\tikzset{every picture/.style={line width=0.75pt}} 
\DeclareMathOperator{\lcm}{lcm}
\newtheorem{Th}{Theorem}[section]
\newtheorem{Prop}[Th]{Proposition}
\newtheorem{Lemma}[Th]{Lemma}
\newtheorem{Cor}[Th]{Corollary}
\newtheorem{Thx}{Theorem}
\newtheorem{Propx}[Thx]{Proposition}
\theoremstyle{definition}
\newtheorem{Remark}[Th]{Remark}
\newtheorem{Def}[Th]{Definition}
\newtheorem{Example}[Th]{Example}
\newcommand{\divides}{\mid}
\newcommand{\ndivides}{\nmid}
\newcommand{\Per}{\operatorname{Per}\nolimits}
\newcommand{\mob}{\boldsymbol{\mu}}
\newcommand{\cf}{{\cal F}}
\newcommand{\cm}{{\cal M}}
\newcommand{\cM}{{\cal M}}
\newcommand{\ov}{\overline}
\newcommand{\Q}{\mathbb{Q}}
\newcommand{\R}{{\mathbb{R}}}
\newcommand{\Z}{{\mathbb{Z}}}
\newcommand{\N}{{\mathbb{N}}}
\newcommand{\PP}{{\mathbb{P}}}
\newcommand{\OK}{\mathcal{O}_K}
\newcommand{\G}{\mathbb{G}}
\newcommand{\cA}{\mathcal{A}}
\newcommand{\cC}{\mathcal{C}}
\newcommand{\HH}{{\mathbb{H}}}
\newcommand{\vep}{\varepsilon}
\newcommand{\sB}{\mathscr{B}}
\newcommand{\raz}{\mathbbm{1}}
\def\moverlay{\mathpalette\mov@rlay}
\def\mov@rlay#1#2{\leavevmode\vtop{%
   \baselineskip\z@skip \lineskiplimit-\maxdimen
   \ialign{\hfil$\m@th#1##$\hfil\cr#2\crcr}}}
\newcommand{\charfusion}[3][\mathord]{
    #1{\ifx#1\mathop\vphantom{#2}\fi
        \mathpalette\mov@rlay{#2\cr#3}
      }
    \ifx#1\mathop\expandafter\displaylimits\fi}
\newcommand{\bigcupdot}{\charfusion[\mathop]{\bigcup}{\cdot}}
\def\supp{{\rm supp }}
\author{Aurelia Dymek \and Stanis\l{}aw Kasjan \and Joanna Ku\l{}aga-Przymus}
\title{Minimality of $\mathfrak{B}$-free systems in number fields}
\begin{document}
\maketitle
\abstract{Let $K$ be a finite extension of $\Q$ and $\OK$ be its ring of integers. Let $\mathfrak{B}$ be a primitive collection of ideals in $\OK$. We show that any $\mathfrak{B}$-free system is essentially minimal. Moreoever, the $\mathfrak{B}$-free system is minimal if and only if the characteristic function of $\mathfrak{B}$-free numbers is a Toeplitz sequence. Equivalently, there are no ideal $\mathfrak{d}$ and no infinite pairwise coprime collection of ideals $\cC$ such that $\mathfrak{d}\cC\subseteq\mathfrak{B}$. Moreover, we find a periodic structure in the Toeplitz case. Last but not least, we describe the restrictions on the cosets of ideals contained in unions of ideals.}
\tableofcontents
\section{Background and main results}\label{se1}

Given a set $\sB$ of natural numbers, we say that an~integer $n$ is~$\sB$-free, if no number in~$\sB$ divides $n$. The set of $\sB$-free integers is~denoted by $\cf_\sB$ and its complement, which is the corresponding set of multiplies, by $\cM_{\sB}$. These sets were studied intensively from the number theoretic viewpoint (see \cite{MR1414678}, and \cite{MR3803141} for more references). Our approach is dynamical and it is the continuation of the line of research initiated by Sarnak. Let $\eta\in \{0,1\}^\Z$ stand for the~characteristic function of~$\cf_\sB$ and let $X_\eta$ be the orbit closure of $\eta$ with respect to~the~left shift $\sigma$~on $\{0,1\}^\Z$. The resulting subshift $(X_\eta,\sigma)$ is~called a $\sB$-free system. In 2010 Sarnak suggested to study the so-called square-free system, i.e.\ the $\sB$-free system for $\sB$ being the set of squares of all primes. The underlying motivation was to understand the random nature of the arithmetic M\"obius function $\mob$ -- note that $\mob^2$ is the characteristic function of the set of square-free integers.

As each set $b\Z$ is~a subgroup of~the additive group $\Z$ and also an~ideal of~the ring $\Z$, there are two natural ways of generalizing the notion of a $\sB$-free system to higher dimensions. Namely, as the counterpart of the set of multiples $\cM_{\sB}$ we choose
\begin{itemize}
\item countable unions of~sublattices of~$\Z^m$,
\item countable unions of~ideals in the~rings of~integers $\OK$ of~algebraic number fields $K$.
\end{itemize}
Similarly to~\cite{MR4251829}, we consider a~collection $\mathfrak{B}$ of non-zero ideals in~$\OK$. Put $\mathcal{F}_\mathfrak{B}:=\OK\setminus\bigcup_{\mathfrak{b}\in\mathfrak{B}}\mathfrak{b}$ and let $X_{\eta}$ be the orbit closure of $\eta=\raz_{\mathcal{F}_\mathfrak{B}}\in\{0,1\}^{\OK}$ under the multidimensional shift $(S_g)_{g\in\OK}$. The pair $(X_\eta,(S_g)_{g\in\OK})$ is called a~\emph{$\mathfrak{B}$-free system}.

Recall that:
\begin{enumerate}
\item[(A)]
Each $\sB$-free subshift is essentially minimal, i.e.\ $(X_\eta,\sigma)$ has a unique minimal subset (which is the orbit closure of a Toeplitz sequence ({\cite[Theorem A]{MR3803141}})).\label{ama}
\item[(B)]
The $\sB$-free subshift $(X_\eta,\sigma)$ is minimal if and only if it is a Toeplitz system ({\cite[Corollary 1.4]{MR3803141}}).
\end{enumerate}
Then, in~\cite{MR3947636} the following result related to (B) was shown:
\begin{enumerate}
\item[(B')]
The minimality of $(X_\eta,\sigma)$ implies that $\eta$ itself is a Toeplitz sequence, under the extra assumption that $\mathscr{B}$ is taut.
\end{enumerate}
Finally, Keller~\cite{MR4280951} showed the following:
\begin{enumerate}
\item[(A')]
The unique minimal subset of $(X_\eta,\sigma)$ is also a $\mathscr{B}$-free system corresponding to a set $\mathscr{B}^*$, which is a certain modification of $\mathscr{B}$.
\end{enumerate}
For more details on (A),(A'),(B),(B'), see Section~\ref{se3}.

The main goal of this paper is to prove analogous results to the above in the multidimensional case described above. Before we  formulate the results, let us introduce some notation. Given a collection  $\mathfrak{B}$ of ideals in $\OK$, let
\begin{align}
\begin{split}\label{A}
\mathfrak{D}=&\left\{\mathfrak{d}\subseteq \OK : \mathfrak{d} \text{ is a non-zero ideal\footnotemark{} such that }\mathfrak{d}\mathcal{C}\subseteq\mathfrak{B}\right.\\
&\left.\text{ for some infinite pairwise coprime collection $\mathcal{C}$ of ideals in }\OK\right\},
\end{split}
\end{align}
\footnotetext{The ideal $\mathfrak{d}$ can be equal to $\OK$.}
$\mathfrak{B}^*=(\mathfrak{B}\cup \mathfrak{D})^{prim}$ and $\eta^*=\raz_{\mathcal{F}_{\mathfrak{B}^*}}$.
\begin{Thx}[{see~\cite[Corollary 5]{MR4280951} for $\OK=\Z$}]\label{minimal2}
Let $\mathfrak{B}$ be a primitive collection of ideals in $\OK$. Then $\eta^*$ is an $\OK$-Toeplitz array and $X_{\eta^*}$ is a unique minimal subset of~$X_\eta$. Moreover, $\eta^*\leq\eta$.
\end{Thx}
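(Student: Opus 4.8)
The plan is to follow the strategy behind Keller's proof of the $\OK=\Z$ case, adapting each ingredient to the ring of integers $\OK$. There are two assertions to establish: that $\eta^*$ is an $\OK$-Toeplitz array and that $X_{\eta^*}$ is the unique minimal subset of $X_\eta$ (the inequality $\eta^*\le\eta$ is immediate, since $\mathfrak{B}\subseteq\mathfrak{B}\cup\mathfrak{D}$ forces $\cf_{\mathfrak{B}^*}\subseteq\cf_{\mathfrak{B}}$, so every $0$ of $\eta$ is a $0$ of $\eta^*$). I would treat these separately, proving the Toeplitz property first.

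First I would analyze the structure of $\mathfrak{B}^*$. The key point is that adjoining $\mathfrak{D}$ and re-primitivizing "uses up" exactly those obstructions to the Toeplitz property: an ideal $\mathfrak{d}$ together with an infinite pairwise coprime family $\cC$ with $\mathfrak{d}\cC\subseteq\mathfrak{B}$ is precisely the configuration that prevents a residue class from being eventually periodically covered. Concretely, one shows that $\mathfrak{B}^*$ admits \emph{no} such pair: if $\mathfrak{d}'\cC'\subseteq\mathfrak{B}^*$ with $\cC'$ infinite pairwise coprime, then (after discarding finitely many members, and using that each $\mathfrak{d}\in\mathfrak{D}$ itself lies in $\mathfrak{B}^*$) one can pull this back to a similar configuration inside $\mathfrak{B}\cup\mathfrak{D}$, and unique factorization of ideals in $\OK$ together with primitivity forces $\mathfrak{d}'\in\mathfrak{D}$, so $\mathfrak{d}'$ has already been absorbed — contradicting primitivity of $\mathfrak{B}^*$. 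This is the "fixed point" property $(\mathfrak{B}^*)^*=\mathfrak{B}^*$. Granting this, I would then show directly that $\eta^*=\raz_{\cf_{\mathfrak{B}^*}}$ is Toeplitz: for a fixed $g\in\OK$ I must exhibit a non-zero ideal $\mathfrak{a}$ with $\eta^*(g+a)=\eta^*(g)$ for all $a\in\mathfrak{a}$. If $g\notin\cf_{\mathfrak{B}^*}$, some $\mathfrak{b}\in\mathfrak{B}^*$ contains $g$, and $\mathfrak{a}=\mathfrak{b}$ works. If $g\in\cf_{\mathfrak{B}^*}$, one must find $\mathfrak{a}$ so that every $\mathfrak{b}\in\mathfrak{B}^*$ meeting $g+\mathfrak{a}$ in fact fails to contain any point of $g+\mathfrak{a}$; the obstruction to choosing such an $\mathfrak{a}$ from the divisors of the relevant $\mathfrak{b}$'s is exactly an infinite pairwise coprime family as in the definition of $\mathfrak{D}$, which the no-configuration property of $\mathfrak{B}^*$ rules out. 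Here I would invoke the CRT in $\OK$ (for pairwise coprime ideals) and the fact that $\OK/\mathfrak{b}$ is finite, to reduce to finitely many congruence conditions.

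Next I would identify $X_{\eta^*}$ as the unique minimal subset of $X_\eta$. One inclusion: I would show $X_{\eta^*}\subseteq X_\eta$ by approximating $\eta^*$ with shifts of $\eta$ — every finite pattern of $\eta^*$ around a point appears in $\eta$ along a suitable translate, because a finite window only sees finitely many ideals of $\mathfrak{B}^*$, and the members of $\mathfrak{D}$ that were added can be replaced, on that window, by members of the coprime families $\cC$ witnessing them (which do lie in $\mathfrak{B}$); using CRT in $\OK$ one finds a translate of $\eta$ matching $\eta^*$ on the window. For minimality of $(X_{\eta^*},(S_g))$ one uses that a Toeplitz array generates a minimal system — this is the $\OK$-analogue of the classical fact and should be either quoted from the paper's preliminaries or proved by the standard argument (every pattern reappears syntactically periodically). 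Finally, for uniqueness, I would show that any minimal subset $Y\subseteq X_\eta$ must contain $\eta^*$ (or a shift of it): elements of $X_\eta$ are, roughly, subsets of $\OK$ that avoid the ideals of $\mathfrak{B}$ "along subsequences", and a minimality/recurrence argument forces the densest possible such configuration, which is governed precisely by $\mathfrak{B}^*$; here I expect to need an auxiliary lemma, analogous to $(A')$ in the one-dimensional case, describing elements of $X_\eta$ as $\mathfrak{B}'$-free-type sets.

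\textbf{Main obstacle.} The hardest part will be the no-configuration / fixed-point property of $\mathfrak{B}^*$ and, relatedly, the uniqueness of the minimal subset. In $\OK$, "pairwise coprime" must be handled via the ideal-theoretic CRT rather than via $\gcd$'s of integers, and one must be careful that passing from $\mathfrak{B}$ to $\mathfrak{B}\cup\mathfrak{D}$ to its primitivization does not create \emph{new} infinite coprime configurations — this requires a genuine use of unique factorization of ideals in $\OK$ and a pigeonhole argument on the (finitely many) prime ideals dividing a given $\mathfrak{d}$. The uniqueness statement will additionally need the structural description of $X_\eta$ (the $\OK$-version of Keller's $(A')$), which is where the bulk of the dynamical work lies; everything else — the Toeplitz verification on windows, the inclusion $X_{\eta^*}\subseteq X_\eta$, minimality of a Toeplitz system — is the standard machinery transported verbatim to the $\OK$ setting.
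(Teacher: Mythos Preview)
Your plan for $\eta^*\le\eta$, for the Toeplitz property of $\eta^*$ via the fixed-point assertion $(\mathfrak{B}^*)^*=\mathfrak{B}^*$, and for the inclusion $X_{\eta^*}\subseteq X_\eta$ via a CRT window-matching argument, all match the paper's approach (these are Theorem~\ref{dwyz} and the first half of the proof of Theorem~\ref{minimal2}).

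Where you diverge is on \emph{uniqueness} of the minimal subset. You anticipate needing a structural description of the elements of $X_\eta$ (an $\OK$-analogue of Keller's $(A')$) and flag this as the main obstacle. The paper avoids this entirely. The point is that the very CRT construction you sketch for proving $\eta^*|_{F_N}$ occurs in $\eta$ actually yields more: the translate $M_N$ realizing $\eta|_{F_N+M_N}=\eta^*|_{F_N}$ is determined only modulo a finite-index ideal $\mathfrak{L}\cap\bigcap_j\mathfrak{c}_j$, so the block $\eta^*|_{F_N}$ appears in $\eta$ along a \emph{syndetic} set of positions. One then invokes a general dynamical criterion (Proposition~\ref{LE51new}/Corollary~\ref{dlapod}): a transitive subshift in which an infinite family of pairwise distinct blocks recurs syndetically is essentially minimal. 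Since $X_{\eta^*}$ is minimal (being a Toeplitz orbit closure) and sits inside $X_\eta$, it must be \emph{the} minimal subset.

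So your route could in principle be completed, but the ``bulk of the dynamical work'' you worry about is unnecessary: uniqueness comes for free from the same CRT step, once you notice that the matching translate is not a single point but a full coset of a finite-index ideal.
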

\begin{Thx}\label{equivalent}
Let $\mathfrak{B}$ be a primitive collection of ideals in~$\OK$. The following are equivalent:
\begin{enumerate}[(i)]
\item $(X_\eta,(S_g)_{g\in\OK})$ is minimal,\label{min}
\item $\eta$ is an $\OK$-Toeplitz array different from $\mathbf{0}$, where $\mathbf{0}_g=0$ for any $g\in\mathcal{O}_K$,\label{top}
\item $\mathfrak{D}=\emptyset$,\label{arith}
\item $X_\eta\subseteq Y$, where $Y=\{x\in\{0,1\}^{\OK};\ |\operatorname{supp }x \bmod \mathfrak{b}|=N(\mathfrak{b})-1\text{ for any }\mathfrak{b}\in\mathfrak{B}\}$ ($N(\mathfrak{b})$ stands for the cardinality of the quotient $\OK/\mathfrak{b}$).\label{Y}
\end{enumerate}
\end{Thx}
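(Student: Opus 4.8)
The plan is to prove the equivalence through the implications $(iii)\Rightarrow(ii)\Rightarrow(i)\Rightarrow(iii)$ (closing up $(i)$--$(iii)$) together with $(iv)\Rightarrow(iii)$ and $(iii)\Rightarrow(iv)$, using Theorem~\ref{minimal2} as the main input. For $(iii)\Rightarrow(ii)$: if $\mathfrak{D}=\emptyset$ then $\mathfrak{B}^{*}=(\mathfrak{B}\cup\emptyset)^{prim}=\mathfrak{B}$ since $\mathfrak{B}$ is primitive, hence $\eta^{*}=\eta$, and Theorem~\ref{minimal2} says $\eta$ is an $\OK$-Toeplitz array; it is $\neq\mathbf{0}$ because the units of $\OK$ are $\mathfrak{B}$-free. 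For $(ii)\Rightarrow(i)$ I would invoke the classical fact that the orbit closure of a Toeplitz array over a countable abelian group --- here $\OK\cong\Z^{[K:\Q]}$ --- is minimal.

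For $(i)\Rightarrow(iii)$: by Theorem~\ref{minimal2}, $X_{\eta^{*}}$ is the unique minimal subset of $X_{\eta}$, so minimality forces $X_{\eta}=X_{\eta^{*}}$ and thus $\eta\in X_{\eta^{*}}$; since $\operatorname{supp}\eta=\mathcal{F}_{\mathfrak{B}}\supseteq\mathcal{F}_{\mathfrak{B}^{*}}=\operatorname{supp}\eta^{*}$, the characterisation of $\eta^{*}$ as the unique point of $X_{\eta^{*}}$ whose support contains $\mathcal{F}_{\mathfrak{B}^{*}}$ yields $\eta=\eta^{*}$; as $\mathfrak{D}\neq\emptyset$ would strictly enlarge the set of multiples (each $\mathfrak{d}\in\mathfrak{D}$ containing a $\mathfrak{B}$-free element, by primitivity of $\mathfrak{B}$), this gives $\mathfrak{D}=\emptyset$. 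For $(iv)\Rightarrow(iii)$ I argue contrapositively: given $\mathfrak{d}\in\mathfrak{D}$, an infinite pairwise coprime $\mathcal{C}$ with $\mathfrak{d}\mathcal{C}\subseteq\mathfrak{B}$, and $\mathfrak{c}\in\mathcal{C}$ with $\mathfrak{c}\neq\OK$, put $\mathfrak{b}_{0}:=\mathfrak{d}\mathfrak{c}\in\mathfrak{B}\subseteq\mathfrak{B}^{*}$. Since $\mathcal{M}_{\mathfrak{B}^{*}}=\mathcal{M}_{\mathfrak{B}\cup\mathfrak{D}}\supseteq\mathfrak{d}$, the support $\operatorname{supp}\eta^{*}=\mathcal{F}_{\mathfrak{B}^{*}}$ is disjoint from $\mathfrak{d}$, hence misses all $N(\mathfrak{c})\ge 2$ residue classes modulo $\mathfrak{b}_{0}$ lying inside $\mathfrak{d}$, so $|\operatorname{supp}\eta^{*}\bmod\mathfrak{b}_{0}|\le N(\mathfrak{b}_{0})-2$ and $\eta^{*}\notin Y$; as $\eta^{*}\in X_{\eta^{*}}\subseteq X_{\eta}$, we get $X_{\eta}\not\subseteq Y$.

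The substantive implication is $(iii)\Rightarrow(iv)$. By the already established $(iii)\Rightarrow(i)$, $X_{\eta}$ is minimal. If $X_{\eta}\not\subseteq Y$, some $x_{0}\in X_{\eta}$ and $\mathfrak{b}_{0}\in\mathfrak{B}$ satisfy $|\operatorname{supp}x_{0}\bmod\mathfrak{b}_{0}|\le N(\mathfrak{b}_{0})-2$; the set of such $x$ is a finite union, over pairs of residue classes modulo $\mathfrak{b}_{0}$, of the closed conditions ``$\operatorname{supp}x$ avoids both'', hence closed and $(S_{g})_{g\in\OK}$-invariant, and by minimality it contains $\eta$. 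Thus $\mathcal{F}_{\mathfrak{B}}$ avoids at least two classes modulo $\mathfrak{b}_{0}$, one of which is $\mathfrak{b}_{0}$ itself, so $r_{0}+\mathfrak{b}_{0}\subseteq\mathcal{M}_{\mathfrak{B}}$ for some $r_{0}\notin\mathfrak{b}_{0}$. It remains to show that such a fully covered nonzero coset forces $\mathfrak{D}\neq\emptyset$, contradicting $(iii)$. Writing $\mathfrak{g}=(r_{0})+\mathfrak{b}_{0}$, which divides $\mathfrak{b}_{0}$ but differs from it, I would use the density theorem for ray classes to produce infinitely many $x\in r_{0}+\mathfrak{b}_{0}$ with $(x)=\mathfrak{g}\mathfrak{q}$ for pairwise distinct primes $\mathfrak{q}$ coprime to $\mathfrak{b}_{0}$. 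Each such $x$ lies in $\mathcal{M}_{\mathfrak{B}}$, so $\mathfrak{b}\mid\mathfrak{g}\mathfrak{q}$ for some $\mathfrak{b}\in\mathfrak{B}$; primitivity rules out $\mathfrak{b}\mid\mathfrak{g}$ (that would give $\mathfrak{b}\mid\mathfrak{b}_{0}$ with $\mathfrak{b}\neq\mathfrak{b}_{0}$), so $\mathfrak{q}\mid\mathfrak{b}$ and $\mathfrak{b}=\mathfrak{d}_{\mathfrak{b}}\mathfrak{q}$ with $\mathfrak{d}_{\mathfrak{b}}\mid\mathfrak{g}$; since $\mathfrak{g}$ has finitely many divisors, pigeonholing over the infinitely many $\mathfrak{q}$ isolates one $\mathfrak{d}\mid\mathfrak{g}$ with $\mathfrak{d}\mathfrak{q}\in\mathfrak{B}$ for infinitely many $\mathfrak{q}$, i.e.\ $\mathfrak{d}\in\mathfrak{D}$.

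The main obstacle is exactly this Behrend-type step: turning the covering of one nonzero coset of $\mathfrak{b}_{0}$ into an infinite pairwise coprime family $\mathcal{C}$ and an ideal $\mathfrak{d}$ with $\mathfrak{d}\mathcal{C}\subseteq\mathfrak{B}$. It requires the arithmetic input that each ray class modulo $\mathfrak{b}_{0}$ contains infinitely many prime ideals, together with careful bookkeeping of the valuations at the primes dividing $\mathfrak{b}_{0}$, so that the auxiliary ideal $(x)\mathfrak{g}^{-1}$ is genuinely coprime to $\mathfrak{b}_{0}$ and a suitable generator $x\equiv r_{0}\pmod{\mathfrak{b}_{0}}$ can be chosen. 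Everything else is a formal consequence of Theorem~\ref{minimal2}, of the maximality of $\eta$ within $X_{\eta}$, and of the closedness and shift-invariance of the constraints defining $Y$.
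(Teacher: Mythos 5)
Your legs (iii)$\Rightarrow$(ii)$\Rightarrow$(i) and your contrapositive proof of (iv)$\Rightarrow$(iii) are fine (the latter is in fact a little slicker than the paper's (iv)$\Rightarrow$(ii), since it works directly with the $N(\mathfrak{c})\geq 2$ classes of $\mathfrak{d}$ inside $\mathfrak{b}_0=\mathfrak{d}\mathfrak{c}$ and avoids Lemma~\ref{Toeplitz_SK}). The genuine gap sits exactly where you put it yourself: in (iii)$\Rightarrow$(iv), after the correct minimality reduction to a fully covered coset $r_0+\mathfrak{b}_0\subseteq\cm_{\mathfrak{B}}$ with $r_0\notin\mathfrak{b}_0$, everything hinges on the claim that such a coset forces $\mathfrak{D}\neq\emptyset$. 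This is precisely Theorem~\ref{Thm_SK}, which you neither quote nor prove: you ask for infinitely many $x\in r_0+\mathfrak{b}_0$ with $(x)=\mathfrak{g}\mathfrak{q}$, $\mathfrak{q}$ prime and coprime to $\mathfrak{b}_0$, and you label the required input ``the main obstacle''. What is needed there is not mere infinitude of primes in an ideal class but a Hecke-type statement (a prime $\mathfrak{q}$ in a prescribed ray class together with a generator of $\mathfrak{g}\mathfrak{q}$ congruent to $r_0$ modulo $\mathfrak{b}_0$, with exact valuations at the primes dividing $\mathfrak{b}_0$), and the paper deliberately avoids this analytic route, remarking that the Dirichlet-type argument of the one-dimensional case does not transfer: Theorem~\ref{Thm_SK} is proved by the elementary Lemmas~\ref{lem2_multi}--\ref{lem5_multi} (Chinese Remainder Theorem plus valuation computations producing elements $r+x_ig$ whose ``new'' prime factors are pairwise disjoint), and then (ii)$\Rightarrow$(iv) follows from Lemma~\ref{Toeplitz_SK} together with the closed $(S_g)_{g\in\OK}$-invariant sets $Y^{\mathfrak{b}}_{\geq s_{\mathfrak{b}}}$. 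If you allow yourself to cite Theorem~\ref{Thm_SK} the way you cite Theorem~\ref{minimal2}, this leg closes and essentially coincides with the paper's; as written, the arithmetic core of the hard implication is missing.

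Your (i)$\Rightarrow$(iii) also invokes two facts without proof. First, ``$\eta^*$ is the unique point of $X_{\eta^*}$ whose support contains $\mathcal{F}_{\mathfrak{B}^*}$'' (and likewise ``the maximality of $\eta$ within $X_\eta$'') is not available as a citation in this multidimensional setting; the paper proves exactly the instance it needs ($\eta\in X_{\eta^*}$, $\eta\geq\eta^*$, $\eta^*$ Toeplitz imply $\eta=\eta^*$) via the skeleton identity~\eqref{downar_mul}: taking a group of periods $\mathfrak{p}$ of $\eta^*$ with $\eta^*|_{n+\mathfrak{p}}\equiv 0$, one compares the numbers of classes of $\Per(\eta,\mathfrak{p},0)$ and $\Per(\eta^*,\mathfrak{p},0)$ modulo $\mathfrak{p}$; this counting argument is the real content of (i)$\Rightarrow$(ii) and has to appear. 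Second, your passage from $\eta=\eta^*$ to $\mathfrak{D}=\emptyset$ rests on ``each $\mathfrak{d}\in\mathfrak{D}$ contains a $\mathfrak{B}$-free element, by primitivity of $\mathfrak{B}$''. That is correct when $\mathfrak{d}$ is principal (a generator lying in some $\mathfrak{b}\in\mathfrak{B}$ gives $\mathfrak{d}\mathfrak{c}\subsetneq\mathfrak{d}\subseteq\mathfrak{b}$ with $\mathfrak{d}\mathfrak{c}\in\mathfrak{B}$, against primitivity), but it is not justified in general: for a non-principal ideal $\mathfrak{a}$ every $0\neq g\in\mathfrak{a}$ satisfies $(g)=\mathfrak{a}\mathfrak{c}$ with $\mathfrak{c}$ a proper ideal, so the primitive family $\{\mathfrak{a}\mathfrak{p}:\ \mathfrak{p}\ \text{prime}\}$ already covers all of $\mathfrak{a}$ --- primitivity alone does not manufacture a $\mathfrak{B}$-free element of $\mathfrak{d}$, and this covering phenomenon is exactly why Theorem~\ref{Thm_SK} carries the hypothesis $\mathfrak{D}=\emptyset$. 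The paper argues this step differently: from $\eta=\eta^*$ it gets $\cm_{\mathfrak{B}^*}=\cm_{\mathfrak{B}}$, hence $\mathfrak{B}=\mathfrak{B}^*$, and then $\mathfrak{D}=\emptyset$ via Theorem~\ref{dwyz}; you should either route through Theorem~\ref{dwyz} or supply a proof of your auxiliary claim in the generality in which you actually use it.
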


Notice that even in the one-dimensional case this is a strenghtening of the earlier results. Namely, our methods allow us to skip the technical (as it turns out) assumption that $\mathscr{B}$ is taut in order to deduce that $\eta$ is a Toeplitz sequence whenever $(X_\eta,\sigma)$ is minimal. The proofs substantially differ from the earlier ones, even in the one-dimensional case.

The proof of Theorem~\ref{minimal2} relies on the following result, which, in our opinion, catches the essence of what is happening in the Toeplitz case.
\begin{Thx}\label{dwyz}
Let $\mathfrak{B}$ be primitive. Then the following are equivalent:
\begin{enumerate}[(i)]
\item $\mathfrak{B}=\mathfrak{B}^*$,
\item $\mathfrak{D}=\emptyset$,
\item $\eta$ is a Toeplitz sequence different from $\bf{0}$.
\end{enumerate}
Moreover, we have
\[
\mathfrak{D}^*:=\{\mathfrak{d} :  \mathfrak{d}\cC\subset\mathfrak{B}^* \text{ for some infinite pairwise coprime set }\cC\}=\emptyset,
\]
i.e.\ $\mathfrak{B}^*=(\mathfrak{B}^*)^*$ and $\eta^*$ is Toeplitz.
\end{Thx}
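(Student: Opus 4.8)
The plan is to run the cycle (i)$\Rightarrow$(ii)$\Rightarrow$(iii)$\Rightarrow$(i), treating (i)$\Leftrightarrow$(ii) as a combinatorial statement about ideals, (ii)$\Leftrightarrow$(iii) through an explicit description of the Toeplitz positions of $\eta$, and then deducing the ``moreover'' part from the resulting equivalence applied to $\mathfrak{B}^*$. For (i)$\Leftrightarrow$(ii): if $\mathfrak{D}=\emptyset$ then $\mathfrak{B}^*=(\mathfrak{B}\cup\mathfrak{D})^{prim}=\mathfrak{B}^{prim}=\mathfrak{B}$, since $\mathfrak{B}$ is already primitive. Conversely, assuming $\mathfrak{D}\neq\emptyset$, I would pick $\mathfrak{d}\in\mathfrak{D}$ maximal for inclusion (possible because $\mathfrak{d}$ has only finitely many divisors in $\OK$), witnessed by $\mathfrak{d}\mathcal{C}\subseteq\mathfrak{B}$ with $\mathcal{C}$ infinite, pairwise coprime, and, after discarding at most one member, with $\OK\notin\mathcal{C}$. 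Primitivity of $\mathfrak{B}$ then forces $\mathfrak{d}\notin\mathfrak{B}$ (otherwise $\mathfrak{d}\mathfrak{c}\subsetneq\mathfrak{d}$ are two comparable members of $\mathfrak{B}$) and forces no $\mathfrak{b}\in\mathfrak{B}$ to strictly contain $\mathfrak{d}$ (otherwise $\mathfrak{d}\mathfrak{c}\subsetneq\mathfrak{b}$ again contradicts primitivity); together with maximality of $\mathfrak{d}$ in $\mathfrak{D}$ this says exactly that $\mathfrak{d}$ is inclusion-maximal in $\mathfrak{B}\cup\mathfrak{D}$, so $\mathfrak{d}\in\mathfrak{B}^*\setminus\mathfrak{B}$ and $\mathfrak{B}\neq\mathfrak{B}^*$.

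For (ii)$\Leftrightarrow$(iii) I would first reduce the Toeplitz property to a hitting-set condition. If $g\notin\mathcal{F}_{\mathfrak{B}}$, choose $\mathfrak{b}\ni g$; then $g+\mathfrak{b}\subseteq\mathfrak{b}$, so $g$ is a Toeplitz position with period $\mathfrak{b}$, unconditionally. If $g\in\mathcal{F}_{\mathfrak{B}}$ (so $g\neq 0$, the trivial case $\OK\in\mathfrak{B}$ being set aside), then for a nonzero ideal $\mathfrak{m}$ one has $\eta$ constant on $g+\mathfrak{m}$ iff $g\notin\mathfrak{b}+\mathfrak{m}$ for all $\mathfrak{b}\in\mathfrak{B}$; writing $W_{\mathfrak{b}}(g):=\{\mathfrak{p}\colon v_{\mathfrak{p}}(\mathfrak{b})>v_{\mathfrak{p}}(g)\}$ (a finite nonempty set of primes, as $g\notin\mathfrak{b}$), a short valuation computation shows such an $\mathfrak{m}$ exists iff some \emph{finite} set $P$ of primes meets every $W_{\mathfrak{b}}(g)$ — one may take $\mathfrak{m}=\prod_{\mathfrak{p}\in P}\mathfrak{p}^{\,v_{\mathfrak{p}}(g)+1}$, and conversely $P=\{\mathfrak{p}\colon v_{\mathfrak{p}}(\mathfrak{m})>v_{\mathfrak{p}}(g)\}$ works — and, passing to a maximal pairwise disjoint subfamily, such a finite $P$ exists iff no infinite subfamily of $\{W_{\mathfrak{b}}(g)\}_{\mathfrak{b}\in\mathfrak{B}}$ is pairwise disjoint. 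Assuming $\mathfrak{D}=\emptyset$: if some $g\in\mathcal{F}_{\mathfrak{B}}$ had an infinite pairwise disjoint subfamily $\{W_{\mathfrak{b}_i}(g)\}_i$, split $\mathfrak{b}_i=\mathfrak{e}_i\mathfrak{f}_i$ with $\mathfrak{e}_i$ the part of $\mathfrak{b}_i$ supported on $W_{\mathfrak{b}_i}(g)$; then $\mathfrak{f}_i\mid(g)$, so some value $\mathfrak{f}$ occurs for infinitely many $i$, the corresponding $\mathfrak{e}_i$ are pairwise coprime and $\neq\OK$, and $\mathfrak{f}\mathfrak{e}_i=\mathfrak{b}_i\in\mathfrak{B}$, forcing $\mathfrak{f}\in\mathfrak{D}$ — a contradiction. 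Hence every position is Toeplitz and $\eta$ is a Toeplitz array.

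For (iii)$\Rightarrow$(ii) I would argue contrapositively: given $\mathfrak{d}\in\mathfrak{D}$ with $\mathfrak{d}\mathcal{C}=\{\mathfrak{d}\mathfrak{c}_i\}_i\subseteq\mathfrak{B}$, $\mathcal{C}$ infinite pairwise coprime, it suffices to produce $g\in\mathcal{F}_{\mathfrak{B}}\cap\mathfrak{d}$: indeed $g\in\mathfrak{d}$ makes $W_{\mathfrak{d}\mathfrak{c}_i}(g)\subseteq\{\mathfrak{p}\colon\mathfrak{p}\mid\mathfrak{c}_i\}$, this is nonempty because $g\notin\mathfrak{d}\mathfrak{c}_i$, and the sets are pairwise disjoint since the $\mathfrak{c}_i$ are, so $g$ is not a Toeplitz position and $\eta$ is not Toeplitz (the case $\mathfrak{d}=\OK$ being handled by $g=1$). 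The delicate point, and the main obstacle of the whole argument, is exactly the existence of such a $g$, i.e.\ that one can choose $\mathfrak{d}\in\mathfrak{D}$ with $\mathfrak{d}\not\subseteq\bigcup_{\mathfrak{b}\in\mathfrak{B}}\mathfrak{b}$: this is not automatic for an arbitrary member of $\mathfrak{D}$, and it is precisely here that the description of which cosets of an ideal can be swallowed by a countable union of ideals (the last topic announced in the introduction) must be invoked.

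Finally, for the ``moreover'' part it suffices, by the equivalence just established applied to the primitive collection $\mathfrak{B}^*$, to show $\mathfrak{D}^*=\emptyset$. I would first prove the auxiliary fact that if $\mathfrak{d}\mathfrak{c}_i\in\mathfrak{B}\cup\mathfrak{D}$ for all $i$, for some infinite pairwise coprime $\{\mathfrak{c}_i\}_i$, then $\mathfrak{d}\in\mathfrak{D}$: if infinitely many $\mathfrak{d}\mathfrak{c}_i$ lie in $\mathfrak{B}$ this is immediate, and otherwise one assembles an infinite pairwise coprime witness for $\mathfrak{d}$ diagonally from the $\mathfrak{D}$-witnesses of the $\mathfrak{d}\mathfrak{c}_i$, at each step picking a cofactor coprime to the finitely many primes used so far. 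Now, if $\mathfrak{d}\in\mathfrak{D}^*$ with witness $\mathfrak{d}\mathfrak{c}_i\in\mathfrak{B}^*\subseteq\mathfrak{B}\cup\mathfrak{D}$, the auxiliary fact gives $\mathfrak{d}\in\mathfrak{D}\subseteq\mathfrak{B}\cup\mathfrak{D}$; since $\mathfrak{d}\mathfrak{c}_i\subsetneq\mathfrak{d}$ and $\mathfrak{B}^*$ is primitive, $\mathfrak{d}$ itself cannot lie in $\mathfrak{B}^*$, so some member of $\mathfrak{B}\cup\mathfrak{D}$ strictly contains it, and climbing the finite chain of strict inclusions one reaches $\mathfrak{e}\in\mathfrak{B}^*$ with $\mathfrak{d}\mathfrak{c}_i\subsetneq\mathfrak{d}\subsetneq\mathfrak{e}$, contradicting primitivity of $\mathfrak{B}^*$. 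Hence $\mathfrak{D}^*=\emptyset$, so $\mathfrak{B}^*=(\mathfrak{B}^*)^*$, and the equivalence (applied to $\mathfrak{B}^*$) yields that $\eta^*$ is Toeplitz.
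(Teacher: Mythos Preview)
Your valuation-based treatment of (ii)$\Rightarrow$(iii) is correct and attractive: you effectively re-derive the key content of Proposition~\ref{propA} (that the non-Toeplitz positions of $\eta$ inside $\mathcal{F}_\mathfrak{B}$ are exactly $\mathcal{M}_\mathfrak{D}\cap\mathcal{F}_\mathfrak{B}$) by an elementary hitting-set argument, rather than via a saturated filtration as the paper does. Your (i)$\Leftrightarrow$(ii) and the ``moreover'' part are also fine and close to the paper's arguments.

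The genuine gap is in (iii)$\Rightarrow$(ii). You try to show, given $\mathfrak{d}\in\mathfrak{D}$, that some $g\in\mathcal{F}_\mathfrak{B}\cap\mathfrak{d}$ exists, and you defer this to ``the description of which cosets of an ideal can be swallowed by a countable union of ideals'', i.e.\ Theorem~\ref{Thm_SK}. But Theorem~\ref{Thm_SK} has $\mathfrak{D}=\emptyset$ as a standing hypothesis, so invoking it while assuming $\mathfrak{D}\neq\emptyset$ is circular; it cannot supply the missing $g$.

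The fix is to reroute through (i), exactly as the paper does, and you already have all the pieces. Your own computations show, unconditionally, that for $g\in\mathcal{F}_\mathfrak{B}$ one has: $g$ non-Toeplitz $\Longleftrightarrow$ $g\in\mathcal{M}_\mathfrak{D}$ (one direction from your (ii)$\Rightarrow$(iii) paragraph, the other from the $W_{\mathfrak{d}\mathfrak{c}_i}(g)$ computation in your (iii)$\Rightarrow$(ii) paragraph). Hence if $\eta$ is Toeplitz then $\mathcal{M}_\mathfrak{D}\cap\mathcal{F}_\mathfrak{B}=\emptyset$, so $\mathcal{M}_\mathfrak{D}\subseteq\mathcal{M}_\mathfrak{B}$, whence $\mathcal{M}_{\mathfrak{B}^*}=\mathcal{M}_\mathfrak{B}\cup\mathcal{M}_\mathfrak{D}=\mathcal{M}_\mathfrak{B}$; since both $\mathfrak{B}$ and $\mathfrak{B}^*$ are primitive this forces $\mathfrak{B}=\mathfrak{B}^*$, and your (i)$\Leftrightarrow$(ii) then gives $\mathfrak{D}=\emptyset$. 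No appeal to Theorem~\ref{Thm_SK} is needed here.
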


Let us now discuss the remaining important ingredients in more details.
Clearly, what prevents $\eta$ from being Toeplitz, is the presence of non-periodic positions. We provide a description of the set of such positions.
\begin{Propx}\label{propA}
We have
\begin{equation}\label{teza}
\OK\setminus \bigcup_{\mathfrak{s}\subset \OK}\Per(\eta,\mathfrak{s})=\mathcal{M}_{\mathfrak{D}}\cap \mathcal{F}_{\mathfrak{B}}=\OK\setminus\bigcup_{i\geq1}\Per(\eta,\mathfrak{s}_i),
\end{equation}
where $\mathfrak{D}$ is given by~\eqref{A}, $(\mathfrak{s}_i)_{i\ge 1}$ is an ideal  filtration of $\OK$ and summation is over all nonzero ideals $\mathfrak{s}\subseteq \OK$.
\end{Propx}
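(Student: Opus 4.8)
The plan is to prove the two equalities in~\eqref{teza} in turn, leaving the middle set for last. The equality of the two outer sets is soft: if $\mathfrak{s}'\subseteq\mathfrak{s}$ are nonzero ideals then $g+\mathfrak{s}'\subseteq g+\mathfrak{s}$, so $\eta$ being constant on the coset $g+\mathfrak{s}$ forces it to be constant on $g+\mathfrak{s}'$, and hence $\Per(\eta,\mathfrak{s})\subseteq\Per(\eta,\mathfrak{s}')$. By the defining property of an ideal filtration, every nonzero ideal $\mathfrak{s}$ of $\OK$ contains some term $\mathfrak{s}_i$, and choosing such an $i$ for each $\mathfrak{s}$ yields $\bigcup_{\mathfrak{s}}\Per(\eta,\mathfrak{s})=\bigcup_{i\ge1}\Per(\eta,\mathfrak{s}_i)$, where the left union runs over all nonzero ideals $\mathfrak{s}$. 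So it remains to prove
\[
\bigcup_{\mathfrak{s}}\Per(\eta,\mathfrak{s})=\OK\setminus(\mathcal{M}_\mathfrak{D}\cap\mathcal{F}_\mathfrak{B})=\mathcal{M}_\mathfrak{B}\cup(\mathcal{F}_\mathfrak{B}\setminus\mathcal{M}_\mathfrak{D}),
\]
which I would split into the two inclusions. Throughout, $v_\mathfrak{p}$ denotes the $\mathfrak{p}$-adic valuation, extended to elements by $v_\mathfrak{p}(g):=v_\mathfrak{p}(g\OK)$; I may assume $\mathfrak{B}\neq\emptyset$ (otherwise $\eta$ is constant $1$, $\mathfrak{D}=\emptyset$, and all three sets are empty), and then $0\notin\mathcal{F}_\mathfrak{B}$, so $g\neq0$ in every case considered below.

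For the inclusion $\bigcup_{\mathfrak{s}}\Per(\eta,\mathfrak{s})\subseteq\OK\setminus(\mathcal{M}_\mathfrak{D}\cap\mathcal{F}_\mathfrak{B})$ it suffices, by passing to complements, to show that every $g\in\mathcal{F}_\mathfrak{B}\cap\mathcal{M}_\mathfrak{D}$ is non-periodic. Fix such a $g$; then $\eta(g)=1$, and there are $\mathfrak{d}\in\mathfrak{D}$ with $g\in\mathfrak{d}$ and an infinite pairwise coprime collection $\mathcal{C}$ with $\mathfrak{d}\mathcal{C}\subseteq\mathfrak{B}$. Let $\mathfrak{s}$ be an arbitrary nonzero ideal. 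The ideal $\mathfrak{d}\cap\mathfrak{s}$ has finitely many prime divisors, and, $\mathcal{C}$ being pairwise coprime, each of them divides at most one member of $\mathcal{C}$; since $\mathcal{C}$ is infinite, some $\mathfrak{c}\in\mathcal{C}$ is coprime to $\mathfrak{d}\cap\mathfrak{s}$, hence coprime to both $\mathfrak{d}$ and $\mathfrak{s}$. By the Chinese Remainder Theorem pick $g'\in\OK$ with $g'\equiv g\pmod{\mathfrak{d}\cap\mathfrak{s}}$ and $g'\equiv0\pmod{\mathfrak{c}}$. Then $g'-g\in\mathfrak{d}\cap\mathfrak{s}$, so $g'\in g+\mathfrak{s}$ and $g'\in\mathfrak{d}$ (using $g\in\mathfrak{d}$), while $g'\in\mathfrak{c}$; hence $g'\in\mathfrak{d}\cap\mathfrak{c}=\mathfrak{d}\mathfrak{c}\in\mathfrak{B}$, so $\eta(g')=0\neq\eta(g)$. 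Thus $\eta$ is non-constant on $g+\mathfrak{s}$, i.e.\ $g\notin\Per(\eta,\mathfrak{s})$; as $\mathfrak{s}$ was arbitrary, $g\notin\bigcup_{\mathfrak{s}}\Per(\eta,\mathfrak{s})$.

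For the reverse inclusion, the part $\mathcal{M}_\mathfrak{B}\subseteq\bigcup_{\mathfrak{s}}\Per(\eta,\mathfrak{s})$ is immediate: if $g\in\mathfrak{b}\in\mathfrak{B}$ then $g+\mathfrak{b}\subseteq\mathcal{M}_\mathfrak{B}$, so $\eta$ vanishes on $g+\mathfrak{b}$ and $g\in\Per(\eta,\mathfrak{b})$. The substance is to show that every $g\in\mathcal{F}_\mathfrak{B}\setminus\mathcal{M}_\mathfrak{D}$ is periodic. I would begin from the reformulation: for $g\in\mathcal{F}_\mathfrak{B}$ and a nonzero ideal $\mathfrak{s}$, since $\eta(g)=1$ one has $g\in\Per(\eta,\mathfrak{s})$ iff $(g+\mathfrak{s})\cap\mathfrak{b}=\emptyset$ for all $\mathfrak{b}\in\mathfrak{B}$, and $(g+\mathfrak{s})\cap\mathfrak{b}\neq\emptyset$ iff $g\in\mathfrak{b}+\mathfrak{s}$; so $g\in\Per(\eta,\mathfrak{s})$ iff $g\notin\mathfrak{b}+\mathfrak{s}=\gcd(\mathfrak{b},\mathfrak{s})$ for every $\mathfrak{b}\in\mathfrak{B}$. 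I would then invoke the elementary combinatorial fact that a collection of nonzero ideals of $\OK$ contains an infinite pairwise coprime subcollection if and only if it cannot be covered by finitely many primes (i.e.\ there is no finite set of primes, one of which divides every member); the nontrivial implication is a greedy construction. Now fix $g\in\mathcal{F}_\mathfrak{B}\setminus\mathcal{M}_\mathfrak{D}$. For each of the finitely many ideals $\mathfrak{d}\mid g$ we have $\mathfrak{d}\notin\mathfrak{D}$ (else $g\in\mathfrak{d}\in\mathfrak{D}$), hence, by definition of $\mathfrak{D}$, the collection $\{\mathfrak{b}\mathfrak{d}^{-1}:\mathfrak{b}\in\mathfrak{B},\ \mathfrak{d}\mid\mathfrak{b}\}$ has no infinite pairwise coprime subcollection, so it is covered by a finite set of primes $F_\mathfrak{d}$. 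Put $F:=\{\mathfrak{p}:\mathfrak{p}\mid g\}\cup\bigcup_{\mathfrak{d}\mid g}F_\mathfrak{d}$ (a finite set of primes) and $\mathfrak{s}:=\prod_{\mathfrak{p}\in F}\mathfrak{p}^{v_\mathfrak{p}(g)+1}$ (a nonzero ideal); the claim is that $g\in\Per(\eta,\mathfrak{s})$. Given $\mathfrak{b}\in\mathfrak{B}$, set $\mathfrak{d}_0:=\gcd(\mathfrak{b},g\OK)$, so $\mathfrak{d}_0\mid g$ and $\mathfrak{d}_0\mid\mathfrak{b}$; hence some $\mathfrak{p}\in F_{\mathfrak{d}_0}\subseteq F$ divides $\mathfrak{b}\mathfrak{d}_0^{-1}$, i.e.\ $v_\mathfrak{p}(\mathfrak{b})\ge v_\mathfrak{p}(\mathfrak{d}_0)+1=\min(v_\mathfrak{p}(\mathfrak{b}),v_\mathfrak{p}(g))+1$, which forces $v_\mathfrak{p}(\mathfrak{b})>v_\mathfrak{p}(g)$ (otherwise $v_\mathfrak{p}(\mathfrak{b})\ge v_\mathfrak{p}(\mathfrak{b})+1$). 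Since $v_\mathfrak{p}(\mathfrak{s})=v_\mathfrak{p}(g)+1$ by construction, $v_\mathfrak{p}(\gcd(\mathfrak{b},\mathfrak{s}))=\min(v_\mathfrak{p}(\mathfrak{b}),v_\mathfrak{p}(\mathfrak{s}))>v_\mathfrak{p}(g)$, so $g\notin\gcd(\mathfrak{b},\mathfrak{s})=\mathfrak{b}+\mathfrak{s}$. As $\mathfrak{b}\in\mathfrak{B}$ was arbitrary, $g\in\Per(\eta,\mathfrak{s})$, as claimed.

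I expect the main obstacle to be the reverse inclusion restricted to $\mathcal{F}_\mathfrak{B}\setminus\mathcal{M}_\mathfrak{D}$: pinning down the correct finite set of primes $F$ — this is exactly where the coprime/covering dichotomy and the hypothesis $g\notin\mathcal{M}_\mathfrak{D}$ enter — and then checking the valuation inequality uniformly over all $\mathfrak{b}\in\mathfrak{B}$ by means of the auxiliary divisor $\mathfrak{d}_0=\gcd(\mathfrak{b},g\OK)$. By comparison, the inclusion ``$\subseteq$'' is a clean Chinese Remainder Theorem argument, and the identification of the two outer sets is purely formal; note also that primitivity of $\mathfrak{B}$, though assumed, is not actually used in this argument.
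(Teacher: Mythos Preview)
Your argument that $\mathcal{M}_\mathfrak{D}\cap\mathcal{F}_\mathfrak{B}$ coincides with the set of \emph{all} non-periodic positions (i.e.\ $\OK\setminus\bigcup_{\mathfrak{s}}\Per(\eta,\mathfrak{s})$) is correct and, for the inclusion ``periodic $\supseteq\mathcal{F}_\mathfrak{B}\setminus\mathcal{M}_\mathfrak{D}$'', genuinely different from the paper. The paper never constructs an explicit period $\mathfrak{s}$ for a given $g\in\mathcal{F}_\mathfrak{B}\setminus\mathcal{M}_\mathfrak{D}$; instead it runs the argument through the specific filtration $(\mathfrak{s}_i)$. Your route---the coprime/covering dichotomy applied to each $\{\mathfrak{b}\mathfrak{d}^{-1}:\mathfrak{b}\in\mathfrak{B},\ \mathfrak{d}\mid\mathfrak{b}\}$, followed by the valuation bookkeeping with $\mathfrak{s}=\prod_{\mathfrak{p}\in F}\mathfrak{p}^{v_\mathfrak{p}(g)+1}$---is clean and self-contained. (Your CRT argument for the other inclusion is essentially the paper's ``Second inclusion''.)

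The gap is in your first paragraph. You assert that ``by the defining property of an ideal filtration, every nonzero ideal $\mathfrak{s}$ of $\OK$ contains some term $\mathfrak{s}_i$''. That is not how the paper defines $(\mathfrak{s}_i)$: there $\mathfrak{s}_i=\lcm(\mathfrak{S}_i)=\bigcap_{\mathfrak{b}\in\mathfrak{S}_i}\mathfrak{b}$ for a \emph{saturated filtration} $(\mathfrak{S}_i)$ of $\mathfrak{B}$ (compare the one-dimensional Proposition~\ref{lemmaA_one}). For such a sequence the primes dividing any $\mathfrak{s}_i$ are among the primes dividing elements of $\mathfrak{B}$; a prime $\mathfrak{p}$ not dividing any $\mathfrak{b}\in\mathfrak{B}$ already gives a nonzero ideal containing no $\mathfrak{s}_i$. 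So your identification of the two outer sets is unjustified, and with it the inclusion $\OK\setminus\bigcup_{i\ge1}\Per(\eta,\mathfrak{s}_i)\subseteq\mathcal{M}_\mathfrak{D}\cap\mathcal{F}_\mathfrak{B}$ remains unproved: the period $\mathfrak{s}$ you build need not be any $\mathfrak{s}_i$, nor need any $\mathfrak{s}_i$ sit inside it.

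This missing inclusion is exactly the paper's ``First inclusion'', and it genuinely uses the saturated structure: starting from $n\notin\bigcup_i\Per(\eta,\mathfrak{s}_i)$ with $n\in\mathcal{F}_\mathfrak{B}$, one finds for every $k$ some $\mathfrak{b}_k\in\mathfrak{B}\setminus\mathfrak{S}_k$ with $n\in\gcd(\mathfrak{b}_k,\mathfrak{s}_k)$; saturatedness forces $\gcd(\mathfrak{b}_k,\mathfrak{s}_k)\supsetneq\mathfrak{b}_k$, and since $n$ has only finitely many ideal divisors one can pass to a subsequence with constant $\gcd(\mathfrak{b}_{k_\ell},\mathfrak{s}_{k_\ell})=\mathfrak{d}$ and then inductively extract $\mathfrak{b}_{k_{\ell_m}}$ with pairwise $\gcd$ equal to $\mathfrak{d}$, yielding $\mathfrak{d}\in\mathfrak{D}$ and $n\in\mathfrak{d}$. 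Supplying this step (or otherwise linking your constructed $\mathfrak{s}$ to the $\mathfrak{s}_i$) would complete your proof.
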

``As a bonus'', Proposition~\ref{propA} allows us to find a periodic structure of $\eta$ in the Toeplitz case.
\begin{Thx}\label{thxC}
Let $\mathfrak{B}$ be a primitive collection of ideals in~$\OK$. Suppose $\mathfrak{S}_1\subset \mathfrak{S}_2\subset\dots\nearrow\mathfrak{B}$ is a saturated filtration of $\mathfrak{B}$ by finite collections of ideals in $\OK$. Assume that $\eta$ is an $\OK$-Toeplitz array. Then $(\bigcap_{\mathfrak{b}\in\mathfrak{S}_i}\mathfrak{b})_{i\geq1}$ is a periodic structure of $\eta$.
\end{Thx}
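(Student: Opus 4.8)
The plan is to set $\mathfrak{s}_i:=\bigcap_{\mathfrak{b}\in\mathfrak{S}_i}\mathfrak{b}$ and to verify the defining properties of a periodic structure for $(\mathfrak{s}_i)_{i\ge 1}$, the core being that the periodic parts $\Per(\eta,\mathfrak{s}_i)$ cover $\OK$. First I would record the elementary facts. Since $\mathfrak{S}_i\subseteq\mathfrak{S}_{i+1}$, the $\mathfrak{s}_i$ form a decreasing chain of non-zero ideals ($\OK$ is Dedekind, so a finite intersection of non-zero ideals is non-zero), with $\bigcap_i\mathfrak{s}_i=\bigcap_{\mathfrak{b}\in\mathfrak{B}}\mathfrak{b}$; this is $\{0\}$ when $\mathfrak{B}$ is infinite (a non-zero ideal has only finitely many divisors), and when $\mathfrak{B}$ is finite the array $\eta$ is periodic and $(\mathfrak{s}_i)$ is eventually constant — a degenerate but legitimate periodic structure — so from now on assume $\mathfrak{B}$ infinite. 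Next, the finite truncation $\eta_i:=\raz_{\cf_{\mathfrak{S}_i}}$, where $\cf_{\mathfrak{S}_i}:=\OK\setminus\bigcup_{\mathfrak{b}\in\mathfrak{S}_i}\mathfrak{b}$, is $\mathfrak{s}_i$-periodic (every $\mathfrak{b}\in\mathfrak{S}_i$ contains $\mathfrak{s}_i$, hence is a union of $\mathfrak{s}_i$-cosets, hence so are $\cm_{\mathfrak{S}_i}$ and its complement), it dominates $\eta$, and $\eta_i\searrow\eta$ because $\bigcup_i\mathfrak{S}_i=\mathfrak{B}$. Consequently, for $g\in\OK$: if $\eta(g)=0$, say $g\in\mathfrak{b}$ with $\mathfrak{b}\in\mathfrak{B}$, then as soon as $\mathfrak{b}\in\mathfrak{S}_i$ we get $g+\mathfrak{s}_i\subseteq g+\mathfrak{b}=\mathfrak{b}\subseteq\cm_\mathfrak{B}$, so $g\in\Per(\eta,\mathfrak{s}_i)$; and if $\eta(g)=1$ then, because $g\in\cf_{\mathfrak{S}_i}$ and $\eta_i$ is $\mathfrak{s}_i$-periodic, $g\in\Per(\eta,\mathfrak{s}_i)$ holds precisely when the coset $g+\mathfrak{s}_i$ misses every $\mathfrak{b}\in\mathfrak{B}\setminus\mathfrak{S}_i$.

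The main step is to show that every $g\in\cf_\mathfrak{B}$ is $\mathfrak{s}_i$-periodic for some $i$; equivalently, by the last remark, that failure at all levels contradicts Toeplitzness. Since $\eta$ is Toeplitz, Theorem~\ref{dwyz} gives $\mathfrak{D}=\emptyset$, and I would argue by contradiction. If, for every $i$, the coset $g+\mathfrak{s}_i$ meets some $\mathfrak{b}_i\in\mathfrak{B}\setminus\mathfrak{S}_i$, then $g\in\mathfrak{b}_i+\mathfrak{s}_i=\gcd(\mathfrak{b}_i,\mathfrak{s}_i)=:\mathfrak{d}_i$, so $\mathfrak{d}_i\mid(g)$; as $(g)\ne 0$ has finitely many divisors, after restricting to an infinite set of indices we may assume $\mathfrak{d}_i\equiv\mathfrak{d}$. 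Write $\mathfrak{b}_i=\mathfrak{d}\mathfrak{c}_i$ (legitimate since $\mathfrak{d}=\gcd(\mathfrak{b}_i,\mathfrak{s}_i)$ divides $\mathfrak{b}_i$); the identity $\gcd(\mathfrak{b}_i,\mathfrak{s}_i)=\mathfrak{d}$ says exactly that $\mathfrak{c}_i$ is coprime with $\mathfrak{s}_i\mathfrak{d}^{-1}$. Note also $\mathfrak{c}_i\ne\OK$ (otherwise $g\in\mathfrak{d}=\mathfrak{b}_i$), and each fixed ideal of $\mathfrak{B}$ occurs as some $\mathfrak{b}_i$ only finitely often (it eventually lies in $\mathfrak{S}_i$), so infinitely many $\mathfrak{c}_i$ occur. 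A greedy selection now yields the contradiction: given pairwise coprime $\mathfrak{c}_{i_1},\dots,\mathfrak{c}_{i_k}$, choose an index $i_{k+1}>i_k$ large enough that $\mathfrak{b}_{i_1},\dots,\mathfrak{b}_{i_k}\in\mathfrak{S}_{i_{k+1}}$ (possible as $\bigcup_j\mathfrak{S}_j=\mathfrak{B}$); then each $\mathfrak{c}_{i_j}$ divides $\mathfrak{s}_{i_{k+1}}\mathfrak{d}^{-1}$, hence so does $\mathfrak{c}_{i_1}\cdots\mathfrak{c}_{i_k}$ (being pairwise coprime, their product equals their lcm), hence $\mathfrak{c}_{i_{k+1}}$, coprime with $\mathfrak{s}_{i_{k+1}}\mathfrak{d}^{-1}$, is coprime with every $\mathfrak{c}_{i_j}$, $j\le k$. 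The resulting infinite pairwise coprime collection $\cC=\{\mathfrak{c}_{i_k}:k\ge 1\}$ satisfies $\mathfrak{d}\cC\subseteq\mathfrak{B}$, whence $\mathfrak{d}\in\mathfrak{D}$, contradicting $\mathfrak{D}=\emptyset$. This establishes $\bigcup_i\Per(\eta,\mathfrak{s}_i)=\OK$. (One could instead try to deduce this from Proposition~\ref{propA} together with $\mathfrak{D}=\emptyset$, once it is checked that $(\mathfrak{s}_i)$ qualifies as an ideal filtration; the self-contained argument above sidesteps that point and its ingredients reappear below.)

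Finally I would verify the remaining requirements of a periodic structure in the paper's sense — essentially, that for each $i$ the array $\eta_i$ really is the $\mathfrak{s}_i$-skeleton of $\eta$ and that $\mathfrak{s}_i$ is exactly its group of periods, so that the $\mathfrak{s}_i$ are not ``artificially fine''. This is where the hypothesis that the filtration $\mathfrak{S}_1\subset\mathfrak{S}_2\subset\cdots$ is \emph{saturated} is used: saturation guarantees, on the one hand, that a position which is $\mathfrak{s}_i$-periodic with value $0$ already lies in $\cm_{\mathfrak{S}_i}$ (so the skeleton coincides with $\eta_i$), and, on the other hand, that $\mathfrak{S}_i$ is large enough that no ideal strictly containing $\mathfrak{s}_i$ periodizes $\eta_i$; for the latter I expect to combine the Chinese Remainder Theorem over the finitely many ideals of $\mathfrak{S}_i$ with the definition of ``saturated'', isolating it as a short lemma. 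The step I expect to be the main obstacle is precisely this matching of the explicit ideal $\bigcap_{\mathfrak{b}\in\mathfrak{S}_i}\mathfrak{b}$ with the abstract notion of essential period at level $i$: the exhaustion $\bigcup_i\Per(\eta,\mathfrak{s}_i)=\OK$ is, as sketched, a fairly mechanical consequence of $\mathfrak{D}=\emptyset$ and $\mathfrak{S}_i\nearrow\mathfrak{B}$, whereas the ``tightness'' of each $\mathfrak{s}_i$ is exactly what the word ``saturated'' was placed in the statement to secure.
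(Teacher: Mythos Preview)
Your covering argument is correct and is, in essence, the paper's own proof of the first inclusion in Proposition~\ref{propA}; the paper simply cites that proposition (together with the Toeplitz hypothesis, which forces $\cm_\mathfrak{D}\cap\cf_\mathfrak{B}=\emptyset$) to obtain $\bigcup_i\Per(\eta,\mathfrak{s}_i)=\OK$.

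The essential-period part, however, has a genuine gap. Your claim ``a position which is $\mathfrak{s}_i$-periodic with value $0$ already lies in $\cm_{\mathfrak{S}_i}$'' is \emph{not} a consequence of saturation and the Chinese Remainder Theorem. Unfolded, it says: if $g+\mathfrak{s}_i\subseteq\cm_\mathfrak{B}$ then $g\in\mathfrak{b}$ for some $\mathfrak{b}\in\mathfrak{B}$ with $\mathfrak{b}\mid\mathfrak{s}_i$. CRT controls finitely many congruences, but here the coset $g+\mathfrak{s}_i$ is covered by the \emph{infinite} union $\bigcup_{\mathfrak{b}\in\mathfrak{B}}\mathfrak{b}$, and a priori this covering could be achieved entirely by ideals $\mathfrak{b}\nmid\mathfrak{s}_i$. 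What is needed is precisely Theorem~\ref{Thm_SK}: under $\mathfrak{D}=\emptyset$, from $g+\mathfrak{s}_i\subseteq\cm_\mathfrak{B}$ one gets $(g)+\mathfrak{s}_i\subseteq\mathfrak{b}$ for a single $\mathfrak{b}\in\mathfrak{B}$; then $\mathfrak{s}_i\subseteq\mathfrak{b}$ and saturation give $\mathfrak{b}\in\mathfrak{S}_i$. So your route can be completed, but only by invoking Theorem~\ref{Thm_SK} --- the very ingredient your sketch omits.

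The paper's argument is in fact shorter and sidesteps the skeleton identification entirely. One does not need to know $\Per(\eta,\mathfrak{s}_i,0)$ exactly; it suffices that each $\mathfrak{b}\in\mathfrak{S}_i$ sits inside $\Per(\eta,\mathfrak{s}_i,0)$. If $\Per(\eta,\mathfrak{s}_i)\subseteq\Per(\eta,\mathfrak{s})$ then $\mathfrak{b}+\mathfrak{s}\subseteq\cm_\mathfrak{B}$; applying Theorem~\ref{Thm_SK} with $\mathfrak{a}=\mathfrak{b}$ and arbitrary $r\in\mathfrak{s}$ yields $(r)+\mathfrak{b}\subseteq\mathfrak{b}'$ for some $\mathfrak{b}'\in\mathfrak{B}$, whence $\mathfrak{b}'=\mathfrak{b}$ by primitivity, so $r\in\mathfrak{b}$. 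Thus $\mathfrak{s}\subseteq\mathfrak{b}$ for every $\mathfrak{b}\in\mathfrak{S}_i$, i.e.\ $\mathfrak{s}\subseteq\mathfrak{s}_i$. Note that this step does \emph{not} use saturation at all; saturation in the paper enters only through Proposition~\ref{propA} for the covering, contrary to where you located it.
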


The last key ingredient of this paper is a result of arithmetic nature, on cosets of ideals (``arithmetic progressions'') contained in the set of multiples  in case when $\eta$ is Toeplitz.
\begin{Thx}[see Proposition \ref{druggi2Toeplitz} and the preceeding comments]\label{Thm_SK}
Let $\mathfrak{B}$ be a primitive collection of ideals in $\OK$ such that $\mathfrak{D}=\emptyset$. Let $r\in\OK$ and  let $\mathfrak{a}\subset\OK$ be an ideal. If
\[
r+\mathfrak{a}\subseteq\cm_\mathfrak{B},
\]
then for some $\mathfrak{b}\in\mathfrak{B}$
\[
\gcd((r),\mathfrak{a})=(r)+\mathfrak{a}\subseteq\mathfrak{b}.
\]
\end{Thx}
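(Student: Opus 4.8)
The plan is to argue by contradiction, manufacturing straight from $r+\mathfrak a\subseteq\cm_{\mathfrak B}$ an ideal $\mathfrak d_0$ and an infinite pairwise coprime collection $\cC$ of ideals with $\mathfrak d_0\cC\subseteq\mathfrak B$ --- impossible since $\mathfrak D=\emptyset$. Suppose, then, that no $\mathfrak b\in\mathfrak B$ contains $\mathfrak d:=(r)+\mathfrak a=\gcd((r),\mathfrak a)$. I would first record two elementary facts: $\mathfrak a\subseteq\mathfrak d$, and $v_{\mathfrak p}(\mathfrak d)=0$ whenever $\mathfrak p\nmid\mathfrak a$, so $\mathfrak d$ is supported on the finitely many prime divisors of $\mathfrak a$.

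The decisive step is a normalisation of the coset via the Chinese Remainder Theorem in $\OK$. For $\mathfrak p\mid\mathfrak a$ with $v_{\mathfrak p}(r)<v_{\mathfrak p}(\mathfrak a)$ every $s\in\mathfrak a$ already has $v_{\mathfrak p}(r+s)=v_{\mathfrak p}(r)=v_{\mathfrak p}(\mathfrak d)$; for $\mathfrak p\mid\mathfrak a$ with $v_{\mathfrak p}(r)\geq v_{\mathfrak p}(\mathfrak a)$ one can prescribe the class of $s$ modulo $\mathfrak p^{v_{\mathfrak p}(\mathfrak a)+1}$ so that $v_{\mathfrak p}(r+s)=v_{\mathfrak p}(\mathfrak a)=v_{\mathfrak p}(\mathfrak d)$ (here $N(\mathfrak p)\geq2$ is used). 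Combining these over all primes of $\mathfrak a$ produces $s_0\in\mathfrak a$ and a finite-index subideal $\mathfrak a'\subseteq\mathfrak a$ with $s_0+\mathfrak a'\subseteq r+\mathfrak a$ such that for every $s\in s_0+\mathfrak a'$
\[
(r+s)=\mathfrak d\cdot\mathfrak m_s,\qquad \gcd(\mathfrak m_s,\mathfrak a)=\OK .
\]
So on this subcoset $(r+s)$ is $\mathfrak d$ times an ideal coprime to $\mathfrak a$, hence to $\mathfrak d$ --- it is this rigidity that makes the argument work.

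Then I would construct the family. Choose $s_1,s_2,\dots\in s_0+\mathfrak a'$ recursively: given $s_1,\dots,s_k$, the Chinese Remainder Theorem again provides $s_{k+1}\in s_0+\mathfrak a'$ with $s_{k+1}-s_i$ coprime to $\mathfrak m_{s_i}$ for $i\leq k$ (finitely many conditions, at primes coprime to $\mathfrak a'$ since $\mathfrak m_{s_i}$ is coprime to $\mathfrak a$). As $s_{k+1}-s_i\in\mathfrak a'\subseteq\mathfrak d$, one gets $\gcd\bigl((r+s_i),(r+s_{k+1})\bigr)=\gcd\bigl((r+s_i),(s_{k+1}-s_i)\bigr)=\gcd\bigl(\mathfrak d\mathfrak m_{s_i},(s_{k+1}-s_i)\bigr)=\mathfrak d$, hence $\gcd(\mathfrak m_{s_i},\mathfrak m_{s_{k+1}})=\OK$; so $(\mathfrak m_{s_i})_{i\geq1}$ is infinite and pairwise coprime. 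For each $i$, since $r+s_i\in\cm_{\mathfrak B}$ pick $\mathfrak b_i\in\mathfrak B$ with $\mathfrak b_i\mid(r+s_i)=\mathfrak d\mathfrak m_{s_i}$ and split $\mathfrak b_i=\mathfrak g_i\mathfrak h_i$ with $\mathfrak g_i\mid\mathfrak d$, $\mathfrak h_i\mid\mathfrak m_{s_i}$ (the coprimality of $\mathfrak d$ and $\mathfrak m_{s_i}$ makes this unambiguous); the contradiction hypothesis $\mathfrak b_i\not\supseteq\mathfrak d$ forces $\mathfrak h_i\neq\OK$. The $\mathfrak g_i$ lie among the finitely many divisors of $\mathfrak d$, so one value $\mathfrak d_0$ recurs infinitely often, and along that subsequence the $\mathfrak h_i$ --- dividing the pairwise coprime $\mathfrak m_{s_i}$ --- are pairwise coprime, are $\neq\OK$ (hence distinct), and satisfy $\mathfrak d_0\{\mathfrak h_i\}=\{\mathfrak b_i\}\subseteq\mathfrak B$. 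Thus $\mathfrak d_0\in\mathfrak D$, contradicting $\mathfrak D=\emptyset$, so some $\mathfrak b\in\mathfrak B$ contains $\gcd((r),\mathfrak a)=(r)+\mathfrak a$. The step I expect to require the most care is the double use of the Chinese Remainder Theorem --- first to pin $(r+s)$ down to $\mathfrak d\mathfrak m_s$ with $\mathfrak m_s$ coprime to $\mathfrak a$ on an entire subcoset, and then to keep the $\mathfrak m_{s_i}$ pairwise coprime --- after which the pigeonhole over divisors of $\mathfrak d$ and the very definition of $\mathfrak D$ conclude. (Notably the scheme uses only $\mathfrak D=\emptyset$, not that $\eta$ is Toeplitz, and avoids reducing to principal $\mathfrak a$.)
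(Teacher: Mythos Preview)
Your argument is correct and follows the same core idea as the paper's proof: manufacture elements of the coset $r+\mathfrak a$ whose generated principal ideals are exactly $\mathfrak d$ times ideals that are pairwise coprime (and coprime to $\mathfrak d$), then split each witnessing $\mathfrak b_i\in\mathfrak B$ accordingly and use pigeonhole on the finitely many divisors of $\mathfrak d$ together with $\mathfrak D=\emptyset$. One cosmetic slip: the phrase ``$s_0+\mathfrak a'\subseteq r+\mathfrak a$'' should read $s_0+\mathfrak a'\subseteq\mathfrak a$ (equivalently $r+s_0+\mathfrak a'\subseteq r+\mathfrak a$); the rest of your argument uses it in this sense and is unaffected.

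Where you differ from the paper is in the \emph{packaging}. The paper picks generators $g_1,\dots,g_t$ of $\mathfrak a$ and proves the key construction (Lemma~\ref{lem5_multi}) by induction on $t$, with the principal-ideal case (Lemmas~\ref{lem3_multi} and~\ref{lem4_multi}) as base and inductive engine; only then does it run the pigeonhole. You bypass the reduction to generators and the induction entirely: a single CRT pass over the prime divisors of $\mathfrak a$ pins down a subcoset $s_0+\mathfrak a'$ on which $(r+s)=\mathfrak d\,\mathfrak m_s$ with $\mathfrak m_s$ coprime to $\mathfrak a$, and a second CRT/avoidance pass (in the spirit of Lemma~\ref{lem2_multi}) makes the $\mathfrak m_{s_i}$ pairwise coprime. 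This is more streamlined---it treats $\mathfrak a$ as an ideal rather than as a module with chosen generators---and your endgame (split $\mathfrak b_i=\mathfrak g_i\mathfrak h_i$, pigeonhole on $\mathfrak g_i$, contradict $\mathfrak D=\emptyset$) is essentially the contrapositive of the paper's ``some $\mathfrak b$ recurs, hence divides $\mathfrak d$'' step. Both approaches use only $\mathfrak D=\emptyset$; neither needs that $\eta$ is Toeplitz.
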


\paragraph{Structure of the paper}
The relation between the results stated above and the location of their proofs in the paper is as follows (the arrow from ``result 1'' to ``result 2'' means that ``result 1'' is used in the proof of ``result 2''):
\begin{center}
\begin{tikzpicture}[x=0.75pt,y=0.75pt,yscale=-1,xscale=1]

\draw  [dash pattern={on 4.5pt off 4.5pt}] (58,246.5) .. controls (58,236.34) and (66.24,228.1) .. (76.4,228.1) -- (189.6,228.1) .. controls (199.76,228.1) and (208,236.34) .. (208,246.5) -- (208,301.7) .. controls (208,311.86) and (199.76,320.1) .. (189.6,320.1) -- (76.4,320.1) .. controls (66.24,320.1) and (58,311.86) .. (58,301.7) -- cycle ;
\draw  [dash pattern={on 4.5pt off 4.5pt}] (135,188.5) .. controls (135,184.41) and (138.31,181.1) .. (142.4,181.1) -- (289.6,181.1) .. controls (293.69,181.1) and (297,184.41) .. (297,188.5) -- (297,210.7) .. controls (297,214.79) and (293.69,218.1) .. (289.6,218.1) -- (142.4,218.1) .. controls (138.31,218.1) and (135,214.79) .. (135,210.7) -- cycle ;
\draw  [dash pattern={on 4.5pt off 4.5pt}] (47,188.5) .. controls (47,184.41) and (50.31,181.1) .. (54.4,181.1) -- (114.6,181.1) .. controls (118.69,181.1) and (122,184.41) .. (122,188.5) -- (122,210.7) .. controls (122,214.79) and (118.69,218.1) .. (114.6,218.1) -- (54.4,218.1) .. controls (50.31,218.1) and (47,214.79) .. (47,210.7) -- cycle ;

\draw (87,200) node    {$Thm.\ \ref{Thm_SK}$};
\draw (87,298.5) node    {$Thm.\ \ref{equivalent}$};
\draw (176,200) node    {$Prop.\ \ref{propA}$};
\draw (257,200) node    {$Thm.\ \ref{thxC}$};
\draw (130.75,240) node    {$Thm.\ \ref{dwyz}$};
\draw (176,298.5) node    {$Thm.\ \ref{minimal2}$};
\draw (175,163.1) node [anchor=north west][inner sep=0.75pt]   [align=left] {Section 4.2};
\draw (94,322.1) node [anchor=north west][inner sep=0.75pt]   [align=left] {Section 4.3};
\draw (48,163.1) node [anchor=north west][inner sep=0.75pt]   [align=left] {Section 4.1};
\draw    (209,200) -- (224,200) ;
\draw [shift={(226,200)}, rotate = 180] [color={rgb, 255:red, 0; green, 0; blue, 0 }  ][line width=0.75]    (10.93,-3.29) .. controls (6.95,-1.4) and (3.31,-0.3) .. (0,0) .. controls (3.31,0.3) and (6.95,1.4) .. (10.93,3.29)   ;
\draw    (87,212) -- (87,284.5) ;
\draw [shift={(87,286.5)}, rotate = 270] [color={rgb, 255:red, 0; green, 0; blue, 0 }  ][line width=0.75]    (10.93,-3.29) .. controls (6.95,-1.4) and (3.31,-0.3) .. (0,0) .. controls (3.31,0.3) and (6.95,1.4) .. (10.93,3.29)   ;
\draw    (116.5,298.5) -- (147.5,298.5) ;
\draw [shift={(114.5,298.5)}, rotate = 0] [color={rgb, 255:red, 0; green, 0; blue, 0 }  ][line width=0.75]    (10.93,-3.29) .. controls (6.95,-1.4) and (3.31,-0.3) .. (0,0) .. controls (3.31,0.3) and (6.95,1.4) .. (10.93,3.29)   ;
\draw    (176,212) -- (176,284.5) ;
\draw [shift={(176,286.5)}, rotate = 270] [color={rgb, 255:red, 0; green, 0; blue, 0 }  ][line width=0.75]    (10.93,-3.29) .. controls (6.95,-1.4) and (3.31,-0.3) .. (0,0) .. controls (3.31,0.3) and (6.95,1.4) .. (10.93,3.29)   ;
\draw    (140.03,252) -- (165.49,284.92) ;
\draw [shift={(166.72,286.5)}, rotate = 232.28] [color={rgb, 255:red, 0; green, 0; blue, 0 }  ][line width=0.75]    (10.93,-3.29) .. controls (6.95,-1.4) and (3.31,-0.3) .. (0,0) .. controls (3.31,0.3) and (6.95,1.4) .. (10.93,3.29)   ;

\end{tikzpicture}

\end{center}

In Section~\ref{se2} we introduce and recall the basic objects from the following areas: number fields and ideals, $\mathfrak{B}$-free integers on number fields, Toeplitz sequences. A part of the subsection on Toeplitz sequences related to the notion of essential periods is new and may be of an independent interest. Then, in Section~\ref{se3}, we discuss one dimensional $\sB$-free systems. Even though the results included there are a special case of their multidimensional counterparts covered in Section~\ref{se4}, we decided to keep it this way, as some readers may be interested in one-dimensional case only. Moreover, the multidimensional objects are much more technical and Section~\ref{se4} may be quite hard to digest otherwise. The structure of Section~\ref{se3} and Section~\ref{se4} is similar, see the diagram above and the diagram in the beginning of Section~\ref{se3}. Finally, in Section~\ref{se5}, we discuss the setting of sets of multiples corresponding to lattices and provide a counterexample to a lattice version of Theorem~\ref{equivalent}.

\section{Main objects}\label{se2}
\subsection{Number fields and ideals}\label{S:21}

Let $K$ be an algebraic number field with degree $d=[K:\Q]$, with the integer ring $\OK$. As in every Dedekind domain, all proper non-zero ideals in $\OK$ factor (uniquely, up to the order) into a product of prime ideals.
We will denote ideals in $\mathcal{O}_K$ by $\mathfrak{a},\mathfrak{b},\mathfrak{c}\dots$  We have
$$
\mathfrak{a}+\mathfrak{b}=\{a+b : a\in\mathfrak{a},b\in\mathfrak{b}\},\ \mathfrak{a}\mathfrak{b}=\{a_1b_1+\dots+a_kb_k : a_i\in \mathfrak{a},b_i\in\mathfrak{b},1\leq i\leq k\}.
$$
It is natural to set
\[
\gcd(\mathfrak{a},\mathfrak{b}):=\mathfrak{a}+\mathfrak{b} \text{ and }\lcm(\mathfrak{a},\mathfrak{b}):=\mathfrak{a}\cap\mathfrak{b}
\]
and speak of the \emph{greatest common divisor} and the \emph{least common multiple}, respectively. Note that the words ``greatest'' and ``least'' are a bit misleading here: $\gcd(\mathfrak{a},\mathfrak{b})$ is the \textbf{smallest} ideal containing both, $\mathfrak{a}$ and $\mathfrak{b}$, while $\lcm(\mathfrak{a},\mathfrak{b})$ is the \textbf{largest} ideal contained both in $\mathfrak{a}$ and $\mathfrak{b}$.

Proper ideals $\mathfrak{a},\mathfrak{b}$ are said to be \emph{coprime} whenever $\mathfrak{a}+\mathfrak{b}=\mathcal{O}_K$. Equivalently, $\mathfrak{a},\mathfrak{b}$ do not share factors: there are no non-trivial ideals $\mathfrak{a}',\mathfrak{b}',\mathfrak{c}$ such that $\mathfrak{a}=\mathfrak{c}\mathfrak{a}'$ and $\mathfrak{b}=\mathfrak{c}\mathfrak{b}'$. If $\mathfrak{a}$ and $\mathfrak{b}$ are coprime then $\lcm(\mathfrak{a},\mathfrak{b})=\mathfrak{a}\cap\mathfrak{b}=\mathfrak{a}\mathfrak{b}$.

The \emph{algebraic norm} of an ideal $\mathfrak{c}\neq \{0\}$ is defined as $N(\mathfrak{c}):=|\mathcal{O}_K/\mathfrak{c}|=[\mathcal{O}_K:\mathfrak{c}]$.

Finally, recall that there is a natural isomorphism from $\OK$ to a lattice in $\R^d$, called the \emph{Minkowski embedding} (see e.g.\ Chapter I, \S 5 in \cite{MR1697859}). We refer the reader to  \cite{MR0195803,MR1697859} for more background information on algebraic number theory.

\subsection{$\mathfrak{B}$-free integers in number fields}
Let $\mathfrak{B}$ be a collection of non-zero ideals in $\mathcal{O}_K$.
\begin{Def}
We say that
\begin{enumerate}[(i)]
\item ideal $\mathfrak{c}$ is \emph{$\mathfrak{B}$-free}, whenever $\mathfrak{c}\not\subseteq \mathfrak{b}$ for all $\mathfrak{b}\in\mathfrak{B}$ (equivalently, $\mathfrak{c}$ cannot be written as a product of $\mathfrak{b}$ with another ideal);
\item
integer $g\in\mathcal{O}_K$ is \emph{$\mathfrak{B}$-free} if the principal ideal $(g):=g\mathcal{O}_K$ is $\mathfrak{B}$-free.
\end{enumerate}
We denote the set of $\mathfrak{B}$-free integers in $\mathcal{O}_K$ by $\mathcal{F}_\mathfrak{B}$ and its complement by $\cM_\mathfrak{B}$.
\end{Def}
Since for any ideal $\mathfrak{b}\subseteq \OK$ and $g\in \mathcal{O}_K$ we have $g\not \in \mathfrak{b} \iff (g) \not\subseteq \mathfrak{b}$, it follows immediately that
\begin{equation*}\label{Z:1}
\mathcal{F}_\mathfrak{B}=\mathcal{O}_K\setminus \bigcup_{\mathfrak{b}\in\mathfrak{B}}\mathfrak{b}.
\end{equation*}
The characteristic function of $\mathcal{F}_\mathfrak{B}$ will be denoted by $\eta\in \{0,1\}^{\OK}$:
\begin{equation*}\label{Z:2}
\eta(g)=\begin{cases}
1& \text{if }g \text{ is }\mathfrak{B}\text{-free},\\
0& \text{otherwise}.
\end{cases}
\end{equation*}
For a finite subset $\mathfrak{S}\subset\mathfrak{B}$ define
\begin{equation*}
\ell_\mathfrak{S}=\bigcap_{\mathfrak{b}\in\mathfrak{S}}\mathfrak{b}=\lcm(\mathfrak{S})\quad\text{and}\quad
\cC_\mathfrak{S}=\{\mathfrak{b}+\ell_\mathfrak{S}:\mathfrak{b}\in\mathfrak{B}\}=\{\gcd(\mathfrak{b},\lcm(\mathfrak{S})):\mathfrak{b}\in\mathfrak{B}\}.
\end{equation*}
(cf.~\eqref{wym1} below).
We say that $\mathfrak{B}$ is \emph{primitive} whenever for any $\mathfrak{b},\mathfrak{b}'\in\mathfrak{B}$ if $\mathfrak{b}\subset\mathfrak{b}'$ then $\mathfrak{b}=\mathfrak{b}'$. For any $\mathfrak{B}$, there exists a unique primitive subset $\mathfrak{B}^{prim}\subset\mathfrak{B}$ such that $\mathcal{M}_{\mathfrak{B}^{prim}}=\mathcal{M}_{\mathfrak{B}}$:
\[
\mathfrak{B}^{prim}=\mathfrak{B}\setminus\{\mathfrak{b}\in\mathfrak{B} : \text{there exists }\mathfrak{b}'\in\mathfrak{B}\text{ such that }\mathfrak{b}\subsetneq\mathfrak{b}'\}.
\]
Without loss of generality we can therefore assume that $\mathfrak{B}$ is primitive. Then $\mathfrak{S}$ is a proper subset of $\cC_\mathfrak{S}$.
Let $\mathfrak{S}\subseteq \mathfrak{S}' \subset\mathfrak{B}$. Then
\begin{equation*}\label{eq:A_S-inclusions}
\mathfrak{S}\subseteq\mathfrak{S}'\subseteq\cC_{\mathfrak{S}'}\subseteq\cM_{\cC_\mathfrak{S}}\text{\quad so that\quad}
\cM_\mathfrak{S}\subseteq\cM_{\mathfrak{S}'}\subseteq\cM_{\cC_{\mathfrak{S}'}}\subseteq\cM_{\cC_\mathfrak{S}}.
\end{equation*}
A finite set $\mathfrak{S}\subset\mathfrak{B}$ is \emph{saturated}, if $\cC_\mathfrak{S}\cap\mathfrak{B}=\mathfrak{S}$. In other words, the only elements of $\mathfrak{B}$ that divide $\lcm(\mathfrak{S})$ are precisely the elements of $\mathfrak{S}$.
 Given any $\mathfrak{S}\subset \mathfrak{B}$, the set $\mathfrak{S}^{sat}:=\cC_\mathfrak{S}\cap\mathfrak{B}$ is saturated. Moreover, if $\mathfrak{S}_1\subset \mathfrak{S}_2\subset\dots \nearrow \mathfrak{B}$ is a filtration, then, for some subsequence $(n_k)$,
\[
\mathfrak{S}_{n_1}^{sat}\subset \mathfrak{S}_{n_2}^{sat} \dots\subset\nearrow \mathfrak{B}
\]
is a saturated filtration of $\mathfrak{B}$, i.e.\ a filtration consisting of saturated sets (see~\cite{DKK}, Section~3 for the details in dimension one).

\paragraph{Dynamical system outputting $\mathfrak{B}$-free integers}\label{szift}
Given an abelian group $\mathbb{G}$ and a finite alphabet $\mathcal{A}$, there is a natural action on $\mathcal{A}^\mathbb{G}$ by $\mathbb{G}$ by commuting translations: $S_g({(x_{g'})}_{h\in \mathbb{G}})={(x_{g'+g})}_{g'\in \mathbb{G}}\text{ for }g,g'\in \mathbb{G}.$
In particular, on $\{0,1\}^{\OK}$, we have
\begin{equation*}\label{eq:translations}
S_g({(x_{g'})}_{g'\in \mathcal{O}_K})={(x_{g'+g})}_{g'\in \mathcal{O}_K}, g\in\OK.
\end{equation*}
To introduce a metric on $\{0,1\}^{\OK}$, recall that $(F_n)_{n\geq 1}\subseteq \OK$ is a \emph{F\o lner sequence} if for any $g\in \OK$,
\[
\lim_{n\to \infty}\frac{|(F_n+g)\cap F_n|}{|F_n|}=1.
\]
If $\bigcup\limits_{n\geq 1}F_n=\OK$ and~$F_n\subseteq F_{n+1}$ for any $n\geq 1$, then $(F_n)_{n\geq 1}$ is called \emph{nested} \cite{MR64062, MR79220}.

Fix a nested F\o lner sequence $(F_n)_{n\geq1}\subseteq\OK$ (we can assume $F_1=\{1\}$) and consider the metric given by the formula
\begin{equation}\label{metryka}
d(x,y)=\begin{cases}
&1, \text{ if } x_1\neq y_1, \\
&2^{-\max\{n\geq1 ;\ x_g=y_g \text{ for any } g\in F_n \} } \text{ otherwise.}
\end{cases}
\end{equation}
The above metric induces the product topology on $\{0,1\}^{\OK}$.
If $X\subseteq \{0,1\}^{\OK}$ is closed and ${(S_g)}_{g\in\OK}$-invariant, we say that $X$ is a \emph{subshift}. We will denote by $X_\eta\subseteq \{0,1\}^{\mathcal{O}_K}$ the smallest subshift containing $\eta$. A subshift $X$ is called \emph{minimal} if for any $x\in X$ its orbit $\{S_gx: g\in\OK\}$ is dense in $X$. We say that $x\in \{0,1\}^{\OK}$ is \emph{minimal}, if its orbit closure is a minimal subshift. Let us recall one more notion, in a sense complementary to the minimality in the case of subshifts. We say that a pair $(x,y)\in X\times X$ is \emph{proximal}, if \begin{equation}\label{prox}
\liminf_{g\to\infty}d(S_gx,S_gy)=0.
\end{equation}
By $g_n\to\infty$ as $n\to\infty$ we mean that for any finite $A\subseteq\OK$, there exists $n_0\geq1$ such that $g_n\not\in A$ for any $n>n_0$. If any pair of points in~$X$ is proximal, then subshift is called {\em proximal}. Usually the opposite notion to proximality is distality (a subshift $X$ is called \emph{distal} if \eqref{prox} is not satisfied for any $x\neq y\in X$). But for any subshift $X$ its distality implies its equicontinuity, so the finiteness of $X$, see \cite[Theorem 2]{MR101283}.

Consider
\begin{equation*}
H_\mathfrak{B}=\prod_{\mathfrak{b}\in\mathfrak{B}}\mathcal{O}_K/{\mathfrak{b}}
\end{equation*}
with the coordinatewise addition. It is the product of finite groups $\OK/{\mathfrak{b}}$ and the Haar measure $\PP_\mathfrak{B}$ on $H_\mathfrak{B}$ is the product of the corresponding counting measures. Moreover, there is a natural $\mathcal{O}_K$-action on $H_\mathfrak{B}$ by translations:
\begin{equation*}\label{eq:rota}
T_g((h_{\mathfrak{b}})_{\mathfrak{b}\in\mathfrak{B}})=(h_\mathfrak{b}+g)_{\mathfrak{b}\in\mathfrak{B}}, g\in\OK.
\end{equation*}
Let
\begin{equation}\label{eq:grupa}
H=\overline{\{T_g(\underline{0}):g\in\OK\}},
\end{equation}
where $\underline{0}$ is the neutral element of $H_\mathfrak{B}$ and let $\PP$ be the Haar measure on $H$.

Let $\varphi\colon H\to \{0,1\}^{\mathcal{O}_K}$ be defined as
\begin{equation*}\label{eq:wzor}
\varphi(h)(g)=\begin{cases}
1,& \text{ if }h_{\mathfrak{b}}+g\not\equiv 0\bmod \mathfrak{b} \text{ for each }\mathfrak{b}\in\mathfrak{B},\\
0,&\text{ otherwise},
\end{cases}
\end{equation*}
where $h=(h_\mathfrak{b})_{\mathfrak{b}\in\mathfrak{B}}$. Notice that $\varphi(\underline{0})=\eta=\raz_{\mathcal{F}_\mathfrak{B}}$. Moreover, $\varphi={(\raz_C\circ T_g)}_{g\in\OK}$, where
\begin{equation*}\label{eq:ce}
C=\{g\in H : h_\mathfrak{b}\not\equiv 0 \bmod \mathfrak{b}\text{ for each }\mathfrak{b}\in\mathfrak{B}\}.
\end{equation*}
In other words, $\varphi$ is the coding of orbits of points under ${(T_g)}_{g\in\OK}$ with respect to the partition $\{C,G\setminus C\}$ of $G$.

Finally, let $\nu_\eta:=\varphi_\ast(\PP)$ be the pushforward of $\PP$ under $\varphi$.
We will call $\nu_\eta$ the \emph{Mirsky measure}.
\begin{Remark}
All above objects are definined for $\sB$-free numbers by putting $\OK=\Z$ and $\mathfrak{b}=b\Z$ for any $b\in\sB\subset\N$.
\end{Remark}
\begin{Remark}
In the case of $\{p^k : p\in\mathcal{P}\}$-free numbers, in particular in the square-free case, $\nu_\eta$  was considered by Mirsky~\cite{MR21566,MR28334} (cf.\ also~\cite{MR30561}) who studied the frequencies of blocks on $\eta$.
\end{Remark}
\subsection{Toeplitz sequences}
\subsubsection{Dimension one}
Let $\mathcal{A}$ be a finite alphabet. We say that $\bm{x}=(\bm{x}_n)_{n\in\Z}\in\mathcal{A}^\Z$ is a \emph{Toeplitz sequence}~\cite{MR255766}, whenever for any $n\in\Z$ there exists $s_n\in\N$ such that $x_{n+k\cdot s_n}=x_n$ for every $k\in\Z$. In other words, any symbol appears on $\bm{x}$ with some period. The orbit closure of any Toeplitz sequence is called a \emph{Toeplitz subshift}. Given $\bm{x}\in \mathcal{A}^\Z$, let
\[
\text{Per}(\bm{x},s,a):=\{n\in\mathbb{Z} : \bm{x}|_{n+s\Z}=a\},\text{ where }a\in\mathcal{A},\ s\in\mathbb{N}
\]
and
\[
\text{Per}(\bm{x},s)=\bigcup_{a\in\mathcal{A}}\text{Per}(\bm{x},s,a).
\]
It is easy to see that $\bm{x}\in\mathcal{A}^\Z$ is a Toeplitz sequence if and only if there exists a sequence $(p_n)_{n\geq 1}$ such that
\begin{equation*}\label{dodane}
p_n\divides p_{n+1} \text{ and }\Z=\bigcup_{n\geq 1}\text{Per}(\bm{x},p_n).\footnotemark
\end{equation*}
\footnotetext{\label{stopa}Indeed, one can simply take any $p_1$ such that $0\in \text{Per}(\bm{x},p_1)$ and then proceed inductively. Suppose that we have chosen $p_1 \divides p_2 \divides\dots \divides p_{2n+1}$ such that $[-n,n]\subset \bigcup_{1\leq k\leq 2n+1}\text{Per}(\bm{x},p_k)$. There exist $p_{2n+2},p_{2n+3}$ such that $-n-1\in \text{Per}(\bm{x},p_{2n+2})$, $n+1\in \text{Per}(\bm{x},p_{2n+3})$. Replacing $p_{2n+2}$ with $\lcm(p_{2n+1},p_{2n+2})$ and then $p_{2n+3}$ with $\lcm(p_{2n+2},p_{2n+3})$, we obtain $p_{2n+1}\divides p_{2n+2}\divides p_{2n+3}$ and $[-n-1,n+1]\subset \bigcup_{1\leq k\leq 2n+3}\text{Per}(\bm{x},p_k)$.}

Finally, let $?$ be a fixed symbol that is not in $\cA$, called a \emph{hole}. For any $s\geq1$ and a Toeplitz subshift $X$ we define the \emph{skeleton map at scale} $s$ by $M_{s}\colon X \rightarrow(\cA \cup ?)^{\mathbb{Z}}$, where
\[
\left(M_{s}(\bm{y})\right)_n=
\begin{cases}
\bm{y}_n,\ &n \in \operatorname{Per}(\bm{y},s),\\
?,\ & \text{ otherwise}
\end{cases}
\]
for any $\bm{y}\in X$. Moreover, if $\bm{y}\in X_{\bm{x}}$, where $\bm{x}$ is Toeplitz then
\begin{equation}\label{downar}
M_{s}(\bm{y})=\sigma^{j} M_{s}(\bm{x})=M_{s}(\sigma^{j}\bm{x})
\end{equation}
for some $0\leq j\leq s-1$. Indeed, since $\bm{y}\in  X_{\bm{x}}$, it follows immediately that $\Per(\bm{y},s)\supset \Per(\bm{x},s)-j$ for some $1\leq j\leq s-1$. By the minimality of $X_{\bm{x}}$, we have $\bm{x}\in X_{\bm{y}}$ and we can reverse the roles of $\bm{x}$ and $\bm{y}$ to obtain $\Per(\bm{y},s)= \Per(\bm{x},s)-j$. Moreover,~\eqref{downar} also holds.

\paragraph{Essential periods}
Downarowicz, in his survey~\cite{MR2180227}, defines essential periods for a given Toeplitz sequence $\bm{x}\in\mathcal{A}^\Z$ in the following way:
\begin{Def}[{\cite[Definition 7.3]{MR2180227}}]\label{pierwsza}
Number $s\in \mathbb{N}$ is said to be an \textit{essential period} of $\bm{x}\in\mathcal{A}^\Z$ if $\text{Per}(\bm{x},s)\neq\emptyset$ and
\begin{equation}\label{one}
q<s \implies \text{Per}(\bm{x},s)\neq \text{Per}(\bm{x},q).
\end{equation}
\end{Def}
Clearly,~\eqref{one} is equivalent to
\begin{equation}\label{one1}
\text{Per}(\bm{x},s)= \text{Per}(\bm{x},q) \implies q\geq s.
\end{equation}
Downarowicz gives a reference to a paper by Williams~\cite{MR756807}. However, she formulates the definition of an essential period differently.
\begin{Def}[\cite{MR756807}]\label{druga}
Number $s\in \mathbb{N}$ is said to be an \textit{essential period }of $\bm{x}\in\mathcal{A}^\Z$ if $\text{Per}(\bm{x},s)\neq\emptyset$ and
\begin{equation}\label{dwa}
\left(\Per(\bm{x},s,a)=\Per(\bm{x},s,a)-q \text{ for all }a\in\mathcal{A}\right) \implies s\divides q.
\end{equation}
\end{Def}

We claim that the two above notions mean the same.
\begin{Prop}\label{twierd}
Definitions~\ref{pierwsza} and~\ref{druga} are equivalent. Moreover, in formula~\eqref{one1}, condition $q\geq s$ can be replaced by $s\divides q$.
\end{Prop}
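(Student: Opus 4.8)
The plan is to rewrite both definitions as conditions that quantify only over divisors of $s$, where the two conditions coincide word for word. Throughout write $P(s):=\Per(\bm x,s)$ and $P(s,a):=\Per(\bm x,s,a)$, so that $P(s)=\bigcup_{a\in\cA}P(s,a)$ is a disjoint union. I would begin by recording three elementary facts. (i) If $q\divides s$, then $P(q,a)\subseteq P(s,a)$ for every $a$ (because $s\Z\subseteq q\Z$), and hence $P(q)\subseteq P(s)$. (ii) Since a position $n\in P(s)$ forces $\bm x_n$ to be the unique symbol $a$ with $n\in P(s,a)$, the equality $P(s)=P(q)$ is \emph{equivalent} to $P(s,a)=P(q,a)$ for every $a\in\cA$. (iii) Every $P(s,a)$ satisfies $P(s,a)+s=P(s,a)$; and more generally, if $t\divides s$ and $P(s,a)+t=P(s,a)$, then $P(s,a)=P(t,a)$ --- here $\supseteq$ is (i), while for $\subseteq$ one observes that $n\in P(s,a)$ gives $n+kt\in P(s,a)\subseteq\{m:\bm x_m=a\}$ for all $k\in\Z$, i.e.\ $n\in P(t,a)$.

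The crucial step will be: if $P(s)=P(q)$, then $P(s)=P(\gcd(s,q))$. By (ii), $P(s,a)=P(q,a)$ for every $a$, so each of these sets is invariant under translation by $s$ \emph{and} by $q$; as the translation periods of any subset of $\Z$ form a subgroup of $\Z$, each $P(s,a)$ is then invariant under translation by $d:=\gcd(s,q)$, and (iii) with $t=d$ gives $P(s,a)=P(d,a)$ for all $a$, hence $P(s)=P(d)$ by (ii). The same computation, run from the hypothesis of~\eqref{dwa} for a fixed $q$ (namely that each $P(s,a)$ be translation-invariant by $q$), shows that this hypothesis is in fact \emph{equivalent} to $P(s)=P(\gcd(s,q))$ --- the reverse direction using that $P(d,a)$ is automatically $q$-invariant since $d\divides q$.

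I would then conclude as follows. As $q$ ranges over $\N$, the quantity $\gcd(s,q)$ ranges exactly over the divisors of $s$, and $s\divides q$ means precisely $\gcd(s,q)=s$; hence Williams' notion (i.e.\ $P(s)\neq\emptyset$ together with~\eqref{dwa} for every $q$) is equivalent to: $P(s)\neq\emptyset$ and $P(s)\neq P(d)$ for every divisor $d\divides s$ with $d<s$. Downarowicz's notion in the form~\eqref{one1} reads: $P(s)\neq\emptyset$ and $P(s)\neq P(q)$ for every $q<s$. One implication is immediate; for the other, if $P(s)=P(q)$ for some $q<s$, then $P(s)=P(\gcd(s,q))$ by the crucial step, and $\gcd(s,q)$ is a divisor of $s$ with $\gcd(s,q)\le q<s$, contradicting the divisor form. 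This gives the equivalence of Definitions~\ref{pierwsza} and~\ref{druga}. For the ``moreover'': if $s$ is an essential period and $P(s)=P(q)$, then $P(s)=P(\gcd(s,q))$, so $\gcd(s,q)$ cannot be a proper divisor of $s$, i.e.\ $\gcd(s,q)=s$ and $s\divides q$; so in~\eqref{one1} one may replace $q\ge s$ by $s\divides q$ (the opposite replacement being trivial, as $q\in\N$).

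The main obstacle is the crucial step --- collapsing an arbitrary common period scale $q$ down to $\gcd(s,q)$; the rest is bookkeeping over divisors. It relies on exactly two structural observations: that equality of the sets $P(\cdot)$ may be tested symbol by symbol (fact (ii)), and that a translation period of $P(s,a)$ which divides $s$ is realised as $P(t,a)$ (fact (iii)), together with the fact that translation periods form a subgroup of $\Z$.
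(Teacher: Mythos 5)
Your proof is correct and follows essentially the same route as the paper: your ``crucial step'' (collapsing a common period $q$ of the sets $\Per(\bm{x},s,a)$ to $\gcd(s,q)$ via Bezout, here phrased through the subgroup of translation periods) is exactly the content of the paper's Proposition~\ref{equiv_cond} ((a)/(e)$\implies$(c), proved there via $s\Z+q\Z=\gcd(s,q)\Z$), and your divisor-of-$s$ reformulation of both definitions is the same bookkeeping the paper carries out in Remarks~\ref{R1} and~\ref{R2} and the proof of Proposition~\ref{twierd}.
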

This can be proven directly (and it is not difficult), however, let us first prove the following result which will be useful when we pass to the multidimensional setting.
\begin{Prop}\label{equiv_cond}
Fix $\bm{x}\in\mathcal{A}^\Z$ and $s,q\in \mathbb{N}$. The following conditions are equivalent:
\begin{enumerate}
\item[(a)]
$\Per(\bm{x},s,a)=\Per(\bm{x},s,a)-q \text{ for all }a\in\mathcal{A}$,
\item[(b)]
$\Per(\bm{x},s,a)=\Per(\bm{x},s,a)-q\mathbb{Z} \text{ for all }a\in\mathcal{A}$,
\item[(c)]
$\Per(\bm{x},s,a)=\Per(\bm{x},\gcd(s,q),a)\text{ for all }a\in \mathcal{A}$,
\item[(d)]
$\Per(\bm{x},s,a)\subset \Per(\bm{x},q,a)$ for all $a\in \mathcal{A}$,
\item[(e)]
$\Per(\bm{x},s)\subset \Per(\bm{x},q)$.
\end{enumerate}
\end{Prop}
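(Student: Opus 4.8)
The plan is to establish the chain $(a)\Leftrightarrow(b)$, $(b)\Leftrightarrow(c)$, $(c)\Rightarrow(d)\Rightarrow(e)\Rightarrow(c)$, which together give the equivalence of all five conditions. Throughout I will use the following elementary observations about the sets $P_a:=\Per(\bm{x},m,a)$: (1) $n\in P_a$ iff $\bm{x}_{n+km}=a$ for every $k\in\Z$; (2) $P_a$ is a union of cosets of $m\Z$; (3) if $m\divides m'$ then $\Per(\bm{x},m,a)\subseteq\Per(\bm{x},m',a)$, since $m\Z\supseteq m'\Z$ and being constant equal to $a$ on $n+m\Z$ forces the same on the subprogression $n+m'\Z$; and (4) $s\Z+q\Z=\gcd(s,q)\Z=:d\Z$ by Bézout.

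For $(a)\Leftrightarrow(b)$ I would first record that for an arbitrary subset $P\subseteq\Z$ one has $P=P-q$ if and only if $P=P-q\Z$: the backward direction follows by extracting the summands $\mp q$, and the forward direction by iterating $P=P-q$ in both directions, so that $P-q\Z=\bigcup_{k\in\Z}(P-kq)=P$. Applying this with $P=\Per(\bm{x},s,a)$ for each $a\in\mathcal{A}$ gives the equivalence of (a) and (b).

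For $(b)\Rightarrow(c)$: condition (b) says each $\Per(\bm{x},s,a)$ is invariant under translation by every multiple of $q$, and combined with (2) it is invariant under translation by all of $s\Z+q\Z=d\Z$. Given $n\in\Per(\bm{x},s,a)$ and any $j\in\Z$, write $jd=ks+lq$; then $n+ks\in\Per(\bm{x},s,a)$ by (2), and adding $lq$ keeps us inside $\Per(\bm{x},s,a)$, so $\bm{x}_{n+jd}=a$. Hence $n\in\Per(\bm{x},d,a)$, and with the reverse inclusion from (3) we get $\Per(\bm{x},s,a)=\Per(\bm{x},d,a)$, which is (c). Conversely $(c)\Rightarrow(b)$ is immediate: $\Per(\bm{x},d,a)$ is $d\Z$-invariant by (2), hence $q\Z$-invariant since $d\divides q$, so $\Per(\bm{x},s,a)=\Per(\bm{x},d,a)=\Per(\bm{x},d,a)-q\Z=\Per(\bm{x},s,a)-q\Z$. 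The implications $(c)\Rightarrow(d)$ (use (3) with $d\divides q$) and $(d)\Rightarrow(e)$ (take the union over $a\in\mathcal{A}$) are routine.

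The only implication carrying genuine content is $(e)\Rightarrow(c)$, and this is the step I expect to require the most care. Assume $\Per(\bm{x},s)\subseteq\Per(\bm{x},q)$ and fix $n\in\Per(\bm{x},s,a)$. For every $k\in\Z$ the point $n+ks$ again lies in $\Per(\bm{x},s,a)$ by (2), hence in $\Per(\bm{x},s)\subseteq\Per(\bm{x},q)$, so $n+ks\in\Per(\bm{x},q,a')$ for some $a'$; evaluating at the $0$-th term gives $a'=\bm{x}_{n+ks}=a$, so in fact $n+ks\in\Per(\bm{x},q,a)$. Therefore $\bm{x}_{n+ks+lq}=a$ for all $k,l\in\Z$, and since $\{ks+lq:k,l\in\Z\}=d\Z$ this means $\bm{x}_{n+md}=a$ for all $m\in\Z$, i.e.\ $n\in\Per(\bm{x},d,a)$. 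Together with the always-valid inclusion $\Per(\bm{x},d,a)\subseteq\Per(\bm{x},s,a)$ from (3), this yields $\Per(\bm{x},s,a)=\Per(\bm{x},d,a)$ for every $a\in\mathcal{A}$, which is exactly (c). This closes the cycle and proves the proposition. Note that Toeplitz-ness of $\bm{x}$ is never used, as the statement allows.
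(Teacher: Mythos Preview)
Your proof is correct and follows essentially the same approach as the paper's. The only organizational difference is that the paper closes the cycle via $(d)\Leftrightarrow(e)$ and $(b)\Rightarrow(c)\Rightarrow(d)\Rightarrow(b)$, whereas you prove $(e)\Rightarrow(c)$ directly; your $(e)\Rightarrow(c)$ argument is exactly the paper's $(d)\Rightarrow(b)$ computation with the observation $a'=a$ (which is the paper's $(e)\Rightarrow(d)$ step) folded in, so the mathematical content is identical.
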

\begin{proof}
It is immediate that (a) and (b) are equivalent (to go from (a) to (b) we just apply (a) repeatedly and to go from (b) to (a) we just use that $\Per(\bm{x},s,a)-q$ is contained in $\Per(\bm{x},s,a)-q\mathbb{Z}$ and both sides of (a) are unions of the same number of arithmetic progressions of the same difference $s$). The equivalence of (d) and (e) follows from the fact that $\Per(\bm{x},s)=\bigcup_{a\in\mathcal{A}}\Per(\bm{x},s,a)$, $\Per(\bm{x},q)=\bigcup_{a'\in\mathcal{A}}\Per(\bm{x},q,a')$, where
\[
\Per(\bm{x},s,a)\cap\Per(\bm{x},q,a')=\emptyset \text{ whenever }a\neq a'.
\]
If this intersection is nonempty then clearly $a=\bm{x}_n=a'$.

We prove now that (b) implies (c). Assume (b) and take $n\in \Per(\bm{x},s,a)$. We have $n+s\Z\subset \Per(\bm{x},s,a)\subset \Per(\bm{x},q,a)$. Therefore, for each $n\in \Per(\bm{x},s,a)$, we have $x|_{n+s\Z+q\Z}\equiv a$. However, this means that $n\in \Per(\bm{x},\gcd(s,q),a)$ since $s\Z+q\Z=\gcd(s,q)\Z$. This gives $\Per(\bm{x},s,a)\subset \Per(\bm{x},\gcd(s,q),a)$ for each $a\in \mathcal{A}$. The opposite inclusion always takes place as $\gcd(s,q)\divides s$. Notice also that (d) follows from (c) immediately as $\gcd(s,q)\divides q$.

It remains to show that (d) implies (b). Take $n\in\Per(\bm{x},s,a)$. Then $n+s\Z\subset \Per(\bm{x},s,a)\subset \Per(\bm{x},q,a)$. Therefore, $n+s\Z+q\Z\subset \Per(\bm{x},q,a)$. In particular, $x|_{n+q\Z+s\Z}\equiv a$, which yields $n+q\Z\subset \Per(\bm{x},s,a)$. This completes the proof.
\end{proof}
\begin{Remark}\label{R1}
As an immediate consequence, we obtain that condition~\eqref{dwa} from Definition~\ref{druga} is equivalent to
\begin{equation}\label{trzeci}
\Per(\bm{x},s)\subset \Per(\bm{x},q) \implies s\divides q.
\end{equation}
\end{Remark}
\begin{Remark}\label{R2}
We claim that condition~\eqref{one} from Definition~\ref{pierwsza} is equivalent to
\begin{equation}\label{czwarty}
\Per(\bm{x},s)= \Per(\bm{x},q) \implies s\divides q.
\end{equation}
In fact,~\eqref{one1},~\eqref{trzeci} and~\eqref{czwarty} are all equivalent. Indeed,~\eqref{trzeci} implies~\eqref{czwarty} and~\eqref{czwarty} implies~\eqref{one1}. It remains to show that~\eqref{one1} implies~\eqref{trzeci}. Suppose that $\Per(\bm{x},s)\subset \Per(\bm{x},q)$ and that~\eqref{one1} holds. It follows by Proposition~\ref{equiv_cond} ((e)$\implies$(c)) that $\Per(\bm{x},s)=\Per(\bm{x},\gcd(s,q))$, so we can apply~\eqref{one1} to conclude that $\gcd(s,q)\geq s$. The latter condition is however equivalent to $s\divides q$.
\end{Remark}

\begin{proof}[Proof of Proposition~\ref{twierd}]
We combine Remark~\ref{R1} and Remark~\ref{R2}. To complete the proof, it remains to use the implication (e)$\implies$(c) from Proposition~\ref{equiv_cond}. Indeed, if $\Per(\bm{x},s)\subset \Per(\bm{x},q)$ then $\Per(\bm{x},s)=\Per(\bm{x},\gcd(s,q))$. Therefore, if~\eqref{czwarty} holds for $s$ then $s\divides\gcd(s,q)$, which is equivalent to $s=\gcd(s,q)$, which, in turn, is the same as $s\divides q$.
\end{proof}
\begin{Remark}\label{NOWA}
Notice that $s\divides q$ is equivalent to $q\Z\subset s\Z$. Moreover, for any $k\in\N$, $k\Z\subset \Z$ is a subgroup and an ideal of $\Z$.
\end{Remark}

\paragraph{Periodic structure}
A \emph{periodic structure} \cite{MR756807} of a Toeplitz sequence $\bm{x}\in\mathcal{A}^\Z$ is any sequence $(p_k)_{k\in \N}$ of essential periods such that $p_k\divides p_{k+1}$ for each $k\in\N$ and
$\bigcup_{k\in\N} \operatorname{Per}(\bm{x},p_k) = \Z$. Every Toeplitz sequence has a periodic structure.

\subsubsection{Higher dimension (abelian discrete, finitely generated groups)}
Let $\mathbb{G}$ be a discrete, finitely generated group and let $\Gamma\subseteq \G$ be a subgroup of finite index. We will be mostly interested in case $\G=\Z^d$ and therefore we restrict ourselves to abelian groups. For $\bm{x}=(\bm{x}_g)_{g\in\G}\in\mathcal{A}^\G$ consider
\[\Per(\bm{x},\Gamma,a)=\{g\in\G; \ \bm{x}_{g+\gamma}=a \text{ for any } \gamma\in\Gamma\}, \quad a\in\mathcal{A},\]
\[\Per(\bm{x},\Gamma)=\bigcup_{a\in\mathcal{A}}\Per(\bm{x},\Gamma,a).\]
If $\Per(\bm{x},\Gamma)\neq\emptyset$, then we say that $\Gamma$ is a \emph{group of periods of }$\bm{x}$.
We say that $\bm{x}$ is a~\emph{$\G$-Toeplitz array}, if for all $g\in\G$ there exists a subgroup $\Gamma\subseteq\G$ of finite index such that $g\in\Per(\bm{x},\Gamma)$. The orbit closure of a $\G$-Toeplitz array endowed with multidimensional shifts $(S_g)_{g\in \G}$ is called a~\emph{$\G$-Toeplitz system}.
\begin{Th}[{\cite[Proposition 5]{MR2427048}},  cf.~\eqref{trzeci} and Remark~\ref{NOWA}]
\label{Cor-Pet}
Let $\bm{x}\in\mathcal{A}^{\G}$. The following are equivalent:
\begin{enumerate}[(1)]
\item $\bm{x}$ is a $\G$-Toeplitz array,
\item there exists a sequence $(\Gamma_n)_{n\geq1}$ of groups with finite indices such that $\Gamma_{n+1}\subset\Gamma_n$ for any $n\geq1$ and $\G=\bigcup_{n\geq1}\Per(\bm{x},\Gamma_n)$.
\end{enumerate}
\end{Th}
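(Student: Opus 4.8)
The plan is to run the same bookkeeping argument as in the one-dimensional case recorded in Footnote~\ref{stopa} (cf.\ \eqref{trzeci} and Remark~\ref{NOWA}), with divisibility of periods replaced by reverse inclusion of finite-index subgroups and $\lcm$ of periods replaced by intersection of subgroups. The implication (2)$\implies$(1) is immediate from the definitions: if $g\in\G$, then $g$ lies in some $\Per(\bm{x},\Gamma_n)$, and $\Gamma_n$ has finite index, which is exactly the condition required at $g$ for $\bm{x}$ to be a $\G$-Toeplitz array. So all the content is in (1)$\implies$(2).

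For that direction I would first isolate two elementary facts. \emph{Monotonicity of $\Per$}: if $\Gamma'\subseteq\Gamma\subseteq\G$, then $\Per(\bm{x},\Gamma,a)\subseteq\Per(\bm{x},\Gamma',a)$ for every $a\in\mathcal{A}$, since $\bm{x}_{g+\gamma}=a$ for all $\gamma\in\Gamma$ forces the same for all $\gamma\in\Gamma'$; hence $\Per(\bm{x},\Gamma)\subseteq\Per(\bm{x},\Gamma')$. \emph{Stability of finite index under intersection}: if $\Gamma_1,\Gamma_2\subseteq\G$ have finite index, then so does $\Gamma_1\cap\Gamma_2$, because $[\G:\Gamma_1\cap\Gamma_2]\le[\G:\Gamma_1]\,[\G:\Gamma_2]$. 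Next, since $\G$ is finitely generated it is countable, so I fix an enumeration $\G=\{g_1,g_2,\dots\}$. Then I would construct $(\Gamma_n)_{n\ge1}$ by induction so that $g_n\in\Per(\bm{x},\Gamma_n)$ and $\Gamma_{n+1}\subseteq\Gamma_n$: using that $\bm{x}$ is a $\G$-Toeplitz array, choose a finite-index subgroup $\Lambda_n\subseteq\G$ with $g_n\in\Per(\bm{x},\Lambda_n)$, set $\Gamma_1:=\Lambda_1$ and $\Gamma_n:=\Gamma_{n-1}\cap\Lambda_n$ for $n\ge2$. By the second fact each $\Gamma_n$ has finite index; by construction $\Gamma_n\subseteq\Gamma_{n-1}$; and by monotonicity $g_n\in\Per(\bm{x},\Lambda_n)\subseteq\Per(\bm{x},\Gamma_n)$ since $\Gamma_n\subseteq\Lambda_n$. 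In particular $\Per(\bm{x},\Gamma_n)\neq\emptyset$, so each $\Gamma_n$ is a group of periods of $\bm{x}$, and $\G=\bigcup_{n\ge1}\Per(\bm{x},\Gamma_n)$ because every $g_n$ is covered.

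I do not expect a genuine obstacle; the only point requiring care is the inductive step $\Gamma_n:=\Gamma_{n-1}\cap\Lambda_n$, namely that the intersection of two finite-index subgroups is again of finite index. This is the multidimensional substitute for closing a set of periods under $\lcm$, and it is what guarantees that a \emph{nested} sequence of periodic subgroups exists at all; fortunately it is a standard fact valid in any group. One could equally well drive the induction along a nested F\o lner exhaustion of $\G$ rather than along a bare enumeration, but this changes nothing. I would also note, as in the one-dimensional footnote, that an alternating version of the construction makes the $\Gamma_n$ strictly decreasing if desired, though the statement only asks for $\Gamma_{n+1}\subseteq\Gamma_n$.
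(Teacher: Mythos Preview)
Your proof is correct. The paper does not give its own proof of this statement---it is quoted as \cite[Proposition~5]{MR2427048}---so there is nothing to compare against directly; your argument is precisely the multidimensional analogue of the one-dimensional bookkeeping in Footnote~\ref{stopa}, with $\lcm$ replaced by intersection of finite-index subgroups, which is exactly the intended generalisation.
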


If $\G=\Z^d$ then conditions (1) and (2) from the above theorem are equivalent to the following one:
\begin{enumerate}
\item[(3)]
there exists a sequence $(p_n)_{n\geq 1}\subset \N$ such that $p_n\divides p_{n+1}$ and $\G=\bigcup_{n\geq1}\text{Per}(\bm{x},p_n\Z^d)$.
\end{enumerate}
Indeed, it suffices to see that for any subgroup $\Gamma_n\subset \Z^d$ of finite index, there exists $p_n\in\N$ with $\Gamma_n\supset p_n\Z^d$ and proceed inductively, as in footnote~\ref{stopa} (for a detailed proof, see {\cite[Proposition 14]{MR2220753}}).
\begin{Remark}
Take $\G=\OK$ (as in Section \ref{szift}). Since there is a group isomorphism between the additive structures of $\OK$ and $\Z^d$ and it preserves the index of a subgroup, it follows immediately that for an $\OK$-Toeplitz array $\bm{x}\in\mathcal{A}^{\OK}$, there is a corresponding $\Z^{d}$-Toeplitz array $\bm{y}\in\mathcal{A}^{\Z^{d}}$. By condition (3) above, there exist a sequence of groups of periods $(p_n\Z^{d})_{n\in\N}$ for $\bm{y}$ such that $p_n\divides p_{n+1}$ and $\bigcup_{n\geq1}\text{Per}(\bm{y},p_n\Z^{d})=\Z^{d}$. Any such group of periods $p_n\Z^{d}$ for $\bm{y}$ corresponds to a group of periods for $\bm{x}$ which is a principal ideal.
\end{Remark}


As in dimension one, let $?$ be a fixed symbol that is not in $\cA$, called a \emph{hole}. For any subgroup $\Gamma\subseteq \G$ of finite index and a $\G$-Toeplitz system $X$, we define the \emph{skeleton map at scale} $\Gamma$ by $M_\Gamma\colon X\rightarrow (\cA\cup ?)^{\G}$, where
\[
\left(M_{\Gamma}(\bm{y})\right)_n=
\begin{cases}
\bm{y}_n,\ &n \in \operatorname{Per}(\bm{y},{\Gamma}),\\
?,\ & \text{ otherwise}
\end{cases}
\]
for any $\bm{y}\in X$. Moreover, if $\bm{y}\in X_{\bm{x}}$, where $\bm{x}$ is a $\G$-Toeplitz array then
\begin{equation}\label{downar_mul}
M_{\Gamma}(\bm{y})=\sigma^{j} M_{\Gamma}(\bm{x})=M_{\Gamma}(\sigma^{j}\bm{x})
\end{equation}
for some $j\in \G$. The proof of \eqref{downar_mul} is the same as the proof of \eqref{downar} in the one-dimensional case.

\paragraph{Essential periods}

There are two definitions of essential periods in this setting present in the literature. The goal of this section is to show that they are equivalent.


Cortez, in her paper \cite{MR2220753}, defines the notion for $\G=\Z^d$ in the following way.
\begin{Def}[{\cite[Definition 15]{MR2220753}}]\label{essential_C}
A group $Z\subset\Z^d$ of periods of $\bm{x}\in \mathcal{A}^{\Z^d}$ is called a \emph{group generated by essential periods of} $\bm{x}$ if $\Per(\bm{x},Z)\subseteq\Per(\bm{x},Z')$ implies that $Z'\subseteq Z$.
\end{Def}
Cortez and Petite~\cite{MR2427048} define that notion for more general $\G$-Toeplitz arrays differently (in their paper $\G$ is a discrete finitely generated group). We recall it in the abelian setting here.
\begin{Def}[{\cite[Definition 4]{MR2427048}}]\label{essential_CP}
A syndetic group $\Gamma\subset\G$ is called an \emph{essential group of periods of} $\bm{x}\in\mathcal{A}^\G$ if $\Per(\bm{x},\Gamma,a)\subseteq\Per(S_g\bm{x},\Gamma,a)$ for every $a\in\mathcal{A}$
implies that $g\in\Gamma$.
\end{Def}
\begin{Remark}
Notice that since $\Per(\bm{x},\Gamma,a)$ is a finite union of sets in the form $\Gamma+g$, $g\in\G$, and for any $a\in\mathcal{A}$ one has $\Per(S_g\bm{x},\Gamma,a)=\Per(\bm{x},\Gamma,a)-g$, the inclusion in Definition \ref{essential_CP} implies $\Per(\bm{x},\Gamma,a)=\Per(\bm{x},\Gamma,a)-g$. Hence it is clear that for $\G=\Z$ the two definitions are equivalent: $\bm{x}\in\mathcal{A}^\Z$, $s\geq 1$ is an essential period if and only if $s\Z$ is an essential group of periods.
\end{Remark}
\begin{Remark}\label{No1}
Cortez and Petite remark that any essential group of periods of $\bm{x}\in\mathcal{A}^{\Z^d}$ is a group generated by essential periods of $\bm{x}$ (\cite[Remark 2]{MR2427048}). In fact, Definition~\ref{essential_C} can be easily extended to a general finitely generated group $\G$. Lemma 6 in \cite{MR2427048} shows that any essential group of periods of $\bm{x}\in\mathcal{A}^\G$ is a group generated by essential periods of $\bm{x}$. We will show that also the opposite implication is true, see Corollary~\ref{essential_multi} below. In order to do this, we will use a multidimensional version of Proposition \ref{equiv_cond}.
\end{Remark}
\begin{Prop}\label{multi_equiv_cond}
Fix $g\in\G$ and let $\Gamma\subset\G$ be a subgroup. Let $\Gamma'=\langle\Gamma,g\rangle$. The following conditions are equivalent:
\begin{enumerate}
\item[(a)]
$\Per(\bm{x},\Gamma,a)=\Per(\bm{x},\Gamma,a)-g \text{ for all }a\in\mathcal{A}$,
\item[(b)]
$\Per(\bm{x},\Gamma,a)=\Per(\bm{x},\Gamma,a)-\Gamma' \text{ for all }a\in\mathcal{A}$,
\item[(c)]
$\Per(\bm{x},\Gamma,a)=\Per(\bm{x},\Gamma',a)\text{ for all }a\in \mathcal{A}$,
\item[(d)]
$\Per(\bm{x},\Gamma,a)\subset \Per(\bm{x},\Gamma',a)$ for all $a\in \mathcal{A}$,
\item[(e)]
$\Per(\bm{x},\Gamma)\subset \Per(\bm{x},\Gamma')$.
\end{enumerate}
\end{Prop}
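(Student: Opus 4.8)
The plan is to follow the one-dimensional argument of Proposition~\ref{equiv_cond} almost verbatim, with the cyclic group $q\Z$ replaced by $\langle g\rangle$ and the group $s\Z+q\Z=\gcd(s,q)\Z$ replaced by $\Gamma'=\Gamma+\langle g\rangle$ (recall that in an abelian group $\langle\Gamma,g\rangle=\Gamma+\langle g\rangle$). Before the implications, I would record a few elementary facts valid for an arbitrary subgroup $\Gamma\subseteq\G$ and arbitrary $\bm{x}\in\mathcal{A}^\G$: (i) each $\Per(\bm{x},\Gamma,a)$ is a union of cosets of $\Gamma$ (if $n\in\Per(\bm{x},\Gamma,a)$ then so is $n+\gamma$ for every $\gamma\in\Gamma$), so $\Per(\bm{x},\Gamma,a)-\Gamma=\Per(\bm{x},\Gamma,a)$; (ii) since $\Gamma\subseteq\Gamma'$, one always has $\Per(\bm{x},\Gamma',a)\subseteq\Per(\bm{x},\Gamma,a)$; (iii) the sets $\Per(\bm{x},\Gamma,a)$, $a\in\mathcal{A}$, are pairwise disjoint, and $\Per(\bm{x},\Gamma,a)\cap\Per(\bm{x},\Gamma',a')=\emptyset$ whenever $a\neq a'$, since a point lies in any of these sets only if $\bm{x}$ takes there the corresponding symbol.

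I would first dispose of the ``easy'' equivalences. For (a)$\Leftrightarrow$(b): (b) implies (a) because $g\in\Gamma'$; conversely, (a) says each $\Per(\bm{x},\Gamma,a)$ is a fixed set of the translation by $-g$, and since for any set $S$ the equality $S-g=S$ forces $S+g=S$, the set $\Per(\bm{x},\Gamma,a)$ is invariant under $\langle g\rangle$; combining this with (i) it is invariant under $\Gamma+\langle g\rangle=\Gamma'$, i.e.\ (b) holds. For (c)$\Leftrightarrow$(d): (d) is one of the two inclusions in (c), and the reverse inclusion is exactly (ii). For (d)$\Leftrightarrow$(e): writing $\Per(\bm{x},\Gamma)$ and $\Per(\bm{x},\Gamma')$ as the (disjoint, by (iii)) unions over $a$ of $\Per(\bm{x},\Gamma,a)$ and $\Per(\bm{x},\Gamma',a)$ respectively, and using (iii) again to note that a point of $\Per(\bm{x},\Gamma,a)$ can only fall into $\Per(\bm{x},\Gamma',a)$ and into no $\Per(\bm{x},\Gamma',a')$ with $a'\neq a$, the single inclusion (e) is equivalent to the family of inclusions (d).

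The one step linking the two ``sides'' is (b)$\Leftrightarrow$(d), the analogue of the implications (b)$\Rightarrow$(c) and (d)$\Rightarrow$(b) in the one-dimensional proof. For (b)$\Rightarrow$(d): take $n\in\Per(\bm{x},\Gamma,a)$; by (b) (and the fact that $\Gamma'$ is a group, so invariance under $-\Gamma'$ coincides with invariance under $\Gamma'$) the whole coset $n+\Gamma'$ lies in $\Per(\bm{x},\Gamma,a)$, hence $\bm{x}_{n+\gamma'}=a$ for every $\gamma'\in\Gamma'$, i.e.\ $\bm{x}|_{n+\Gamma'}\equiv a$, i.e.\ $n\in\Per(\bm{x},\Gamma',a)$. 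For (d)$\Rightarrow$(b): the inclusion $\Per(\bm{x},\Gamma,a)\subseteq\Per(\bm{x},\Gamma,a)-\Gamma'$ is trivial since $0\in\Gamma'$; conversely, for $n\in\Per(\bm{x},\Gamma,a)$ and $\gamma'\in\Gamma'$, (d) gives $\bm{x}|_{n+\Gamma'}\equiv a$, and since $-\gamma'+\Gamma\subseteq\Gamma'$ we get $\bm{x}|_{(n-\gamma')+\Gamma}\equiv a$, so $n-\gamma'\in\Per(\bm{x},\Gamma,a)$.

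I do not anticipate a genuine obstacle; the whole thing is a bookkeeping exercise with cosets. The two points to watch are: (1) never to invoke finite index of $\Gamma$ — it is unnecessary, because the assertion ``$S-g=S$'' already encodes $\langle g\rangle$-invariance by itself; and (2) to stay consistent about the identification $\Gamma'=\Gamma+\langle g\rangle$ as a subset of $\G$, which is exactly what makes the coset inclusions such as $-\gamma'+\Gamma\subseteq\Gamma'$ immediate. Once this proposition is in place, the equivalence of Definitions~\ref{essential_C} and~\ref{essential_CP} (Corollary~\ref{essential_multi}) will follow as in the one-dimensional case from Remark~\ref{R1}.
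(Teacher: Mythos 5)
Your proposal is correct and follows essentially the same route as the paper: the same chain of equivalences (a)$\Leftrightarrow$(b), (b)$\Leftrightarrow$(d), (c)$\Leftrightarrow$(d), (d)$\Leftrightarrow$(e), with the same coset-invariance observations. The only (harmless) difference is that where the paper closes the inclusions $\Per(\bm{x},\Gamma,a)-g\subseteq\Per(\bm{x},\Gamma,a)$ by counting cosets of $\Gamma$, you use that $\pm g\in\Gamma'$ (equivalently, that translation is a bijection), which is a slightly cleaner way to get the same equalities.
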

\begin{proof}
Since $\langle\Gamma,g\rangle=\{a\gamma+b g:  a,b\in\Z, \gamma\in\Gamma\}$, to show that (a) implies (b), we use $\Gamma$-invariance of $\Per(\bm{x},\Gamma,a)$ and just apply (a) repeatedly.

To go from (b) to (a) we just use that $\Per(\bm{x},\Gamma,a)-g\subseteq\Per(\bm{x},\Gamma,a)-\Gamma'$ and both sides of (a) are unions of the same number of sets in the form $\Gamma+g'$, $g'\in\G$.

The equivalence of (d) and (e) follows from the fact that $\Per(\bm{x},\Gamma)=\bigcup_{a\in\mathcal{A}}\Per(\bm{x},\Gamma,a)$, $\Per(\bm{x},\Gamma')=\bigcup_{a'\in\mathcal{A}}\Per(\bm{x},\Gamma',a')$, where
\[
\Per(\bm{x},\Gamma,a)\cap\Per(\bm{x},\Gamma',a')=\emptyset \text{ whenever }a\neq a'
\]
(if this intersection is nonempty then clearly $a=\bm{x}_n=a'$).

We prove now that (b) implies (c). Assume (b) and take $\gamma\in \Per(\bm{x},\Gamma,a)$. Then $\bm{x}|_{\gamma+\Gamma'}\equiv a$. So $\gamma\in\Per(\bm{x},\Gamma',a)$. This gives $\Per(\bm{x},\Gamma,a)\subset \Per(\bm{x},\Gamma',a)$ for each $a\in \mathcal{A}$. The opposite inclusion always takes place as $\Gamma\subset\Gamma'$. Notice also that (d) follows from (c) immediately.

It remains to show that (d) implies (a). Take $\gamma\in\Per(\bm{x},\Gamma,a)$. Then by (d), $\gamma-g\in\gamma+\Gamma'\subset\Per(\bm{x},\Gamma',a)$. In particular, $\bm{x}|_{\gamma-g+\Gamma'}\equiv a$, which yields $\bm{x}|_{\gamma-g+\Gamma}\equiv a$, so $\Per(\bm{x},\Gamma,a)-g\subset\Per(\bm{x},\Gamma,a)$. Since both sets are unions of the same number of sets in the form $\Gamma+g'$, $g'\in\G$, they are equal. This completes the proof.
\end{proof}

\begin{Cor}[cf.\ Remark~\ref{No1}]\label{essential_multi}
For any $\bm{x}\in\mathcal{A}^\G$, the family of corresponding groups generated by essential periods is the same as the family of corresponding essential groups of periods.
\end{Cor}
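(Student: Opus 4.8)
The plan is to deduce the corollary from Proposition~\ref{multi_equiv_cond}, with the two inclusions of families treated separately. One of them is already available: by Remark~\ref{No1} (i.e.\ Lemma~6 of~\cite{MR2427048}, which extends verbatim to our abelian $\G$), every essential group of periods of $\bm{x}$ is a group generated by essential periods of $\bm{x}$. So the only thing left to prove is the converse: if $\Gamma\subseteq\G$ is a group generated by essential periods of $\bm{x}$ (in the $\G$-version of Definition~\ref{essential_C}), then $\Gamma$ is an essential group of periods of $\bm{x}$ in the sense of Definition~\ref{essential_CP}. Note that $\Gamma$ is a group of periods, hence syndetic, i.e.\ of finite index, in the setting under consideration, which is what is needed to make the coset-counting argument below work.

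So fix such a $\Gamma$ and suppose $g\in\G$ satisfies $\Per(\bm{x},\Gamma,a)\subseteq\Per(S_g\bm{x},\Gamma,a)$ for every $a\in\mathcal{A}$; we must show $g\in\Gamma$. First I would rewrite this hypothesis: since $\Per(S_g\bm{x},\Gamma,a)=\Per(\bm{x},\Gamma,a)-g$, and both $\Per(\bm{x},\Gamma,a)$ and $\Per(\bm{x},\Gamma,a)-g$ are finite unions of the same number of cosets of $\Gamma$ (as $\Gamma$ has finite index and $\Per(\bm{x},\Gamma,a)$ is $\Gamma$-invariant), the inclusion forces the equality $\Per(\bm{x},\Gamma,a)=\Per(\bm{x},\Gamma,a)-g$ for all $a\in\mathcal{A}$ — exactly as noted in the remark preceding Definition~\ref{essential_CP}. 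This is precisely condition (a) of Proposition~\ref{multi_equiv_cond} with $\Gamma'=\langle\Gamma,g\rangle$. Hence, by Proposition~\ref{multi_equiv_cond}, condition (e) holds as well, namely $\Per(\bm{x},\Gamma)\subseteq\Per(\bm{x},\Gamma')$ with $\Gamma'=\langle\Gamma,g\rangle$. Applying Definition~\ref{essential_C} to $Z=\Gamma$ and $Z'=\Gamma'$ (which is legitimate: $\Gamma'\supseteq\Gamma$ is again a finite-index group of periods), this inclusion yields $\Gamma'\subseteq\Gamma$, and in particular $g\in\Gamma'\subseteq\Gamma$. Therefore $\Gamma$ is an essential group of periods, which completes the argument.

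I expect this proof to be essentially bookkeeping once Proposition~\ref{multi_equiv_cond} is in hand, and I do not anticipate a genuine obstacle. The one place that calls for a little care is the passage between the per-symbol inclusions $\Per(\bm{x},\Gamma,a)\subseteq\Per(S_g\bm{x},\Gamma,a)$ used in Definition~\ref{essential_CP} and the union-form inclusion $\Per(\bm{x},\Gamma)\subseteq\Per(\bm{x},\Gamma')$ used in condition (e); this is exactly why Proposition~\ref{multi_equiv_cond} was stated so as to include both the per-symbol equivalences (a)--(d) and the union version (e), together with the identity $\Per(S_g\bm{x},\Gamma,a)=\Per(\bm{x},\Gamma,a)-g$ and the finite-index coset-counting step. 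The translation to the $\OK$-setting is then immediate, since groups of periods of an $\OK$-Toeplitz array correspond to finite-index subgroups of $\G=\Z^d$, as recorded in the remarks following Theorem~\ref{Cor-Pet}.
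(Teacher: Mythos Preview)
Your proof is correct and follows essentially the same route as the paper. For the direction ``group generated by essential periods $\Rightarrow$ essential group of periods'' you argue exactly as the paper does: translate the hypothesis of Definition~\ref{essential_CP} into condition~(a) of Proposition~\ref{multi_equiv_cond}, pass to condition~(e), and then invoke Definition~\ref{essential_C}; for the reverse direction you cite Remark~\ref{No1}, whereas the paper also includes a short self-contained argument (via the implication (e)$\Rightarrow$(a) applied to each $g\in\Gamma'$), but this is just a presentational choice.
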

\begin{proof}
Suppose that $\Gamma$ is a group generated by essential periods of $\bm{x}$ and $\Per(\bm{x},\Gamma,a)\subset\Per(\bm{x},\Gamma,a)-g$ for every $a\in\mathcal{A}$ and for some $g\in\G$. Let $\Gamma'=\langle\Gamma,g\rangle$. By Proposition \ref{multi_equiv_cond} (a)$\Rightarrow$(e), $\Per(\bm{x},\Gamma)\subset\Per(\bm{x},\Gamma')$. Since $\Gamma$ is a group generated by essential periods of $\bm{x}$, $\Gamma'\subset\Gamma$. So $g\in\Gamma$. Hence $\Gamma$ is an essential group of periods.

Suppose now that $\Gamma$ is an essential group of periods of $\bm{x}$ and $\Per(\bm{x},\Gamma)\subset\Per(\bm{x},\Gamma')$ for some $\Gamma'$ group of periods of $\bm{x}$. Let $g\in\Gamma'$, $a\in\mathcal{A}$ and $\gamma\in\Per(\bm{x},\Gamma,a)$. Then $\bm{x}|_{\gamma+\Gamma+g}\equiv a$, so $\gamma\in\Per(\bm{x},\Gamma,a)-g$. Thus, $\Per(\bm{x},\Gamma,a)\subset\Per(\bm{x},\Gamma,a)-g$. Since $\Gamma$ is an essential group of periods of $\bm{x}$, $g\in\Gamma$. Therefore $\Gamma'\subset\Gamma$, whence $\Gamma$ is a group generated by essential periods of $\bm{x}$.
\end{proof}

Recall also that essential groups of periods exist. In fact, the next lemma tells us that we have even more.
\begin{Lemma}[{\cite[Lemma 7]{MR2427048}}]
Let $\bm{x} \in \mathcal{A}^{\G}$. If $\Gamma \subseteq \G$ is a group of periods of $\bm{x}$, then there exists $K \subseteq \G$ an essential group of periods of $\bm{x}$ such that \[\operatorname{Per}(\bm{x}, \Gamma) \subseteq \operatorname{Per}(\bm{x}, K).\]
\end{Lemma}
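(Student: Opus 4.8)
The plan is to obtain $K$ by repeatedly enlarging $\Gamma$, stopping once no further enlargement is possible; termination is guaranteed because the index of a finite-index subgroup is a positive integer and it strictly drops at each step. The key claim is the following \emph{enlargement step}: if $\Gamma$ is a group of periods of $\bm{x}$ that is \emph{not} essential, then there is a group of periods $\Gamma'$ with $\Gamma\subsetneq\Gamma'$ and $\Per(\bm{x},\Gamma)\subseteq\Per(\bm{x},\Gamma')$. Granting this, I would set $\Gamma_0:=\Gamma$ and, as long as $\Gamma_n$ is not essential, choose $\Gamma_{n+1}$ to be a strict enlargement provided by the claim. Each $\Gamma_n$ contains $\Gamma$, hence has finite index, and $\Gamma_n\subsetneq\Gamma_{n+1}$ forces $[\G:\Gamma_{n+1}]\le\tfrac12[\G:\Gamma_n]$; so the chain must terminate at some $K:=\Gamma_N$, which by construction is an essential group of periods. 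Along the way $\Per(\bm{x},\Gamma)=\Per(\bm{x},\Gamma_0)\subseteq\Per(\bm{x},\Gamma_1)\subseteq\cdots\subseteq\Per(\bm{x},\Gamma_N)=\Per(\bm{x},K)$, which is exactly the assertion. (If $\Gamma$ is already essential, take $K=\Gamma$.)

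For the enlargement step, suppose $\Gamma$ is a group of periods that is not essential. By Definition~\ref{essential_CP} there is $g\in\G\setminus\Gamma$ such that $\Per(\bm{x},\Gamma,a)\subseteq\Per(S_g\bm{x},\Gamma,a)$ for every $a\in\mathcal{A}$. By the Remark following Definition~\ref{essential_CP}, $\Per(S_g\bm{x},\Gamma,a)=\Per(\bm{x},\Gamma,a)-g$; since $\Gamma$ has finite index, both $\Per(\bm{x},\Gamma,a)$ and its translate are finite unions of the same number of $\Gamma$-cosets, so the inclusion is actually the equality $\Per(\bm{x},\Gamma,a)=\Per(\bm{x},\Gamma,a)-g$ for all $a$. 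Now put $\Gamma':=\langle\Gamma,g\rangle$. Then $\Gamma\subsetneq\Gamma'$ because $g\notin\Gamma$, and $\Gamma'$ has finite index since it contains $\Gamma$. Proposition~\ref{multi_equiv_cond}, implication (a)$\Rightarrow$(e) applied with this $\Gamma$, $g$ and $\Gamma'=\langle\Gamma,g\rangle$, yields $\Per(\bm{x},\Gamma)\subseteq\Per(\bm{x},\Gamma')$; in particular $\Per(\bm{x},\Gamma')\supseteq\Per(\bm{x},\Gamma)\neq\emptyset$, so $\Gamma'$ is a group of periods, completing the step.

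I do not expect a substantial obstacle: all the real work is already done by Proposition~\ref{multi_equiv_cond} (equivalently Corollary~\ref{essential_multi}), and the proof is a bookkeeping argument. The only point deserving care is matching the two formulations of ``essential'' — the shift-theoretic inclusions $\Per(\bm{x},\Gamma,a)\subseteq\Per(S_g\bm{x},\Gamma,a)$ in Definition~\ref{essential_CP} versus the group-of-periods inclusion $\Per(\bm{x},\Gamma)\subseteq\Per(\bm{x},\langle\Gamma,g\rangle)$ needed to invoke Proposition~\ref{multi_equiv_cond} — which goes through precisely because $\Per(\bm{x},\Gamma,a)$ is a finite union of $\Gamma$-cosets. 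After that, termination is nothing more than the finiteness of $[\G:\Gamma]$.
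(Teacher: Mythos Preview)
Your proof is correct. The paper does not include its own proof of this lemma; it is cited directly from Cortez--Petite \cite[Lemma~7]{MR2427048}. Your argument is the natural one in the present context: use the failure of essentiality to find $g\notin\Gamma$ with $\Per(\bm{x},\Gamma,a)=\Per(\bm{x},\Gamma,a)-g$, invoke Proposition~\ref{multi_equiv_cond} ((a)$\Rightarrow$(e)) to enlarge $\Gamma$ to $\langle\Gamma,g\rangle$ without shrinking $\Per$, and iterate; termination follows from the strict decrease of index. This is essentially the same strategy used in the original reference, and it fits cleanly with the paper's own development, since Proposition~\ref{multi_equiv_cond} and Corollary~\ref{essential_multi} are precisely the tools that make the two definitions of ``essential'' interchangeable.
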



\paragraph{Periodic structure and maximal equicontinuous factor}
\begin{Th}[{\cite[Corollary 6]{MR2427048}}]\label{tw_essential}
Let $\bm{x} \in \mathcal{A}^{\G}$ be a $\G$-Toeplitz array. Then there exists a sequence $\left(\Gamma_{n}\right)_{n \geq 1}$ of essential groups of periods of $\bm{x}$ such that
\[
\Gamma_{n+1} \subseteq \Gamma_{n}\text{ and }\bigcup_{n\geq1} \operatorname{Per}\left(\bm{x}, \Gamma_{n}\right)=\G.
\]
\end{Th}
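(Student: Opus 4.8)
The plan is to take the decreasing sequence of groups of periods supplied by Theorem~\ref{Cor-Pet}, upgrade each member to an essential group of periods by means of \cite[Lemma 7]{MR2427048}, and then repair the lost nesting by hand. Throughout I would use the elementary monotonicity of the $\Per$ operator in the group variable: if $\Gamma'\subseteq\Gamma$ are subgroups of $\G$, then $\Per(\bm{x},\Gamma)\subseteq\Per(\bm{x},\Gamma')$, since $\bm{x}|_{g+\Gamma}\equiv a$ forces $\bm{x}|_{g+\Gamma'}\equiv a$. First I would apply Theorem~\ref{Cor-Pet} to the $\G$-Toeplitz array $\bm{x}$ to obtain a sequence $(\Delta_n)_{n\geq1}$ of finite-index subgroups of $\G$ with $\Delta_{n+1}\subseteq\Delta_n$ and $\bigcup_{n\geq1}\Per(\bm{x},\Delta_n)=\G$.

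Next I would construct $(\Gamma_n)_{n\geq1}$ inductively so that each $\Gamma_n$ is an essential group of periods of $\bm{x}$ with $\Per(\bm{x},\Delta_n)\subseteq\Per(\bm{x},\Gamma_n)$. For $n=1$, apply \cite[Lemma 7]{MR2427048} to the group of periods $\Delta_1$. Given such a $\Gamma_n$, consider $\Gamma_n\cap\Delta_{n+1}$: it is a subgroup of finite index (being the intersection of two finite-index subgroups), and by monotonicity $\Per(\bm{x},\Gamma_n\cap\Delta_{n+1})\supseteq\Per(\bm{x},\Gamma_n)\neq\emptyset$, so it is a group of periods; let $\Gamma_{n+1}$ be an essential group of periods obtained by applying \cite[Lemma 7]{MR2427048} to $\Gamma_n\cap\Delta_{n+1}$, so that $\Per(\bm{x},\Gamma_n\cap\Delta_{n+1})\subseteq\Per(\bm{x},\Gamma_{n+1})$.

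It then remains to verify the two asserted properties. For the covering property, monotonicity gives $\Per(\bm{x},\Delta_{n+1})\subseteq\Per(\bm{x},\Gamma_n\cap\Delta_{n+1})\subseteq\Per(\bm{x},\Gamma_{n+1})$, so inductively $\Per(\bm{x},\Delta_n)\subseteq\Per(\bm{x},\Gamma_n)$ for all $n$, whence $\bigcup_{n\geq1}\Per(\bm{x},\Gamma_n)\supseteq\bigcup_{n\geq1}\Per(\bm{x},\Delta_n)=\G$. For the nesting $\Gamma_{n+1}\subseteq\Gamma_n$, monotonicity again gives $\Per(\bm{x},\Gamma_n)\subseteq\Per(\bm{x},\Gamma_n\cap\Delta_{n+1})\subseteq\Per(\bm{x},\Gamma_{n+1})$, and now I would use that $\Gamma_n$ is an essential group of periods: by Corollary~\ref{essential_multi} it is a group generated by essential periods, so the inclusion $\Per(\bm{x},\Gamma_n)\subseteq\Per(\bm{x},\Gamma_{n+1})$ forces $\Gamma_{n+1}\subseteq\Gamma_n$ by Definition~\ref{essential_C}. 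This last implication is the only delicate point: \cite[Lemma 7]{MR2427048} by itself yields essential groups of periods that need not be comparable with one another, and it is exactly the maximality built into the notion of an essential group (equivalently, a group generated by essential periods, via Corollary~\ref{essential_multi}) that converts the inclusion of $\Per$-sets into an inclusion of the groups themselves. Everything else in the argument is routine bookkeeping.
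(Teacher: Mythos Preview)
The paper does not provide its own proof of this statement: it is quoted verbatim as \cite[Corollary~6]{MR2427048} and used as a black box. Your argument is correct and is essentially the natural proof one extracts from the surrounding tools (Theorem~\ref{Cor-Pet}, \cite[Lemma~7]{MR2427048}, and Corollary~\ref{essential_multi}); in particular the crucial step, that $\Per(\bm{x},\Gamma_n)\subseteq\Per(\bm{x},\Gamma_{n+1})$ together with essentiality of $\Gamma_n$ forces $\Gamma_{n+1}\subseteq\Gamma_n$, is exactly Definition~\ref{essential_C} applied via Corollary~\ref{essential_multi}.
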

Sequence $(\Gamma_n)$ from the theorem above is called a \emph{periodic structure} of $\bm{x}$. It can be used to describe the maximal equicontinuous factor of a $\G$-Toeplitz system.
\begin{Th}[{\cite[Proposition 7]{MR2427048}}]
Let $\bm{x} \in \mathcal{A}^{\G}$ be a $\G$-Toeplitz array. If $\left(\Gamma_{n}\right)_{n\geq1}$ is a periodic structure of $\bm{x}$, then $\overleftarrow{G}=\varprojlim_{ n}\G / \Gamma_{n}$ with addition of $(e_\G,e_\G,\ldots)$ is the maximal equicontinuous factor of $\left(X_{\bm{x}}, (S_g)_{g\in\G}\right)$.
\end{Th}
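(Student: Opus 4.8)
The plan is to build $\overleftarrow G$ explicitly as a factor of $X_{\bm x}$ via the \emph{phase} of the skeleton maps $M_{\Gamma_n}$, and then to check that the fibres of this factor map are exactly the proximal pairs, which forces maximality among equicontinuous factors.

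First I would construct the factor map. Fix $\bm y\in X_{\bm x}$. By \eqref{downar_mul}, for each $n$ there is $j\in\G$ with $M_{\Gamma_n}(\bm y)=\sigma^{j}M_{\Gamma_n}(\bm x)$, and since $\Gamma_n$ is an essential group of periods of $\bm x$ (it belongs to a periodic structure), $j$ is unique modulo $\Gamma_n$; put $\pi_n(\bm y):=j+\Gamma_n\in\G/\Gamma_n$. Because $\Gamma_{n+1}\subseteq\Gamma_n$, the skeleton $M_{\Gamma_n}(\bm y)$ is determined by $M_{\Gamma_{n+1}}(\bm y)$ (a position is $\Gamma_n$-periodic exactly when it, together with its translates by the finitely many $\Gamma_{n+1}$-cosets inside $\Gamma_n$, is $\Gamma_{n+1}$-periodic with a common value), so the $\pi_n(\bm y)$ are coherent and define $\pi(\bm y)\in\overleftarrow G$. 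Each $\pi_n$ is continuous: $M_{\Gamma_n}$ takes only finitely many values on $X_{\bm x}$, and by \eqref{downar_mul} and essentiality of $\Gamma_n$ the set $\{\bm y:\pi_n(\bm y)=j+\Gamma_n\}$ is the closed set of $\bm y$ agreeing with $\sigma^{j}\bm x$ on $\Per(\bm x,\Gamma_n)-j$; a finite partition of $X_{\bm x}$ into closed sets is clopen. A direct computation gives $\pi(S_g\bm y)=\iota(g)+\pi(\bm y)$, where $\iota\colon\G\to\overleftarrow G$ is the canonical homomorphism with dense image; since $\pi(S_g\bm x)=\iota(g)$ and the orbit of $\bm x$ is dense in the minimal subshift $X_{\bm x}$, $\pi$ is a continuous surjection. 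As $\overleftarrow G$ with the natural $\G$-translation action $g\colon\xi\mapsto\iota(g)+\xi$ is a compact group rotation, it is equicontinuous, so it is an equicontinuous factor of $(X_{\bm x},(S_g)_{g\in\G})$.

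The key point is the fibre over the identity $\bar e\in\overleftarrow G$: if $\pi(\bm y)=\bar e$ then $M_{\Gamma_n}(\bm y)=M_{\Gamma_n}(\bm x)$ for all $n$, so $\bm y$ and $\bm x$ agree on $\Per(\bm x,\Gamma_n)$ for every $n$; since $(\Gamma_n)$ is a periodic structure, $\bigcup_n\Per(\bm x,\Gamma_n)=\G$, whence $\bm y=\bm x$. Thus $\pi^{-1}(\bar e)=\{\bm x\}$. Using this, I would show that any two points in the same fibre of $\pi$ are proximal: if $\pi(\bm y)=\pi(\bm y')=\xi$, then for each $m$ the set of $h\in\G$ whose class $h+\Gamma_m$ equals the $m$-th coordinate of $-\xi$ is a coset of $\Gamma_m$, hence infinite, so one may choose $h_k\to\infty$ in $\G$ with $\iota(h_k)\to-\xi$; then $\pi(S_{h_k}\bm y)=\iota(h_k)+\xi\to\bar e$, so by continuity of $\pi$, compactness of $X_{\bm x}$ and $\pi^{-1}(\bar e)=\{\bm x\}$ we get $S_{h_k}\bm y\to\bm x$, and likewise $S_{h_k}\bm y'\to\bm x$, so $d(S_{h_k}\bm y,S_{h_k}\bm y')\to 0$ along $h_k\to\infty$. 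Finally, maximality: let $p\colon X_{\bm x}\to Z$ be any factor onto an equicontinuous system. An equicontinuous compact metric system is distal, hence has no nontrivial proximal pairs, and $p$ (being uniformly continuous) carries proximal pairs to proximal pairs; therefore $\pi(\bm y)=\pi(\bm y')$ implies $p(\bm y)=p(\bm y')$, so $p$ factors through $\pi$ by a continuous $\G$-equivariant map $q\colon\overleftarrow G\to Z$. Hence $(\overleftarrow G,\iota(\G))$ dominates every equicontinuous factor of $X_{\bm x}$ while being one itself, i.e.\ it is the maximal equicontinuous factor.

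I expect the main obstacle to be the technical verification in the first step that the phase map $\pi$ is well defined, coherent, and — in particular — continuous on all of $X_{\bm x}$ (not merely on its Toeplitz points); the identification $\pi^{-1}(\bar e)=\{\bm x\}$, which is precisely where the periodic-structure hypothesis enters (an arbitrary decreasing chain of groups of periods would not suffice), is the conceptual crux, whereas the remaining ``proximal pairs collapse in equicontinuous factors'' argument is soft.
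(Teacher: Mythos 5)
The paper does not prove this statement at all: it is quoted verbatim from Cortez--Petite \cite{MR2427048} (Proposition 7 there), so there is no internal proof to compare against. Judged on its own, your argument is correct and is essentially the argument of the cited source: you build the phase map $\pi_n(\bm{y})=j+\Gamma_n$ from the skeleton relation \eqref{downar_mul}, note that essentiality of $\Gamma_n$ makes the phase well defined and makes the fibres $\{\bm{y}:\pi_n(\bm{y})=j+\Gamma_n\}$ a finite closed (hence clopen) partition --- this is precisely the content of Theorem~\ref{structure} quoted in the paper, which you re-derive rather than cite --- and then you pass to the inverse limit, use the periodic-structure hypothesis $\bigcup_n\Per(\bm{x},\Gamma_n)=\G$ to get $\pi^{-1}(\bar e)=\{\bm{x}\}$, deduce proximality of all $\pi$-fibres, and conclude maximality since equicontinuous (hence distal) factors collapse proximal pairs. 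The only step worth flagging is the identification of the fibre of $\pi_n$ with the set of points agreeing with $S_j\bm{x}$ on $\Per(\bm{x},\Gamma_n)-j$: the inclusion ``agreement $\Rightarrow$ same phase'' genuinely needs the essentiality of $\Gamma_n$ (via Definition~\ref{essential_CP} or Proposition~\ref{multi_equiv_cond}), and you do invoke it, so the proof is complete; your construction of $h_k\to\infty$ with $\iota(h_k)\to-\xi$ is also fine because cosets of the finite-index subgroups $\Gamma_m$ are infinite in the (infinite) group $\G$ considered here.
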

We will also need the following related technical result (used in~\cite{MR2427048} to prove Proposition 7 therein).
\begin{Th}[{\cite[Proposition 6, Remark 2]{MR2427048}}]\label{structure}
Let $\bm{x} \in \mathcal{A}^{\G}$ be a $\G$-Toeplitz array and $\bm{y} \in X_{\bm{x}}$. If $\Gamma_{\bm{y}} \subseteq \G$ is an essential group of periods of $\bm{y}$, then
\[
C_{\Gamma_{\bm{y}}}(\bm{y}):=\left\{\bm{x}' \in X_{\bm{x}} : \operatorname{Per}\left(\bm{x}', \Gamma_{\bm{y}}, a\right)=\operatorname{Per}(\bm{y}, \Gamma_{\bm{y}}, a) \text{ for all } a \in \cA\right\}=\overline{\{S_g\bm{y};\ g\in\Gamma_{\bm{y}}\}}
\]
and $\left\{S_w (C_{\Gamma_{\bm{y}}}(\bm{y}))\right\}_{w \in \G / \Gamma_{\bm{y}}}$ is a closed partition of $X_{\bm{x}}$.
\end{Th}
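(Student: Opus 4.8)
The plan is to encode each $\bm x'\in X_{\bm x}$ by the translate carrying the $\Gamma_{\bm y}$-skeleton of $\bm y$ onto that of $\bm x'$, i.e.\ to build an ``address map'' $\tau\colon X_{\bm x}\to\G/\Gamma_{\bm y}$, and then read off both assertions from its fibers. First I would record the basic input: since $\bm x$ is a $\G$-Toeplitz array its orbit closure $X_{\bm x}$ is minimal, so $X_{\bm x}=X_{\bm y}$, and $\Per(\bm y,\Gamma_{\bm y})\neq\emptyset$ because $\Gamma_{\bm y}$ is a group of periods. The proof of~\eqref{downar_mul} only uses these two facts, so it applies with $\Gamma=\Gamma_{\bm y}$ and $\bm y$ in the role of the base point: for every $\bm x'\in X_{\bm x}$ there is $j=j(\bm x')\in\G$ with
\[
\Per(\bm x',\Gamma_{\bm y},a)=\Per(\bm y,\Gamma_{\bm y},a)-j\qquad\text{for all }a\in\cA .
\]
If $j$ and $j'$ both work, then $\Per(\bm y,\Gamma_{\bm y},a)=\Per(\bm y,\Gamma_{\bm y},a)-(j-j')$ for all $a$, so the hypothesis that $\Gamma_{\bm y}$ is an essential group of periods (Definition~\ref{essential_CP} together with the remark that the inclusion there is an equality) forces $j-j'\in\Gamma_{\bm y}$. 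Hence $\tau(\bm x'):=j(\bm x')+\Gamma_{\bm y}$ is well defined; it satisfies $\tau(\bm y)=0$, $\tau(S_g\bm x')=\tau(\bm x')+(g+\Gamma_{\bm y})$, is onto (via $\tau(S_g\bm y)=g+\Gamma_{\bm y}$), and by definition $C_{\Gamma_{\bm y}}(\bm y)=\tau^{-1}(0+\Gamma_{\bm y})$. Since this fiber is invariant under $S_\gamma$ for $\gamma\in\Gamma_{\bm y}$, equivariance of $\tau$ gives $S_w\bigl(C_{\Gamma_{\bm y}}(\bm y)\bigr)=\tau^{-1}(w)$ unambiguously for $w\in\G/\Gamma_{\bm y}$.

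Next I would identify $C_{\Gamma_{\bm y}}(\bm y)$ with $\overline{\{S_g\bm y:g\in\Gamma_{\bm y}\}}$. For ``$\supseteq$'': if $\bm x'=\lim_k S_{g_k}\bm y$ with $g_k\in\Gamma_{\bm y}$, then, taking the coordinatewise limit in the identities $(S_{g_k}\bm y)_{n+\gamma}=\bm y_{n+\gamma+g_k}=a$ (valid for $n\in\Per(\bm y,\Gamma_{\bm y},a)$ and $\gamma\in\Gamma_{\bm y}$, since $\gamma+g_k\in\Gamma_{\bm y}$), one gets $\Per(\bm x',\Gamma_{\bm y},a)\supseteq\Per(\bm y,\Gamma_{\bm y},a)$ for every $a$; combining with the displayed formula and using that each $\Per(\bm y,\Gamma_{\bm y},a)$ is a finite union of $\Gamma_{\bm y}$-cosets (so an inclusion between two unions of equally many cosets is an equality) yields $\Per(\bm y,\Gamma_{\bm y},a)=\Per(\bm y,\Gamma_{\bm y},a)-j(\bm x')$, hence $j(\bm x')\in\Gamma_{\bm y}$ and $\bm x'\in C_{\Gamma_{\bm y}}(\bm y)$. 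For ``$\subseteq$'': given $\bm x'\in C_{\Gamma_{\bm y}}(\bm y)$, write $\bm x'=\lim_k S_{g_k}\bm y$ by minimality and pass to a subsequence with $g_k\equiv c\pmod{\Gamma_{\bm y}}$; the same limit argument gives $\Per(\bm y,\Gamma_{\bm y},a)=\Per(\bm x',\Gamma_{\bm y},a)\supseteq\Per(\bm y,\Gamma_{\bm y},a)-c$ for all $a$, so again equality and $c\in\Gamma_{\bm y}$, whence $g_k\in\Gamma_{\bm y}$ and $\bm x'\in\overline{\{S_g\bm y:g\in\Gamma_{\bm y}\}}$. This proves the first equality in the statement and, in particular, that $C_{\Gamma_{\bm y}}(\bm y)$ is closed.

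It then remains to read off the partition statement: the fibers $\tau^{-1}(w)=S_w\bigl(C_{\Gamma_{\bm y}}(\bm y)\bigr)$, $w\in\G/\Gamma_{\bm y}$, are pairwise disjoint, nonempty, and cover $X_{\bm x}$ (every point has a well-defined address), and each is closed, being the image of the closed set $C_{\Gamma_{\bm y}}(\bm y)$ under a homeomorphism $S_g$ ($g$ any representative of $w$); hence $\{S_w(C_{\Gamma_{\bm y}}(\bm y))\}_{w\in\G/\Gamma_{\bm y}}$ is a closed (indeed clopen, as there are finitely many) partition of $X_{\bm x}$.

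I expect the heart of the matter to be the well-definedness of $\tau$ in $\G/\Gamma_{\bm y}$. That all $\Gamma_{\bm y}$-skeletons in the system are translates of that of $\bm y$ is a soft consequence of~\eqref{downar_mul} and of running minimality in both directions; but pinning the translation down modulo $\Gamma_{\bm y}$ itself (rather than modulo some a priori larger subgroup) is exactly where the hypothesis that $\Gamma_{\bm y}$ is \emph{essential}, and not merely a group of periods, is indispensable --- without it both conclusions of the theorem fail. Everything downstream is the routine ``an inclusion between two unions of the same finite number of $\Gamma_{\bm y}$-cosets is an equality'' bookkeeping.
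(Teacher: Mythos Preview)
The paper does not prove this statement; it is quoted from Cortez--Petite \cite[Proposition 6, Remark 2]{MR2427048} and used as a black box. Your proof is correct and is essentially the standard argument: lift~\eqref{downar_mul} to an address map $\tau\colon X_{\bm x}\to\G/\Gamma_{\bm y}$, use the essential-period hypothesis to make $\tau$ well defined, and read off both the identification $C_{\Gamma_{\bm y}}(\bm y)=\overline{\{S_g\bm y:g\in\Gamma_{\bm y}\}}$ and the closed partition from the fibers. Your diagnosis that essentiality is needed precisely for the well-definedness of $\tau$ (and nowhere else) is exactly right.
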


\section{Minimality and periodic structure of $\sB$-free systems}\label{se3}
\subsection{Background and main results}
Recall that any $\sB$-free system is essentially minimal, i.e.\ it has exactly one minimal subset. In fact, we have the following.
\begin{Th}[{\cite[Theorem A]{MR3803141}},  ``baby version'' of Theorem~\ref{minimal2} in dimension one]\label{minimal_basic}
For any $\mathscr{B}\subset \N$, the subshift $(X_\eta,\sigma)$ has a unique minimal subset, denoted by $M$. Moreover, $M$ is the orbit closure of a Toeplitz sequence.
\end{Th}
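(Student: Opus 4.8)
The plan is to identify the unique minimal subset explicitly as $X_{\eta^*}$, where $\mathscr{D}$, $\mathscr{B}^*$ and $\eta^*=\raz_{\mathcal{F}_{\mathscr{B}^*}}$ are the one-dimensional instances ($\OK=\Z$, $\mathfrak{b}=b\Z$) of the objects introduced in~\eqref{A}, and then to prove two things in turn: that $X_{\eta^*}$ is the orbit closure of a Toeplitz sequence, and that $X_{\eta^*}\subseteq X_x$ for \emph{every} $x\in X_\eta$. The second statement already forces the conclusion: given any minimal subshift $M\subseteq X_\eta$, choose $x\in M$, so $X_x=M$, whence $X_{\eta^*}\subseteq M$ and, by minimality of $M$, $M=X_{\eta^*}$; thus $X_\eta$ has a unique minimal subset, equal to $X_{\eta^*}$. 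Beforehand one clears the degenerate cases: if $\mathscr{B}^{prim}$ is finite then $\eta$ is periodic and $X_\eta$ is a single finite orbit, and if $1\in\mathscr{B}$ then $\eta=\mathbf 0$; both are immediate, so from now on $\mathscr{B}=\mathscr{B}^{prim}$ is infinite and $\eta\neq\mathbf 0$.

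For the Toeplitz property of $\eta^*$ I would use the one-dimensional versions of Proposition~\ref{propA} and of the ``moreover'' clause of Theorem~\ref{dwyz}. Applying the former to $\mathscr{B}^*$ in place of $\mathscr{B}$ identifies the set of non-periodic coordinates of $\eta^*$ with $\mathcal{M}_{\mathscr{D}^*}\cap\mathcal{F}_{\mathscr{B}^*}$, while the latter asserts $\mathscr{D}^*=\emptyset$, i.e.\ that passing from $\mathscr{B}$ to $\mathscr{B}\cup\mathscr{D}$ swallows every infinite pairwise coprime tower: if $d\cC\subseteq\mathscr{B}^*$ with $\cC$ infinite and pairwise coprime, then either $d$ already lies in $\mathscr{B}\cup\mathscr{D}$, which is impossible since then $dc\subsetneq d$ would keep infinitely many $dc$ ($c\in\cC$) out of $(\mathscr{B}\cup\mathscr{D})^{prim}=\mathscr{B}^*$, or else a diagonal argument over the coprime families witnessing the memberships $dc\in\mathscr{D}$ yields a single infinite pairwise coprime $\cC'$ with $d\cC'\subseteq\mathscr{B}$, so $d\in\mathscr{D}$ — again a contradiction. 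Hence every coordinate of $\eta^*$ is periodic, i.e.\ $\eta^*$ is a Toeplitz sequence; in particular it is a minimal point and $X_{\eta^*}$ is a minimal Toeplitz subshift.

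The core of the argument is the inclusion $X_{\eta^*}\subseteq X_x$ for an arbitrary $x\in X_\eta$. Fix such $x$, pick $g_m$ with $S_{g_m}\eta\to x$, and (after refining) assume $T_{g_m}(\underline 0)\to h$ in $H$. Since $\eta$ vanishes identically on the residue class $0\bmod b$ for each $b\in\mathscr{B}$, the limit $x$ vanishes on $Z_h:=\bigcup_{b\in\mathscr{B}}(-h_b+b\Z)$. The decisive structural input is that membership in $\mathscr{D}$ is witnessed by \emph{infinite pairwise coprime} families. Given a window $W=[-N,N]$, the finitely many elements of $\mathscr{B}^*$ that are active on $W$ and come from $\mathscr{B}$ impose mutually consistent congruences on a prospective shift $n$; each active $d\in\mathscr{D}$ is replaced by finitely many pairwise coprime cofactors $dc_1,\dots,dc_k\in\mathscr{B}$ chosen so that every multiple of $d$ in the translated window that must carry a $0$ falls into some $dc_i\Z$. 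Then the Chinese Remainder Theorem — the moduli $dc_1,\dots,dc_k$ being coprime, and the $\mathscr{B}$-part of the prescribed zeros landing in $Z_h$ — produces arbitrarily large $n$ for which $\sigma^n x$ reproduces on $W$ the pattern carried by $\eta^*$, the prescribed $1$'s surviving for a suitable such $n$ by an abundance argument (admissible blocks occur in $\eta$ with positive frequency, $\eta$ being quasi-generic for its Mirsky measure $\nu_\eta$). Letting $W$ exhaust $\Z$ and diagonalising produces a point of $X_{\eta^*}$ inside $X_x$, and minimality of $X_{\eta^*}$ upgrades this to $X_{\eta^*}\subseteq X_x$. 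Taking $x=\eta$ here also shows $\eta^*\le\eta$ and $\eta^*\in X_\eta$, so the minimal set indeed sits inside $X_\eta$, and the theorem follows.

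The obstacle I expect to be decisive is precisely this last inclusion. An arbitrary $x\in X_\eta$ may be far sparser than $\eta$, so the Chinese Remainder Theorem construction that matches $\eta^*$ inside $\eta$ cannot simply be transplanted; one must first extract from $x$ its odometer coordinate $h$ (recording $\operatorname{supp}x\subseteq\Z\setminus Z_h$) and then control \emph{simultaneously} the enforced zeros — where the infinite-pairwise-coprime clause in the definition of $\mathscr{D}$ is exactly what makes the relevant cosets fit inside $\mathcal{M}_{\mathscr{B}}$, a fact quantified by Theorem~\ref{Thm_SK} — and the enforced ones, which is a genuine abundance statement. Managing the interaction between the $\mathscr{D}$-cofactors and the congruences already imposed by the active elements of $\mathscr{B}$ is the delicate bookkeeping; by contrast the abstract reduction, the degenerate cases, and the Toeplitz property of $\eta^*$ are routine once Proposition~\ref{propA} and Theorem~\ref{dwyz} are available.
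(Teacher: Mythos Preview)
Your reduction and your treatment of the Toeplitz property of $\eta^*$ are fine and agree with the paper: one passes to $\mathscr{B}^*=(\mathscr{B}\cup D)^{prim}$, and Proposition~\ref{propA} together with the ``moreover'' in Theorem~\ref{dwyz} give $D^*=\emptyset$, hence $\eta^*$ is Toeplitz.

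The divergence is in the second step. You aim to prove $X_{\eta^*}\subseteq X_x$ for \emph{every} $x\in X_\eta$ directly, by reconstructing the $\eta^*$-pattern inside an arbitrary $x$. The paper does \emph{not} do this. It works only with the transitive point $\eta$: using the CRT construction it shows that for every $N$ the block $\eta^*|_{[-N,N]}$ occurs on $\eta$, and moreover occurs along a full coset of a finite-index ideal, hence syndetically. Then it invokes the abstract criterion of Proposition~\ref{LE51new}/Corollary~\ref{dlapod} (condition~(e): a family of pairwise distinct blocks, each seen syndetically by the transitive point) to conclude essential minimality. The statement $X_{\eta^*}\subseteq X_x$ for all $x$ then follows \emph{a posteriori} from (a)$\Rightarrow$(b) in Proposition~\ref{LE51new}, not from a direct block-matching in $x$.

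This matters because your sketch has a genuine gap exactly where you flag it. For arbitrary $x\in X_\eta$ the odometer coordinate $h$ only gives $\operatorname{supp}x\subseteq\Z\setminus Z_h$, an \emph{upper} bound on the support; it provides no lower bound whatsoever, and the Mirsky-measure abundance you invoke is a property of $\eta$, not of $x$. Without tautness or further hypotheses there is no a priori reason why the ``prescribed $1$'s'' should survive in $x$ along any of the CRT-produced positions. The paper sidesteps this entirely: in $\eta$ the $1$'s on the periodic part $\Per(\eta,\lcm(S_i))\cap[-N,N]$ are preserved automatically under the shift by $M_N$ (because $\lcm(S_i)\mid M_N$), so no abundance argument is needed at all. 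In short: your strategy tries to prove condition~(b) of Proposition~\ref{LE51new} bare-handed, whereas the paper proves the much softer condition~(e) and lets the abstract equivalence do the rest.
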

Recall the notion of densities for subsets of integers. Given $A\subset \Z$, let $\bm{\delta}(A)$ stand for the logarithmic density and $\bm{d}(A)$ stand for the natural density of $A$, provided that these densities exist:
\[
\bm{\delta}(A)=\lim_{N\to \infty}\frac{1}{\log N}\sum_{1\leq n\leq N}\frac{\bm{1}_A(n)}{n} \text{ and }\bm{d}(A)=\lim_{N\to \infty}\frac{1}{N}\sum_{1\leq n\leq N}\bm{1}_A(n).
\]
Recall that for the sets of multiples the logaritmic density always exists, by the result of Davenport and Erd\"os~\cite{MR43835}. We say that $\mathscr{B}$ is \emph{Behrend}, whenever $\bm{\delta}(\mathcal{M}_\mathscr{B})=1$ (which is equivalent to $\bm{d}(\mathcal{M}_\mathscr{B})=1$). Recall also that a set $\mathscr{B}$ is called \emph{taut} when
\[
\bm{\delta}(\mathcal{M}_{\mathscr{B}})>\bm{\delta}(\mathcal{M}_{\mathscr{B}\setminus \{b\}}) \text{ for any }b\in\mathscr{B}.
\]
It follows by~\cite[Corollary 0.19]{MR1414678} that $\mathscr{B}$ is taut precisely if it is primitive and
\[
d\cC\not\subseteq \mathscr{B} \text{ for any }d\in\N \text{ and any Behrend set }\cC\subset\N.
\]
Recall that $\mathscr{B}$ is primitive if $b\ndivides b'$ for any $b\neq b'\in\mathscr{B}$.


Kasjan, Keller and Lema\'nczyk described the minimality of $\sB$-free systems in~\cite{MR3947636}. Before we formulate their result, let us define some auxiliary objects. Given $\sB$, let
\begin{align}
\mathcal{C}_{S}&:=\{\gcd(b,\lcm(S)):  b\in\sB\} \text{ for a finite subset }S\subset \sB,\label{wym1}\\
\mathcal{C}_{\infty}&:=\left\{n \in \mathbb{N}; \ \forall_{\text{finite }S\subset \sB}\ \exists_{\text{finite }S^{\prime};\ S \subseteq S^{\prime} \subset \mathscr{B}}\ n \in \mathcal{C}_{S^{\prime}} \setminus S^{\prime}\right\}.\nonumber
\end{align}
\begin{Th}[{\cite[Theorem B]{MR3947636}}, almost complete version of Theorem~\ref{equivalent} in dimension one]\label{toeplitz}
Suppose that $\sB$ is primitive. Let $W=\{h\in H: \ h_b\neq0\text{ for all }b\in\sB\}$, where $H$ is defined by \eqref{eq:grupa} for $\OK=\Z$ and $\mathfrak{b}=b\Z$, $b\in\sB$, i.e.
\[
H=\overline{\{(n,n,n,\dots) : n\in \Z\}}\subset \prod_{b\in\mathscr{B}}\Z/b\Z.
\]
Consider the following list of properties:
\begin{enumerate}[(1)]
\item the window $W$ is topologically regular, i.e. $\overline{\operatorname{int}(W)}=W$,\label{B1}
\item $\mathcal{F}_{\sB}=\bigcup_{\text{finite }S\subset \sB} \cf_{\mathcal{C}_{S}}$, \label{B2}
\item $\mathcal{C}_{\infty}=\emptyset$,  \label{B3}
\item there are no $d\in\N$ and no infinite pairwise coprime set $\mathcal{C} \subseteq \N\setminus\{1\}$ such that $d \mathcal{C} \subseteq \mathscr{B}$,\label{B4}
\item $\eta=\varphi(0)$ is a Toeplitz sequence different from $\ldots0.00\ldots$,\label{B5}
\item $0 \in C_{\varphi}$ and $\varphi(0) \neq\ldots0.00\ldots$, where $C_\varphi$ is the set of continuity points of $\varphi$,\label{B6}
\item $\eta \in M$ and $\eta \neq\ldots0.00\ldots$, where $M=\overline{C_\varphi}$,\label{B7}
\item $X_{\eta}$ is minimal, i.e. $X_{\eta}=M$, and~$\operatorname{card}\left(X_{\eta}\right)>1$,\label{B8}
\item the dynamics on $(X_{\eta},\sigma)$ is a minimal almost $1$-$1$ extension of $\left(H, R_{\Delta(1)}\right)$, the rotation $\Delta(1)$ on $H$.\label{B9}
\end{enumerate}
The relations between the above conditions are as follows:
\begin{align*}
\text{$\mathscr{B}$ is taut}\Leftarrow\ &\boxed{\eqref{B1}\iff\eqref{B2}\iff\eqref{B3}\iff\eqref{B4}\iff\eqref{B5}\iff\eqref{B6}}\\
&\Downarrow\\
& \eqref{B9}\\
& \Downarrow\\
&\boxed{\eqref{B7}\iff\eqref{B8}}.
\end{align*}
Moreover, if $\overline{\Delta(\mathbb{Z}) \cap W}=W$ (in particular if $\sB$ is taut), then \eqref{B1}$-$\eqref{B9} are all equivalent.
\end{Th}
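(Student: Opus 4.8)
The plan is to organise the nine conditions into three tiers and establish the equivalences tier by tier: the \emph{arithmetic tier} (B2), (B3), (B4); the \emph{Toeplitz/window tier} (B1), (B5), (B6); and the \emph{minimality tier} (B7), (B8), (B9). Two cross-tier links come for free once the objects of the excerpt are specialised to $\OK=\Z$, $\mathfrak b=b\Z$: the set $\mathfrak D$ of~\eqref{A} becomes $\{d\Z:\ d\mathcal C\subseteq\mathscr B\text{ for some infinite pairwise coprime }\mathcal C\}$, so (B4) is literally ``$\mathfrak D=\emptyset$'' and Theorem~\ref{dwyz} then yields (B4)$\Leftrightarrow$(B5) directly, while Proposition~\ref{propA} identifies $\Z\setminus\bigcup_s\Per(\eta,s)$ with $\mathcal M_{\mathfrak D}\cap\mathcal F_{\mathscr B}$. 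The real work lies in the combinatorial equivalences inside the arithmetic tier, in the translation of (B3) into window regularity (B1) and into $0\in C_\varphi$ (B6), and in closing the loop through the minimality tier under the extra hypothesis.

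For the arithmetic tier I would prove (B3)$\Leftrightarrow$(B4) by contraposition both ways. If $n\in\mathcal C_\infty$, build an infinite pairwise coprime $\mathcal C$ with $n\mathcal C\subseteq\mathscr B$ inductively: having chosen $b_1=nc_1,\dots,b_k=nc_k\in\mathscr B$, apply the defining property of $\mathcal C_\infty$ to a finite saturated $S\subset\mathscr B$ large enough to contain every prime-power divisor of $nc_1\cdots c_k$; the resulting $b\in\mathscr B$ with $\gcd(b,\lcm(S))=n$, $b\notin S$, has $b/n$ coprime to $c_1\cdots c_k$. Conversely, if $n\mathcal C\subseteq\mathscr B$ with $\mathcal C$ infinite pairwise coprime, then for any finite $S\subset\mathscr B$ all but finitely many $c\in\mathcal C$ are coprime to $\lcm(S)$, so $\gcd(nc,\lcm(S))\mid n$; a short refinement, using primitivity to rule out that the gcd is a proper divisor of $n$, shows $n\in\mathcal C_\infty$. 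For (B2)$\Leftrightarrow$(B3) I would first note $\mathcal M_{\mathscr B}\subseteq\mathcal M_{\mathcal C_S}$ for every finite $S$ (if $b\mid m$ then $\gcd(b,\lcm(S))\mid m$), so $\bigcup_S\mathcal F_{\mathcal C_S}\subseteq\mathcal F_{\mathscr B}$ always; the reverse inclusion fails exactly when some $\mathscr B$-free $m$ has, for every finite $S$, a divisor in $\mathcal C_S\setminus\mathscr B\subseteq\mathcal C_S\setminus S$, and since $m$ has finitely many divisors a pigeonhole over a filtration $S_1\subset S_2\subset\cdots\nearrow\mathscr B$ produces a single such divisor lying in $\mathcal C_\infty$. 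The opposite direction, from $\mathcal C_\infty\neq\emptyset$ to the failure of (B2), is more delicate and is where the one-dimensional form of Proposition~\ref{propA} does the job: a non-periodic position of $\eta$ is $\mathscr B$-free and yet lies in $\mathcal M_{\mathcal C_S}$ for every $S$, hence in no $\mathcal F_{\mathcal C_S}$.

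Next the window tier. Here $\varphi\colon H\to\{0,1\}^\Z$ is the coding of the orbit of $\underline0$ under the rotation by $\Delta(1)$ on $H=\overline{\Delta(\Z)}\subseteq\prod_b\Z/b\Z$ with respect to the window $W$ (written $C$ in the excerpt), and $\eta=\varphi(\underline0)$. For a window that is clopen on any finite set of coordinates, the continuity set of $\varphi$ equals $\{h:\ \Delta(\Z)h\cap\partial W=\emptyset\}$ and the image of a continuity point is Toeplitz; checking that $\underline0$ being such a point is the same as regularity of $W$ along the diagonal orbit gives (B5)$\Leftrightarrow$(B6)$\Leftrightarrow$(B1). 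The bridge to (B3) is the crux: for each finite $S$ the set $\{h\in H:\ h_b\neq0\text{ for all }b\in\mathcal C_S\}$ depends only on finitely many coordinates, hence is genuinely clopen, and it sits inside $W$ because $\mathcal C_S$ saturates $\mathscr B$ relative to $\lcm(S)$; if $\mathcal C_\infty=\emptyset$ these clopen neighbourhoods of $\underline0$ exhaust the interior of $W$ there and force regularity, whereas a point of $\mathcal C_\infty$ produces elements of $H\setminus W$ accumulating at $\underline0$ through every neighbourhood and destroys it. Combining with the arithmetic tier closes the box (B1)$\Leftrightarrow\cdots\Leftrightarrow$(B6), and each of these then forces $\mathscr B$ taut via the Davenport--Erd\H{o}s theorem and \cite[Corollary~0.19]{MR1414678}.

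Finally the minimality tier and the ``moreover''. By Theorem~\ref{minimal_basic}, $M=\overline{C_\varphi}$ is always the unique minimal subset of $X_\eta$, so (B6)$\Rightarrow$(B7) is immediate, (B7)$\Leftrightarrow$(B8) holds because $X_\eta$ is minimal exactly when it coincides with its unique minimal subset, i.e.\ when $\eta\in M$ (the cardinality clause only excluding $X_\eta=\{\mathbf 0\}$), and (B6)$\Rightarrow$(B9) is the general fact that a Toeplitz continuity point makes $(X_\eta,\sigma)$ the almost $1$-$1$ extension of $(H,R_{\Delta(1)})$ induced by $\varphi$, whence (B9)$\Rightarrow$(B7). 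For the last claim, the hypothesis $\overline{\Delta(\Z)\cap W}=W$ says $W$ has no ``orbit-invisible'' fat part and so excludes the scenario where $X_\eta$ is minimal but $\underline0\notin C_\varphi$, closing the loop (B8)$\Rightarrow$(B6); tautness implies this hypothesis once more by \cite[Corollary~0.19]{MR1414678}. The step I expect to be hardest is the arithmetic-to-topology bridge above: matching ``$\mathcal C_\infty=\emptyset$'' with topological regularity of $W$ needs careful control of which of the infinitely many coordinates of $\prod_b\Z/b\Z$ actually constrain a neighbourhood of $\underline0$, complicated by $H$ being a proper, non-open subgroup of the product.
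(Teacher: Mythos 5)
You are proposing a proof of a statement the paper itself never proves: Theorem~\ref{toeplitz} is imported verbatim from \cite[Theorem B]{MR3947636} (together with Keller's identification $\mathcal{C}_\infty=D$ from \cite{MR4280951}) and is used as an input for the one-dimensional Section~3, so there is no proof in the paper to compare yours against; I can only assess your outline on its own terms. On the positive side, deriving \eqref{B4}$\iff$\eqref{B5} from Theorem~\ref{dwyz} (equivalently Theorem~\ref{d}) and Proposition~\ref{propA}/\ref{lemmaA_one} is legitimate and not circular, since the proofs in Section~4, and those of Theorem~\ref{d} and Proposition~\ref{lemmaA_one}, nowhere invoke Theorem~\ref{toeplitz}; only the proofs of Theorems~\ref{druggi2Toeplitz} and~\ref{structure1} do. Your arithmetic tier is essentially sound, but the inductive step in \eqref{B3}$\implies$\eqref{B4} is misstated: $S$ must consist of elements of $\sB$, so you cannot ask it to ``contain every prime-power divisor of $nc_1\cdots c_k$''. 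The correct move is to put the already constructed $b_i=nc_i$ into $S$; then for the $b\in\sB$ with $\gcd(b,\lcm(S'))=n$ one has $b=nm$, and $nc_i\divides\lcm(S')$ forces $n\gcd(m,c_i)=\gcd(nm,nc_i)\divides n$, i.e.\ $\gcd(m,c_i)=1$, while $n\in\mathcal{C}_\infty$ rules out $n\in\sB$ and hence $m=1$.

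The genuine gaps are exactly where you predicted difficulty, and they are the substantive content of \cite[Theorem B]{MR3947636}, which an outline cannot simply gesture at. You assert, without argument, that $C_\varphi=\{h:\ \Delta(\Z)h\cap\partial W=\emptyset\}$, that $0\in C_\varphi$ forces $\varphi(0)$ to be Toeplitz, and above all that topological regularity of $W$ (a global property) is equivalent to continuity of $\varphi$ at the single point $\underline 0$; the bridge between $\mathcal{C}_\infty=\emptyset$ and $\overline{\operatorname{int}(W)}=W$ is described only as ``these clopen neighbourhoods \dots exhaust the interior of $W$ there and force regularity'', which is precisely the step that needs a proof (and in \cite{MR3947636} occupies the bulk of the argument). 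The same applies to the closing implication \eqref{B8}$\implies$\eqref{B6} under $\overline{\Delta(\Z)\cap W}=W$, which you dismiss in one sentence about an ``orbit-invisible fat part''; and to (B6)$\implies$(B9), where ``general fact'' hides the almost $1$-$1$ extension analysis. Two smaller inaccuracies: ``$\sB$ taut $\implies\overline{\Delta(\Z)\cap W}=W$'' is a theorem of \cite{MR3947636}, not a consequence of \cite[Corollary 0.19]{MR1414678} alone; and deducing tautness from \eqref{B4} needs, besides that corollary, the fact that every Behrend set contains an infinite pairwise coprime subset (cf.\ Remark~\ref{porownanie}). As it stands, your proposal correctly re-derives the arithmetic equivalences \eqref{B2}$\iff$\eqref{B3}$\iff$\eqref{B4}$\iff$\eqref{B5} from the paper's machinery, but it does not prove the window-tier equivalences or the final ``moreover'', so it is a road map rather than a proof of the full theorem.
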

Keller in~\cite[Lemma 1]{MR4280951} showed that
\begin{equation}\label{de}
\mathcal{C}_\infty=D:=\{d\in\N :  d\cC\subset\sB \text{ for some infinite pairwise coprime set }\cC\}.
\end{equation}
Moreover, he gave the following more detailed description of the unique minimal subset of $\sB$-free systems than that in Theorem \ref{minimal_basic}.
\begin{Th}[{\cite[Corollary 5, Lemma 3]{MR4280951}}, Theorem~\ref{minimal2} in dimension one]\label{min_Keller}
For any primitive $\sB\subset\N$ let
\[
\sB^{*}:=(\sB \setminus \mathcal{M}_{\mathcal{C}_{\infty}})\cup \mathcal{C}_{\infty}^{prim}\text{ and } \eta^{*}=\raz_{\mathcal{F}_{\mathscr{B}^{*}}}.
\]
Then $\eta^{*}\leq \eta$ and it is a Toeplitz sequence. Moreover,~$X_{\eta^*}$ is a unique minimal subset of~$X_\eta$.
\end{Th}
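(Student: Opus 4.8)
The plan is to treat the three assertions separately, reducing the Toeplitz property and the minimality of $X_{\eta^*}$ to Theorem~\ref{toeplitz} and Theorem~\ref{minimal_basic}; the real content lies in the combinatorics of $\sB^*$, together with the identity $\mathcal{C}_\infty=D$ from~\eqref{de}, which lets me use the two descriptions of $\mathcal{C}_\infty$ interchangeably. First I would dispose of degenerate cases: $\sB\in\{\emptyset,\{1\}\}$ is trivial, and if $1\in\mathcal{C}_\infty$ — equivalently $\sB$ itself contains an infinite pairwise coprime family — then $\mathcal{M}_{\mathcal{C}_\infty}=\Z$ forces $\sB^*=\{1\}$ and $\eta^*=\mathbf{0}$, which is (trivially) Toeplitz, while the same infinite coprime family produces, by the Chinese Remainder Theorem, arbitrarily long blocks of zeros in $\eta$, so $\mathbf{0}\in X_\eta$ and $X_{\eta^*}=\{\mathbf{0}\}$ is visibly its unique minimal subset. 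From now on $\sB\notin\{\emptyset,\{1\}\}$ and $1\notin\mathcal{C}_\infty$; then $\sB^*$ is a nonempty primitive set with $1\notin\sB^*$, and $\eta,\eta^*\notin\{\mathbf{0},\mathbf{1}\}$, so Theorem~\ref{toeplitz} applies to $\sB^*$. The inequality $\eta^*\le\eta$ is immediate: if $b\in\sB\cap\mathcal{M}_{\mathcal{C}_\infty}$ then $b\Z\subseteq\mathcal{M}_{\mathcal{C}_\infty}=\mathcal{M}_{\mathcal{C}_\infty^{prim}}\subseteq\mathcal{M}_{\sB^*}$, and if $b\in\sB\setminus\mathcal{M}_{\mathcal{C}_\infty}$ then $b\in\sB^*$, so again $b\Z\subseteq\mathcal{M}_{\sB^*}$; hence $\mathcal{M}_{\sB}\subseteq\mathcal{M}_{\sB^*}$.

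By Theorem~\ref{toeplitz} applied to $\sB^*$ (there ``$\mathcal{C}_\infty(\sB^*)=\emptyset$'' is equivalent to ``$\eta^*$ is a Toeplitz sequence different from $\mathbf{0}$'' and also implies that $X_{\eta^*}$ is minimal), to obtain that $\eta^*$ is Toeplitz and that $X_{\eta^*}$ is minimal it suffices to check that $\sB^*$ is primitive and $\mathcal{C}_\infty(\sB^*)=D(\sB^*)=\emptyset$. Primitivity follows from that of $\sB$: elements within $\sB\setminus\mathcal{M}_{\mathcal{C}_\infty}$, or within $\mathcal{C}_\infty^{prim}$, are pairwise incomparable, and if $b\in\sB\setminus\mathcal{M}_{\mathcal{C}_\infty}$ and $c\in\mathcal{C}_\infty^{prim}$, then $c\mid b$ would give $b\in\mathcal{M}_{\mathcal{C}_\infty}$, while a proper relation $b\mid c$, using $c\mathcal{C}\subseteq\sB$ for an infinite pairwise coprime $\mathcal{C}\subseteq\N\setminus\{1\}$, would make $b$ a proper divisor of some $cc'\in\sB$ — contradicting primitivity of $\sB$. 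The heart of the matter is $D(\sB^*)=\emptyset$: suppose $d\mathcal{C}\subseteq\sB^*$ with $\mathcal{C}\subseteq\N\setminus\{1\}$ infinite pairwise coprime, and split $\mathcal{C}=\mathcal{C}_1\cupdot\mathcal{C}_2$ (a disjoint union) with $dc\in\sB\setminus\mathcal{M}_{\mathcal{C}_\infty}$ for $c\in\mathcal{C}_1$ and $dc\in\mathcal{C}_\infty^{prim}$ for $c\in\mathcal{C}_2$. If $\mathcal{C}_1$ is infinite, then $d\mathcal{C}_1\subseteq\sB$ already shows $d\in D=\mathcal{C}_\infty$, whence $dc\in\mathcal{M}_{\mathcal{C}_\infty}$ for $c\in\mathcal{C}_1$ — impossible. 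If $\mathcal{C}_2$ is infinite, then each $dc$ ($c\in\mathcal{C}_2$) lies in $\mathcal{C}_\infty=D$, hence carries an infinite pairwise coprime $\mathcal{D}_c\subseteq\N\setminus\{1\}$ with $(dc)\mathcal{D}_c\subseteq\sB$; choosing inductively $c_k\in\mathcal{C}_2$ and $\delta_k\in\mathcal{D}_{c_k}$, each step coprime to the finitely many primes already used, produces an infinite pairwise coprime $\{c_k\delta_k\}_k\subseteq\N\setminus\{1\}$ with $d\{c_k\delta_k\}_k\subseteq\sB$, so again $d\in\mathcal{C}_\infty$; but then $d$ is a proper divisor (each $c>1$) of $dc\in\mathcal{C}_\infty^{prim}$ and lies in $\mathcal{C}_\infty$, contradicting primitivity of $\mathcal{C}_\infty^{prim}$. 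This finishes the Toeplitz part and the minimality of $X_{\eta^*}$.

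It remains to see that $X_{\eta^*}$ is \emph{the} minimal subset of $X_\eta$. Since $X_\eta$ has a unique minimal subset $M$ (Theorem~\ref{minimal_basic}) and $X_{\eta^*}$ is minimal, it is enough to show $X_{\eta^*}\subseteq X_\eta$, i.e.\ $\eta^*\in X_\eta$; then $X_{\eta^*}$ is a minimal subset of $X_\eta$, so $X_{\eta^*}=M$. To place $\eta^*$ in $X_\eta$ I would realize each restriction $\eta^*|_{[-N,N]}$ as a shift $\sigma^n\eta|_{[-N,N]}$. Because $\mathcal{C}_\infty(\sB^*)=\emptyset$, Theorem~\ref{toeplitz} gives the increasing decomposition $\mathcal{F}_{\sB^*}=\bigcup_{\text{finite }S\subseteq\sB^*}\mathcal{F}_{\mathcal{C}_S}$, so I may fix a finite $S\subseteq\sB^*$ with $\mathcal{F}_{\mathcal{C}_S}\cap[-N,N]=\mathcal{F}_{\sB^*}\cap[-N,N]$; then $\eta^*|_{[-N,N]}$ is the $\lcm(S)$-periodic pattern whose zeros are exactly the $j$ with $\gcd(b^*,\lcm(S))\mid j$ for some $b^*\in\sB^*$. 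I would then construct $n$ by the Chinese Remainder Theorem so that each such $j$ is forced into $\mathcal{M}_{\sB}$ by a distinct element of $\sB$ — there is enough room because every $c\in\mathcal{C}_\infty^{prim}$ comes with an infinite pairwise coprime family $\mathcal{C}$ with $c\mathcal{C}\subseteq\sB$ — while ensuring that no element of $\sB$ divides $n+j$ for the remaining $j\in[-N,N]$.

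The last clause is the main obstacle: arranging the Chinese Remainder Theorem solution so that no $b\in\sB$ accidentally divides a position $n+j$ with $\eta^*(j)=1$. This ``admissibility'' of the finite pattern $\eta^*|_{[-N,N]}$ is precisely what is encoded by $\mathcal{C}_\infty(\sB^*)=\emptyset$ (it is the one-dimensional shadow of condition~\eqref{Y} of Theorem~\ref{equivalent} and of Theorem~\ref{Thm_SK}), and handling it cleanly is where the work goes; by contrast the case analyses above are elementary. A smoother alternative for this last step would be to establish that the Mirsky measures satisfy $\nu_\eta=\nu_{\eta^*}$: as $\nu_{\eta^*}$ is the unique invariant measure of the minimal system $X_{\eta^*}$ and $\nu_\eta$ is supported on $M$, one would then get $X_{\eta^*}=\operatorname{supp}\nu_{\eta^*}=\operatorname{supp}\nu_\eta\subseteq M\subseteq X_\eta$ in one stroke, again forcing $X_{\eta^*}=M$.
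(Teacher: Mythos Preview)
Your treatment of $\eta^*\le\eta$, the primitivity of $\sB^*$, and the vanishing of $D(\sB^*)$ is correct and essentially matches the paper's Theorem~\ref{d}; invoking Theorem~\ref{toeplitz} for $\sB^*$ to get that $\eta^*$ is Toeplitz and $X_{\eta^*}$ minimal is fine. The real issue is the step $\eta^*\in X_\eta$, which you leave unfinished.

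Your sketch works with the periodic structure of $\eta^*$: pick a finite $S\subseteq\sB^*$ so that $\eta^*|_{[-N,N]}$ is the $\lcm(S)$-periodic pattern, then try to realize it on $\eta$ by CRT. As you yourself note, the obstacle is guaranteeing $n+j\in\mathcal{F}_\sB$ for all $j$ with $\eta^*(j)=1$; since $\sB$ is infinite, no finite system of congruences settles this. The paper avoids this difficulty by working with the periodic structure of $\eta$ rather than $\eta^*$: Proposition~\ref{lemmaA_one} identifies the non-periodic positions of $\eta$ as exactly $\mathcal{M}_D\cap\mathcal{F}_\sB$. One then picks $i$ so large that on $[-N,N]$ the non-periodic positions agree with $\mathcal{M}_D\cap\mathcal{F}_\sB\cap[-N,N]=\{I_1,\dots,I_{t_N}\}$, and constructs $M_N$ divisible by $\lcm(S_i)$ such that $M_N+I_j\in\mathfrak{b}_j$ for suitably chosen $\mathfrak{b}_j=d_jc_j\in\sB$. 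Periodic positions are then preserved automatically (because $\lcm(S_i)\mid M_N$), and the finitely many non-periodic ones are forced to $0$, matching $\eta^*$ there since $I_j\in\mathcal{M}_D\subseteq\mathcal{M}_{\sB^*}$. This is the missing idea: use the \emph{known} $\eta$-periodicity to handle the $1$'s for free, rather than trying to certify membership in $\mathcal{F}_\sB$ by hand.

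Your proposed shortcut through Mirsky measures does not work. In general $\nu_\eta\neq\nu_{\eta^*}$: take $\sB=\{2p_{n!}:n\ge1\}$ with $p_k$ the $k$-th prime. Then $\sB$ is taut (no Behrend set is scaled into $\sB$, since $\sum 1/p_{n!}<\infty$), but $2\in D$ and $\sB^*=\{2\}$. One computes $\bm d(\mathcal{F}_{\sB^*})=\tfrac12$ while $\bm d(\mathcal{F}_\sB)=1-\tfrac12\,\bm d(\mathcal{M}_{\{p_{n!}\}})>\tfrac12$, so the two measures disagree already on the cylinder $\{x:x_0=1\}$. (Even the auxiliary claims are shaky: Toeplitz subshifts need not be uniquely ergodic, and $\operatorname{supp}\nu_\eta$ is in general $X_{\eta'}$, not $M$; cf.\ Remark~\ref{porownanie}.) So to close the argument you really need Proposition~\ref{lemmaA_one} or an equivalent description of the non-periodic positions of $\eta$.
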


\begin{Remark}\label{B*}
We have $\sB^*=(\sB\cup D)^{prim}$. Indeed, we need to show that
\[
(\sB\setminus\cm_D)\cup D^{prim}=(\sB\cup D)^{prim}.
\]
By~\cite[Lemma 3 a)]{MR4280951}, $\mathscr{B}^*=(\sB\setminus\cm_D)\cup D^{prim}$ is primitive. Therefore, also $\sB\setminus\cm_D$ is primitive (and disjoint with $D^{prim}$). Hence,
\[
(\sB\cup D)^{prim}=((\sB\setminus\cm_D)\cup D)^{prim}=(\sB\setminus\cm_D)^{prim}\cup D^{prim}=(\sB\setminus\cm_D)\cup D^{prim}.
\]
\end{Remark}
\begin{Remark}
Notice that by the above remark, we may define $\sB^*$ also for $\sB$ that are not primitive, just by setting $\sB^*:=(\sB\cup D)^{prim}$ (or even skip the ``prim'' in the supscipt in the above formula if we do not need that the resulting set $\sB^*$ is primitive).
\end{Remark}

Theorem~\ref{min_Keller} allows one to get rid of the technical assumption $\overline{\Delta(\mathbb{Z}) \cap W}=W$ in Theorem \ref{toeplitz}, see Theorem \ref{onedim} and Corollary \ref{toeplitzall}.
\begin{Example}
It was shown in \cite[Lemma 3.6]{MR3947636} that $\eta\in Y$ necessary for $\ov{\Delta(\Z)\cap W}=W$.
\begin{itemize}
\item
Let $\mathscr{B}=2\mathcal{P}$. Then $\mathscr{B}$ is primitive and $|\supp\ \eta \bmod 4|=2$, i.e.\ $\eta\not\in Y$. So, in this case we have $\ov{\Delta(\Z)\cap W}\neq W$.
\item
Choose $b\in\N$, $0<r<b$ and take $\mathscr{B}$ such that $b\in\mathscr{B}$ and $(r+b\N)^{prim}\subset \mathscr{B}$. Then, clearly, $|\supp\ \eta \bmod b|\leq b-2$, which gives $\eta\not\in Y$ and, again, $\ov{\Delta(\Z)\cap W}\neq W$.
\end{itemize}
\end{Example}

One of our main goals was to give an alternative proof of Theorem \ref{min_Keller} adaptable to the multidimensional case. Our proof is based on the study of a periodic structure for $\sB$-free systems. Before we comment on the tools and the related results (often interesting on their own), let us first present the full one-dimensional version of Theorem~\ref{equivalent}.
\begin{Th}[Theorem~\ref{equivalent} in dimension one]\label{onedim}
Let $\mathscr{B}\subseteq\{2,3,\ldots\}$ be primitive. The following conditions are equivalent:
\begin{enumerate}[(a)]
\item $(X_\eta,\sigma)$ is minimal,\label{min_one}
\item $\eta$ is a Toeplitz sequence different from $\ldots0.00\ldots$,\label{top_one}
\item $D=\emptyset$,
\item $X_\eta\subseteq Y$, where $Y=\{x\in\{0,1\}^\Z;\ |\operatorname{supp }x \bmod b|=b-1\text{ for any }b\in\sB\}$.\label{Y_one}
\end{enumerate}
\end{Th}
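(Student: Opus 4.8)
The strategy is to prove the chain of implications $\eqref{Y_one}\Rightarrow\eqref{min_one}\Rightarrow\eqref{top_one}\Rightarrow\text{($D=\emptyset$)}\Rightarrow\eqref{Y_one}$, which closes the loop. Since Theorem~\ref{onedim} is merely the one-dimensional shadow of Theorem~\ref{equivalent}, and the latter already depends on Theorem~\ref{dwyz}, Proposition~\ref{propA} and Theorem~\ref{Thm_SK}, I expect most of the work to be in deducing the right implications cleanly from those ingredients specialised to $\OK=\Z$. Concretely, the equivalence ``$D=\emptyset\iff\eta$ is Toeplitz (and $\neq\ldots0.00\ldots$)'' is exactly Theorem~\ref{dwyz} in dimension one (recall $\eqref{de}$ identifies $\mathcal C_\infty$ with $D$, and $\mathfrak D$ with $D$), so that link is free. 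The implication $\eqref{top_one}\Rightarrow\eqref{min_one}$ is also essentially automatic: the orbit closure of a Toeplitz sequence is minimal, so if $\eta$ is Toeplitz then $X_\eta=X_\eta$ is already a Toeplitz subshift, hence minimal.

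The first genuinely substantive step is $\eqref{min_one}\Rightarrow\text{($D=\emptyset$)}$, or equivalently its contrapositive: if $D\neq\emptyset$ then $X_\eta$ is not minimal. Here I would invoke Theorem~\ref{min_Keller} (equivalently Theorem~\ref{minimal2} in dimension one): the unique minimal subset of $X_\eta$ is $X_{\eta^*}$ with $\eta^*\leq\eta$ and $\eta^*=\raz_{\mathcal F_{\mathscr B^*}}$, $\mathscr B^*=(\mathscr B\cup D)^{prim}$. If $D\neq\emptyset$, pick $d\in D$ and an infinite pairwise coprime $\mathcal C$ with $d\mathcal C\subseteq\mathscr B$; then $d\in\mathcal M_{\mathscr B^*}$ (as $d\in D\subseteq\mathscr B^*$ or is a multiple of something in $D^{prim}$) but one checks $d\notin\mathcal M_{\mathscr B}$ using primitivity of $\mathscr B$ together with coprimality of $\mathcal C$ — indeed if some $b\in\mathscr B$ divided $d$, tautness-type reasoning via $\cite[\text{Corollary 0.19}]{MR1414678}$ rules it out, since $d\mathcal C$ being inside $\mathscr B$ already forces $\mathscr B$ not to be ``taut at $d$''. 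Hence $\eta(d)=1$ while $\eta^*(d)=0$, so $\eta\neq\eta^*$, and since $X_{\eta^*}\subsetneq X_\eta$ is the only minimal subset, $X_\eta$ is not minimal. The main obstacle will be making this $d\notin\mathcal M_{\mathscr B}$ argument airtight: one must verify that the $d$ witnessing $D\neq\emptyset$ can be chosen $\mathscr B$-free, which is precisely where one uses that $\mathcal C$ is infinite and pairwise coprime (so that no single $b\in\mathscr B$ can absorb infinitely many $dc$'s unless $b\divides d$, and the latter is excluded by replacing $d$ by $d/\gcd$ if necessary and re-invoking primitivity of $\mathscr B$).

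The remaining implication is $\text{($D=\emptyset$)}\Rightarrow\eqref{Y_one}$, i.e.\ $X_\eta\subseteq Y$. Fix $x\in X_\eta$ and $b\in\mathscr B$; I must show $|\operatorname{supp}x\bmod b|=b-1$. Since $x\in X_\eta$, we have $\operatorname{supp}x\bmod b\subseteq\operatorname{supp}\eta\bmod b$, and the latter is $(\Z/b\Z)\setminus\{0\}$ because $\eta(g)=0\iff b\divides g$ for $g$ in the residue class $0$, while for any nonzero residue $r$ there is $g\equiv r\pmod b$ with $g$ $\mathscr B$-free (e.g.\ take $g=r+kb$ with $k$ chosen so that no other element of $\mathscr B$ divides $g$ — possible since $\mathscr B$ is primitive so $b\nmid b'$ for $b'\neq b$). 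Hence $|\operatorname{supp}x\bmod b|\leq b-1$, and I need the reverse inequality: no nonzero residue may be ``missing''. This is exactly the content of Theorem~\ref{Thm_SK} (Proposition~\ref{druggi2Toeplitz}) in dimension one applied contrapositively: if some residue $r\not\equiv0\pmod b$ never appeared in $\operatorname{supp}x\bmod b$, then $r+b\Z\subseteq\mathcal M_{\mathscr B}$ for the point $x$... more carefully, one uses the structure: $x\in X_\eta$ with $D=\emptyset$ means $\eta$ is Toeplitz (by the already-established link), so $x$ lies in the minimal Toeplitz subshift, and a missing nonzero residue class mod $b$ for $x$ would propagate (by minimality / density of the orbit of $\eta$) to $\eta$ itself, giving $r+b\Z\subseteq\mathcal M_{\mathscr B}$ for some $0<r<b$; then Theorem~\ref{Thm_SK} forces $\gcd((r),b\Z)=(r)+b\Z\subseteq(b')$ for some $b'\in\mathscr B$, i.e.\ $b'\divides\gcd(r,b)$, but $\gcd(r,b)$ strictly divides $b$ (as $0<r<b$), contradicting primitivity of $\mathscr B$ if $b'=b$, and if $b'\neq b$ then $b'\divides b$ contradicts primitivity directly. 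So every nonzero residue appears, giving $|\operatorname{supp}x\bmod b|=b-1$ and hence $x\in Y$. The delicate point here is the ``propagation'' step — transferring a missing residue from a general $x\in X_\eta$ back to $\eta$ — which should follow from the fact that $\operatorname{Per}(\eta,b\Z)$-type information is shift-invariant along the orbit and $\eta$ itself lies in the (now minimal) subshift; alternatively one argues directly with $x$ using that $\{g:x(g)=0,\ g\equiv r\}=\{g:g\equiv r\}$ is a full coset contained in $\mathcal M$ for the appropriate $\mathscr B$-configuration coded by $x$, then applies Proposition~\ref{druggi2Toeplitz}. I would flag this transfer as the second main obstacle, to be handled by the skeleton-map machinery of~\eqref{downar}.
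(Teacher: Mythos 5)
Your cycle is not closed. You announce the chain \eqref{Y_one}$\Rightarrow$\eqref{min_one}$\Rightarrow$\eqref{top_one}$\Rightarrow$($D=\emptyset$)$\Rightarrow$\eqref{Y_one}, but nowhere in the body do you prove any implication that has \eqref{Y_one} as the \emph{hypothesis}: what you actually argue is \eqref{min_one}$\Rightarrow$($D=\emptyset$), ($D=\emptyset$)$\iff$\eqref{top_one} (Theorem~\ref{d}), \eqref{top_one}$\Rightarrow$\eqref{min_one}, and ($D=\emptyset$)$\Rightarrow$\eqref{Y_one}. This shows (a),(b),(c) are mutually equivalent and imply (d), but it does not make (d) equivalent to them. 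The missing direction is a genuine argument in the paper: assuming $X_\eta\subseteq Y$, Theorem~\ref{min_Keller} gives $\eta^*\in X_{\eta^*}\subseteq X_\eta\subseteq Y$, Corollary~\ref{Toeplitz_SK_one} gives $\eta^*\in Y^*$ (the analogue of $Y$ for $\sB^*$), and then, if some $b\in\sB\setminus\sB^*$ had a divisor $b^*\in\sB^*$, one computes $|\supp\ \eta^*\bmod b|=\tfrac{b}{b^*}(b^*-1)<b-1$, contradicting $\eta^*\in Y$; hence $\sB=\sB^*$ and $\eta=\eta^*$ is Toeplitz. Nothing in your proposal covers this.

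Your step \eqref{min_one}$\Rightarrow$($D=\emptyset$) also has a gap at its crux. Producing $d\in D$ with $\eta(d)=1$ and $\eta^*(d)=0$ is fine (primitivity of $\sB$ plus the infinitude of $\mathcal{C}$ already gives $d\in\cf_\sB$; the detour through tautness and \cite[Corollary 0.19]{MR1414678} is unnecessary), so $\eta\neq\eta^*$. But you then assert ``$X_{\eta^*}\subsetneq X_\eta$'', which does not follow: if $X_\eta$ were minimal one would get $X_\eta=X_{\eta^*}$, and $\eta\neq\eta^*$ is a priori compatible with $\eta\in X_{\eta^*}$, since elements of a Toeplitz orbit closure need not be Toeplitz. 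Excluding this is exactly the paper's skeleton-map argument: from $\eta\in X_{\eta^*}$ and \eqref{downar} one gets $|\Per(\eta^*,p_k,0)\cap[0,p_k-1]|=|\Per(\eta,p_k,0)\cap[0,p_k-1]|$, while $\eta^*\leq\eta$ together with the position $n$ where they differ and a suitable $p_k$ from the periodic structure of $\eta^*$ forces a strict inequality (this is \eqref{ble} and the lines after it). Without this, or a substitute, your contrapositive is incomplete. Two smaller points: the inclusion $\supp x\bmod b\subseteq\supp\eta\bmod b$ for $x\in X_\eta$ is not literally true — patterns of $x$ occur in $\eta$ only up to shifts depending on the window, so one needs a pigeonhole argument over residues; and in your ($D=\emptyset$)$\Rightarrow$\eqref{Y_one} transfer from $x$ to $\eta$ you must track \emph{two} missing residues of $x$ modulo $b$, so that after the shift a nonzero missing residue survives for $\eta$ before applying Theorem~\ref{druggi2Toeplitz}. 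The paper avoids this transfer altogether by intersecting $X_\eta$ with the closed invariant sets $Y^b_{\geq s_b}$ and using minimality, which is the cleaner route.
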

As an immediate consequence, we obtain the following corollary.
\begin{Cor}\label{toeplitzall}
Let $\mathscr{B}\subseteq\{2,3,\ldots\}$ be primitive. Conditions~\eqref{B1} to \eqref{B9} from Theorem \ref{toeplitz} are all equivalent to
\begin{enumerate}[(1)]
\setcounter{enumi}{9}
\item $X_\eta\subseteq Y$, where $Y=\{x\in\{0,1\}^\Z;\ |\operatorname{supp }x \bmod b|=b-1\text{ for any }b\in\sB\}$.
\end{enumerate}
\end{Cor}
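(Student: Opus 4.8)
The plan is to obtain the corollary by simply \emph{gluing} Theorem~\ref{onedim} onto the chain of implications already recorded in Theorem~\ref{toeplitz}; no genuinely new argument is needed, the real content having been spent in proving Theorem~\ref{onedim}. First I would note that condition~(10) of the corollary is \emph{verbatim} condition~\eqref{Y_one} of Theorem~\ref{onedim}, so by that theorem it is equivalent to the minimality of $(X_\eta,\sigma)$ (condition~\eqref{min_one}), to $\eta$ being a Toeplitz sequence different from $\ldots0.00\ldots$ (condition~\eqref{top_one}), and to the arithmetic condition $D=\emptyset$.

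Next I would match these three conditions with entries of the list in Theorem~\ref{toeplitz}. Condition~\eqref{top_one} of Theorem~\ref{onedim} is literally condition~\eqref{B5}; condition~\eqref{min_one} is condition~\eqref{B8} up to the harmless clause $\operatorname{card}(X_\eta)>1$, which costs nothing in the present setting $\sB\subseteq\{2,3,\dots\}$ since $\pm1$ are always $\sB$-free (so $\eta\neq\ldots0.00\ldots$ and $X_\eta$ is non-trivial whenever $\sB\neq\emptyset$); and ``$D=\emptyset$'' coincides with condition~\eqref{B4}, because the identity $\mathcal{C}_\infty=D$ from~\eqref{de} together with the equivalence \eqref{B3}$\iff$\eqref{B4} already contained in Theorem~\ref{toeplitz} shows that \eqref{B4} holds precisely when $D=\emptyset$.

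Finally I would close the implication cycle. Theorem~\ref{toeplitz} asserts that the box \eqref{B1}$\iff\cdots\iff$\eqref{B6} implies \eqref{B9}, which in turn implies the box \eqref{B7}$\iff$\eqref{B8}; what was missing, in the absence of the hypothesis $\ov{\Delta(\Z)\cap W}=W$, is an arrow leading from the lower box back to the upper one, and Theorem~\ref{onedim} supplies exactly this, namely \eqref{min_one}$\Rightarrow$``$D=\emptyset$'', i.e.\ \eqref{B8}$\Rightarrow$\eqref{B4}. Consequently all of \eqref{B1}--\eqref{B9} become equivalent, and since condition~(10) is equivalent to \eqref{min_one}, which is \eqref{B8}, it joins this single equivalence class, which is exactly the assertion. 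Thus there is no real obstacle beyond this bookkeeping; the step carrying all the weight is the appeal to Theorem~\ref{onedim} (concretely, the implication \eqref{B8}$\Rightarrow$\eqref{B4}), and the only mildly delicate point is reconciling the differently phrased conditions \eqref{B3}, \eqref{B4} and ``$D=\emptyset$'', which~\eqref{de} takes care of.
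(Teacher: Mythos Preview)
Your proposal is correct and follows the same approach as the paper, which simply declares the corollary an ``immediate consequence'' of Theorem~\ref{onedim} without spelling out the bookkeeping you have carefully supplied. The key step you identify---using Theorem~\ref{onedim} to close the implication cycle via \eqref{B8}$\Rightarrow$\eqref{B4}---is exactly the missing arrow that the extra hypothesis $\overline{\Delta(\Z)\cap W}=W$ was previously needed for, and your matching of conditions (including the handling of the $\operatorname{card}(X_\eta)>1$ clause and the identification of ``$D=\emptyset$'' with~\eqref{B4} via~\eqref{de}) is accurate.
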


The proof of both, Theorem~\ref{min_Keller} and Theorem~\ref{onedim}, rely on the following result that is, in fact, a part of Theorem~\ref{min_Keller} which we think that deserves to be formulated separately, with more details (see also Remark~\ref{porownanie} below).
\begin{Th}[Theorem~\ref{dwyz} in dimension one]\label{d}
Let $\mathscr{B}\subset \{2,3,\dots\}$ be primitive. Then the following are equivalent:
\begin{enumerate}[(a)]
\item $\sB=\sB^*$,
\item $D=\emptyset$,
\item $\eta$ is a Toeplitz sequence different from $\ldots0.00\ldots$.
\end{enumerate}
Moreover, we have
\[
D^*:=\{d\in\N :  d\cC\subset\sB^* \text{ for some infinite pairwise coprime set }\cC\}=\emptyset,
\]
i.e.\ $\sB^*=(\sB^*)^*$ and $\eta^*$ is Toeplitz.
\end{Th}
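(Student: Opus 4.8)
The statement to prove is Theorem~\ref{d} (the one-dimensional version of Theorem~\ref{dwyz}): for primitive $\sB\subset\{2,3,\dots\}$ the conditions (a) $\sB=\sB^*$, (b) $D=\emptyset$, (c) $\eta$ is Toeplitz and nonzero are equivalent, and moreover $D^*=\emptyset$, so $\sB^*=(\sB^*)^*$ and $\eta^*$ is Toeplitz. The plan is to prove (a)$\iff$(b) by unwinding the definition of $\sB^*=(\sB\cup D)^{prim}$, then to prove (b)$\iff$(c) via Proposition~\ref{propA} in dimension one, and finally to deduce the ``moreover'' part by applying the equivalences to $\sB^*$ in place of $\sB$.

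\textbf{Step 1: (a)$\iff$(b).} By Remark~\ref{B*}, $\sB^*=(\sB\cup D)^{prim}$. If $D=\emptyset$ then $\sB^*=\sB^{prim}=\sB$ since $\sB$ is primitive, giving (a). Conversely, suppose $\sB=\sB^*$ but $D\neq\emptyset$; pick $d\in D$, so $d\cC\subseteq\sB$ for some infinite pairwise coprime $\cC\subseteq\N\setminus\{1\}$. The point is that $d$ itself divides every $dc$, $c\in\cC$, and these are infinitely many distinct elements of $\sB$; one must check that $d$ (or a proper divisor of $d$ lying in $D$, chosen minimally) cannot already lie in $\cM_\sB$ in a way that is consistent with primitivity of $\sB=\sB^*$ — more precisely, that passing to $\sB^*=(\sB\cup D)^{prim}$ genuinely removes from $\sB$ all the multiples $dc$ (they become non-primitive once $d$ is adjoined), contradicting $\sB=\sB^*$. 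A short argument handling the case $d\in\cM_\sB$ (replace $d$ by the relevant element of $\sB$ dividing it, which still works as a ``$d$'') closes this direction. I expect this step to be essentially bookkeeping.

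\textbf{Step 2: (b)$\iff$(c).} Here I invoke the one-dimensional specialization of Proposition~\ref{propA}: $\Z\setminus\bigcup_{s}\Per(\eta,s)=\cM_D\cap\cf_\sB$. If $D=\emptyset$ then $\cM_D=\emptyset$, so every position of $\eta$ is periodic, i.e.\ $\eta$ is Toeplitz; and $\eta\neq\mathbf 0$ because $\sB\subseteq\{2,3,\dots\}$ forces $1\in\cf_\sB$, so $\eta(1)=1$ (one also notes $\eta$ cannot be identically $0$ as $\cf_\sB$ is nonempty by a standard density/Behrend-avoidance argument, but the $1\notin\sB$ observation suffices). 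Conversely, if $\eta$ is Toeplitz then $\Z\setminus\bigcup_s\Per(\eta,s)=\emptyset$, so $\cM_D\cap\cf_\sB=\emptyset$. It remains to rule out $D\neq\emptyset$: if $d\in D$ with $d\cC\subseteq\sB$, then since the $dc$ are pairwise ``almost coprime'' (coprime after dividing by $d$), a density/Chinese-remainder argument shows $\cM_D\cap\cf_\sB\ni$ some point — concretely one finds $n$ with $d\nmid n$ (so $n\notin\cM_{(d)}\supseteq$ nothing forcing $n\in\cM_\sB$) yet $n\in\cf_\sB$; actually the cleanest route is: $\cM_D\cap\cf_\sB\neq\emptyset$ whenever $D\neq\emptyset$ follows directly from Proposition~\ref{propA} combined with the fact that $\eta$ being Toeplitz makes the left side empty, so $D\neq\emptyset$ would contradict Toeplitzness once we know $\cM_D\cap\cf_\sB\neq\emptyset$. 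The genuine content — that $D\neq\emptyset\Rightarrow\cM_D\cap\cf_\sB\neq\emptyset$ — is where the infinitude and coprimality of $\cC$ are used: one shows $\cf_{d\cC}$ has positive density within $d\Z$ minus finitely much, hence meets $\cf_\sB$, using that $\cM_{d\cC}$ cannot cover $d\Z$ as $\cC$ is not Behrend. I expect \textbf{this implication to be the main obstacle}, as it is the place where the arithmetic (Behrend sets, Davenport--Erd\H{o}s, pairwise coprimality) genuinely enters rather than just formal manipulation.

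\textbf{Step 3: the ``moreover'' part.} Once (a)$\iff$(b)$\iff$(c) is established, apply it to $\sB^*$: we have $\sB^*=(\sB\cup D)^{prim}$, which is primitive, and by construction every $d\in D$ has $(d)\subseteq\sB^*$ essentially absorbed, so $D^*=\emptyset$ should follow from the fact that adjoining $D$ and taking the primitive hull ``saturates'' — formally, if $d'\in D^*$ with $d'\cC'\subseteq\sB^*$ for infinite pairwise coprime $\cC'$, then tracing each $d'c'$ back to an element of $\sB\cup D$ dividing it yields $d'\in D$ (a divisor-chasing argument using that only finitely many of the $d'c'$ can come from $D$ itself since $D^{prim}$-elements dividing a common structure are constrained), whence $d'\cdot(\text{tail of }\cC')$ witnesses $d'\in D$, so $(d')\subseteq\sB^*$ kills those $c'$ from being primitive — contradiction with $d'\cC'\subseteq\sB^*$ being an infinite set of elements of the primitive set $\sB^*$. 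Then $D^*=\emptyset$ gives $\sB^*=(\sB^*)^*$ by Step~1 applied to $\sB^*$, and $\eta^*$ Toeplitz by Step~2 applied to $\sB^*$. This step is again formal once the key arithmetic implication of Step~2 is in hand.
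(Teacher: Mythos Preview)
Your overall architecture is right and close to the paper's, but two of your justifications fail.

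In Step~2 you correctly reduce (c)$\Rightarrow$(b) to the implication $D\neq\emptyset\Rightarrow\cM_D\cap\cf_\sB\neq\emptyset$, but your proposed proof via ``$\cC$ is not Behrend'' is wrong: an infinite pairwise coprime set can perfectly well be Behrend (e.g.\ the set of all primes). Fortunately the implication is trivial, not ``the main obstacle'': any $d\in D$ lies in $\cM_D$ tautologically, and lies in $\cf_\sB$ because a hypothetical $b\in\sB$ with $b\mid d$ would divide every $dc\in\sB$, forcing $b=dc$ for all $c\in\cC$ by primitivity. The paper bypasses even this by proving (c)$\Rightarrow$(a) instead: from $\cM_D\cap\cf_\sB=\emptyset$ one gets $\cM_D\subseteq\cM_\sB$, hence $\cM_{\sB^*}=\cM_\sB\cup\cM_D=\cM_\sB$, hence $\sB=\sB^*$.

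In Step~3 the assertion ``only finitely many of the $d'c'$ can come from $D$'' is circular: you would need $d'\in D$ to argue that $d'\mid d'c'\in D^{prim}$ forces $c'=1$, but $d'\in D$ is exactly what you are trying to establish. The actual argument (which is the real content of the ``moreover'' clause) splits into two cases after passing to a sub-collection: either $d'\cC'\subseteq\sB\setminus\cM_D\subseteq\sB$, giving $d'\in D$ immediately and hence the absurdity $d'\cC'\subseteq\sB\setminus d'\Z$; or $d'\cC'\subseteq D^{prim}$, in which case each $d'c'_i$ comes with an infinite pairwise coprime $\cA_i$ satisfying $d'c'_i\cA_i\subset\sB$, and one \emph{diagonally} selects $a_i\in\cA_i$ so that $\{c'_ia_i:i\geq 1\}$ is pairwise coprime, exhibiting $d'\in D$ via $d'\{c'_ia_i\}\subset\sB$. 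Only then does primitivity of $D^{prim}$ give the contradiction. Your sketch does not supply this diagonal step.
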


Recall that conditions~\eqref{top_one} and~\eqref{Y_one} from Theorem~\ref{onedim} is related to the tautness of $\sB$. More precisely, we have the following two lemmas.
\begin{Lemma}[{\cite[Proposition 5.3]{MR3731019} and \cite[Lemma 3.6]{MR3947636}}]\label{taut}
Let $\mathscr{B}\subset\N$ be primitive. If $\eta$ is a Toeplitz sequence then $\mathscr{B}$ is taut.
\end{Lemma}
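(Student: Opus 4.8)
The plan is to prove the contrapositive: if $\mathscr{B}\subset\N$ is primitive but \emph{not} taut, then $\eta$ is not a Toeplitz sequence. The degenerate cases are trivial, since there $\mathscr{B}$ is taut outright: if $1\in\mathscr{B}$ then primitivity forces $\mathscr{B}=\{1\}$ (taut, as $\bm{\delta}(\mathcal{M}_{\{1\}})=1>0=\bm{\delta}(\mathcal{M}_\emptyset)$), and $\mathscr{B}=\emptyset$ is taut vacuously. So we may assume $\emptyset\ne\mathscr{B}\subseteq\{2,3,\dots\}$. By the reformulation of tautness recalled above (a consequence of \cite[Corollary 0.19]{MR1414678}), non-tautness of the primitive set $\mathscr{B}$ supplies some $d\in\N$ and a Behrend set $\mathcal{C}\subset\N$ with $d\mathcal{C}\subseteq\mathscr{B}$; being Behrend, $\mathcal{C}$ is infinite, and $1\notin\mathcal{C}$ (otherwise $d\in\mathscr{B}$ would divide every $dc\in\mathscr{B}$, $c\in\mathcal{C}$, against primitivity).

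The first step is to probe the single position $n=d$, and to observe that it is $\mathscr{B}$-free. Indeed, if some $b\in\mathscr{B}$ divided $d$, then choosing any $c\in\mathcal{C}$ (so $c\ge 2$) we would obtain $dc\in d\mathcal{C}\subseteq\mathscr{B}$ with $b\mid d\mid dc$ and $b\le d<dc$, i.e.\ two distinct elements of $\mathscr{B}$ with one dividing the other — contradicting primitivity. Hence $\eta(d)=1$. The reason for choosing a position divisible by $d$ is that then $\gcd(d,p)\mid d$ for every $p\ge1$, so every progression $d+p\Z$ meets $d\Z$; this is exactly what by-passes the fact that the Toeplitz periods of $n$ need not be coprime to $d$ (the point where a naive attempt with $n=1$ stalls).

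Next, fix an arbitrary $p\ge1$ and set $L=\lcm(p,d)$. Then $d+L\Z\subseteq (d+p\Z)\cap d\Z$, and its elements are $d\bigl(1+(L/d)t\bigr)$, $t\in\Z$. Since $\mathcal{C}$ is Behrend, $\bm{d}(\mathcal{M}_\mathcal{C})=1$, so $\mathcal{F}_\mathcal{C}$ has natural density $0$; consequently the positive‑density progression $1+(L/d)\Z$ cannot be contained in $\mathcal{F}_\mathcal{C}$, i.e.\ $1+(L/d)t\in\mathcal{M}_\mathcal{C}$ for some $t$. Therefore
\[
d+Lt=d\bigl(1+(L/d)t\bigr)\in d\,\mathcal{M}_\mathcal{C}=\mathcal{M}_{d\mathcal{C}}\subseteq\mathcal{M}_\mathscr{B},
\]
so $\eta(d+Lt)=0\ne 1=\eta(d)$ while $d+Lt\in d+p\Z$. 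Thus $\eta$ is not constant on $d+p\Z$, i.e.\ $d\notin\Per(\eta,p)$. As $p$ was arbitrary, $d\notin\bigcup_{p\ge1}\Per(\eta,p)$, so $\eta$ is not a Toeplitz sequence, which is the desired contradiction.

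I do not expect a genuine obstacle: the whole argument is elementary once the two quoted facts are in place — the Behrend reformulation of tautness and the equivalence $\bm{\delta}(\mathcal{M}_\mathcal{C})=1\iff\bm{d}(\mathcal{M}_\mathcal{C})=1$ — so the only thing to get right is the choice of probing position, namely that $n=d$ is simultaneously $\mathscr{B}$-free (by primitivity) and divisible by $d$ (which kills the gcd with every period). Alternatively, once $\mathscr{B}\subseteq\{2,3,\dots\}$, the statement is already subsumed by the implication ``(B5) $\Rightarrow$ $\mathscr{B}$ is taut'' of Theorem~\ref{toeplitz}; I would record this but prefer the self-contained proof above, since it is the one that transfers to $\OK$.
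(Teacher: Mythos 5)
Your proof is correct, and there is nothing in the paper to compare it with line by line: Lemma~\ref{taut} is quoted from \cite[Proposition 5.3]{MR3731019} and \cite[Lemma 3.6]{MR3947636} and is not proved in this paper (the cited proofs go through the window $W$ and the equivalences of Theorem~\ref{toeplitz}, i.e.\ tautness is extracted from \eqref{B5}$\Rightarrow$``taut''). What you give is a genuinely self-contained, elementary route: after reducing to $\emptyset\neq\sB\subseteq\{2,3,\dots\}$ and invoking the recalled criterion from \cite[Corollary 0.19]{MR1414678}, you show that a witness $d$ of non-tautness (i.e.\ $d\cC\subseteq\sB$ with $\cC$ Behrend) is a non-periodic position of $\eta$: primitivity makes $d$ itself $\sB$-free, and for every $p$ the density-one set $\cM_\cC$ must meet $1+(L/d)\Z$ with $L=\lcm(p,d)$, producing a zero of $\eta$ in $d+p\Z$. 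This is exactly the mechanism of the second inclusion in Proposition~\ref{lemmaA_one} (a multiple of $d$ witnessing non-periodicity), with the Behrend/density argument playing the role that coprimality plays there; your version buys independence from Theorem~\ref{toeplitz} and, as you note, is the shape of argument that survives the passage to $\OK$. Two small points: the infinitude of $\cC$ is never actually used, so that sentence can be dropped; and your exclusion of $1\in\cC$ tacitly assumes $\cC\neq\{1\}$ — this is harmless because the quoted tautness criterion is only correct under the standing convention that Behrend sets lie in $\{2,3,\dots\}$ (otherwise $\{1\}$ is Behrend and the criterion would declare every nonempty primitive set non-taut), but it is worth saying so explicitly rather than deriving $1\notin\cC$ from primitivity.
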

\begin{Lemma}[{\cite[Corollary 4.32]{MR3803141}}]\label{inY}
If $\mathscr{B}\subset\N$ is taut then $\eta\in Y$.
\begin{Remark}
It is not so hard to see that $Y$ may not be closed, so even if $\eta\in Y$ then $X_\eta\subseteq Y$ is not automatic. Indeed, let $\mathscr{B}$ be the set of squares of all primes. Then $X_\eta$ is hereditary and it is equal to the set of admissible sequences. Then $\eta\in Y$, so for any  $b\in\sB$, there exists $m$ such that that $|\supp(\eta) \cap[-m,m] \bmod b|=b-1$. Moreover, each block that appears on $\eta$ is of positive Mirsky measure, so, in particular, it appears on $\eta$ infinitely often. This implies that $|\supp(\eta \cdot\raz_{\Z\setminus [-n,n]}) \bmod b|=b-1$ for any $n\geq 1$. In other words, $\eta \cdot\raz_{\Z\setminus [-n,n]}\in Y$. Clearly, $\eta \cdot\raz_{\Z\setminus [-n,n]}$ converges to the all-zero sequence as $n$ tends to infinity, which means that $Y$ is not closed.

In fact, it follows from Theorem~\ref{onedim} that $Y_\eta:=Y\cap X_\eta$ is closed and non-empty precisely if $\eta$ is Toeplitz. Indeed, if $Y_\eta$ is closed and non-empty then necessarily $\eta\in Y_\eta$, which yields $Y_\eta=X_\eta$ and this is equivalent to $X_\eta\subset Y$, so, by Theorem~\ref{onedim}, $\eta$ is Toeplitz. To obtain the implication in the other direction, notice that if $\eta$ is Toeplitz then $X_\eta=Y_\eta$ and it is immediate that $Y_\eta$ is closed and non-empty.
\end{Remark}
\end{Lemma}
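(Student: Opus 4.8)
The plan is to prove the contrapositive: assuming $\eta\notin Y$, I will exhibit a failure of tautness. Recall that, $\mathscr B$ being primitive (as it is taut), tautness means exactly that removing any single $b_0\in\mathscr B$ strictly decreases $\bm{\delta}(\mathcal M_{\mathscr B})$; so it is enough to find $b_0\in\mathscr B$ with $\bm{\delta}(\mathcal M_{\mathscr B\setminus\{b_0\}})=\bm{\delta}(\mathcal M_{\mathscr B})$. Now $\mathcal M_{\mathscr B}\setminus\mathcal M_{\mathscr B\setminus\{b_0\}}$ is the set of multiples of $b_0$ that are multiples of no other member of $\mathscr B$; dividing it by $b_0$ turns it into $\Z\setminus\mathcal M_{\mathcal C^{(b_0)}}$, where $\mathcal C^{(b_0)}:=\{\,b'/\gcd(b_0,b') : b'\in\mathscr B\setminus\{b_0\}\,\}$ (note $1\notin\mathcal C^{(b_0)}$ by primitivity), so its logarithmic density is $\tfrac1{b_0}\bigl(1-\bm{\delta}(\mathcal M_{\mathcal C^{(b_0)}})\bigr)$. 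Hence the whole proof reduces to finding $b_0\in\mathscr B$ for which $\mathcal C^{(b_0)}$ is a Behrend set; equivalently, by \cite[Corollary 0.19]{MR1414678}, to producing $d$ and a Behrend set $\mathcal C$ with $d\mathcal C\subseteq\mathscr B$.

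To locate such a $b_0$ I would start from the hypothesis $\eta\notin Y$. Since $b\in\mathscr B$ forces $0\notin\supp\eta\bmod b$, one always has $|\supp\eta\bmod b|\le b-1$; thus $\eta\notin Y$ provides $b\in\mathscr B$ and $r$ with $b\nmid r$ and $r+b\Z\subseteq\mathcal M_{\mathscr B}$, and we may take $0<r<b$. I claim $b_0:=b$ will work, and in fact it suffices to show that the set $\mathcal C_r:=\{\,b'/\gcd(b,b') : b'\in\mathscr B,\ \gcd(b,b')\mid r\,\}$ is Behrend: the condition $\gcd(b,b')\mid r$ already forces $b'\ne b$, so $\mathcal C_r\subseteq\mathcal C^{(b)}$, hence $\mathcal M_{\mathcal C_r}\subseteq\mathcal M_{\mathcal C^{(b)}}$, and a Behrend $\mathcal C_r$ makes $\mathcal C^{(b)}$ Behrend as well.

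Now the core. For $b'\in\mathscr B$ with $g:=\gcd(b,b')\mid r$, the intersection $b'\Z\cap(r+b\Z)$ is a single coset of $\lcm(b,b')\Z$; parametrising $r+b\Z$ by $m\mapsto r+bm$, the condition $b'\mid r+bm$ becomes one congruence on $m$ modulo $c(b'):=b'/g$, with $c(b')\ge2$ (if $c(b')=1$ then $b'\mid b$, so $b'=b$ by primitivity, excluded since $b\nmid r$) and $\gcd(c(b'),b/g)=1$. Because $r+b\Z\subseteq\mathcal M_{\mathscr B}$, the cosets so obtained — one of modulus $c(b')$ for each admissible $b'$ — cover $\Z$. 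To convert this covering into the statement ``$\bm{\delta}(\mathcal M_{\mathcal C_r})=1$'' I would pass to the profinite completion $\widehat\Z$: by the Davenport--Erd\H os theory of sets of multiples, the logarithmic density of any countable union of cosets of subgroups of $\Z$ equals the Haar measure of the corresponding union in $\widehat\Z$, and in particular $\bm{\delta}(\mathcal M_{\mathcal C_r})=\mu_{\widehat\Z}\bigl(\bigcup_{b'}c(b')\widehat\Z\bigr)$. The coset contributed by $b'$ is $\{x\in\widehat\Z : c(b')\mid(r/g)+(b/g)x\}$, a translate of $c(b')\widehat\Z$ with the same Haar measure; when all these cosets arise from one and the same linear form — which happens precisely if $\gcd(c(b'),b)=1$ for every admissible $b'$, since then $c(b')\mid(r/g)+(b/g)x\iff c(b')\mid r+bx$ — their full intersection pattern matches that of the subgroups $c(b')\widehat\Z$, so by translation invariance of Haar measure $\mu_{\widehat\Z}\bigl(\bigcup_{b'}\{x:c(b')\mid r+bx\}\bigr)=\mu_{\widehat\Z}\bigl(\bigcup_{b'}c(b')\widehat\Z\bigr)$. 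The left-hand side is $1$ because the union of the cosets is all of $\Z$; so $\bm{\delta}(\mathcal M_{\mathcal C_r})=1$ and we are done.

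The step I expect to be the main obstacle is the general case, where some admissible $b'$ has $\gcd(c(b'),g)>1$ (equivalently $\gcd(c(b'),b)>1$) and the contributing cosets no longer share a single linear form. Grouping the admissible $b'$ by the finitely many values $g=\gcd(b,b')$ and setting $\mathcal C_g:=\{b'/g:\gcd(b,b')=g\}$ (so that $g\mathcal C_g\subseteq\mathscr B$), the argument above, carried out within each fixed $g$, only yields $\sum_{g\mid\gcd(b,r)}\bm{\delta}(\mathcal M_{\mathcal C_g})\ge1$, which is not enough by itself. I would try to close this gap either by iterating the reduction — replacing $b$ by a suitable least common multiple that absorbs the offending common factors, so as to fall back on the one--linear--form situation of the previous paragraph — or by exploiting the arithmetic relations linking $g$, $b/g$ and $r/g$ (for instance $r/g_1\equiv r/g_2\pmod2$ whenever $g_2/g_1$ is odd), which should rule out the ways in which the shifts attached to different $g$ could conspire and thereby force a single $\mathcal C_{g_0}$ to be Behrend. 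If no clean self-contained argument emerges, one can instead invoke directly the density lemma for sets of multiples in Hall's monograph \cite{MR1414678}; this is the route behind the cited \cite[Corollary 4.32]{MR3803141}.
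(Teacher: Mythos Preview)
Your proposal does not actually close the argument. The special case you treat --- when every admissible $b'$ satisfies $\gcd\bigl(b'/\gcd(b,b'),\,b\bigr)=1$ --- is fine: in $\prod_{p\nmid b}\Z_p$ the map $x\mapsto r+bx$ is an affine bijection, so the Haar measures of $\bigcup_{b'}\{x:c(b')\mid r+bx\}$ and $\bigcup_{b'}c(b')\widehat\Z$ agree, and Davenport--Erd\H os finishes it. But the moment some $b'$ has $v_p(b')>v_p(b)$ for a prime $p\mid b$, the cosets no longer arise from a single affine map and your measure comparison breaks down. Your fallback --- group by $g=\gcd(b,b')$ and hope one $\mathcal C_g$ is Behrend --- yields only $\sum_{g\mid\gcd(b,r)}\bm\delta(\cM_{\mathcal C_g})\ge 1$, which, as you note, is insufficient; the two fixes you sketch (iterate with a larger lcm, or exploit parity relations among the $r/g$) are not carried out, and the last sentence effectively defers to the very reference you are trying to reprove. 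So as written the proof is incomplete precisely at the point where the real content lies.

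For comparison, the paper does not prove Lemma~\ref{inY} from scratch either: it cites \cite[Corollary 4.32]{MR3803141} and, immediately below, points out that the lemma is a one-line consequence of \cite[Proposition 4.31]{MR3803141} (stated again as Proposition~2.4 in \cite{DKK}): if $\sB$ is taut and $r+d\Z\subseteq\cM_\sB$, then some $b\in\sB$ divides $\gcd(r,d)$. Applying this with $d=b\in\sB$ and $0<r<b$ gives $b'\mid\gcd(r,b)\mid b$, hence $b'=b$ by primitivity, contradicting $b\nmid r$. The substance is thus hidden in Proposition~4.31, whose proof uses Dirichlet's theorem on primes in arithmetic progressions --- a completely different mechanism from your Behrend/Haar-measure strategy. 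If you want a self-contained argument along your lines, you would need to show directly that $r+b\Z\subseteq\cM_\sB$ with $b\nmid r$ forces some $f\mid\gcd(r,b)$ to satisfy $\bm\delta(\cM_{\{b'/f:\,b'\in\sB,\ f\mid b'\}})=1$; this is essentially Hall's density machinery, and it is not clear your covering-by-cosets viewpoint gets there without it.
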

Thus, if we are thinking of ``translating'' Theorem~\ref{onedim} to the multidimensional case, we may need a proof of the implication (b)$\implies$(c) that does not use the notion of tautness, which seems to lack a reasonable multidimensional analogue. If we look deeper in these proofs, we meet even more obstructions. Lemma \ref{inY} is a consequence of the following result.
\begin{Lemma}[{\cite[Proposition 4.31]{MR3803141}}, see also {\cite[Proposition 2.4]{DKK}}]
Assume that $\mathscr{B}\subset \N$ is taut, $d\in\N$, $1\leq r\leq d$. If
\[
r+d\Z\subseteq \cM_\sB,
\]
then there exists $b\in\sB$ such that $b\divides\gcd(r,d)$.
\end{Lemma}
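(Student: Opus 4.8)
The plan is to argue by contradiction: assuming that no $b\in\sB$ divides $g:=\gcd(r,d)$, I will show that $\sB$ cannot be taut. Note first that ``$b\divides\gcd(r,d)$'' is the same as ``$b$ divides every element of $r+d\Z$'' (since $b\divides r$ together with $b\divides r+d$ amounts to $b\divides\gcd(r,d)$), so the statement really says that a single $b\in\sB$ must account for the whole progression lying in $\cM_\sB$. To contradict tautness I would exhibit $c\in\N$ and a Behrend set $\cC\subset\N$ with $c\cC\subseteq\sB$; this is impossible for taut $\sB$ by the characterisation recalled above (a taut set is primitive and contains no $c\cC$ with $\cC$ Behrend).

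Write $r=gr_0$, $d=gd_0$ with $\gcd(r_0,d_0)=1$, so that $r+d\Z=g\,(r_0+d_0\Z)$. The key idea is to test the hypothesis not against all of $r+d\Z$ (whose multiplicative structure is awkward to control) but only against the sparse subset $\{gp\}$ with $p$ a prime lying in the progression $r_0+d_0\Z$. First I would invoke Dirichlet's theorem in the sharp form $\sum_{p\equiv r_0\ (d_0)}1/p=\infty$ to fix an infinite set $P$ of primes $p\equiv r_0\pmod{d_0}$ with $\sum_{p\in P}1/p=\infty$, discarding the finitely many $p\le g$ so that $p\nmid g$ for all $p\in P$. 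For each $p\in P$ we have $gp\in r+d\Z\subseteq\cM_\sB$, so some $b_p\in\sB$ satisfies $b_p\divides gp$. Since $p\nmid g$, the prime $p$ divides $gp$ exactly once, hence $p$ divides $b_p$ at most once; and if $p\nmid b_p$ then $b_p\divides gp$ forces $b_p\divides g$, contradicting our hypothesis. Therefore $b_p=p\,c_p$ with $c_p:=b_p/p$ a positive divisor of $g$.

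Next I would apply the pigeonhole principle to the map $p\mapsto c_p$, whose range is the finite set of divisors of $g$: since $\sum_{p\in P}1/p=\infty$, there is a fixed divisor $c$ of $g$ for which $P':=\{p\in P:c_p=c\}$ still satisfies $\sum_{p\in P'}1/p=\infty$. Then $c\,P'=\{b_p:p\in P'\}\subseteq\sB$. Finally, $P'$ is a Behrend set: it is a set of primes with divergent sum of reciprocals, so $\prod_{p\in P'}(1-1/p)=0$ and hence $\bm{\delta}(\cM_{P'})=1-\prod_{p\in P'}(1-1/p)=1$ (the logarithmic density of $\cM_{P'}$ exists by Davenport--Erd\H{o}s, and equals this product by inclusion--exclusion over finite subsets of $P'$). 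Thus $\sB\supseteq c\,\cC$ with $\cC=P'$ Behrend, contradicting tautness; hence some $b\in\sB$ divides $\gcd(r,d)$, as required.

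The step I expect to be the crux is the implication ``$b_p\divides gp\ \Rightarrow\ b_p=(\text{a divisor of }g)\cdot p$'': this is exactly where testing against primes pays off, since primality bounds the $p$-adic valuation of $b_p$ by $1$ and the choice $p>g$ kills the $p$-part of $g$. A more direct attack using the hypothesis on arbitrary elements of $r+d\Z$ would first have to replace the progression by a subprogression all of whose elements have one and the same ``$d$-part'' (i.e.\ the same $p$-adic valuation at each prime $p\divides d$), which is feasible but more technical; routing everything through Dirichlet primes avoids it. A minor point is that the pigeonhole argument must be run on the reciprocal sums, not merely on cardinalities, so that the extracted set $P'$ is genuinely Behrend --- but this is automatic, since a finite partition of a divergent series of prime reciprocals always leaves a divergent part.
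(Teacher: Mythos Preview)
Your proof is correct, and it follows essentially the same route as the cited references. The paper does not supply its own proof of this lemma; it quotes the result from \cite[Proposition 4.31]{MR3803141} and \cite[Proposition 2.4]{DKK} and explicitly remarks that ``the main tool in the proofs of Proposition 4.31 in \cite{MR3803141} and Proposition 2.4 in \cite{DKK} is the Dirichlet's theorem on primes in arithmetic progressions.'' Your argument---reduce to $g(r_0+d_0\Z)$, pick Dirichlet primes $p\equiv r_0\pmod{d_0}$ with $p>g$, force $b_p=c_p p$ with $c_p\mid g$, pigeonhole on the divisors of $g$ to extract a single $c$ with $cP'\subseteq\sB$ and $\sum_{p\in P'}1/p=\infty$, and conclude via the Behrend characterisation of tautness---is exactly this strategy.
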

Even if we ``forget'' about the problems related to the notion of tautness, the main tool in the proofs of Proposition 4.31 in \cite{MR3803141} and Proposition 2.4 in \cite{DKK} is the Dirichlet's theorem on primes in arithmetic progressions -- it has no multidimensional counterpart that could be useful for us. However, if we replace the tautness of $\sB$ by the stronger assumption that $\eta$ is a Toeplitz sequence, new tools become available and it turns out that this weaker result stated below is the right (and sufficient) way to see things in higher dimension.
\begin{Th}[Theorem~\ref{Thm_SK} in dimension one]\label{druggi2Toeplitz}
Assume that $\eta$ is a Toeplitz sequence, $d\in\N$, $1\leq r\leq d$. If
\[r+d\Z\subseteq \cM_\sB,\]
then there exists $b\in\sB$ such that $b\divides\gcd(r,d)$.
\end{Th}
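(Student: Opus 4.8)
The plan is to argue by contrapositive: assume that no $b\in\mathscr{B}$ divides $\gcd(r,d)$ and derive that $r+d\mathbb{Z}\not\subseteq\mathcal{M}_\mathscr{B}$, i.e.\ there is some $n\equiv r\pmod d$ with $n\in\mathcal{F}_\mathscr{B}$. The key idea is to exploit the hypothesis that $\eta$ is Toeplitz, which by Theorem~\ref{onedim} (or directly Theorem~\ref{d}) is equivalent to $D=\emptyset$: there is no $d'\in\mathbb{N}$ and no infinite pairwise coprime set $\mathcal{C}$ with $d'\mathcal{C}\subseteq\mathscr{B}$. I would also use the other direction of Toeplitz-ness, namely that $\mathbb{Z}=\bigcup_{k}\mathrm{Per}(\eta,p_k)$ for a chain $p_1\mid p_2\mid\cdots$, so in particular the position $r$ (more precisely, some position congruent to $r$ mod $d$, after adjusting) lies on a periodic part of $\eta$.

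First I would reduce to the case $\gcd(r,d)=1$ by dividing through: write $e=\gcd(r,d)$, $r=er_0$, $d=ed_0$ with $\gcd(r_0,d_0)=1$; the assumption becomes ``no $b\in\mathscr{B}$ divides $e$'', and I want to produce $n\equiv r\pmod d$ that is $\mathscr{B}$-free. Next, the crucial combinatorial step: look at the set $\mathscr{B}_e:=\{b/\gcd(b,e^\infty)\text{-type reductions}\}$ — more carefully, the elements of $\mathscr{B}$ that could possibly cover a number $\equiv r\pmod d$. Since $r+d\mathbb{Z}\subseteq\mathcal{M}_\mathscr{B}$, every integer in this progression is hit; because $D=\emptyset$, the "covering" of the progression $r+d\mathbb{Z}$ by the sets $b\mathbb{Z}\cap(r+d\mathbb{Z})$ cannot be achieved using infinitely many pairwise-coprime "new" divisors — I expect one can pick a finite subfamily $S\subset\mathscr{B}$ whose least common multiple $\ell=\mathrm{lcm}(S)$ already satisfies: every $n\equiv r\pmod d$ with $\gcd(n,\ell/\gcd(\ell,d))$ suitably controlled is still covered. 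Pushing this through, one finds $b\in S$ with $b\mid\gcd(r,d)$, contradicting the assumption. The place where Toeplitz-ness (rather than mere tautness) enters is exactly here: being Toeplitz gives that the restriction of $\eta$ to an arithmetic progression is eventually periodic with a controlled period, which lets me replace the infinitary covering argument (Dirichlet primes in the taut proof) by a finitary one on a single period $p_k$ with $d\mid p_k$.

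Concretely, the main step I would carry out: choose $p_k$ with $d\mid p_k$ and $r\in\mathrm{Per}(\eta,p_k)$ (possible since $\eta$ is Toeplitz; if $\eta(r)$ happens to be $0$ along $r+p_k\mathbb{Z}$, we're done, so assume $\eta\equiv 0$ on $r+p_k\mathbb{Z}$ is false — then $\eta\equiv 1$ somewhere, contradiction with $r+d\mathbb{Z}\subseteq\mathcal{M}_\mathscr{B}$; so actually $\eta|_{r+p_k\mathbb{Z}}\equiv 0$). From $\eta|_{r+p_k\mathbb{Z}}\equiv 0$, every $n\equiv r\pmod{p_k}$ is divisible by some $b\in\mathscr{B}$; a compactness/pigeonhole argument over the (finite) period, together with $\gcd(r,d)=1$ forcing each such $b$ to be coprime to $d$, lets me extract from $\mathscr{B}$ an infinite pairwise coprime family $\mathcal{C}$ and a common $d'$ with $d'\mathcal{C}\subseteq\mathscr{B}$, unless some $b\mid\gcd(r,d)$. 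That contradicts $D=\emptyset$. The hard part will be organizing the extraction of the pairwise coprime family cleanly — i.e.\ showing that if no $b\in\mathscr{B}$ divides $\gcd(r,d)$, yet $r+d\mathbb{Z}\subseteq\mathcal{M}_\mathscr{B}$ and $\eta$ is periodic on the relevant progression, then $\mathscr{B}$ must contain a "$d'\mathcal{C}$" configuration — this is where I'd spend the real effort, likely by induction on the number of primes dividing a current partial lcm, building $\mathcal{C}$ one coprime element at a time using the fact that each finite stage leaves an infinite sub-progression still entirely inside $\mathcal{M}_\mathscr{B}$.
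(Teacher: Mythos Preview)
Your plan to use the Toeplitz hypothesis only via $D=\emptyset$ is correct in spirit, but two issues stand out.

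First, citing Theorem~\ref{onedim} here would be circular: in the paper's logical order Theorem~\ref{onedim} is proved \emph{using} Theorem~\ref{druggi2Toeplitz}. The implication you actually need ($\eta$ Toeplitz $\Rightarrow D=\emptyset$) is available from Theorem~\ref{toeplitz} (the equivalence \eqref{B5}$\Leftrightarrow$\eqref{B4}), which is prior work and is what the paper invokes.

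Second, your detour through $\mathrm{Per}(\eta,p_k)$ is a red herring. With $d\mid p_k$ one has $r+p_k\Z\subseteq r+d\Z\subseteq\mathcal{M}_\sB$ automatically, so the periodicity of $\eta$ on $r+p_k\Z$ carries no information beyond the hypothesis. The paper never touches the $\mathrm{Per}$ sets in this proof; the Toeplitz assumption enters solely through $D=\emptyset$.

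The paper's argument is direct and splits the work into two short lemmas that you have not isolated. Lemma~\ref{lem1} is pure arithmetic, with no hypothesis on $\sB$: every progression $r+d\Z$ contains a set of the form $\gcd(r,d)\cdot\mathcal{C}$ with $\mathcal{C}$ infinite and pairwise coprime. Lemma~\ref{lem2} then uses $D=\emptyset$: if $e\mathcal{C}\subseteq\mathcal{M}_\sB$ for such a $\mathcal{C}$, then $e\in\mathcal{M}_\sB$ (write $ec_i=b_ik_i$ with $b_i\in\sB$; either some $b$ repeats infinitely often, forcing $b\mid e$, or after passing to a subsequence with constant $\gcd(e,k_i)$ one obtains $\frac{e}{\gcd(e,k_i)}$ times an infinite pairwise coprime set inside $\sB$, contradicting $D=\emptyset$). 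Chaining the two gives $\gcd(r,d)\in\mathcal{M}_\sB$ immediately. Your ``hard part''---extracting a $d'\mathcal{C}'\subseteq\sB$ configuration---is essentially the content of Lemma~\ref{lem2}, but you are missing the key simplification provided by Lemma~\ref{lem1}: first find a pairwise coprime set inside the progression \emph{without reference to $\sB$}, and only afterwards ask which $b\in\sB$ covers each element. Your inductive sketch conflates these two steps and does not explain how to land inside $\sB$ (rather than merely in $\mathcal{M}_\sB$) with a common scale $d'$.
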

In fact, the above theorem has the following consequence (cf.\ Lemma~\ref{inY}).
\begin{Cor}\label{Toeplitz_SK_one}
If $\eta$ is a Toeplitz sequence then $\eta\in Y$.
\end{Cor}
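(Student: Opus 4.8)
The goal is to deduce Corollary~\ref{Toeplitz_SK_one} from Theorem~\ref{druggi2Toeplitz}. Recall that $\eta\in Y$ means $|\operatorname{supp}\eta \bmod b| = b-1$ for every $b\in\sB$. Since $\eta$ is the characteristic function of $\cf_\sB$, for a fixed $b\in\sB$ the residue class $0+b\Z$ is entirely contained in $\cm_\sB$ (every multiple of $b$ is a multiple), so $\operatorname{supp}\eta$ meets at most $b-1$ of the $b$ residue classes mod $b$; hence $|\operatorname{supp}\eta\bmod b|\le b-1$ always. The content is the reverse inequality: no \emph{second} residue class mod $b$ can be missing from $\operatorname{supp}\eta$.

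So suppose for contradiction that for some $b\in\sB$ there is a residue $r$ with $1\le r\le b$, $b\nmid r$, and $r+b\Z\subseteq\cm_\sB$ (i.e.\ $\eta$ vanishes on this class, so it is missing from the support in addition to the class $0+b\Z$). By Theorem~\ref{druggi2Toeplitz}, there exists $b'\in\sB$ with $b'\mid \gcd(r,b)$. In particular $b'\mid b$. Since $\sB$ is primitive (we are in the setting of Theorem~\ref{onedim}/\ref{d} where $\sB\subset\{2,3,\dots\}$ is primitive), $b'\mid b$ forces $b'=b$, hence $b=b'\mid r$, contradicting $b\nmid r$. Therefore no such $r$ exists, and $|\operatorname{supp}\eta\bmod b|=b-1$ for every $b\in\sB$, i.e.\ $\eta\in Y$.

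The only subtlety to flag is the primitivity hypothesis: Theorem~\ref{druggi2Toeplitz} as stated does not itself assume $\sB$ primitive, but the conclusion $b'\mid \gcd(r,b)$ combined with primitivity is what closes the argument; without primitivity one would only get that \emph{some} element of $\sB$ divides $b$, which still suffices since that element then divides $r$ as well, so in fact $r+b\Z$ being in $\cm_\sB$ would have been forced already — meaning the statement $\eta\in Y$ would need to be interpreted relative to $\sB^{prim}$, but in the primitive case there is nothing to worry about. I would simply note in one line that we work with primitive $\sB$ (consistent with the ambient assumption in this subsection) so that $b'\mid b$ yields $b'=b$.

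I do not anticipate any real obstacle here: the corollary is a one-step logical consequence of Theorem~\ref{druggi2Toeplitz} plus primitivity, once one unwinds the definition of $Y$ and observes that the class $0\bmod b$ accounts for the one residue that is always absent from $\operatorname{supp}\eta$. The proof is three or four sentences; the substance all lies in Theorem~\ref{druggi2Toeplitz} itself.
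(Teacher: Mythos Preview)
Your proof is correct and follows essentially the same argument as the paper: apply Theorem~\ref{druggi2Toeplitz} with $d=b$ to any residue class $r+b\Z\subseteq\cm_\sB$, then use primitivity of $\sB$ to conclude $b'=b$ and hence $b\mid r$. The paper's version is just the terse form of what you wrote, without the explicit remark that the inequality $|\operatorname{supp}\eta\bmod b|\le b-1$ is automatic.
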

Clearly, what prevents $\eta$ from being a Toeplitz sequence is the presence of the non-periodic positions. We give the following description of the set of such positions on $\eta$.
\begin{Prop}[Proposition~\ref{propA} in dimension one]\label{lemmaA_one}
Let $(S_i)_{i\geq1}$ be a saturated filtration of a primitive set $\sB$ by finite sets. Then
\begin{equation}\label{teza1}
\Z\setminus\bigcup_{i\geq1}\Per(\eta,\lcm(S_i))=\mathcal{M}_{D}\cap \mathcal{F}_{\sB}=\Z\setminus \bigcup_{s\subset \N}\Per(\eta,s),
\end{equation}
where $D$ is as in~\eqref{de}.
\end{Prop}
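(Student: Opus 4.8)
The plan is to prove the two equalities in~\eqref{teza1} by a sequence of set-theoretic inclusions, extracting the arithmetic content from results stated earlier. Write $A:=\mathcal{M}_D\cap\mathcal{F}_\sB$ for the middle set, $L:=\bigcup_{i\ge1}\Per(\eta,\lcm(S_i))$ for the left-hand union (over a fixed saturated filtration $(S_i)$), and $P:=\bigcup_{s\subset\N}\Per(\eta,s)$ for the right-hand union over all periods. Since the $\lcm(S_i)$ form a particular divisibility-increasing sequence of moduli, $L\subseteq P$ trivially, so $\Z\setminus P\subseteq\Z\setminus L$. Thus it suffices to prove the two inclusions $\Z\setminus L\subseteq A$ and $A\subseteq\Z\setminus P$; chaining them gives $\Z\setminus L\subseteq A\subseteq\Z\setminus P\subseteq\Z\setminus L$, forcing all three to coincide.

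First I would handle $A\subseteq\Z\setminus P$, i.e.\ show no point of $\mathcal{M}_D\cap\mathcal{F}_\sB$ is periodic for $\eta$. Fix $n\in\mathcal{M}_D\cap\mathcal{F}_\sB$; since $n\in\mathcal{F}_\sB$ we have $\eta(n)=1$. Suppose toward a contradiction $n\in\Per(\eta,s)$ for some $s$, so $\eta\equiv1$ on $n+s\Z$, meaning $n+s\Z\subseteq\mathcal{F}_\sB$. Since $n\in\mathcal{M}_D$, pick $d\in D$ with $d\mid n$, and let $\cC$ be an infinite pairwise coprime set with $d\cC\subseteq\sB$. The key step is: among the infinitely many $c\in\cC$, all but finitely many are coprime to $s/\gcd(s,?)$ — more precisely, only finitely many $c\in\cC$ can share a common factor with $s$ because the $c$'s are pairwise coprime, so choose $c\in\cC$ coprime to $s$. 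Then $dc\mid n+ks$ has a solution $k\in\Z$ by CRT (as $\gcd(c,s)=1$ and $d\mid n$, one solves $n+ks\equiv0\bmod dc$), so that position $n+ks\in\mathcal{M}_{\{dc\}}\subseteq\mathcal{M}_\sB$, contradicting $n+s\Z\subseteq\mathcal{F}_\sB$. Hence $n\notin P$.

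Next, the inclusion $\Z\setminus L\subseteq A$, equivalently $\mathcal{F}_\sB\setminus\mathcal{M}_D=\mathcal{F}_\sB\cap\mathcal{F}_D\subseteq L$ together with $\mathcal{M}_\sB\subseteq L$. For $n\in\mathcal{M}_\sB$: some $b\in\sB$ divides $n$; since the filtration is saturated and exhausts $\sB$, there is $i$ with $b\in S_i$, and then $\eta\equiv0$ on $n+\lcm(S_i)\Z$ (as $b\mid\lcm(S_i)$ and $b\mid n$), so $n\in\Per(\eta,\lcm(S_i))\subseteq L$. For $n\in\mathcal{F}_\sB\cap\mathcal{F}_D$: the point is that the ``potential obstructions'' to $n$ being periodic all come from $D$, and absent them we can find a modulus $\lcm(S_i)$ that certifies periodicity of $\eta$ at $n$. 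The hard part is making this precise: I would use the structure $\cC_S=\{\gcd(b,\lcm(S)):b\in\sB\}$ and the saturation hypothesis to argue that, for $n\notin\mathcal{M}_D$, the set $\{c\in\cC_{S_i}\setminus S_i : c\mid n\}$ stabilizes (is empty for $i$ large along the filtration) — this is exactly where $\mathcal{C}_\infty=\mathcal{C}_\infty$-type arguments from~\eqref{de} and the definition of $D$ enter — and conclude that for large $i$, $n+\lcm(S_i)\Z$ meets no $\mathfrak{b}\in\sB$, whence $\eta\equiv1$ there and $n\in\Per(\eta,\lcm(S_i))\subseteq L$.

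The main obstacle I anticipate is this last sub-step: verifying that for $n\in\mathcal{F}_\sB$ with $n\notin\mathcal{M}_D$ there is genuinely an index $i$ with $n+\lcm(S_i)\Z\subseteq\mathcal{F}_\sB$. One must rule out that arbitrarily large $b\in\sB$ "just barely miss" $n$ (i.e.\ $\gcd(b,n)\neq b$ but $\gcd(b,\lcm(S_i))$ eventually divides $n+k\lcm(S_i)$ for some $k$); translating "$n\notin\mathcal{M}_D$" into the statement that only finitely many distinct values $\gcd(b,\lcm(S_i))$ dividing $n$ can occur as $i\to\infty$ requires the pairwise-coprimality/infiniteness packaging of $D$ together with the fact that a saturated filtration does not create new divisors of $\lcm(S_i)$ lying in $\sB$. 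Once that finiteness is in hand, a standard CRT argument finishes: choosing $i$ large enough that all these finitely many ``near-divisors'' have been absorbed into $S_i$, the residue class $n+\lcm(S_i)\Z$ avoids every $b\in\sB$.
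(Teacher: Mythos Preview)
Your overall strategy (the chain of inclusions $\Z\setminus L\subseteq A\subseteq\Z\setminus P\subseteq\Z\setminus L$) is exactly what the paper does, and your argument for $A\subseteq\Z\setminus P$ is correct and essentially the paper's. The easy half of the other direction, $\mathcal{M}_\sB\subseteq L$, is also fine.

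The gap is in the hard half, $\mathcal{F}_\sB\cap\mathcal{F}_D\subseteq L$, where your sketch is muddled. You write that one must show ``only finitely many distinct values $\gcd(b,\lcm(S_i))$ dividing $n$ can occur as $i\to\infty$'' --- but that is automatic, since $n$ has only finitely many divisors. What is actually needed (in contrapositive form, which is how the paper argues) is: if for \emph{every} $i$ there exists $b_i\in\sB\setminus S_i$ with $\gcd(b_i,\lcm(S_i))\mid n$, then $n\in\mathcal{M}_D$. Passing to a subsequence with $\gcd(b_{k_\ell},\lcm(S_{k_\ell}))=d$ constant is the first step; the substantive work is then to extract from the $b_{k_\ell}$ an infinite subfamily whose pairwise gcd's are all exactly $d$, so that $\{b_{k_\ell}/d\}$ is pairwise coprime and $d\in D$. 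The paper does this by an inductive construction: at each stage choose $\ell_{m+1}$ large enough that $b_{k_{\ell_1}},\dots,b_{k_{\ell_m}}\in S_{k_{\ell_{m+1}}}$, and use $\gcd(b_{k_{\ell_{m+1}}},\lcm(S_{k_{\ell_{m+1}}}))=d$ to force $\gcd(b_{k_{\ell_j}},b_{k_{\ell_{m+1}}})=d$. This is the missing idea in your proposal.

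Your alternative suggestion --- ``choosing $i$ large enough that all these finitely many `near-divisors' have been absorbed into $S_i$'' --- does not work as stated: the numbers $\gcd(b,\lcm(S_i))$ for $b\in\sB\setminus S_i$ are proper divisors of elements of $\sB$ (by saturation and primitivity), hence are \emph{not} in $\sB$ at all and cannot be ``absorbed into $S_i$''. Making $i$ large does not prevent new $b\in\sB\setminus S_i$ from producing the same small gcd dividing $n$; it is precisely the pairwise-coprimality extraction that turns this persistent phenomenon into membership in $D$.
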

The above proposition also allows us to find a periodic structure for $\eta$ in the Toeplitz case.
\begin{Th}[Theorem~\ref{thxC} in dimension one]\label{structure1}
Let $\sB\subset\N$ be primitive. Suppose $S_1\subset S_2\subset\dots\nearrow\sB$ is a saturated filtration of $\sB$ by finite sets. Assume that $\eta$ is a Toeplitz sequence. Then $(\lcm(S_i))_{i\geq1}$ is a periodic structure of $\eta$.
\end{Th}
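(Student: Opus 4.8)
The plan is to check directly that the sequence $p_i:=\lcm(S_i)$ satisfies the three conditions in the definition of a periodic structure of the Toeplitz sequence $\eta$: that $p_i\divides p_{i+1}$, that $\bigcup_{i\ge1}\Per(\eta,p_i)=\Z$, and that each $p_i$ is an essential period. I would first normalize by discarding the finitely many initial indices for which $S_i=\emptyset$, so that $S_i\neq\emptyset$ for all $i$; then $0+p_i\Z\subseteq\cM_{S_i}\subseteq\cM_\sB$, so $0\in\Per(\eta,p_i,0)$ and in particular $\Per(\eta,p_i)\neq\emptyset$. The divisibility $p_i\divides p_{i+1}$ is immediate from $S_i\subseteq S_{i+1}$. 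For the exhaustion, since $\eta$ is Toeplitz one has $\bigcup_{s}\Per(\eta,s)=\Z$ (equivalently $D=\emptyset$ by Theorem~\ref{d}), and Proposition~\ref{lemmaA_one} identifies $\Z\setminus\bigcup_i\Per(\eta,\lcm(S_i))$ with $\Z\setminus\bigcup_s\Per(\eta,s)=\emptyset$. Thus only the essentiality of the $p_i$ requires work.

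For essentiality I would use the characterization from Proposition~\ref{twierd} (condition~\eqref{czwarty}): it suffices to show that $\Per(\eta,p_i)=\Per(\eta,q)$ implies $p_i\divides q$. The crucial point is the identity
\[
\Per(\eta,s,0)=\cM_{\{b\in\sB\,:\,b\divides s\}}\qquad\text{for every }s\in\N ,
\]
together with the refinement $\Per(\eta,p_i,0)=\cM_{S_i}$ when $S_i$ is saturated. The inclusion $\supseteq$ is trivial: if $b\divides n$ with $b\in\sB$, $b\divides s$, then $n+s\Z\subseteq b\Z\subseteq\cM_\sB$. For $\subseteq$, suppose $n+s\Z\subseteq\cM_\sB$ and choose $r$ with $n\equiv r\pmod s$, $1\le r\le s$; then $r+s\Z\subseteq\cM_\sB$, so Theorem~\ref{druggi2Toeplitz} — this is the step that genuinely uses the Toeplitz hypothesis — produces $b\in\sB$ with $b\divides\gcd(r,s)=\gcd(n,s)$, whence $b\divides n$ and $n\in\cM_{\{b\in\sB:b\divides s\}}$. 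When $s=\lcm(S_i)$ the $b$ so obtained divides $\lcm(S_i)$, so saturation of $S_i$ (which says precisely that the elements of $\sB$ dividing $\lcm(S_i)$ are exactly those of $S_i$) puts $b\in S_i$, giving $\Per(\eta,p_i,0)=\cM_{S_i}$.

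Granting this, the conclusion is immediate. Since $\Per(\eta,s,0)=\{n\in\Per(\eta,s):\eta(n)=0\}$ depends only on $\Per(\eta,s)$, the hypothesis $\Per(\eta,p_i)=\Per(\eta,q)$ gives $\cM_{S_i}=\Per(\eta,p_i,0)=\Per(\eta,q,0)=\cM_{\sB_q}$ with $\sB_q:=\{b\in\sB:b\divides q\}$; as $S_i$ and $\sB_q$ are both primitive (subfamilies of the primitive $\sB$) and a primitive family is recovered from its set of multiples, $S_i=\sB_q$, so every element of $S_i$ divides $q$ and therefore $p_i=\lcm(S_i)\divides q$. Combining the three conditions yields the theorem. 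I expect the identity $\Per(\eta,p_i,0)=\cM_{S_i}$ to be the main obstacle: it is where the Toeplitz assumption is actually spent, through Theorem~\ref{druggi2Toeplitz} (whose proof in turn rests on the arithmetic of cosets contained in sets of multiples), in tandem with the combinatorial content of saturation — everything else (divisibility of the $p_i$, exhaustion via Proposition~\ref{lemmaA_one}, the primitivity bookkeeping) is routine. In the multidimensional Theorem~\ref{thxC} one runs the same scheme with $\lcm(S_i)$ replaced by $\ell_{\mathfrak{S}_i}=\bigcap_{\mathfrak{b}\in\mathfrak{S}_i}\mathfrak{b}$, invoking Theorem~\ref{Thm_SK}, Theorem~\ref{dwyz} and Proposition~\ref{propA} in place of their one-dimensional counterparts.
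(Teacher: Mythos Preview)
Your proof is correct and follows essentially the same approach as the paper: both obtain the exhaustion $\bigcup_i\Per(\eta,p_i)=\Z$ from Proposition~\ref{lemmaA_one} and the essentiality of each $p_i$ from Theorem~\ref{druggi2Toeplitz} combined with the primitivity of $\sB$. The only difference is cosmetic: you package the essentiality step through the identity $\Per(\eta,s,0)=\cM_{\{b\in\sB:\,b\mid s\}}$ and the equivalent criterion~\eqref{czwarty}, whereas the paper verifies Definition~\ref{pierwsza} directly by exhibiting, for each $s<p_i$, a witness $b\in S_i$ with $b\nmid s$ lying in $\Per(\eta,p_i)\setminus\Per(\eta,s)$.
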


Here is a table including the above results and their multidimensional counterparts formulated in Section~\ref{se1}.
\begin{table}[ht]
\centering
\begin{tabular}[t]{lcc}
\hline
&$\OK$&$\Z$\\
\hline
Essential minimality&Theorem~\ref{minimal2}&Theorem~\ref{min_Keller}\\
Minimality characterization&Theorem~\ref{equivalent}&Theorem~\ref{onedim}\\
Essence of being Toeplitz & Theorem~\ref{dwyz} & Theorem~\ref{d} \\
Non-periodic positions on $\eta$&Proposition~\ref{propA}&Proposition~\ref{lemmaA_one}\\
Periodic structure&Theorem~\ref{thxC}&Theorem~\ref{structure1}\\
APs in ``Toeplitz sets of multiples''&Theorem~\ref{Thm_SK}&Theorem~\ref{druggi2Toeplitz}\\
\hline
\end{tabular}
\caption{Results in higher dimension vs in dimension one.}
\end{table}%

We complete this section with a map of Section~\ref{dowody} which includes the proofs of our dimension one results.

\begin{center}

\begin{tikzpicture}[x=0.75pt,y=0.75pt,yscale=-1,xscale=1]

\draw  [dash pattern={on 4.5pt off 4.5pt}] (40.67,246.5) .. controls (40.67,236.34) and (48.9,228.1) .. (59.07,228.1) -- (256.27,228.1) .. controls (266.43,228.1) and (274.67,236.34) .. (274.67,246.5) -- (274.67,301.7) .. controls (274.67,311.86) and (266.43,320.1) .. (256.27,320.1) -- (59.07,320.1) .. controls (48.9,320.1) and (40.67,311.86) .. (40.67,301.7) -- cycle ;
\draw  [dash pattern={on 4.5pt off 4.5pt}] (162,188.5) .. controls (162,184.41) and (165.31,181.1) .. (169.4,181.1) -- (404.27,181.1) .. controls (408.35,181.1) and (411.67,184.41) .. (411.67,188.5) -- (411.67,210.7) .. controls (411.67,214.79) and (408.35,218.1) .. (404.27,218.1) -- (169.4,218.1) .. controls (165.31,218.1) and (162,214.79) .. (162,210.7) -- cycle ;
\draw  [dash pattern={on 4.5pt off 4.5pt}] (35.67,188.5) .. controls (35.67,184.41) and (38.98,181.1) .. (43.07,181.1) -- (129.27,181.1) .. controls (133.35,181.1) and (136.67,184.41) .. (136.67,188.5) -- (136.67,210.7) .. controls (136.67,214.79) and (133.35,218.1) .. (129.27,218.1) -- (43.07,218.1) .. controls (38.98,218.1) and (35.67,214.79) .. (35.67,210.7) -- cycle ;


\draw (87,200) node    {$Theorem\ \ref{druggi2Toeplitz}$};
\draw (87,298.5) node    {$Theorem\ \ref{onedim}$};
\draw (225,200) node    {$Proposition\ \ref{lemmaA_one}$};
\draw (360,200) node    {$Theorem\ \ref{structure1}$};
\draw (130.75,240) node    {$Theorem\ \ref{d}$};
\draw (225,298.5) node    {$Theorem\ \ref{min_Keller}$};
\draw (175,163.1) node [anchor=north west][inner sep=0.75pt]   [align=left] {Section 3.2.2};
\draw (94,322.1) node [anchor=north west][inner sep=0.75pt]   [align=left] {Section 3.2.3};
\draw (48,163.1) node [anchor=north west][inner sep=0.75pt]   [align=left] {Section 3.2.1};
\draw    (285.5,200) -- (305,200) ;
\draw [shift={(310,200)}, rotate = 180] [color={rgb, 255:red, 0; green, 0; blue, 0 }  ][line width=0.75]    (10.93,-3.29) .. controls (6.95,-1.4) and (3.31,-0.3) .. (0,0) .. controls (3.31,0.3) and (6.95,1.4) .. (10.93,3.29)   ;
\draw    (87,212) -- (87,284.5) ;
\draw [shift={(87,286.5)}, rotate = 270] [color={rgb, 255:red, 0; green, 0; blue, 0 }  ][line width=0.75]    (10.93,-3.29) .. controls (6.95,-1.4) and (3.31,-0.3) .. (0,0) .. controls (3.31,0.3) and (6.95,1.4) .. (10.93,3.29)   ;
\draw    (130,298.5) -- (183,298.5) ;
\draw [shift={(128,298.5)}, rotate = 0] [color={rgb, 255:red, 0; green, 0; blue, 0 }  ][line width=0.75]    (10.93,-3.29) .. controls (6.95,-1.4) and (3.31,-0.3) .. (0,0) .. controls (3.31,0.3) and (6.95,1.4) .. (10.93,3.29)   ;
\draw    (225,212) -- (225,284.5) ;
\draw [shift={(225,286.5)}, rotate = 270] [color={rgb, 255:red, 0; green, 0; blue, 0 }  ][line width=0.75]    (10.93,-3.29) .. controls (6.95,-1.4) and (3.31,-0.3) .. (0,0) .. controls (3.31,0.3) and (6.95,1.4) .. (10.93,3.29)   ;
\draw    (150.08,252) -- (203.97,285.45) ;
\draw [shift={(205.67,286.5)}, rotate = 211.83] [color={rgb, 255:red, 0; green, 0; blue, 0 }  ][line width=0.75]    (10.93,-3.29) .. controls (6.95,-1.4) and (3.31,-0.3) .. (0,0) .. controls (3.31,0.3) and (6.95,1.4) .. (10.93,3.29)   ;

\end{tikzpicture}
%

\end{center}

\subsection{Proofs}\label{dowody}
\subsubsection{Arithmetic progressions in $\mathcal{M}_\sB$: proof of Theorem~\ref{druggi2Toeplitz}}
The proof of Theorem~\ref{druggi2Toeplitz} relies on two lemmas, whose proofs will be given in a moment.
\begin{Lemma}\label{lem1}
Let $d,r\in\N$. Then there exists an infinite pairwise coprime set $\mathcal{C}$ such that
\[
\gcd(r,d)\mathcal{C}\subseteq r+d\Z.
\]
\end{Lemma}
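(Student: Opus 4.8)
The plan is to reduce the statement to the coprime case and then construct $\mathcal{C}$ directly. Write $g=\gcd(r,d)$, $r=gr'$, $d=gd'$, so that $\gcd(r',d')=1$ (and $r',d'\ge 1$). Since $r+d\Z=g\,(r'+d'\Z)$ and $g\ne 0$, for $c\in\N$ we have $gc\in r+d\Z$ if and only if $c\in r'+d'\Z$. Hence it suffices to produce an infinite pairwise coprime set $\mathcal{C}\subseteq r'+d'\Z$; then automatically $g\mathcal{C}\subseteq r+d\Z$, as required.

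To build such a $\mathcal{C}$, I would proceed by induction, producing pairwise coprime integers $c_1<c_2<\cdots$ with $c_i\equiv r'\pmod{d'}$ for all $i$. Assume $c_1,\dots,c_n$ have been chosen, and let $p_1,\dots,p_k$ be the (distinct) primes dividing $c_1\cdots c_n$ (with $k=0$ allowed, e.g.\ in the base step). If a prime $p$ divides $d'$, then $p\nmid r'$ because $\gcd(r',d')=1$, and hence $p\nmid (r'+d't)$ for every $t$; such primes impose no constraint. If instead $p\nmid d'$, the congruence $r'+d't\equiv 0\pmod p$ has a single solution $t\equiv t_p\pmod p$, which is the only value of $t\bmod p$ to be avoided. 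By the Chinese Remainder Theorem we may pick $t\in\N$ as large as we wish with $t\not\equiv t_{p_j}\pmod{p_j}$ for every $p_j$ among $p_1,\dots,p_k$ that does not divide $d'$ (each such congruence leaves $p_j-1\ge 1$ admissible residues). Setting $c_{n+1}:=r'+d't$, we get $c_{n+1}\equiv r'\pmod{d'}$, $c_{n+1}>c_n$ (choosing $t$ large), and $c_{n+1}$ divisible by none of $p_1,\dots,p_k$, hence coprime to each $c_i$ with $i\le n$. The set $\mathcal{C}=\{c_1,c_2,\dots\}$ is then infinite and pairwise coprime, which completes the proof.

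There is no real obstacle here; the one point that deserves attention is the inductive step, and in particular the observation that primes dividing $d'$ impose no condition on $t$ — this is precisely where the hypothesis is used (it is the difference between multiplying $\mathcal{C}$ by $\gcd(r,d)$ rather than by $1$). As an alternative one could simply invoke Dirichlet's theorem on primes in arithmetic progressions, taking $\mathcal{C}$ to be any infinite set of primes in the class $r'+d'\Z$ (nonempty since $\gcd(r',d')=1$), which makes pairwise coprimality automatic; I would nevertheless favour the elementary construction above, since it avoids Dirichlet and therefore adapts more directly to the number-field setting considered later.
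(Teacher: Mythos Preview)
Your proof is correct and follows essentially the same route as the paper: reduce to the case $\gcd(r,d)=1$ and then show that, given finitely many primes, one can always find an element of the progression avoiding all of them. The only cosmetic difference is that the paper packages the last step as a contradiction (take a maximal pairwise coprime subset, then exhibit one more element coprime to all its prime factors via an explicit choice of $k$), whereas you carry out the same avoidance step by CRT inside a direct induction.
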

\begin{Lemma}\label{lem2}
Suppose that $\sB$ is such that $D=\emptyset$. If $d\mathcal{C}\subseteq \cM_\sB$ for some $d\in\N$ and some infinite pairwise coprime set $\mathcal{C}$ then $d\in\cM_\sB$.
\end{Lemma}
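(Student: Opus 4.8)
The plan is to use the witnesses $b_c\in\sB$ supplied by $d\mathcal{C}\subseteq\cM_\sB$ to produce either an element of $\sB$ dividing $d$ (which is precisely the assertion $d\in\cM_\sB$) or a witness that $D\neq\emptyset$, contradicting the hypothesis. Concretely, for each $c\in\mathcal{C}$ fix $b_c\in\sB$ with $b_c\divides dc$; such a $b_c$ exists because $dc\in d\mathcal{C}\subseteq\cM_\sB$. Throughout I write $v_p(n)$ for the exponent of a prime $p$ in $n\in\N$.

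The first step is to discard finitely many $c$'s. Since $\mathcal{C}$ is pairwise coprime, for every prime $p$ at most one $c\in\mathcal{C}$ satisfies $p\divides c$; as $d$ has only finitely many prime divisors, all but finitely many $c\in\mathcal{C}$ have $\gcd(c,d)=1$, and I pass to an infinite subcollection $\mathcal{C}_1\subseteq\mathcal{C}$ of such $c$. For $c\in\mathcal{C}_1$ I split $b_c=d_c\,b_c'$, where $d_c=\prod_{p\divides d}p^{v_p(b_c)}$ and $b_c'=b_c/d_c$, so that $\gcd(b_c',d)=1$. From $b_c\divides dc$ we get $v_p(b_c)\le v_p(d)+v_p(c)$ for every $p$: for $p\divides d$ we have $v_p(c)=0$, hence $v_p(b_c)\le v_p(d)$ and so $d_c\divides d$; for $p\ndivides d$ we have $v_p(b_c')=v_p(b_c)\le v_p(c)$, hence $b_c'\divides c$.

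Next, since $d$ has only finitely many divisors, the pigeonhole principle lets me pass to an infinite subcollection $\mathcal{C}_2\subseteq\mathcal{C}_1$ on which $d_c$ equals one fixed divisor $d_0\divides d$. If $b_c'=1$ for some $c\in\mathcal{C}_2$, then $b_c=d_0\divides d$, so $d\in\cM_\sB$ and the proof is complete. Otherwise $b_c'>1$ for all $c\in\mathcal{C}_2$; because $b_c'\divides c$ and the $c$'s are pairwise coprime, the numbers $b_c'$ are pairwise coprime and $>1$, hence pairwise distinct, so $\mathcal{C}':=\{b_c':c\in\mathcal{C}_2\}$ is an infinite pairwise coprime set. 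Since $d_0\,b_c'=b_c\in\sB$ for each $c\in\mathcal{C}_2$, we obtain $d_0\mathcal{C}'\subseteq\sB$, that is $d_0\in D$, contradicting $D=\emptyset$.

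The only step with real content is the opening reduction to the $c$'s coprime to $d$: this is what confines $d_c=\gcd(b_c,d)$ to the finite set of divisors of $d$, so that pigeonhole applies, and what lets the cofactors $b_c'$ inherit pairwise coprimality from $\mathcal{C}$; everything afterward is a routine valuation computation and two pigeonhole passes. Note that the Toeplitz hypothesis plays no role in this lemma — it enters only when Lemma~\ref{lem1} and Lemma~\ref{lem2} are combined (via the fact that $D=\emptyset$ in the Toeplitz case) to deduce Theorem~\ref{druggi2Toeplitz}.
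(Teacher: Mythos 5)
Your proof is correct and takes essentially the same route as the paper's: both factor each witness $b_c\in\sB$ with $b_c\divides dc$ into a divisor of $d$ times a divisor of $c$, apply the pigeonhole principle over the finitely many divisors of $d$, and conclude either that some element of $\sB$ divides $d$ or that a rescaled infinite pairwise coprime set sits inside $\sB$, contradicting $D=\emptyset$. The only difference is bookkeeping: you first discard the finitely many $c$ not coprime to $d$ and split $b_c$ by valuations at the primes of $d$, while the paper works with the cofactors $k_i=dc_i/b_i$, fixes $\gcd(d,k_i)$ by pigeonhole, and splits cases according to whether some $b_i$ repeats infinitely often.
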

\begin{proof}[Proof of Theorem~\ref{druggi2Toeplitz}]
Suppose that $r+d\Z \subseteq \cM_\sB$. Then, by Lemma~\ref{lem1}, we have
\[
\gcd(r,d)\mathcal{C} \subseteq r+d\Z\subseteq \cM_\sB
\]
for some infinite pairwise coprime set $\mathcal{C}$. Since $\eta$ is a Toeplitz sequence, it follows from Theorem~\ref{toeplitz} (\eqref{B5}$\Leftrightarrow$\eqref{B4}) that $D=\emptyset$. Therefore, by Lemma~\ref{lem2}, we have $\gcd(r,d) \in \cM_\sB$, which completes the proof.
\end{proof}
\paragraph{Proofs of lemmas}
\begin{proof}[Proof of Lemma~\ref{lem1}]
It suffices to show that if $\gcd(r,d)=1$ then $r+d\Z$ contains an infinite pairwise coprime set. Notice that for any finite subset of primes $P$, there exists $k\in \Z$ such that
\begin{equation}\label{*p}
\gcd(r+kd, \prod_{p\in P}p)=1
\end{equation}
(take, for example, $k=\prod_{p\in P, p\ndivides r}p$ or $k=1$ if $\{p\in P, p\ndivides r\}$ is empty). Suppose now that $r+d\Z$ does not contain an infinite pairwise coprime subset. Then there is a pairwise coprime set $K$ of the maximal cardinality contained in~$r+d\Z$. Clearly, any element of $r+d\Z$ shares a factor with some element of this set. It remains to notice that~\eqref{*p} does not hold for $P$ being the set of all prime factors of elements of $K$  to obtain a contradition.
\end{proof}
\begin{proof}[Proof of Lemma~\ref{lem2}]
Let $\mathcal{C}=\{c_i\}_{i\geq1}$. Since $d\mathcal{C}\subseteq\cm_\sB$, for any $i\geq1$ there exist $b_i\in\sB$ and $k_i\in\N$ such that
\[dc_i=b_ik_i.
\]

Suppose first that there exists $b\in\sB$ such that
\[
b_i=b \text{ for infinitely many }i\geq 1.
\]
Then since $c_i$, $i\geq1$, are pairwise coprime, $b\divides d$.

Suppose now that each $b\in\sB$ appears among $b_i$'s at most a finite number of times. In particular, passing to a subsequence if necessary, we may assume that $b_i$'s are pairwise different (the corresponding $c_i$'s still form an infinite pairwise coprime set). Since $d$ has only finitely many factors, we can assume that $\gcd(d,k_i)$ is constant. Then
\[
b_i=\frac{d}{\gcd(d,k_i)}\cdot \frac{c_i \cdot \gcd(d,k_i)}{k_i}.
\]
Since $c_i$'s are pairwise coprime and $\frac{c_i \cdot \gcd(d,k_i)}{k_i}\divides c_i$, it follows immediately that
also $\frac{c_i \cdot \gcd(d,k_i)}{k_i}$ ($i\geq 1$) are pairwise coprime. This completes the proof.
\end{proof}
\subsubsection{Non-periodic positions (proof of Proposition~\ref{lemmaA_one}) and periodic structure (proof of Theorem~\ref{structure1})}
The structure of this section is fairly simple, so we just proceed with the proofs.
\begin{proof}[Proof of Proposition~\ref{lemmaA_one}]
It suffices to show that
\begin{equation}\label{dwie_inkluzje}
\Z\setminus \bigcup_{i\geq 1}\Per(\eta,\lcm(S_i)) \subseteq \mathcal{M}_D\cap\cf_\sB \subseteq \Z\setminus \bigcup_{s\in\N}\Per(\eta,s).
\end{equation}
Clearly, $\Z\setminus \bigcup_{s\in\N}\Per(\eta,s)\subseteq \Z\setminus \bigcup_{i\geq 1}\Per(\eta,\lcm(S_i))$ and therefore the inclusions must be, in fact, equalities.

\paragraph{First inclusion.} Take $n\not\in\bigcup_{i\geq 1}\Per(\eta,\lcm(S_i))$. Then $n\in\cf_\sB$. Moreover, for any $k\geq 1$ there exists $t\in \Z$ such that $n+t\lcm(S_k) \in\mathcal{M}_\mathscr{B}$, so there exists $b_k$ such that
\[
b_k\mid n+t\lcm(S_k).
\]
Notice that $b_k\notin S_k$ (if $b_k\in S_k$ then $b_k\mid n$ which contradicts $n\in\cf_\sB$). Since $(S_k)_{k\geq1}$ is saturated, $\gcd(b_k,\lcm(S_k))<b_k$. Notice that $\gcd(b_k,\lcm(S_k))\mid n$. Since $n$ has only finitely many divisors, there exists $d\in\N$ and a subsequence $(k_\ell)\subseteq\N$ such that $\gcd(b_{k_\ell},\lcm(S_{k_\ell}))=d$ for some $d\in\N$ (and $d\divides n$).

Let $\ell_2>\ell_1:=1$ be such that
\begin{equation}\label{maj1}
b_{k_{\ell_1}}\in S_{\ell_{k_2}} \text{ (in particular, $b_{k_{\ell_1}}\divides \lcm(S_{\ell_{k_2}})$)}.
\end{equation}
By the choice of $(k_\ell)$, we have $d\divides b_{k_{\ell_1}},b_{k_{\ell_2}}$, so
\[
d\divides \gcd(b_{k_{\ell_1}},b_{k_{\ell_2}}).
\]
On the other hand, using~\eqref{maj1}, we obtain
\[
\gcd(b_{k_{\ell_1}},b_{k_{\ell_2}})\divides \gcd(\lcm(S_{k_{\ell_2}}),b_{k_{\ell_2}})=d.
\]
Thus, $\gcd(b_{k_{\ell_1}},b_{k_{\ell_2}})=d$.

Suppose that we have constructed $1=\ell_1<\ell_2<\dots<\ell_m$ such that
\begin{equation}\label{maj2}
\gcd(b_{k_{\ell_i}},b_{k_{\ell_{i'}}})=d \text{ for any }1\leq i<i'\leq m.
\end{equation}
There exists $\ell_{m+1}>\ell_m$ such that
\begin{equation}\label{maj3}
b_{k_{\ell_1}},b_{k_{\ell_2}},\dots,b_{k_{\ell_m}}\in S_{k_{\ell_{m+1}}}.
\end{equation}
It follows by the choice of $(k_\ell)$ that $d\divides b_{k_{\ell_i}}$ for $1\leq i\leq m+1$. This gives
\[
d\divides \gcd(b_{k_{\ell_i}},b_{k_{\ell_{i'}}}) \text{ for any }1\leq i<i'\leq m+1.
\]
On the other hand, using~\eqref{maj3}, we obtain $\gcd(b_{k_{\ell_i}},b_{k_{\ell_{m+1}}})\divides\gcd(\lcm(S_{k_{\ell_{m+1}}}),b_{k_{\ell_{m+1}}})=d$. Thus,
\[
\gcd(b_{k_{\ell_i}},b_{k_{\ell_{i'}}})=d \text{ for any }1\leq i<i'\leq m+1
\]
and the above inductive procedure therefore yields a sequence $1=\ell_1<\ell_2<\dots$ such that
\[
\gcd(b_{k_{\ell_i}},b_{k_{\ell_{i'}}})=d \text{ for any }i\neq i;
\]
Let $c_i:=\frac{b_{k_i}}{d}$. It remains to notice that then $\mathcal{C}=\{c_i : i\geq 1\}$ is pairwise coprime and we have $d\mathcal{C}\subseteq \mathscr{B}$. Since $d\divides n$, it follows that $n\in\mathcal{M}_D$.

\paragraph{Second inclusion.}
Take $n\in\mathcal{M}_D\cap\mathcal{F}_\mathscr{B}$. There exists $d$ and an infinite pairwise coprime set $\mathcal{C}$ such that
\[
d\divides n \text{ and }d\mathcal{C}\subseteq\mathscr{B}.
\]
Suppose that $n\in\text{Per}(\eta,s)$ for some $s\in\N$. In other words, we have $n+s\Z\subseteq \mathcal{F}_\mathscr{B}$. Since $d\divides n$, it follows that
\begin{equation}\label{inclusion3_one}
d\Z\cap (n+s\Z)=n+\lcm(s,d)\Z\subseteq n+s\Z\subseteq\mathcal{F}_\sB.
\end{equation}
Take $c\in\mathcal{C}$ coprime to $s$ and $d$. Then
\[
\emptyset\neq (n+\lcm(s,d)\Z)\cap dc\Z \subseteq (n+\lcm(s,d)\Z)\cap\mathcal{M}_\sB
\]
as $dc\in\sB$. This, however, contradicts~\eqref{inclusion3_one}.
\end{proof}

\begin{proof}[Proof of Theorem \ref{structure1}]
For $i\geq1$ put $s_i:=\lcm(S_i)$. It follows from the equivalence \eqref{B5} $\Leftrightarrow$ \eqref{B4} in Theorem~\ref{toeplitz} that $D=\emptyset$ and thus $\mathcal{M}_D\cap \mathcal{F}_\sB=\emptyset$. Together with Proposition~\ref{lemmaA_one} this yields
\[
\bigcup_{i\geq1}\Per(\eta,s_i)=\Z.
\]
Clearly, $s_i\divides s_{i+1}$, so it remains to show that $s_i$ is an essential period of $\eta$.
Let $s<s_i$. Then there exists $b\in S_i$ such that
\begin{equation}\label{bj_s}
b \ndivides s.
\end{equation}
We have $b \in \operatorname{Per}(\eta,s_i)$. Suppose that $b\in \operatorname{Per}(\eta, s)$, i.e.\
\[
b+s \mathbb{Z} \subseteq \mathcal{M}_{\mathscr{B}}
\]
By Theorem~\ref{druggi2Toeplitz} (for $d=s$, $r=b$), we get that for some $b' \in \mathscr{B}$,
\[
b' \divides \gcd(b,s)\divides b.
\]
Since $\mathscr{B}$ is primitive, $b'=b$, so $b\divides s$. But this contradicts \eqref{bj_s}, which completes the proof.
\end{proof}
\begin{Example}
Let $\sB=\{2^ic_i;\ i\geq 1\}$, where $c_i\geq 2$ for $i\geq 1$. In Example 3.1 in \cite{MR3803141} it is shown that $\Z=\bigcup_{i\geq1}\Per(\eta,2^{i+1}c_1\ldots c_i)$. Let $S_k=\{2^ic_i: 1\leq i\leq k\}$. By Theorem~\ref{structure1}, we have $\Z=\bigcup_{i\geq1}\Per(\eta,2^ic_1,\ldots,c_i)$.
\end{Example}
\begin{Example}
The first example of $0$-$1$ nonperiodic Toeplitz sequence comes from Garcia and Hedlund \cite{MR0074810}. The sequence is given by the following rule: in each odd-numbered step we fill every second available
position with $0$, in each even-numbered step we fill every second available position
with $1$. The corresponding periodic structure
is $(2^m)_{m\geq1}$. On the other hand, it follows by Theorem~\ref{structure1} that $(2^m)_{m\geq1}$ cannot be a periodic structure of any Toeplitz sequence given by some $\eta^*$ (corresponding to $\sB^*$). Since any two Toeplitz sequences from the same Toeplitz subshift have the same periodic structure, it follows that the Garcia-Hedlund sequence cannot be an element of a $\sB$-free subshift. Indeed, by Theorem~\ref{min_Keller}, it would be then an element of the corresponding $\sB^*$-free subshift which yields a contradiction.
%
\end{Example}

\subsubsection{Minimality: proof of Theorems~\ref{min_Keller} and~\ref{onedim}}
We will prove first Theorem~\ref{d}, necessary for the proof of Theorem~\ref{min_Keller}. Then we make some comments on set $\mathscr{B}^*$ and compare it to the taut set $\mathscr{B}'$ from~\cite{MR3803141} giving the same Mirsky measure as $\mathscr{B}$. Then we pass to the proof of Theorem~\ref{min_Keller}. Finally, we prove Theorem~\ref{onedim}, using both, Theorem~\ref{d} and Theorem~\ref{min_Keller}.
\begin{proof}[Proof of Theorem~\ref{d}]
We will first show (a) $\iff$ (b). Clearly, if $D=\emptyset$ then $\sB=\sB^*$. For the other direction, notice that
\[
\sB=\sB^*=(\sB\setminus \mathcal{M}_D)\cup D^{prim} \text{ implies }D^{prim}\subset \sB.
\]
However, by the definition of $D$, for any $d\in D$, some non-trivial multiple of $d$ is a member of $\sB$, as $d\mathcal{C}\subset \sB$ where $\mathcal{C}$ is inifinite and pairwise coprime. If $D\neq\emptyset$ this contradicts the primitivity of $\sB$.

Let us show that $D^*=\emptyset$ (i.e.\ $\eta^*$ is Toeplitz, so (a) $\implies$ (c)). Suppose that there is some $d\in\N$ and some infinite pairwise coprime set $\cC$ such that
\[
d\mathcal{C}\subseteq \sB^*=(\sB\setminus \mathcal{M}_D)\cup D^{prim}.
\]
Then, without loss of generality (taking a smaller but still infinite and pairwise coprime set $\cC$), one of the following holds:
\begin{enumerate}[(A)]
\item
$d\mathcal{C}\subseteq \sB\setminus \mathcal{M}_D$,
\item
$d\mathcal{C}\subseteq D^{prim}$.
\end{enumerate}
If (A) holds then $d\mathcal{C}\subset \sB$, so $d\in D$ and we obtain
\[
d\mathcal{C}\subset \mathscr{B}\setminus \mathcal{M}_D\subset \mathscr{B}\setminus d\Z,
\]
which yields a contradiction. Suppose now that (B) holds. Let $\mathcal{C}=\{c_1,c_2,\dots\}$ and let $\mathcal{A}_i$ for $i\geq 1$ be infinite, pairwise coprime and such that $dc_i\mathcal{A}_i\subset \sB$. We can choose $a_i\in\mathcal{A}_i$, $i\geq 1$ so that $\{c_ia_i:i\geq 1\}$ is infinite and pairwise coprime (we just use that each $\mathcal{A}_i$ is infinite and pairwise coprime). This gives again $d\in D$. This contradicts (B) (by the primitivity of $D^{prim}$). We conclude that indeed $D^*=\emptyset$.

Finally, we will show (c) $\implies$ (a). If $\eta$ is Toeplitz then $\mathcal{F}_\sB\cap \mathcal{M}_D=\emptyset$ since (by Proposition~\ref{lemmaA_one}) it is the set of non-periodic positions on $\eta$. Therefore, $\mathcal{M}_D\subseteq \mathcal{M}_\sB$ and it follows immediately (see Remark~\ref{B*}) that $\mathcal{M}_{\sB^*}=\mathcal{M}_{\sB}$. This yields $\sB=\sB^*$ by the primitivity of $\sB$ and $\sB^*$.

\end{proof}

\begin{Remark}\label{porownanie}
In~\cite{MR3803141}, a certain procedure was described to modify $\sB$ to $\sB'$, so that:
\begin{itemize}
\item $\sB'$ is taut,
\item $\eta'\leq\eta$ (where $\eta'=\mathbf{1}_{\mathcal{F}_{\sB'}}$),
\item $\nu_{\eta}=\nu_{\eta'}$.
\end{itemize}
The idea was very similar to the one used to produce $\sB^*$. We recall it here.
\begin{itemize}
\item
Suppose that $\sB$ is not taut, let $c_1$ be the smallest natural number such that there exists a Behrend set $\mathcal{A}_1$ such that $c_1\mathcal{A}_1\subset \sB$. Replace $\sB$ with $\sB\setminus c_1\Z \cup \{c_1\}$.
\item
If now $\sB$ is taut, we stop. If not, we take the smallest natural number $c_2$ such that there exists a Behrend set $\mathcal{A}_2$ such that $c_2\mathcal{A}_2\subset \sB$ and replace the original $\sB$ with $\sB\setminus (c_1\Z\cup c_2\Z)\cup\{c_1,c_2\}$.
\end{itemize}
Either the above procedure ends after a finite number of steps or we arrive at the modified $\sB$ of the form
\[
\sB'=\sB \setminus \bigcup_{i\geq 1}c_i\Z \cup \{c_i : i\geq 1\},
\]
where $\mathcal{A}_i$, $i\geq 1$ are Behrend sets such that $c_i\cA_i\subseteq\sB \setminus \bigcup_{j=1}^{i-1}c_j\Z \cup \{c_1,c_2,\ldots,c_{i-1}\}$ . It was shown in Lemma~4.10 in~\cite{MR3803141} that such $\sB'$ fulfills the requirements listed in the beginning of this remark. In particular, this means that one could equivalently define $\sB'$ as follows:
\begin{equation}\label{ce}
\sB'':=(\sB\setminus \mathcal{M}_C) \cup C^{prim}=(\sB\cup C)^{prim},
\end{equation}
where
\[
C=\{c\in \N : \text{there exists a Behrend set $\mathcal{A}$ such that }c\mathcal{A}\subset \sB\}.
\]
Indeed, we need to show that $C^{prim}=\{c_i : i\geq 1\}$ (the proof of the second equality above goes along the same lines as in Remark~\ref{B*}). The inclusion $C^{prim}\supset\{c_i : i\geq 1\}$ follows directly by the construction of $\{c_i : i\geq 1\}$. Suppose that we do not have an equality, i.e.\ there exists $c\in C^{prim}$ such that $c\neq c_i$ for all $i\geq1$. Let $n\in\N$ be such that $c_{n-1}<c<c_n$. Then $c\cA\not\subset\sB\setminus\bigcup_{i=1}^{n-1}c_i\Z$ for any Behrend set $\cA$ but $\cA_c:=\{\frac{b}{c}: b\in\sB, c\mid b\}$ is Behrend. Notice that
\[
\cA_c=\left(\cA_c\cap\left\{\frac{b}{c}: b\in\sB\setminus\bigcup_{i=1}^{n-1}c_i\Z, c\mid b\right\}\right)\cup \bigcup_{i=1}^{n-1} \left(\cA_c\cap\left\{\frac{b}{c}:b\in \sB\cap c_i\Z,c\mid b\right\}\right).
\]
By the above, the first set in the above sum is not Behrend. Recall that by \cite[Corollary 0.14]{MR1414678} a finite union of sets is Behrend, provided that at least one of them is Behrend. It follows immediately that there exists $1\leq i\leq n-1$ such that $\cA_c\cap\left\{\frac{b}{c}:b\in c_i\Z,c\mid b\right\}$ is Behrend. However,
\[
\cA_c\cap\left\{\frac{b}{c}:b\in \sB\cap c_i\Z,c\mid b\right\}\subset\frac{c_i}{\gcd(c,c_i)}\Z.
\]
Therefore,
\[
1=d\left(\cM_{\cA_c\cap\left\{\frac{b}{c}:b\in c_i\Z,c\mid b\right\}}\right)\leq d\left(\frac{c_i}{\gcd(c,c_i)}\Z\right)=\frac{\gcd(c,c_i)}{c_i}.\] Hence $\gcd(c,c_i) =c_i$ which contradicts $c\in C^{prim}\supset\{c_i:i\geq1\}$.

Now, we claim that we have the following inclusions:
\[
X_{\eta^*}\subset X_{\eta'}\subset X_\eta.
\]
Indeed, for the second inclusion, see (27) in~\cite{MR4289651}. The first inclusion follows by the fact that $X_{\eta^*}$ is the unique minimal subset of $X_\eta$. So we must have $X_{\eta^*}=X_{\eta^*}\cap X_{\eta'}$ and thus $X_{\eta^*}\subset X_{\eta'}$. This also implies that $(\sB')^*=\sB^*$. Indeed, $X_{\eta^*}$ is not only the unique minimal subset of $X_\eta$ but also of $X_{\eta'}$. Therefore, we have $X_{\eta^*}=X_{(\eta')^*}$. Both, $\sB^*$ and $(\sB')^*$ are taut and $(\sB')^*=\sB^*$ follows immediately by Corollary 4.36 in~\cite{MR3803141}.

To sum up the above discussion, let us make the following observation. Up to applying ``prim'' at the end,
\begin{itemize}
\item
the procedure that outputs $\sB^*$ means replacing the rescaled copies of infinite pairwise coprime subsets of $\sB$ with the set of scales (cf.\ Remark~\ref{B*}),
\item
the procedure that outputs $\sB'$ means replacing the rescaled copies of Behrend subsets of $\sB$ with the set of scales (cf.\ \eqref{ce}).
\end{itemize}
Recall now that each Behrend set contains an infinite pairwise coprime subset as the corresponding $\sB$-free subshift is proximal (see \cite[Theorem 3.7]{MR3803141}). This tells us immediately that
$C\subset D$ so $\cM_\sB\subseteq\cM_{\sB'}\subseteq\cM_{\sB^*}$.
\end{Remark}


\begin{proof}[Proof of Theorem~\ref{min_Keller}]
Since, by Remark~\ref{B*}, $\sB^*=(\sB\cup D)^{prim}$, we have $\eta^*\leq\eta$. Moreover, $\eta^*$ is Toeplitz, by Theorem~\ref{d}.

Now, we will prove that $\eta^*\in X_\eta$, i.e.\ that for any $N\geq 1$ the block $\eta^*|_{[-N,N]}$ appears on $\eta$. Fix $N\geq 1$ and let $i\geq1$ be sufficiently large to ensure that
\begin{equation}\label{righti_one}
 \cm_D\cap\cf_\sB\cap [-N,N]=(\Z\setminus\Per(\eta,\lcm(S_i)))\cap [-N,N].
\end{equation}
Recall that by Proposition~\ref{lemmaA_one}, we have $\cm_D\cap\cf_\sB=\Z\setminus\bigcup_{i\geq 1}\Per(\eta,\lcm(S_i))$,
and $\Per(\eta,\lcm(S_i))$ grows, as $i$ grows. Moreover, by Proposition~\ref{lemmaA_one}, $\cm_D\cap\cf_\sB\cap [-N,N]$ are all non-periodic positions on $\eta$ restricted to $[-N,N]$. Let $t_N$ be the cardinality of $\cm_D\cap\cf_\sB\cap [-N,N]$ and denote the elements of this set by $I_1,\dots, I_{t_N}$. It follows by the definition of $D$ that for any $1\leq j\leq t_N$, there exists $d_j\in \N$ and infinite pairwise coprime set $\mathcal{C}_j\subset \N$ such that
\[
d_j\divides I_j \text{ and }d_j\mathcal{C}_j\subset \sB.
\]
Let
%
\[
L:=\lcm(\lcm(S_i),d_1,\ldots,d_{t_N}).
\]
Let $c_1\in\mathcal{C}_1$ be coprime to $L$ (such a number exists since $L$ has finitely many factors and~$\mathcal{C}_1$ is infinite pairwise coprime). Then $\gcd(L,d_1c_1)=d_1\divides I_1$ and therefore we can find $k_1\in \Z$ such that
\[
Lk_1+I_1\equiv 0 \bmod d_1c_1.
\]
Let $c_2\in\mathcal{C}_2$ be coprime to $L$ and $c_1$ (again, such a number exists because $L$ and~$c_1$ have finitely many factors and $\mathcal{C}_2$ is infinite pairwise coprime). Since $d_2\mid L$ and $L$, $c_1$ and~$c_2$ are pairwise coprime, we have $\gcd(Lc_1,d_2c_2)=d_2\divides Lk_1+I_2$. Therefore, we can find $k_2\in\Z$ such that
\[
Lc_1k_2+Lk_1+I_2\equiv 0 \bmod d_2c_2.
\]
Notice that since $d_1\mid L$, we have
\[
Lc_1k_2+Lk_1+I_1\equiv 0\bmod d_1c_1.
\]
Let $c_3\in\mathcal{C}_3$ be coprime to $L$, $c_1$ and~$c_2$. Since $d_3\divides L$ and $L$, $c_1,c_2$ and $c_3$ are pairwise coprime, we have $\lcm(Lc_2c_1,d_3c_3)=d_3 \divides Lc_1k_2+Lk_1+I_3$. Therefore, we can find $k_3\in\Z$ such that
\begin{align*}
Lc_2c_1k_3+Lc_1k_2+Lk_1+I_3\equiv 0 \bmod d_3c_3.
\end{align*}
Notice that since $d_\ell c_\ell\mid Lc_2c_1$ for $\ell=1,2$, we have
\begin{align*}
Lc_1c_2k_3+Lc_1k_2+Lk_1+I_\ell\equiv0 \bmod d_\ell c_\ell\text{ for }\ell=1,2.
\end{align*}
By repeating the above arguments, we obtain $k_j\in\Z$ for $j=1,2,\ldots,t_N$ such that
\begin{align}\label{kok_one}
Lc_{t_{N-1}}\ldots c_1k_{t_N}+Lc_{t_{N-2}}\ldots c_1k_{t_{N-1}}+\ldots+Lc_1k_2+Lk_1+I_j\equiv0 \bmod d_j c_j.
\end{align}
for $j=1,2,\ldots,t_N$. Put
\[
M_N:=Lc_{t_{N-1}}\ldots c_1k_{t_N}+Lc_{t_{N-2}}\ldots c_1k_{t_{N-1}}+\ldots+Lc_1k_2+Lk_1.
\]

We will show that
\begin{equation}\label{okresowe_one}
\eta_{n+M_N}=\eta_n=\eta^*_n \text{ for }n\in\Per(\eta,\lcm(S_i))\cap [-N,N].
\end{equation}
Take $n\in\Per(\eta,\lcm(S_i))\cap [-N,N]$. Since  $\lcm(S_i)\divides L \divides M_N$, it follows that
\begin{equation}\label{AA}
\eta_{n+M_N}=\eta_n \text{ for }n\in\Per(\eta,\lcm(S_i))\cap [-N,N]
\end{equation}
and $n+M_N\in \Per(\eta,\lcm(S_i))$. It follows by Proposition~\ref{lemmaA_one} that $n,n+M_N\not\in \cm_D\cap\cf_\sB$ (the latter set is the set of all non-periodic positions on $\eta$). In other words,
\begin{equation*}
n,n+M_n\in \mathcal{F}_D \cup \cm_\sB=(\mathcal{F}_D\cap \mathcal{F}_{\sB})\cup \mathcal{M}_{\sB}.
\end{equation*}
Recall that by Remark~\ref{B*}, we have $\sB^*=(\sB\cup D)^{prim}$, so $\mathcal{M}_{\sB^*}=\mathcal{M}_{\sB}\cup\mathcal{M}_{D}$. Equivalently, $\mathcal{F}_{\sB^*}=\mathcal{F}_{\sB}\cap\mathcal{F}_D$.
Therefore,
\begin{itemize}
\item
if $n\in \mathcal{F}_D\cap \mathcal{F}_\sB$ then $n\in\mathcal{F}_{\sB^*}$,
\item
if $n\in\mathcal{M}_{\sB}$ then $n\in\mathcal{M}_{\sB^*}$.
\end{itemize}
Hence,~\eqref{okresowe_one} indeed holds.

Now, we will show that
\begin{equation}\label{nieokresowe_one}
\eta_{n+M_N}=\eta^*_n \text{ for }n\in(\Z\setminus\Per(\eta,\lcm(S_i)))\cap [-N,N].
\end{equation}
Since $d_jc_j\in\sB$ and $d_jc_j\divides I_j+M_N$, it follows by \eqref{kok_one} that
\begin{equation}\label{BB}
\eta_{I_j+M_N}=0 \text{ for all }1\leq j\leq t_N.
\end{equation}
Moreover, it follows by $I_j\in\cm_D\subseteq \cm_{\sB^*}$ (recall again~Remark \ref{B*}), that we also have
\begin{equation*}
\eta^*_{I_j}=0\text{ for all }1\leq j\leq t_N.
\end{equation*}
Therefore,~\eqref{nieokresowe_one} indeed holds. Combining~\eqref{okresowe_one} and~\eqref{nieokresowe_one}, we conclude that
\begin{equation}\label{etai*_one}
\eta|_{[-N,N]+M_N}=\eta^*|_{[-N,N]}.
\end{equation}

Notice that in the above arguments we used only that
\[
L\divides M_N \text{ and }d_jc_j\divides M_N+I_j
\]
(to obtain~\eqref{AA} and~\eqref{BB}, respectively). Thus, by~\eqref{kok_one},
\[
\eta|_{[-N,N]+M_N}=\eta|_{[-N,N]+M_N+s} \text{ for any }s\in Lc_1\ldots c_{t_N}\Z.
\]
In particular, $\{s\in\Z;\ \eta|_{[-N,N]+M_N}=\eta|_{[-N,N]+M_N+s}\}$ is syndetic. Moreover, the blocks $(\eta|_{[-N,N]+M_N})_{N\geq1}$ have different lengths, so they are pairwise different. By \cite[Corollary 2.17]{MR3803141} (see Corollary~\ref{dlapod}), $X_\eta$ has a unique minimal subset.
\end{proof}

\begin{proof}[Proof of Theorem \ref{onedim}]
Obviously, \eqref{top_one} $\Rightarrow$ \eqref{min_one}.

\eqref{min_one} $\Rightarrow$ \eqref{top_one}.
It follows from~\eqref{min_one} and from Theorem~\ref{min_Keller} that $X_{\eta^*}=X_{\eta}$. Suppose that $\eta^*<\eta$, i.e.\ there is $n\in\Z$ such that $0=\eta^*_n<\eta_n=1$. Let $(p_k)_{k\in\N}$ be a periodic structure of $\eta^*$ and let $k\in\N$ be such that $\eta^*|_{n+p_k\Z}=0$. It follows from~\eqref{downar} that
\begin{equation}\label{ble}
|\text{Per}(\eta^*,p_k,0)\cap [0,p_k-1]|=|\text{Per}(\eta,p_k,0)\cap[0,p_k-1]|
\end{equation}
as $\eta\in X_{\eta^*}$.  On the other hand, since $\eta^*\leq \eta$, it follows immediately that
\[
\text{Per}(\eta^*,p_k,0) \supset \text{Per}(\eta,p_k,0).
\]
It remains to notice that
$n\in \text{Per}(\eta^*,p_k,0)\setminus \text{Per}(\eta,p_k,0)$, so
\[
|\text{Per}(\eta^*,p_k,0) \cap [0,p_k-1]| > |\text{Per}(\eta,p_k,0)\cap [0,p_k-1]|,
\]
which contradicts~\eqref{ble}. This yields $\eta=\eta^*$, i.e.\ $\eta$ itself is a Toeplitz sequence (\eqref{top_one} holds).

\eqref{top_one} $\Rightarrow$ \eqref{Y_one} Suppose $\eta$ is a Toeplitz sequence. 
For any $b\in\sB$ and~$0\leq s_b\leq b$ let \[Y^b_{\geq s_b}=\{\bm{x}\in \{0,1\}^\Z;\ |\text{supp }\bm{x}\bmod b|\leq b-s_b\}\] and \[Y^b_{s_b}=\{\bm{x}\in \{0,1\}^\Z;\ |\text{supp }\bm{x}\bmod b|=b-s_b\}.\] Then $Y^b_{\geq s_b}$ is closed and~$\sigma$-invariant. By the minimality of $(X_\eta,\sigma)$, it follows that for any $0\leq s_b\leq b$, $b\in\sB$, we have
\[
X_\eta \cap Y^b_{\geq s_b}=X_\eta\text{ or }X_\eta \cap Y^b_{\geq s_b}=\emptyset.
\]
By Corollary \ref{Toeplitz_SK_one}, $\eta\in Y$, so for $s_b\geq 2$ we have
\[
X_\eta \cap Y^b_{\geq s_b}=\emptyset.
\]
Since for any $b\in\sB$ we have
\[
X_\eta=X_\eta \cap \left(\bigcup_{1\leq s_b\leq b}Y^b_{s_b}\right),
\]
it follows immediately that $X_\eta=X_\eta\cap Y$.

\eqref{Y_one} $\Rightarrow$ \eqref{top_one} Suppose $X_\eta\subseteq Y$. By Theorem~\ref{min_Keller}, we have $X_{\eta^*}\subseteq X_\eta$, so, in particular, $\eta^*\in X_\eta$ is an element of $Y$. Since $\eta^*$ is a Toeplitz sequence, it follows by Corollary~\ref{Toeplitz_SK_one} that \[\eta^*\in Y^*,\]where
\[
Y^*=\{\bm{x}\in\{0,1\}^{\Z};\ |\supp \ \bm{x}\bmod b^*|=b^*-1 \text{ for any }b^*\in\sB^*\}.
\]
Suppose there exists $b\in\sB\setminus \sB^*$. Let $b^*\in\sB$ be such that $b^*\mid b$. Then
\[
|\supp \ \eta^* \bmod b^*|=b^*-1
\]
and we conclude that
\[
\left|\supp \ \eta^* \bmod b\right|=\frac{b}{b^*}\left|\supp \ \eta^* \bmod b^*\right|=\frac{b}{b^*}\left(b^*-1\right)<b-1.
\]
Thus, $\eta^*\not\in Y$, which we know that is not true. It follows that $\sB=\sB^*$ and therefore~$\eta=\eta^*$ is a Toeplitz sequence.
\end{proof}
\begin{proof}[Proof of Corollary \ref{Toeplitz_SK_one}]
Suppose that $\eta$ is a Toeplitz sequence and $r+b\Z\subseteq\cm_\sB$ for some $b\in\sB$ and $1\leq r\leq b$. Then by Theorem \ref{druggi2Toeplitz} (for $d=b$), there is $b'\in\sB$ such that $b'\mid\gcd(r,b)$. Since $\sB$ is primitive, $b=b'$. Hence $r=b$. So $\eta\in Y$.
\end{proof}
\begin{Remark}
Arguments used in the proof \eqref{top_one}$\Rightarrow$\eqref{Y_one} of Theorem \ref{onedim} comes from Example 3.16 in \cite{MR3803141}.
\end{Remark}

\section{Minimality and periodic structure of $\mathfrak{B}$-free systems}\label{se4}
\subsection{Arithmetic progressions}

The main goal of this section is to prove Theorem~\ref{Thm_SK}. We will prove first its easier version, valid for principal ideals.
\begin{Th}\label{cor_toeplitz}
Let $\mathfrak{B}$ be a primitive collection of ideals in $\OK$ such that $\mathfrak{D}=\emptyset$, $r,g\in\OK$. If
\begin{equation}\label{principal}
r+(g)\subseteq\cm_\mathfrak{B},
\end{equation}
then for some $\mathfrak{b}\in\mathfrak{B}$
\[
\gcd((r),(g))=(r)+(g)\subseteq\mathfrak{b}.
\]
\end{Th}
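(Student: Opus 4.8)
The plan is to follow the one–dimensional proof of Theorem~\ref{druggi2Toeplitz} as closely as possible, replacing the arithmetic of integers by that of ideals in $\OK$. Write $\mathfrak{e}=\gcd((r),(g))=(r)+(g)$; we may assume $g\neq 0$, the case $g=0$ being trivial. The two ingredients are ideal-theoretic analogues of Lemmas~\ref{lem1} and~\ref{lem2}:

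\medskip\noindent(I) There exist an infinite pairwise coprime collection $\mathcal{C}=\{\mathfrak{c}_i\}_{i\ge1}$ of ideals of $\OK$ and elements $x_i\in r+(g)$ with $(x_i)=\mathfrak{e}\,\mathfrak{c}_i$.

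\noindent(II) If $\mathfrak{D}=\emptyset$ and each ideal $\mathfrak{e}\,\mathfrak{c}_i$ ($i\ge1$) is divisible by some member of $\mathfrak{B}$, where $\mathcal{C}=\{\mathfrak{c}_i\}$ is an infinite pairwise coprime collection, then $\mathfrak{e}\subseteq\mathfrak{b}$ for some $\mathfrak{b}\in\mathfrak{B}$.

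\medskip\noindent
Granting these, the theorem follows at once: since $r+(g)\subseteq\cM_\mathfrak{B}$, each $x_i$ lies in $\cM_\mathfrak{B}$, i.e.\ $(x_i)=\mathfrak{e}\mathfrak{c}_i\subseteq\mathfrak{b}_i$ for some $\mathfrak{b}_i\in\mathfrak{B}$; thus (I) supplies exactly the hypothesis of (II), and (II) yields $\gcd((r),(g))=\mathfrak{e}\subseteq\mathfrak{b}$.

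For (I), the basic observation is that for any $x=r+gy$ one has $(x)+(g)=(r)+(g)=\mathfrak{e}$, so $\mathfrak{e}\mid(x)$, the integral ideal $(x)\mathfrak{e}^{-1}$ is well defined, and $\gcd\bigl((x)\mathfrak{e}^{-1},\mathfrak{f}\bigr)=\OK$ where $\mathfrak{f}:=(g)\mathfrak{e}^{-1}$. Now run the maximality argument from the proof of Lemma~\ref{lem1}: if $\{(x)\mathfrak{e}^{-1}:x\in r+(g)\}$ had no infinite pairwise coprime subfamily, pick one of maximal cardinality, realized by $x_1,\dots,x_n$, and let $\mathfrak{p}_1,\dots,\mathfrak{p}_m$ be all prime ideals dividing $\prod_i(x_i)\mathfrak{e}^{-1}$; each $\mathfrak{p}_j$ is coprime to $\mathfrak{f}$, hence $v_{\mathfrak{p}_j}(g)=v_{\mathfrak{p}_j}(\mathfrak{e})$, while $v_{\mathfrak{p}_j}(r)\ge v_{\mathfrak{p}_j}(\mathfrak{e})$. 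For each $j$ the requirement $v_{\mathfrak{p}_j}(r+gy)=v_{\mathfrak{p}_j}(\mathfrak{e})$ — equivalently $r+gy\notin\mathfrak{p}_j\mathfrak{e}$, equivalently $\mathfrak{p}_j\nmid(r+gy)\mathfrak{e}^{-1}$ — is secured by a single nonempty congruence condition on $y\bmod\mathfrak{p}_j$ (take $y\equiv0$ if $v_{\mathfrak{p}_j}(r)=v_{\mathfrak{p}_j}(\mathfrak{e})$, and $y\not\equiv0$ if $v_{\mathfrak{p}_j}(r)>v_{\mathfrak{p}_j}(\mathfrak{e})$, using $N(\mathfrak{p}_j)\ge2$). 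By the Chinese Remainder Theorem in $\OK$ these finitely many conditions are met simultaneously, producing $x=r+gy$ with $(x)\mathfrak{e}^{-1}$ coprime to every $\mathfrak{p}_j$, contradicting maximality. The resulting infinite family consists of distinct ideals (at most one equal to $\OK$), which is all that is needed.

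For (II), write $\mathfrak{b}_i\mid\mathfrak{e}\mathfrak{c}_i$ with $\mathfrak{b}_i\in\mathfrak{B}$. If some $\mathfrak{b}\in\mathfrak{B}$ satisfies $\mathfrak{b}_i=\mathfrak{b}$ for infinitely many $i$, then for each prime $\mathfrak{p}$ pairwise coprimality of $\mathcal{C}$ gives $v_{\mathfrak{p}}(\mathfrak{c}_i)=0$ for all but at most one such $i$, whence $v_{\mathfrak{p}}(\mathfrak{b})\le v_{\mathfrak{p}}(\mathfrak{e})+v_{\mathfrak{p}}(\mathfrak{c}_i)=v_{\mathfrak{p}}(\mathfrak{e})$; thus $\mathfrak{b}\mid\mathfrak{e}$, i.e.\ $\mathfrak{e}\subseteq\mathfrak{b}$, as wanted. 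Otherwise every element of $\mathfrak{B}$ occurs among the $\mathfrak{b}_i$ only finitely often; passing to a subsequence we may assume the $\mathfrak{b}_i$ pairwise distinct, and then, since $\mathfrak{e}_0:=\gcd(\mathfrak{b}_i,\mathfrak{e})$ divides $\mathfrak{e}$ and $\mathfrak{e}$ has finitely many divisors, after a further passage $\mathfrak{e}_0$ is a fixed ideal. From $\mathfrak{b}_i\mid\mathfrak{e}\mathfrak{c}_i$ one checks $\mathfrak{b}_i\mathfrak{e}_0^{-1}\mid\mathfrak{c}_i$, so the ideals $\mathfrak{b}_i\mathfrak{e}_0^{-1}$ are pairwise coprime and, being distinct (as $\mathfrak{b}_i=\mathfrak{e}_0\cdot\mathfrak{b}_i\mathfrak{e}_0^{-1}$), all but at most one are proper; they form an infinite pairwise coprime collection $\mathcal{C}'$ with $\mathfrak{e}_0\mathcal{C}'\subseteq\mathfrak{B}$, so $\mathfrak{e}_0\in\mathfrak{D}$, contradicting $\mathfrak{D}=\emptyset$. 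Hence the first alternative must hold.

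The routine parts are the CRT bookkeeping and the passages to subsequences. The main obstacle — and the one genuinely new point over dimension one — is step (I): in $\Z$ the reduced residue $x/\gcd(r,d)$ is just a number, whereas here one works with the quotient ideal $(x)\mathfrak{e}^{-1}$ and must verify that the obstruction to $\mathfrak{p}$-coprimality is, at each bad prime $\mathfrak{p}$, a single congruence class for $y$, so that CRT in $\OK$ applies. Once (I) is established, (II) and the final assembly are direct transcriptions of the one-dimensional argument.
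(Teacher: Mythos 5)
Your argument is correct, and it reaches the paper's conclusion by the same overall strategy (produce infinitely many elements of $r+(g)$ whose ideals are $\mathfrak{e}\mathfrak{c}_i$ with $\{\mathfrak{c}_i\}$ pairwise coprime, then use $\mathfrak{D}=\emptyset$ to force one $\mathfrak{b}\in\mathfrak{B}$ to absorb infinitely many of them), but the internal organization differs from the paper's. The paper proves the stronger Lemma~\ref{lem4_multi} by an explicit induction (via Lemmas~\ref{lem2_multi} and~\ref{lem3_multi}): each $(r+x_ig)=(r,g)\mathfrak{q}_1^{(i)}\cdots\mathfrak{q}_{m_i}^{(i)}$ with the new primes pairwise disjoint across $i$ \emph{and} avoiding the primes of $(r,g)$; in the proof of the theorem this extra avoidance lets one pick a single index $i$ with $\{\mathfrak{q}^{(i)}_k\}$ disjoint from the primes of the repeated $\mathfrak{b}$ and read off $\mathfrak{b}\mid(r,g)$ at once. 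You instead prove only the ``in particular'' part of Lemma~\ref{lem4_multi} (pairwise coprimality of the quotients $(x_i)\mathfrak{e}^{-1}$), by a maximal-family-plus-CRT argument modelled on the one-dimensional Lemma~\ref{lem1}, and you compensate in step (II) with a two-index valuation argument ($v_\mathfrak{p}(\mathfrak{c}_i)=0$ for all but one $i$ hitting the same $\mathfrak{b}$) to still conclude $\mathfrak{b}\mid\mathfrak{e}$. Your step (II) is in fact a more carefully justified version of the paper's rather terse ``$\mathfrak{d}\in\mathfrak{D}$'' extraction; on the other hand, the paper's stronger Lemma~\ref{lem4_multi} is not wasted there, since it is reused verbatim in Lemma~\ref{lem5_multi} and hence in Theorem~\ref{Thm_SK}, which your weaker statement (I) would not support without redoing the bookkeeping.

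Two small points to tighten, neither fatal. First, ``a pairwise coprime family of maximal cardinality'' should be ``maximal under inclusion'' (Zorn), exactly as in the paper's own Lemma~\ref{lem1}; arbitrarily large finite pairwise coprime subfamilies do not formally yield an infinite one. Second, your contradiction with maximality has a degenerate case: the constructed $x$ could satisfy $(x)=\mathfrak{e}$, i.e.\ $(x)\mathfrak{e}^{-1}=\OK$, and if $\OK$ already lies in the maximal family no enlargement occurs. This is a one-line patch: either observe that if some element of $r+(g)$ generates $\mathfrak{e}$ then $\mathfrak{e}=(x)\subseteq\mathfrak{b}$ directly from $r+(g)\subseteq\cm_\mathfrak{B}$ and the theorem is already proved, or note that the admissible $y$ form a full congruence class, so not all resulting $r+gy$ can be associates (their norms are nonconstant on a coset of a full-rank lattice), and choose one with $(x)\mathfrak{e}^{-1}$ proper.
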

Lemmas needed for the proof of Theorem~\ref{cor_toeplitz} are used also in the proof of Theorem~\ref{Thm_SK}. We decided to keep both results to make the arguments easier to digest. The auxiliary lemmas that we present now are valid in any Dedekind ring $R$ but we phrase them here for $R=\OK$. The proofs of the lemmas are contained in the next subsection.
\begin{Lemma}\label{lem2_multi}
Let $\mathfrak{a}_1,\ldots,\mathfrak{a}_n,\mathfrak{b}\subsetneq\OK$ be pairwise coprime (proper) ideals and $x_1,\ldots,x_n\in\OK$. Then
\[
\mathfrak{b}\not\subseteq \bigcup_{i=1}^n(x_i+\mathfrak{a}_i).
\]
\end{Lemma}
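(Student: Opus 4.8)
The plan is to use the Chinese Remainder Theorem to produce, explicitly, an element of $\mathfrak{b}$ that avoids every coset $x_i+\mathfrak{a}_i$. The point is that the hypothesis "pairwise coprime" says precisely that the $n+1$ ideals $\mathfrak{b},\mathfrak{a}_1,\dots,\mathfrak{a}_n$ are comaximal in pairs, which is exactly what is needed to prescribe residues modulo all of them simultaneously.

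Concretely, set $\mathfrak{m}:=\mathfrak{b}\,\mathfrak{a}_1\cdots\mathfrak{a}_n$. Since $\mathfrak{b},\mathfrak{a}_1,\dots,\mathfrak{a}_n$ are pairwise coprime, CRT gives a ring isomorphism
\[
\OK/\mathfrak{m}\;\xrightarrow{\ \sim\ }\;\OK/\mathfrak{b}\times\prod_{i=1}^n\OK/\mathfrak{a}_i ,
\]
and in particular the reduction map $\OK\to\OK/\mathfrak{b}\times\prod_{i}\OK/\mathfrak{a}_i$ is surjective. Now, because each $\mathfrak{a}_i$ is a \emph{proper} ideal we have $N(\mathfrak{a}_i)=|\OK/\mathfrak{a}_i|\ge 2$, so for each $i$ one may choose $t_i\in\OK$ with $t_i\not\equiv x_i\pmod{\mathfrak{a}_i}$. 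By the surjectivity above there is $y\in\OK$ with $y\equiv 0\pmod{\mathfrak{b}}$ and $y\equiv t_i\pmod{\mathfrak{a}_i}$ for all $i$. Then $y\in\mathfrak{b}$, while $y-x_i\equiv t_i-x_i\not\equiv 0\pmod{\mathfrak{a}_i}$, so $y\notin x_i+\mathfrak{a}_i$ for every $i$; hence $\mathfrak{b}\not\subseteq\bigcup_{i=1}^n(x_i+\mathfrak{a}_i)$.

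I do not expect a genuine obstacle here: the only things to watch are that the coprimality hypothesis includes coprimality of $\mathfrak{b}$ with each $\mathfrak{a}_i$ (so that the residue $0$ modulo $\mathfrak{b}$ can be imposed alongside the residues modulo the $\mathfrak{a}_i$), and that properness of the $\mathfrak{a}_i$ is what provides a residue class distinct from $x_i$. If one preferred to avoid quoting CRT in full strength, the same conclusion follows by a short induction on $n$: at the $i$-th step pick a residue modulo $\mathfrak{a}_i$ away from $x_i$, using comaximality of $\mathfrak{a}_i$ with $\mathfrak{b}\mathfrak{a}_1\cdots\mathfrak{a}_{i-1}$ to keep the accumulated congruence system solvable.
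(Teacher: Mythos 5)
Your proof is correct and follows essentially the same route as the paper: both apply the Chinese Remainder Theorem (using pairwise coprimality of $\mathfrak{b},\mathfrak{a}_1,\dots,\mathfrak{a}_n$) to produce an element of $\mathfrak{b}$ whose residue modulo each $\mathfrak{a}_i$ is prescribed to differ from $x_i$. Your only addition is to spell out why such a residue exists (properness of $\mathfrak{a}_i$ gives $|\OK/\mathfrak{a}_i|\ge 2$), a point the paper leaves implicit.
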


Given an ideal $\mathfrak{a}$ and $y\in\OK$, we set
\[
v_{\mathfrak{a}}(y):=\max\{t : y\in \mathfrak{a}^t\}.
\]
\begin{Lemma}\label{lem3_multi}
Let $r,g\in\OK$ and $(r,g)=\mathfrak{p}_1^{\alpha_1}\cdot\ldots\cdot\mathfrak{p}_m^{\alpha_m}$, where $\mathfrak{p}_i$, $1\leq i\leq m$, are distinct prime ideals ordered in such a way that for some $0\leq m'\leq m$ we have
\begin{equation}\label{c3}
\nu_{\mathfrak{p}_i}(g)=\nu_{\mathfrak{p}_i}(r) \iff i>m'.
\end{equation}
Then for
\[
x\in\mathfrak{p}_{m'+1}\cdot\ldots\cdot\mathfrak{p}_m\setminus\bigcup_{i=1}^{m'}\mathfrak{p}_i,
\]
we have
\[(r+xg)=(r,g)\mathfrak{q}_1\ldots\mathfrak{q}_n,\]
where $\mathfrak{q}_i$ are prime ideals and $\mathfrak{q}_i\neq\mathfrak{p}_j$ for all $1\leq i\leq n$ and all $1\leq j\leq m$.
\end{Lemma}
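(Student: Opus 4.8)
The plan is to reduce everything to a single local statement: for each $j=1,\dots,m$ one has $\nu_{\mathfrak{p}_j}(r+xg)=\alpha_j$. Granting this, the conclusion is immediate from unique factorisation of ideals in $\OK$. Indeed $r\in(r,g)$ and $xg\in(g)\subseteq(r,g)$, so $r+xg\in(r,g)$, i.e.\ $(r,g)\mid(r+xg)$, say $(r+xg)=(r,g)\mathfrak{a}$ for some ideal $\mathfrak{a}$; the equalities $\nu_{\mathfrak{p}_j}(r+xg)=\alpha_j=\nu_{\mathfrak{p}_j}((r,g))$ then say precisely that $\nu_{\mathfrak{p}_j}(\mathfrak{a})=0$ for each $j$, i.e.\ $\mathfrak{a}$ is coprime to every $\mathfrak{p}_j$, and factoring $\mathfrak{a}=\mathfrak{q}_1\cdots\mathfrak{q}_n$ into prime ideals yields exactly the asserted form. (One also records $r+xg\neq0$ so that $(r+xg)$ is a genuine non-zero ideal; for $m\geq1$ this is forced by the finiteness of $\nu_{\mathfrak{p}_1}(r+xg)$ established below, and the remaining degenerate cases $g=0$ or $m=0$ are handled by inspection under the running hypotheses.)

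To prove $\nu_{\mathfrak{p}_j}(r+xg)=\alpha_j$, I would first recall the two standard facts about $\mathfrak{p}$-adic valuations on the Dedekind domain $\OK$: that $\alpha_i=\nu_{\mathfrak{p}_i}((r)+(g))=\min(\nu_{\mathfrak{p}_i}(r),\nu_{\mathfrak{p}_i}(g))$, and that each $\nu_{\mathfrak{p}_i}$ is, after localising, a discrete valuation, so that it is additive on products and $\nu_{\mathfrak{p}_i}(a+b)=\min(\nu_{\mathfrak{p}_i}(a),\nu_{\mathfrak{p}_i}(b))$ as soon as $\nu_{\mathfrak{p}_i}(a)\neq\nu_{\mathfrak{p}_i}(b)$. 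Then one splits on the ordering. If $j\leq m'$, then $\nu_{\mathfrak{p}_j}(r)\neq\nu_{\mathfrak{p}_j}(g)$ by~\eqref{c3}, while $x\notin\mathfrak{p}_j$ gives $\nu_{\mathfrak{p}_j}(xg)=\nu_{\mathfrak{p}_j}(g)$, so $\nu_{\mathfrak{p}_j}(r)\neq\nu_{\mathfrak{p}_j}(xg)$ and hence $\nu_{\mathfrak{p}_j}(r+xg)=\min(\nu_{\mathfrak{p}_j}(r),\nu_{\mathfrak{p}_j}(g))=\alpha_j$. If $j>m'$, then $\nu_{\mathfrak{p}_j}(r)=\nu_{\mathfrak{p}_j}(g)=\alpha_j$, while $x\in\mathfrak{p}_{m'+1}\cdots\mathfrak{p}_m\subseteq\mathfrak{p}_j$ gives $\nu_{\mathfrak{p}_j}(xg)=\nu_{\mathfrak{p}_j}(x)+\nu_{\mathfrak{p}_j}(g)\geq1+\alpha_j>\nu_{\mathfrak{p}_j}(r)$, so again $\nu_{\mathfrak{p}_j}(r+xg)=\min(\nu_{\mathfrak{p}_j}(r),\nu_{\mathfrak{p}_j}(xg))=\alpha_j$. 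This settles the local statement.

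There is essentially no hard step here: the argument is pure bookkeeping of $\mathfrak{p}$-adic valuations, and the hypothesis on $x$ is engineered precisely to cover both cases (avoid $\mathfrak{p}_1,\dots,\mathfrak{p}_{m'}$ so as not to disturb an already strict inequality; lie in $\mathfrak{p}_{m'+1}\cdots\mathfrak{p}_m$ so as to break each tie in favour of $r$). The only points worth stating with care are that such an $x$ exists at all, so the lemma is not vacuous — this is prime avoidance, since the distinct primes $\mathfrak{p}_{m'+1},\dots,\mathfrak{p}_m$ lie in none of $\mathfrak{p}_1,\dots,\mathfrak{p}_{m'}$, hence neither does their product — and that the degenerate ranges are covered: for $m'=0$ the union over $i\leq m'$ is empty, so the condition on $x$ is just $x\in\mathfrak{p}_1\cdots\mathfrak{p}_m$, and for $m'=m$ the product $\mathfrak{p}_{m'+1}\cdots\mathfrak{p}_m$ is the empty product $\OK$, so the condition is just $x\notin\bigcup_{i}\mathfrak{p}_i$; in each of these the corresponding half of the case analysis is simply vacuous.
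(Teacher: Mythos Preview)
Your proof is correct and follows essentially the same approach as the paper: both reduce to showing $\nu_{\mathfrak{p}_j}(r+xg)=\alpha_j$ for each $j$ by splitting into the cases $j\le m'$ and $j>m'$ and using the standard valuation inequality $\nu(a+b)=\min(\nu(a),\nu(b))$ when $\nu(a)\neq\nu(b)$. Your write-up is in fact slightly more careful in places (you note $\nu_{\mathfrak{p}_j}(x)\geq1$ rather than $=1$ for $j>m'$, and you address the degenerate cases and non-vacuity), but the argument is the same.
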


We will need the following generalization of Lemma \ref{lem3_multi}.
\begin{Lemma}\label{lem4_multi}
Let $r,g\in\OK$. Then there exists a sequence $x_1,x_2,\ldots\in\OK$ such that
\[
(r+x_ig)=(r,g)\mathfrak{q}_1^{(i)}\ldots\mathfrak{q}_{m_i}^{(i)},
\]
where $\mathfrak{q}_k^{(i)}$ are prime ideals and $\mathfrak{q}_k^{(i)}\neq\mathfrak{q}_l^{(j)}$ for $i\neq j$, $1\leq k\leq m_i$, $1\leq l\leq m_j$ and $\mathfrak{q}_k^{(i)}\ndivides(r,g)$ for $i\geq1$. In particular, for $i\neq j$, we have
\[(r+x_ig,r+x_jg)=(r,g).\]
\end{Lemma}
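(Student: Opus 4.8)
The plan is to build the sequence $(x_i)_{i\ge1}$ by induction, applying Lemma~\ref{lem3_multi} at each step but choosing $x_i$ inside a carefully prescribed residue class, so that the new prime factors $\mathfrak{q}_k^{(i)}$ avoid every prime that has occurred at an earlier level. First I would fix once and for all the factorization $(r,g)=\mathfrak{p}_1^{\alpha_1}\cdots\mathfrak{p}_m^{\alpha_m}$, the ordering of the $\mathfrak{p}_j$, and the integer $0\le m'\le m$ provided by Lemma~\ref{lem3_multi}, so that $\nu_{\mathfrak{p}_j}(g)=\nu_{\mathfrak{p}_j}(r)$ exactly when $j>m'$. (If $r=g=0$ the statement is trivial, so we may assume $(r,g)\neq(0)$.)

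For the inductive step, assume $x_1,\dots,x_{i-1}$ have been found, with $(r+x_jg)=(r,g)\mathfrak{q}_1^{(j)}\cdots\mathfrak{q}_{m_j}^{(j)}$ for $j<i$, all the primes $\mathfrak{q}_k^{(j)}$ ($j<i$) pairwise distinct and none of them dividing $(r,g)$. Let $T=\{\mathfrak{q}_k^{(j)} : 1\le j\le i-1,\ 1\le k\le m_j\}$, a finite set of primes with $\mathfrak{q}\ndivides(r,g)$ for every $\mathfrak{q}\in T$. I want $x_i\in\OK$ such that $x_i\in\mathfrak{p}_j$ for all $j$ with $m'<j\le m$, that $x_i\notin\mathfrak{p}_j$ for all $j$ with $1\le j\le m'$, and that $r+x_ig\notin\mathfrak{q}$ for every $\mathfrak{q}\in T$. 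These conditions restrict $x_i$ only modulo the finitely many \emph{distinct} prime ideals among $\mathfrak{p}_1,\dots,\mathfrak{p}_m$ and the members of $T$ (distinct because each $\mathfrak{q}\in T$ is coprime to $(r,g)$, hence differs from every $\mathfrak{p}_j$), so by the Chinese Remainder Theorem in the Dedekind ring $\OK$ it is enough to fulfil each one separately: take $x_i\equiv0\bmod\mathfrak{p}_j$ for $m'<j\le m$; a nonzero residue $\bmod\ \mathfrak{p}_j$ for $1\le j\le m'$ (possible as $N(\mathfrak{p}_j)\ge2$); and, for $\mathfrak{q}\in T$, note that $\mathfrak{q}\ndivides(r,g)$ forces $g\notin\mathfrak{q}$ or $r\notin\mathfrak{q}$: if $g\notin\mathfrak{q}$ then $r+x_ig\in\mathfrak{q}$ holds for a single residue of $x_i$ modulo $\mathfrak{q}$, which we discard (again $N(\mathfrak{q})\ge2$), and if $g\in\mathfrak{q}$ then $r+x_ig\equiv r\not\equiv0\bmod\mathfrak{q}$ for every $x_i$. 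Hence such an $x_i$ exists.

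By the first two conditions, $x_i\in\mathfrak{p}_{m'+1}\cdots\mathfrak{p}_m\setminus\bigcup_{j=1}^{m'}\mathfrak{p}_j$, so Lemma~\ref{lem3_multi} applies and gives $(r+x_ig)=(r,g)\mathfrak{q}_1^{(i)}\cdots\mathfrak{q}_{m_i}^{(i)}$ with each $\mathfrak{q}_k^{(i)}\ndivides(r,g)$; and since no $\mathfrak{q}\in T$ divides $r+x_ig$, we have $\mathfrak{q}_k^{(i)}\notin T$ for all $k$, which is precisely the claim that the level-$i$ primes are distinct from all earlier ones. Iterating produces the entire sequence. Finally, if $i\neq j$, then in $(r+x_ig)=(r,g)\prod_k\mathfrak{q}_k^{(i)}$ and $(r+x_jg)=(r,g)\prod_l\mathfrak{q}_l^{(j)}$ every prime dividing $(r,g)$ occurs in both with the same exponent, while each $\mathfrak{q}_k^{(i)}$ is coprime to $r+x_jg$ and each $\mathfrak{q}_l^{(j)}$ is coprime to $r+x_ig$; by the unique factorization of ideals in $\OK$ this yields $\gcd\big((r+x_ig),(r+x_jg)\big)=(r,g)$, i.e.\ $(r+x_ig,r+x_jg)=(r,g)$.

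The only delicate point is finding, in the inductive step, an admissible residue modulo each $\mathfrak{q}\in T$: this is exactly where the coprimality $\mathfrak{q}\ndivides(r,g)$ coming from Lemma~\ref{lem3_multi} is indispensable, since it is what guarantees that at most one residue class modulo $\mathfrak{q}$ is forbidden; everything else is bookkeeping with the Chinese Remainder Theorem and unique factorization.
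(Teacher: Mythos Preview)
Your proof is correct and follows essentially the same inductive scheme as the paper: fix the data from Lemma~\ref{lem3_multi}, and at each step choose $x_i$ in $\mathfrak{p}_{m'+1}\cdots\mathfrak{p}_m\setminus\bigcup_{j\le m'}\mathfrak{p}_j$ while avoiding finitely many residue classes modulo the primes already produced, using CRT (which the paper packages as Lemma~\ref{lem2_multi}).

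There is one small but genuine difference in the avoidance condition. The paper requires $x_i\notin x_j+\mathfrak{q}_l^{(j)}$ for all earlier $j$ and then argues by contradiction: if $\mathfrak{q}_k^{(i)}=\mathfrak{q}_\ell^{(j)}$ then $(x_j-x_i)g\in\mathfrak{q}_\ell^{(j)}$, and since $x_i\not\equiv x_j$ this forces $g\in\mathfrak{q}_\ell^{(j)}$ and hence $r\in\mathfrak{q}_\ell^{(j)}$, contradicting $\mathfrak{q}_\ell^{(j)}\ndivides(r,g)$. You instead impose directly $r+x_ig\notin\mathfrak{q}$ for each $\mathfrak{q}\in T$, splitting into the cases $g\notin\mathfrak{q}$ (one forbidden residue) and $g\in\mathfrak{q}$ (no constraint, since then $r\notin\mathfrak{q}$). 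Your condition makes the disjointness of the prime sets immediate with no final contradiction argument, at the cost of the small case split; the paper's condition is uniform but needs the extra paragraph. Both rely on the same key fact, namely $\mathfrak{q}\ndivides(r,g)$ for every $\mathfrak{q}\in T$, which is exactly what Lemma~\ref{lem3_multi} provides.
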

Note that Lemma~\ref{lem4_multi} will play the same role in the proof of Theorem~\ref{cor_toeplitz} as Lemma~\ref{lem1} in the proof of Theorem~\ref{druggi2Toeplitz}. To see that Lemma~\ref{lem4_multi} is indeed a multidimensional version of Lemma~\ref{lem1} notice that we can rephrase Lemma~\ref{lem1}.
\begin{Lemma}
Let $d,r\in\N$ Then there exists a sequence $(n_i)_{i\geq 1}\subseteq \Z$ and an infinite pairwise coprime set $\mathcal{C}=\{c_1,c_2,\dots\}$ such that
\[
r+dn_i=\gcd(r,d)c_i \text{ for }i\geq 1.
\]
\end{Lemma}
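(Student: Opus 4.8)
The plan is to observe that this statement is merely a bookkeeping reformulation of Lemma~\ref{lem1}, so essentially no new work is needed. First I would invoke Lemma~\ref{lem1} with the given $d,r\in\N$ to obtain an infinite pairwise coprime set $\mathcal{C}=\{c_1,c_2,\dots\}$ satisfying
\[
\gcd(r,d)\,\mathcal{C}\subseteq r+d\Z .
\]
Then, for each $i\geq 1$, the inclusion above gives $\gcd(r,d)\,c_i\in r+d\Z$, which by definition of the coset $r+d\Z$ means precisely that there exists an integer $n_i\in\Z$ with $\gcd(r,d)\,c_i=r+d\,n_i$. Recording these integers as the desired sequence $(n_i)_{i\ge1}$ completes the argument, since $\mathcal{C}$ is already infinite and pairwise coprime by construction.

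I do not expect any obstacle here: the content of the lemma is entirely contained in Lemma~\ref{lem1}, and the only thing being added is the explicit name $n_i$ for the witness of membership in the arithmetic progression. The point of stating it this way, as the text indicates, is purely pedagogical — to make transparent that Lemma~\ref{lem4_multi} (with its explicit multipliers $x_i$ and factorizations $(r+x_ig)=(r,g)\mathfrak{q}_1^{(i)}\cdots\mathfrak{q}_{m_i}^{(i)}$) is the correct higher-dimensional analogue of Lemma~\ref{lem1}, the role of $\gcd(r,d)$ being played by the ideal $(r,g)=\gcd((r),(g))$ and the role of the pairwise coprime scalars $c_i$ being played by the pairwise coprime cofactors $\mathfrak{q}_1^{(i)}\cdots\mathfrak{q}_{m_i}^{(i)}$.
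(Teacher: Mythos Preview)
Your proposal is correct and matches the paper's treatment exactly: the paper does not give a separate proof of this lemma at all, stating it explicitly as a mere rephrasing of Lemma~\ref{lem1}, which is precisely what you do by extracting the witnesses $n_i$ from the inclusion $\gcd(r,d)\,\mathcal{C}\subseteq r+d\Z$.
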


\begin{proof}[Proof of Theorem~\ref{cor_toeplitz}]
Let $x_1,x_2,\ldots\in\OK$ be the sequence from Lemma \ref{lem4_multi}. For $i\geq1$, let $\mathfrak{b}_i\in\mathfrak{B}$ be such that
\[
r+x_ig\in\mathfrak{b}_i.
\]
By Lemma~\ref{lem4_multi}, we have \[(r,g)\mathfrak{q}_1^{(i)}\ldots\mathfrak{q}_{m_i}^{(i)}=(r+x_ig)\subseteq\mathfrak{b}_i.\] We claim that there exists $\mathfrak{b}\in\mathfrak{B}$ such that $\mathfrak{b}_i=\mathfrak{b}$ for infinitely many $i\geq1$. Indeed, if this is not the case, then passing to a subsequence, we can assume that all $\mathfrak{b}_i$ are distinct. Moreover, since $(r,g)$ has finitely many factors, passing to a further subsequence if necessary, we have that $\gcd(\mathfrak{b}_i,(r,g))=\mathfrak{d}$ for some ideal $\mathfrak{d}$. Now, $\mathfrak{b}_i=\gcd(\mathfrak{b}_i,(r,g))\mathfrak{c}_i'$, where $\mathfrak{c}_i\divides \mathfrak{q}_1^{(i)}\ldots\mathfrak{q}_{m_i}^{(i)}$. It follows that $\mathfrak{d}\in\mathfrak{D}$, which is a contradiction.

Consider the prime factorization of $\mathfrak{b}$:
\[
\mathfrak{b}=\mathfrak{q}'_1\cdot\ldots\cdot\mathfrak{q}'_n
\]
and take $i$ such that
\begin{equation}\label{f21}
\{\mathfrak{q}^{(i)}_1,\dots,\mathfrak{q}^{(i)}_{m_i}\}\cap \{\mathfrak{q}'_1,\dots,\mathfrak{q}'_n\}=\emptyset.
\end{equation}
We claim that this yields $(r,\mathfrak{a})\subseteq \mathfrak{b}$. Indeed,
\[
r+x_{i}g\in\mathfrak{b} \iff \mathfrak{b} \divides (r+x_{i}g)=(r,g)\mathfrak{q}_1^{(i)}\cdot\ldots\cdot \mathfrak{q}_{m_i}^{(i)},
\]
which, in view of~\eqref{f21}, implies that $\mathfrak{b}\divides (r,g_1,\dots,g_t)$, i.e.\ $(r,\mathfrak{a})\subseteq \mathfrak{b}$.
%
%
%
\end{proof}

To prove Theorem~\ref{Thm_SK}, we will need the following extension of Lemma~\ref{lem4_multi}.
\begin{Lemma}\label{lem5_multi}
Let $r,g_1,\ldots,g_t\in\OK$ and $(r,g_1,\ldots,g_t)=\mathfrak{p}_1\ldots\mathfrak{p}_m$. Then there exists a sequence  \[
\underline{x}_1=(x_{1,1},\ldots,x_{1,t}),\underline{x}_2=(x_{21},\ldots,x_{2t}),\ldots\in\OK^t
\]
such that
\[
(r+x_{i,1}g_1+\ldots+x_{i,t}g_t)=(r,g_1,\ldots,g_t)\mathfrak{q}_1^{(i)}\ldots\mathfrak{q}_{m_i}^{(i)},
\]
where $\mathfrak{q}_j^{(i)}$ are prime ideals and
$\mathfrak{q}_k^{(i)}\neq\mathfrak{q}_l^{(j)}$ for $i\neq j$, $1\leq k\leq m_i$, $1\leq l\leq m_j$ and $\mathfrak{q}_k^{(i)}\ndivides(r,a_1,\ldots,a_t)$ for $i\geq1$.
\end{Lemma}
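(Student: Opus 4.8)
The plan is to reduce the multi-generator statement to the single-generator Lemma~\ref{lem4_multi} by a substitution argument, then build the sequence $(\underline{x}_i)$ inductively so as to keep the newly-appearing prime ideals disjoint across different indices. First I would set $\mathfrak{a}=(g_1,\ldots,g_t)$ and $\mathfrak{I}=(r,g_1,\ldots,g_t)=(r)+\mathfrak{a}$. The key observation is that $\{\,r+x_1g_1+\cdots+x_tg_t : x_1,\ldots,x_t\in\OK\,\}=r+\mathfrak{a}$, so a single element $g\in\OK$ with $(g)=\mathfrak{a}$ would let me invoke Lemma~\ref{lem4_multi} directly; but $\mathfrak{a}$ need not be principal. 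To get around this, I would work prime by prime, as in the proof of Lemma~\ref{lem3_multi}: for each prime $\mathfrak{p}_j\divides\mathfrak{I}$ I can compare $v_{\mathfrak{p}_j}(r)$ with $\min_k v_{\mathfrak{p}_j}(g_k)$, and since $\mathfrak{I}=(r)+\mathfrak{a}$ we have $v_{\mathfrak{p}_j}(\mathfrak{I})=\min\bigl(v_{\mathfrak{p}_j}(r),\,\min_k v_{\mathfrak{p}_j}(g_k)\bigr)$. Choosing $\underline{x}$ so that $\sum_k x_k g_k$ realizes the valuation $v_{\mathfrak{p}_j}(\mathfrak{I})$ exactly at each $\mathfrak{p}_j$ where $v_{\mathfrak{p}_j}(r)>v_{\mathfrak{p}_j}(\mathfrak{I})$, and is a non-unit only at the "already forced'' primes otherwise (mimicking the role of the $x$ in Lemma~\ref{lem3_multi} living in $\mathfrak{p}_{m'+1}\cdots\mathfrak{p}_m\setminus\bigcup_{i\le m'}\mathfrak{p}_i$), guarantees $(r+\sum_k x_kg_k)=\mathfrak{I}\,\mathfrak{q}_1\cdots\mathfrak{q}_n$ with all $\mathfrak{q}_i$ coprime to $\mathfrak{I}$. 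Concretely, an element of $r+\mathfrak{a}$ of the required shape can be produced by the Chinese Remainder Theorem applied to the finitely many primes dividing $\mathfrak{I}$: for each such $\mathfrak{p}_j$ prescribe the $\mathfrak{p}_j$-adic behaviour of $\sum_k x_k g_k$, which is possible because $\mathfrak{a}$ locally at $\mathfrak{p}_j$ is generated by whichever $g_k$ has minimal valuation.

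Next I would run the induction to separate the primes $\mathfrak{q}_k^{(i)}$ across different $i$. Having chosen $\underline{x}_1,\ldots,\underline{x}_{i-1}$ with the listed properties, let $\mathfrak{p}_1,\ldots,\mathfrak{p}_m$ be the primes dividing $\mathfrak{I}$ and let $\mathfrak{q}_k^{(j)}$ ($j<i$) be all the primes produced so far; this is a finite set $T_i$ of primes, all coprime to $\mathfrak{I}$. I would then repeat the construction of the previous paragraph but additionally require $\sum_k x_{i,k}g_k$ to lie outside every $\mathfrak{q}\in T_i$ (i.e.\ prescribe via CRT that $r+\sum_k x_{i,k}g_k$ is a $\mathfrak{q}$-adic unit for each $\mathfrak{q}\in T_i$, which is an open condition compatible with the earlier prescriptions at the $\mathfrak{p}_j$ since $T_i$ is disjoint from $\{\mathfrak{p}_1,\ldots,\mathfrak{p}_m\}$). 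This forces $\mathfrak{q}_k^{(i)}\notin T_i$, so $\mathfrak{q}_k^{(i)}\ne\mathfrak{q}_l^{(j)}$ for all $j<i$, and $\mathfrak{q}_k^{(i)}\ndivides\mathfrak{I}$ by the valuation bookkeeping. The "in particular'' clause about $(r+x_{i,1}g_1+\cdots,r+x_{j,1}g_1+\cdots)=\mathfrak{I}$ is then immediate: both principal ideals are $\mathfrak{I}$ times a product of primes disjoint from each other and from $\mathfrak{I}$, so their gcd collapses to $\mathfrak{I}$.

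The main obstacle I expect is the local-to-global step in the first paragraph: making precise that one can choose $x_{i,1},\ldots,x_{i,t}$ so that $\sum_k x_{i,k}g_k$ has prescribed valuations at the primes dividing $\mathfrak{I}$ and prescribed (unit) behaviour at the finitely many primes in $T_i$, simultaneously. This is a CRT/approximation argument in the Dedekind ring $\OK$, and the subtlety is that $\mathfrak{a}$ is generated by the tuple $(g_1,\ldots,g_t)$ rather than a single element, so at each $\mathfrak{p}_j$ one must track which generator attains the minimal valuation and build the coefficient vector accordingly before patching with CRT. Once this is set up carefully — essentially the $t$-generator refinement of the choice ``$x\in\mathfrak{p}_{m'+1}\cdots\mathfrak{p}_m\setminus\bigcup_{i=1}^{m'}\mathfrak{p}_i$'' in Lemma~\ref{lem3_multi} — the rest is the routine inductive disjointness argument above. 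It may also be cleanest to first prove the $i=1$ case as a standalone $t$-generator analogue of Lemma~\ref{lem3_multi} and then bootstrap, exactly paralleling how Lemma~\ref{lem4_multi} is deduced from Lemma~\ref{lem3_multi}.
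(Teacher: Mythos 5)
Your plan is correct, but it is organized quite differently from the paper's proof, which is an induction on the number of generators $t$: there, the inductive hypothesis produces tuples $\underline{x}'_i\in\OK^{t-1}$ for $(r,g_1,\ldots,g_{t-1})$, one such tuple $\underline{x}'$ is fixed so that $g_t$ avoids the finitely many auxiliary primes $\mathfrak{s}_1,\ldots,\mathfrak{s}_{m'}$ occurring in $(r+x'_1g_1+\cdots+x'_{t-1}g_{t-1})$ (possible because the prime sets attached to different $i$ are pairwise disjoint while $g_t$ lies in only finitely many primes), and then Lemma~\ref{lem4_multi} is applied to $r'=r+x'_1g_1+\cdots+x'_{t-1}g_{t-1}$ and $g'=g_t$; the only genuinely new content is the verification $(r',g_t)=(r,g_1,\ldots,g_t)$, which uses exactly that $g_t\notin\mathfrak{s}_1\cup\cdots\cup\mathfrak{s}_{m'}$, and in particular the sequence produced has all coordinates except the last one constant. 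You instead build each $\underline{x}_i$ directly by prescribing, via CRT in the Dedekind ring, the local behaviour of $y=\sum_k x_{i,k}g_k\in\mathfrak{a}=(g_1,\ldots,g_t)$ at the primes dividing $\mathfrak{I}=(r)+\mathfrak{a}$ (so that $v_\mathfrak{p}(r+y)=v_\mathfrak{p}(\mathfrak{I})$ exactly) and at the finite set $T_i$ of previously used primes; this is in effect a $t$-generator version of Lemma~\ref{lem3_multi} plus the same disjointness bootstrap as in Lemma~\ref{lem4_multi}. What the paper's route buys is brevity: it reuses the one-generator machinery as a black box and needs no new local analysis. What your route buys is a self-contained, symmetric construction that makes the valuation bookkeeping explicit and does not need the induction on $t$; the price is that you must justify the local-to-global step (surjectivity of $\mathfrak{a}\to\prod_\mathfrak{p}\mathfrak{a}/\mathfrak{p}^{e_\mathfrak{p}}\mathfrak{a}$, with the relevant generator $g_k$ of minimal valuation at each $\mathfrak{p}$), including the equal-valuation case $v_\mathfrak{p}(r)=v_\mathfrak{p}(\mathfrak{a})$, where you should force $v_\mathfrak{p}(y)>v_\mathfrak{p}(r)$ to avoid cancellation, and one small wrinkle at primes $\mathfrak{q}\in T_i$ with $\mathfrak{q}\divides\mathfrak{a}$: there you cannot prescribe $y$ to be a unit (every element of $\mathfrak{a}$ lies in $\mathfrak{q}$), but since $\mathfrak{q}\ndivides\mathfrak{I}$ forces $\mathfrak{q}\ndivides(r)$, the condition that $r+y$ is a $\mathfrak{q}$-adic unit holds automatically, so your argument goes through after this case distinction.
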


\begin{proof}[Proof of Theorem \ref{Thm_SK}]
Let $g_1,\dots,g_t$ be a set of generators\footnote{It is known that every ideal in a Dedekind ring is generated by 2 elements, so we could assume that $t\le 2$. This, however, does not simplify the proofs substantially.}    of $\mathfrak{a}$. Let $\underline{x}_1,\underline{x}_2,\dots$ be as in Lemma~\ref{lem5_multi}. Since $\mathfrak{B}$ does not contain a rescaled copy of an infinite pairwise coprime set, we have
\[
r+x_{i,1}g_1+\dots + x_{i,t}g_t\in\mathfrak{b}
\]
for infinitely many $i$'s for some $\mathfrak{b}\in\mathfrak{B}$. Consider the prime factorization of $\mathfrak{b}$:
\[
\mathfrak{b}=\mathfrak{q}'_1\cdot\ldots\cdot\mathfrak{q}'_n
\]
and take $i$ such that
\begin{equation}\label{f2}
\{\mathfrak{q}^{(i)}_1,\dots,\mathfrak{q}^{(i)}_{m_i}\}\cap \{\mathfrak{q}'_1,\dots,\mathfrak{q}'_n\}=\emptyset.
\end{equation}
We claim that this yields $(r,\mathfrak{a})\subseteq \mathfrak{b}$. Indeed,
\[
r+x_{i,1}g_1+\dots+x_{i,t}g_t\in\mathfrak{b} \iff \mathfrak{b} \divides (r+x_{i,1}g_1+\dots+x_{i,t}g_t)=(r,g_1,\dots,g_t)\mathfrak{q}_1^{(i)}\cdot\ldots\cdot \mathfrak{q}_{m_i}^{(i)},
\]
which, in view of~\eqref{f2}, implies that $\mathfrak{b}\divides (r,g_1,\dots,g_t)$, i.e.\ $(r,\mathfrak{a})\subseteq \mathfrak{b}$.
%
\end{proof}

\paragraph{Proofs of lemmas}
\begin{proof}[Proof of Lemma~\ref{lem2_multi}]
This is a simple consequence of the Chinese Remainder Theorem for rings. Indeed, for any choice of $y_1,\dots, y_n\in\OK$, one can find $x\in \OK$ such that
\[
x\in\mathfrak{b} \text{ and }x-y_i \in\mathfrak{a}_i \text{ for }1\leq i\leq n.
\]
Now, it suffices to apply the above to any $y_1,\dots, y_n$ such that $y_i-x_i\not\in\mathfrak{a}_i$ to conclude that
\[
\mathfrak{b}\cap \bigcap_{i=1}^{n}(x_i+\mathfrak{a}_i)^c\neq\emptyset.
\]
\end{proof}
\begin{proof}[Proof of Lemma~\ref{lem3_multi}]
For every $1\leq i\leq m$, we will prove that $v_{\mathfrak{p}_i}(r+xg)=\alpha_i$ for each $1\leq i\leq m$. Consider first $i>m'$. We have
\[
v_{\mathfrak{p}_i}(r)=v_{\mathfrak{p}_i}(g)=\alpha_i  \text{ and } v_{\mathfrak{p}_i}(x)=1.
\]
It follows that, as $v_{\mathfrak{p}_i}(r)=\alpha_i<v_{\mathfrak{p}_i}(xg)$,  $v_{\mathfrak{p}_i}(r+xg)= \alpha_i$.

Now, consider $1\leq i\leq m'$. We have
\[
v_{\mathfrak{p}_i}(x)=0, v_{\mathfrak{p}_i}(r)\ge \alpha_i, v_{\mathfrak{p}_i}(g)\ge \alpha_i
\]
and precisely one of the following:
\begin{equation}\label{gie1}
\text{either } v_{\mathfrak{p}_i}(r)>\alpha_i \text{ or }v_{\mathfrak{p}_i}(g)>\alpha_i.
\end{equation}
It follows that $v_{\mathfrak{p}_i}(r)\neq v_{\mathfrak{p}_i}(xg)$, thus $v_{\mathfrak{p}_i}(r+xg)=\min(v_{\mathfrak{p}_i}(r),v_{\mathfrak{p}_i}(xg))=\alpha_i$.

This yields $v_{\mathfrak{p}_i}(r+xg)=\alpha_i$ for each $1\leq i\leq m$.
\end{proof}

\begin{proof}[Proof of Lemma~\ref{lem4_multi}]
Let $\mathfrak{p}_1,\dots,\mathfrak{p}_m$, $\alpha_1,\ldots,\alpha_m$ and $m'$ be as in the assumptions of Lemma~\ref{lem3_multi} and let
\[
x_1\in\mathfrak{p}_{m'+1}\cdot\ldots\cdot\mathfrak{p}_m\setminus (\bigcup_{i=1}^{m'}\mathfrak{p}_i).
\]
Then
\[
(r+x_1g)=\mathfrak{p}_1^{\alpha_1}\cdot\ldots\cdot\mathfrak{p}_m^{\alpha_m}\mathfrak{q}_1^{(1)}\cdot\ldots\cdot\mathfrak{q}_{m_1}^{(1)}
\]
and $\mathfrak{q}_k^{(1)}\not\in\{\mathfrak{p}_1,\dots,\mathfrak{p}_m\}$ are prime ideals. Suppose that we have chosen $x_1,\dots, x_{i-1}$ and take
\[
x_i\in\mathfrak{p}_{m'+1}\cdot\ldots\cdot\mathfrak{p}_m\setminus \left(\bigcup_{j=1}^{m'}\mathfrak{p}_j\cup\bigcup_{j=1}^{i-1}\bigcup_{l=1}^{m_j}(\mathfrak{q}_l^{(j)}+x_j)\right)
\]
(Lemma~\ref{lem2_multi} guarantees that such $x_i$ exists). It follows by Lemma~\ref{lem3_multi} that
\[
(r+x_ig)=\mathfrak{p}_1^{\alpha_1}\cdot\ldots\cdot\mathfrak{p}_m^{\alpha_m}\cdot\mathfrak{q}_1^{(k)}\cdot\ldots\cdot\mathfrak{q}_{m_k}^{(k)},
\]
where $\mathfrak{q}_k^{(i)}\not\in\{\mathfrak{p}_1,\dots,\mathfrak{p}_m\}$.

It remains to show that
$\mathfrak{q}_k^{(i)}\neq \mathfrak{q}_\ell^{(j)}$ whenever $j\neq i$. Suppose otherwise, then
\[
(x_j-x_k)g\in\mathfrak{q}_\ell^{(j)}
\]
Now, $x_j-x_i\notin \mathfrak{q}_\ell^{(j)}$, as $x_i\not\in \mathfrak{q}_\ell^{(j)}+x_j$. Thus, $g\in \mathfrak{q}_\ell^{(j)}$ and, consequently, $r=r+x_ig-x_ig \in\mathfrak{q}_\ell^{(j)}$. This is impossible, as $\mathfrak{q}_\ell^{(j)}\neq \mathfrak{p}_i$ for all $i$. The remaining statement follows easily.

\end{proof}

\begin{proof}[Proof of Lemma~\ref{lem5_multi}]
The proof is inductive. For $t=1$ the assertion becomes the one of Lemma~\ref{lem4_multi}. Now, take $t>1$. There exists a sequence
\[
\underline{x}_1'=(x'_{1,1},\dots,x'_{1,t-1}),\underline{x}'_2=(x'_{2,1},\dots,x'_{2,t-1}),\dots \in \OK^{t-1}
\]
such that
\[
(r+x'_{i,1}g_1+\dots+x'_{i,t-1}g_{t-1})=(r,g_1,\dots,g_{t-1})\cdot \mathfrak{s}_1^{(i)}\cdot\ldots\cdot\mathfrak{s}_{m'_i}^{(i)},
\]
where $\mathfrak{s}_k^{(i)}$ are prime ideals such that $\mathfrak{s}_k^{(i)}\neq \mathfrak{s}_l^{(j)}$, whenever $i\neq j$. Fix
\[
\underline{x}'=(x'_1,\dots,x'_{t-1})
\]
such that
\[
(r+x'_1g_1+\dots+x'_{t-1}g_{t-1})=(r,g_1,\dots,g_{t-1})\mathfrak{s}_1\cdot\ldots\cdot\mathfrak{s}_{m'}
\]
and $g_t\not\in\bigcup_{j=1}^{m'}\mathfrak{s}_j$. By Lemma~\ref{lem4_multi} (applied to $r'=r+x_1'g_1+\dots +x_{t-1}'g_{t-1}$ and $g'=g_t$), there exists a sequence $(x_i)_{i\geq 1}\subseteq \OK$ such that, for every $i\ge 1$:
\begin{equation}\label{f1}
(r'+x_ig')=(r',g')\mathfrak{c}_1^{(i)}\cdot\ldots\cdot \mathfrak{c}^{(i)}_{l_i},
\end{equation}
i.e.\
\[
(r+x_1'g_1+\dots+x'_{t-1}g_{t-1}+x_ig_t)=(r+x'_1g_1+\dots+x'_{t-1}g_{t-1},g_t)\mathfrak{c}_1^{(i)}\cdot\ldots\cdot\mathfrak{c}^{(i)}_{l_i},
\]
where $\mathfrak{c}^{(i)}_1,\dots,\mathfrak{c}^{(i)}_{l_i}$ are prime ideals that do not divide $(r',g')$ and
\[
\{\mathfrak{c}^{(i)}_1,\dots\mathfrak{c}^{(i)}_{l_i}\}\cap\{\mathfrak{c}^{(j)}_1,\dots\mathfrak{c}^{(j)}_{l_j}\}=\emptyset \text{ whenever }i\neq j.
\]
Now,
\[
(r',g')=(r+x'_1g_1+\dots+x'_{t-1}g_{t-1},g_t)=(r,g_1,\dots,g_{t-1})\mathfrak{s}_1\cdot\ldots\cdot\mathfrak{s}_{m'}+(g_t).
\]
Clearly,  $(r',g')\subseteq (r,g_1,\dots,g_{t})$. On the other hand, 
since $g_t\notin \mathfrak{s}_1\cup\ldots\cup\mathfrak{s}_{m'}$, it follows that  $1\in (g_t)+\mathfrak{s}_1\cdot\d\ldots\cdot\mathfrak{s}_{m'}$. Thus   
\[
(r,g_1,\dots,g_{t-1})+(g_t)\subseteq (r,g_1,\dots,g_{t-1})\mathfrak{s}_1\cdot\ldots\cdot\mathfrak{s}_{m'}+(g_t),
\]
and we get
\[
(r',g')=(r,g_1,\dots,g_t).
\]
This equality, combined with~\eqref{f1}, yields
\[
(r+x_1'g_1+\dots+x'_{t-1}g_{t-1}+x_ig_t)=(r,g_1,\dots,g_t)\mathfrak{c}_1^{(i)}\cdot\ldots\cdot\mathfrak{c}_{l_i}^{(i)}.
\]
\end{proof}
\subsection{Non-periodic positions and periodic structure}
The first goal of this section is to give a description of non-periodic positions on $\eta$, i.e.\ to prove Proposition~\ref{propA}.

We have
\[
\mathcal{F}_{\mathfrak{B}^*}=\mathcal{F}_\mathfrak{B}\cap \mathcal{F}_\mathfrak{D},
\]
where $\mathfrak{D}$ is defined in~\eqref{A}. Equivalently, this equality can be rewritten as
\begin{equation}\label{eta*}
\eta^*_n=\begin{cases}
\eta_n \text{, if }n\not\in\mathcal{M}_{\mathfrak{D}},\\
0 \text{, if }n\in\mathcal{M}_\mathfrak{D},
\end{cases}
\end{equation}
Recall that for any ideal $\mathfrak{s}\subseteq\OK$ places which are $\mathfrak{s}$-periodic are denoted by
\[\Per(\eta,\mathfrak{s})=\{g\in\OK;\ \eta_g=\eta_{g+s}\text{ for any }s\in\mathfrak{s}\}.\]
Let $(\mathfrak{S}_i)_{i\geq1}$ be a saturated filtration of $\mathfrak{B}$ by finite collections of ideals. For $i\geq1$ put
\[
\mathfrak{s}_i:=\bigcap_{\mathfrak{b}\in\mathfrak{S}_i}\mathfrak{b}=\lcm(\mathfrak{S}_i).
\]
\begin{Lemma}\label{ideal_sol}
Let $g\in\G$ and~$\{e_\G\}\neq\HH_1,\HH_2\subseteq\G$ be subgroups. Then
\begin{align}\label{kogruencja}
(\HH_1+g)\cap\HH_2\neq\emptyset\text{ if and only if }g\in\HH_1+\HH_2.
\end{align}
\end{Lemma}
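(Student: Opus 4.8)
The plan is to prove both implications directly from the definitions. The only structural facts I will use are that $\G$ is abelian (so $\HH_1+\HH_2=\HH_2+\HH_1$) and that $\HH_1,\HH_2$ are subgroups, hence closed under taking inverses; the hypothesis $\HH_i\neq\{e_\G\}$ is not actually needed for the equivalence and I would simply not invoke it.

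For the forward implication I would pick a point $x\in(\HH_1+g)\cap\HH_2$. Writing $x=h_1+g$ with $h_1\in\HH_1$, and using $x\in\HH_2$, one rearranges to $g=x+(-h_1)$ with $x\in\HH_2$ and $-h_1\in\HH_1$ (here the subgroup property of $\HH_1$ is what lets me negate $h_1$), so $g\in\HH_2+\HH_1=\HH_1+\HH_2$, as desired.

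For the converse I would start from a decomposition $g=h_1+h_2$ with $h_i\in\HH_i$ and set $x:=h_2=(-h_1)+g$. Since $-h_1\in\HH_1$ we get $x\in\HH_1+g$, and since $h_2\in\HH_2$ we get $x\in\HH_2$; hence $x$ witnesses $(\HH_1+g)\cap\HH_2\neq\emptyset$.

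There is essentially no obstacle: the statement is a routine coset computation. The only points requiring a moment's care are the sign bookkeeping (using closure of a subgroup under inverses) and invoking commutativity of $\G$ so that $\HH_1+\HH_2$ and $\HH_2+\HH_1$ coincide, which is exactly what makes the two directions line up symmetrically.
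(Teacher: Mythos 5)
Your proof is correct; the paper explicitly omits the proof of this lemma as straightforward, and your two-line coset computation (rearranging $x=h_1+g\in\HH_2$ to get $g\in\HH_2+\HH_1$, and conversely taking $x=h_2=(-h_1)+g$ as the witness) is exactly the routine argument being skipped. You are also right that the hypothesis $\HH_i\neq\{e_\G\}$ plays no role in the equivalence.
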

The proof of the above lemma is straightforward, so we skip it.
%
\begin{Lemma}[see i.e.\ {\cite[Chapter I, Proposition 2.2]{MR1878556}}]\label{intersect}
Let $\HH_1,\HH_2\subseteq\G$ be subgroups of finite index. Then $\HH_1\cap\HH_2$ is also a subgroup of finite index in~$\G$ and \[[\G : \HH_1\cap\HH_2]=[\G : \HH_1]\cdot[\HH_1 : \HH_1\cap\HH_2].\]
\end{Lemma}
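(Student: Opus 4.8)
The plan is to derive both assertions from the multiplicativity of the index along a tower of subgroups. First I would record the elementary fact that whenever $C\subseteq B\subseteq A$ is a chain of subgroups with $[A:B]$ and $[B:C]$ finite, one has $[A:C]=[A:B]\cdot[B:C]$: if $a_1,\dots,a_p$ is a transversal of $B$ in $A$ and $b_1,\dots,b_q$ a transversal of $C$ in $B$, then the products $a_ib_j$ form a transversal of $C$ in $A$ (one checks that these $pq$ cosets are pairwise distinct and exhaust $A/C$). Since in our situation $\G$ is abelian, there is no left/right coset subtlety to worry about.

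Next I would apply this with $A=\G$, $B=\HH_1$ and $C=\HH_1\cap\HH_2$, which immediately yields the displayed identity $[\G:\HH_1\cap\HH_2]=[\G:\HH_1]\cdot[\HH_1:\HH_1\cap\HH_2]$, provided the two factors on the right are finite. The factor $[\G:\HH_1]$ is finite by hypothesis. For $[\HH_1:\HH_1\cap\HH_2]$, I would consider the map from $\HH_1/(\HH_1\cap\HH_2)$ to $\G/\HH_2$ sending $h+(\HH_1\cap\HH_2)$ to $h+\HH_2$ for $h\in\HH_1$. It is clearly well defined, and it is injective: if $h,h'\in\HH_1$ satisfy $h-h'\in\HH_2$, then $h-h'\in\HH_1\cap\HH_2$. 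Hence $[\HH_1:\HH_1\cap\HH_2]\le[\G:\HH_2]<\infty$.

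Combining these observations, $[\G:\HH_1\cap\HH_2]=[\G:\HH_1]\cdot[\HH_1:\HH_1\cap\HH_2]$ is finite (and in fact bounded above by $[\G:\HH_1]\cdot[\G:\HH_2]$), which simultaneously shows that $\HH_1\cap\HH_2$ has finite index and gives the index formula. I do not expect any genuine obstacle here: the only point requiring care is to check the finiteness of $[\HH_1:\HH_1\cap\HH_2]$ before invoking the tower formula (and, if one wanted the statement for non-abelian $\G$, to work consistently with left cosets throughout). Since the paper only needs the lemma for $\G=\Z^d$, i.e.\ the additive group of $\OK$, none of this poses any difficulty.
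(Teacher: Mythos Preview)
Your proof is correct and is the standard argument. The paper does not actually prove this lemma; it simply cites it from the literature (\cite[Chapter I, Proposition 2.2]{MR1878556}), so there is nothing to compare against beyond noting that your argument is the expected one.
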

\begin{Lemma}[{\cite[2.4. s. 128]{MR36767}}]\label{index}
If $\G$ is a finitely generated group, then it contains finitely many (possibly $0$) subgroups of a given index $n$.
\end{Lemma}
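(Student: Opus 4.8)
The plan is to realise each finite-index subgroup as the preimage of a point stabiliser under a permutation representation of $\G$ on a coset space, and then to use finite generation to bound the number of such representations. This is the classical coset-action argument.

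First I would fix $n\geq1$ and a finite generating set $g_1,\dots,g_k$ of $\G$. Given a subgroup $\HH\subseteq\G$ with $[\G:\HH]=n$, I would consider the action of $\G$ on the left coset space $\G/\HH$ by left translation. Choosing a bijection $\G/\HH\leftrightarrow\{1,\dots,n\}$ that sends the trivial coset $\HH$ to $1$, this action becomes a homomorphism $\va_\HH\colon\G\to S_n$. The key point is that $\HH$ can be recovered from $\va_\HH$: for $g\in\G$ one has $g\in\HH$ iff $g\HH=\HH$ iff $\va_\HH(g)$ fixes the point $1$, so
\[
\HH=\va_\HH^{-1}\!\left(\operatorname{Stab}_{S_n}(1)\right),
\]
and this holds irrespective of the chosen bijection. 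In particular the assignment $\HH\mapsto\va_\HH$ is injective on the set of index-$n$ subgroups of $\G$.

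Next I would observe that a homomorphism $\G\to S_n$ is completely determined by the images of the generators $g_1,\dots,g_k$, so $|\operatorname{Hom}(\G,S_n)|\leq (n!)^k$. Combined with the injectivity just noted, this yields at most $(n!)^k$ subgroups of index $n$ in $\G$, which is the assertion. (In the abelian situation relevant to this paper there is also a shortcut: every index-$n$ subgroup contains $n\G$, the finitely generated abelian group $\G/n\G$ has exponent dividing $n$ and is therefore finite, and subgroups of index $n$ in $\G$ correspond to subgroups of index $n$ in $\G/n\G$.) I do not expect any genuine obstacle here: the only substantive input is the finiteness of $\operatorname{Hom}(\G,S_n)$ for finitely generated $\G$, and checking the displayed identity $\HH=\va_\HH^{-1}(\operatorname{Stab}_{S_n}(1))$ is immediate from the definitions.
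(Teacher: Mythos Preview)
Your argument is correct and is the standard coset-action proof of this classical fact. The paper does not supply its own proof of this lemma: it is simply quoted from the literature (the citation \cite[2.4, p.~128]{MR36767}), so there is no in-paper argument to compare against. Your parenthetical abelian shortcut is also valid and is in fact all that is needed for the application here, since the paper only uses the lemma for $\G=\OK\cong\Z^d$.
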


The proof of Proposition~\ref{propA} go along the same lines as the proof of Proposition~\ref{lemmaA_one}.

\begin{proof}[Proof of Proposition~\ref{propA}] 
It suffices to show that
\begin{equation}\label{dwie_inkluzje_ok}
\OK\setminus\bigcup_{i\geq1}\Per(\eta,\mathfrak{s}_i)\subseteq\mathcal{M}_{\mathfrak{D}}\cap \mathcal{F}_{\mathfrak{B}}\subseteq\OK\setminus \bigcup_{\mathfrak{s}\subset \OK}\Per(\eta,\mathfrak{s}).
\end{equation}
Clearly, $\OK\setminus \bigcup_{\mathfrak{s}\subset\OK}\Per(\eta,\mathfrak{s})\subseteq \OK\setminus \bigcup_{i\geq 1}\Per(\eta,\mathfrak{s}_i)$ and therefore the inclusions must be, in fact, equalities.

\paragraph{First inclusion.} Take $n\not\in \bigcup_{i\geq 1}\Per(\eta,\mathfrak{s}_i)$. Then $n\in\mathcal{F}_\mathfrak{B}$. Moreover, $n+\mathfrak{s}_k\not\subseteq \mathcal{F}_\mathfrak{B}$ for any $k\geq 1$. In other words, $(n+\mathfrak{s}_k)\cap\mathcal{M}_\mathfrak{B}\neq\emptyset$, so there exists $\mathfrak{b}_k$ such that $n\in \mathfrak{b}_k+\mathfrak{s}_k=\gcd(\mathfrak{b}_k,\mathfrak{s}_k)$, so that
\[
n\OK \subseteq \mathfrak{b}_k+\mathfrak{s}_k=\gcd(\mathfrak{b}_k,\mathfrak{s}_k).
\]
Notice that $b_k\not\in\mathfrak{S}_k$ (if $b_k\in\mathfrak{S}_k$ then $\mathfrak{s}_k\subseteq\mathfrak{b}_k$ and $n\in\mathfrak{b}_k+\mathfrak{s}_k\subseteq \mathfrak{b}_k$ which contradicts $n\in\mathcal{F}_\mathfrak{B}$). It follows by Lemma~\ref{intersect} that the index of $\mathfrak{b}_k+\mathfrak{s}_k$ is divisible by the index of $n\OK$, so by Lemma~\ref{index},  there are only finitely many possibilities for $\mathfrak{b}_k+\mathfrak{s}_k$. In other words, there exists an ideal $\mathfrak{d}$ and a subsequence $(k_\ell)\subseteq\N$ such that $\gcd(\mathfrak{b}_{k_\ell},\mathfrak{s}_{k_\ell})=\mathfrak{b}_{k_\ell}+\mathfrak{s}_{k_\ell}=\mathfrak{d}$ (and $n\in\mathfrak{d}$, as $n\in \mathfrak{b}_k+\mathfrak{s}_k$ for each $k\geq 1$).

Let $\ell_2>\ell_1:=1$ be such that
\begin{equation}\label{rnie}
\mathfrak{b}_{k_{\ell_1}}\in\mathfrak{S}_{k_{\ell_2}} \text{ (in particular, $\mathfrak{s}_{k_{\ell_2}}\subseteq \mathfrak{b}_{k_{\ell_1}}$).}
\end{equation}
By the choice of $(k_\ell)$, we have $\mathfrak{b}_{k_{\ell_1}},\mathfrak{b}_{k_{\ell_2}}\subseteq \mathfrak{d}$, so
\[
\mathfrak{b}_{k_{\ell_1}}+\mathfrak{b}_{k_{\ell_2}}\subseteq \mathfrak{d}.
\]
On the other hand, using~\eqref{rnie},  we obtain
\[
\mathfrak{d}=\mathfrak{b}_{k_{\ell_2}}+\mathfrak{s}_{k_{\ell_2}}=\gcd(\mathfrak{b}_{k_{\ell_2}},\mathfrak{s}_{k_{\ell_2}})\subseteq\gcd(\mathfrak{b}_{k_{\ell_2}},\mathfrak{b}_{k_{\ell_1}}).
\]
Thus, $\gcd(\mathfrak{b}_{k_{\ell_1}},\mathfrak{b}_{k_{\ell_2}})=\mathfrak{b}_{k_{\ell_1}}+\mathfrak{b}_{k_{\ell_2}}=\mathfrak{d}$.

Suppose that we have constructed $1=\ell_1<\ell_2<\ldots<\ell_m$ such that
\begin{equation}\label{nowe1}
\gcd(\mathfrak{b}_{k_{\ell_i}},\mathfrak{b}_{k_{\ell_{i'}}})=\mathfrak{d} \text{ for any }1\leq i<i'\leq m.
\end{equation}
There exists $\ell_{m+1}>\ell_m$ such that
\begin{equation}\label{nowe2}
\mathfrak{b}_{k_{\ell_1}},\mathfrak{b}_{k_{\ell_2}},\dots,\mathfrak{b}_{k_{\ell_m}}\in \mathfrak{S}_{k_{\ell_{m+1}}}.
\end{equation}
It follows by the choice of $(k_\ell)$ that $b_{k_{\ell_i}}\subseteq \mathfrak{d}$ for $1\leq i\leq m+1$. This gives
\[
\gcd(\mathfrak{b}_{k_{\ell_i}},\mathfrak{b}_{k_{\ell_{i'}}})=\mathfrak{b}_{k_{\ell_i}}+\mathfrak{b}_{k_{\ell_{i'}}}\subseteq\mathfrak{d} \text{ for any }1\leq i<i'\leq m+1.
\]
On the other hand, using~\eqref{nowe2}, we obtain $\mathfrak{d}=\gcd(\mathfrak{b}_{k_{\ell_{m+1}}},\mathfrak{s}_{k_{\ell_{m+1}}})\subseteq\gcd(\mathfrak{b}_{k_{\ell_i}},\mathfrak{b}_{k_{\ell_{m+1}}})$.
Thus,
\[
\gcd(\mathfrak{b}_{k_{\ell_i}},\mathfrak{b}_{k_{\ell_{i'}}})=\mathfrak{d} \text{ for any }1\leq i<i'\leq m+1
\]
and the above inductive procedure therefore yields a sequence $1=\ell_1<\ell_2<\dots$ such that
\[
\gcd(\mathfrak{b}_{k_{\ell_i}},\mathfrak{b}_{k_{\ell_{i'}}})=\mathfrak{d} \text{ for any }i\neq i'.
\]
Let $\mathfrak{c}_i$, for $i\geq 1$, be an ideal such that $\mathfrak{c}_i\mathfrak{d}=\mathfrak{b}_{k_{\ell_i}}$. It remains to show that $\mathcal{C}:=\{\mathfrak{c}_i : i\geq 1\}$ is pairwise coprime. However, we have
\[
\mathfrak{d}=\gcd(\mathfrak{b}_{k_{\ell_i}},\mathfrak{b}_{k_{\ell_{i'}}})=\mathfrak{b}_{k_{\ell_i}}+\mathfrak{b}_{k_{\ell_{i'}}}=(\mathfrak{c}_i+\mathfrak{c}_{i'})\mathfrak{d}
\]
and after dividing the above by $\mathfrak{d}$, we conclude that $\mathfrak{c}_i+\mathfrak{c}_{i'}=\OK$ for $i\neq i'$. This tells us that $n\in\mathcal{M}_\mathfrak{D}$.

\paragraph{Second inclusion.} Take $n\in\mathcal{M}_\mathfrak{D}\cap\mathcal{F}_\mathfrak{B}$.  There exists a non-zero ideal $\mathfrak{d}$ and an infinite pairwise coprime collection $\mathcal{C}$ of ideals such that
\[
n\in\mathfrak{d} \text{ and }\mathfrak{d}\mathcal{C}\subseteq \mathfrak{B}.
\]
Suppose that $n\in\Per(\eta,\mathfrak{s})$ for some non-zero ideal $\mathfrak{s}$. In other words, we have $n+\mathfrak{s}\subseteq \mathcal{F}_\mathfrak{B}$. Since $n\in\mathfrak{d}$, it follows that
\begin{equation}\label{moje}
\mathfrak{d}\cap (n+\mathfrak{s})=n+\mathfrak{d}\cap\mathfrak{s}\subseteq \mathcal{F}_\mathfrak{B}.
\end{equation}
Take $\mathfrak{c}\in\mathcal{C}$ coprime to $\mathfrak{s}$ and $\mathfrak{d}$. Then
\[
n\in\mathfrak{d}=\mathfrak{d}(\mathfrak{s}+\mathfrak{c})=\mathfrak{d}\mathfrak{s}+\mathfrak{d}\mathfrak{c}=\mathfrak{d}\cap\mathfrak{s}+\mathfrak{d}\mathfrak{c}.
\]
Therefore (by Lemma~\ref{ideal_sol} for $\G=\OK$, $g=n$, $\HH_1=\mathfrak{d}\cap\mathfrak{s}_i$ and~$\HH_2=\mathfrak{d}\mathfrak{c}_j$) we obtain
\[
(n+\mathfrak{d}\cap\mathfrak{s})\cap \mathcal{M}_\mathfrak{B}\supseteq(n+\mathfrak{d}\cap\mathfrak{s})\cap \mathfrak{d}\mathfrak{c}\neq\emptyset,
\]
which contradicts~\eqref{moje}.
\end{proof}

\begin{proof}[Proof of Theorem~\ref{thxC}]
For $i\geq1$ put $\mathfrak{s}_i:=\bigcap_{\mathfrak{b}\in\mathfrak{S}_i}\mathfrak{b}=\lcm(\mathfrak{S}_i)$. By Proposition~\ref{propA}, we get \[\bigcup_{i\geq1}\Per(\eta,\mathfrak{s}_i)=\OK.\]
Clearly, $\mathfrak{s}_{i+1}\subseteq\mathfrak{s}_i$. Suppose that $\mathfrak{s}$ is such that
\[
\Per(\eta,\mathfrak{s}_i)\subseteq\Per(\eta,\mathfrak{s}).
\]
For any $\mathfrak{b}\in\mathfrak{S}_i$, we have
\[
\mathfrak{b}\subseteq \Per(\eta,\mathfrak{s}_i)\subseteq \Per(\eta,\mathfrak{s}),
\]
whence
\[
\mathfrak{b}+\mathfrak{s}\subseteq \mathcal{M}_\mathfrak{B}.
\]
Take $r\in\mathfrak{s}$. It follows by Theorem~\ref{Thm_SK} (for $\mathfrak{a}=\mathfrak{b}$) that for some $\mathfrak{b}'\in\mathfrak{B}$ we have
\[
(r)+\mathfrak{b}\subseteq \mathfrak{b}'.
\]
In particular, $\mathfrak{b}\subseteq\mathfrak{b}'$, which, by the primitivity of $\mathfrak{B}$ implies $\mathfrak{b}'=\mathfrak{b}$. Thus,
\[
\mathfrak{s}\subseteq \mathfrak{b},
\]
as $r\in\mathfrak{s}$ was arbitrary. Now, since $\mathfrak{b}\in\mathfrak{S}_i$ was arbitrary, we obtain $\mathfrak{s}\subseteq \mathfrak{s_i}$
and we conclude that $\mathfrak{s}$ is an essential group of periods.
\end{proof}
\subsection{Minimality}
\subsubsection{Tools}
We begin this section with a multidimensional version of a result from~\cite{MR3803141} used therein to prove \cite[Theorem A]{MR3803141} (i.e.\ Theorem~\ref{minimal_basic}).
\begin{Prop}[{\cite[Proposition 2.15]{MR3803141} for $\G=\Z$}]\label{LE51new}
Let $\G$ be countable abelian group. Let $(X,(T_g)_{g\in\G})$ be a topological dynamical system with a transitive point $x_0\in X$. Then the following conditions are equivalent:
\begin{enumerate}[(a)]
\item\label{HA}
$(X,(T_g)_{g\in\G})$ has a unique minimal subset $M$,
\item\label{HB}
there exists a closed, $(T_g)_{g\in\G}$-invariant subset $M'\subseteq X$ such that
for any $x\in M'$ and~$y\in X$, there exists $(g_n)_{n\geq 1}\subseteq \G$ such that $g_n\to\infty$ and $T_{g_n}y\to x$ as $n\to\infty$,
\item\label{HC}
there exists $x_M\in X$ such that for any $y\in X$ there exists $(g_n)_{n\geq 1}\subseteq \G$ such that $g_n\to\infty$ and $T_{g_n}y\to x_M$,
\item\label{HD}
there exists a closed, $(T_g)_{g\in\G}$-invariant subset $M''\subseteq X$, such that $\left\{g\in\G;\right.\\ \left. T_gx_0 \in U\right\}$ is syndetic for any open $U$ with $U\cap M''\neq\emptyset$,
\item\label{HE}
there exists a sequnce of open sets $(U_n)_{n\geq 1}\subseteq X$ such that:
\begin{itemize}
\item
$\text{diam}(U_n)\to 0\text{ as }n\to\infty$,
\item
the set $\{g\in\G;\ T_gx_0 \in U_n\}$ is syndetic for each $n\in\N$.
\end{itemize}
\end{enumerate}
Furthermore, if any of the above hold, then $M=M'=M''$ and $x_M\in M$.
\end{Prop}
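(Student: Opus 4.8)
The plan is to route everything through the $\omega$-limit sets $\omega(y):=\{z\in X:\ T_{g_n}y\to z\text{ for some }g_n\to\infty\}$, $y\in X$. One first records the routine facts that (since $\G$ is infinite and $X$ is compact) each $\omega(y)$ is nonempty, closed and $(T_g)_{g\in\G}$-invariant, that $\omega(T_hy)=\omega(y)$, and hence that every $\omega(y)$ contains a minimal subset of $X$. The key reformulation is then: \emph{$X$ has a unique minimal subset $M$ iff there exists $p\in X$ with $p\in\omega(y)$ for all $y\in X$, and in that case $p\in M$.} Indeed, if $M$ is the unique minimal set and $p\in M$, then for every $y$ the minimal subset contained in $\omega(y)$ is $M$, so $p\in M\subseteq\omega(y)$; conversely, if such a $p$ exists and $M_1$ is any minimal set, picking $y\in M_1$ gives $p\in\omega(y)\subseteq\overline{\{T_gy:g\in\G\}}=M_1$, and since distinct minimal sets are disjoint, $M_1$ is unique. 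Now condition (c) \emph{is} the assertion ``$\exists x_M\ \forall y:\ x_M\in\omega(y)$'', so (a)$\iff$(c) with $x_M\in M$ is immediate; (b)$\Rightarrow$(c) by taking $x_M$ to be any point of the (nonempty) $M'$; and (a)$\Rightarrow$(b) by taking $M'=M$ and invoking $M\subseteq\omega(y)$. The same computation gives $M'=M$ whenever (b) holds: $M'\subseteq\omega(y)=M$ for $y\in M$, while $M'$ nonempty closed invariant forces $M\subseteq M'$.

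Next I would establish (a)$\Rightarrow$(d) with $M''=M$. Fix an open $U$ with $U\cap M\neq\emptyset$ and suppose $N(x_0,U):=\{g\in\G:\ T_gx_0\in U\}$ is not syndetic; then $\G\setminus N(x_0,U)$ is thick, so exhausting $\G$ by finite sets $F_1\subseteq F_2\subseteq\cdots$ one finds $g_n$ with $T_{g_n+f}x_0\notin U$ for all $f\in F_n$. A convergent subsequence $T_{g_n}x_0\to y$ then satisfies $T_fy\in\overline{X\setminus U}=X\setminus U$ for every $f\in\G$ (as $U$ is open), so $\overline{\{T_gy:g\in\G\}}\subseteq X\setminus U$; but this closure contains a minimal set, which by uniqueness is $M$, contradicting $M\cap U\neq\emptyset$. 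The implication (d)$\Rightarrow$(e) is then immediate: fix $m$ in the (nonempty) $M''$ and take $U_n=\{x\in X:\ d(x,m)<1/n\}$, which meets $M''$ and has $\text{diam}(U_n)\to0$.

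To close the loop I would prove (e)$\Rightarrow$(a). Choose $p_n\in U_n$ and a subsequence $p_{n_k}\to x_M$; I claim $x_M$ lies in every minimal set $M_1$ (which by disjointness of distinct minimal sets yields (a), and then $x_M\in M$). Fix $y\in M_1$ and $\varepsilon>0$; choose $k$ with $d(p_{n_k},x_M)<\varepsilon$ and $\text{diam}(U_{n_k})<\varepsilon$, so $U_{n_k}\subseteq B(x_M,2\varepsilon)$. Writing $\G=\bigcup_{f\in F}(N(x_0,U_{n_k})-f)$ with $F$ finite (syndeticity from (e)) and picking $T_{h_j}x_0\to y$ (transitivity of $x_0$), for each $j$ some $f_j\in F$ satisfies $T_{h_j+f_j}x_0\in U_{n_k}$; passing to a subsequence with $f_j\equiv f$ and using continuity gives $T_fy\in\overline{U_{n_k}}\subseteq\overline{B(x_M,2\varepsilon)}$. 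Letting $\varepsilon\downarrow0$ produces $f_m\in\G$ with $T_{f_m}y\to x_M$, hence $x_M\in\overline{\{T_gy:g\in\G\}}=M_1$. Finally, the very same computation, applied with a single $U=B(z,\varepsilon)$ where $z\in M''\setminus M$ and $\varepsilon$ is small enough that $\overline{B(z,\varepsilon)}\cap M=\emptyset$ (possible since $M$ is closed and $z\notin M$) and using (d), would give $T_fy\in\overline{B(z,\varepsilon)}$ for $y\in M$, contradicting $T_fy\in M$; hence $M''\subseteq M$, so $M''=M$, which completes the ``furthermore'' part.

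The genuine content sits in (a)$\Rightarrow$(d) (and the mirror computation identifying $M''$): converting a failure of syndeticity into an honest orbit that avoids an open set, and then exploiting compactness of $X$ together with uniqueness of the minimal set. Everything else is bookkeeping with $\omega$-limit sets; the only adjustments needed in passing from $\G=\Z$ (the case in \cite[Proposition 2.15]{MR3803141}) to a general \emph{infinite} countable abelian $\G$ are that every infinite subset of $\G$ contains a sequence tending to infinity (so $\omega(y)\neq\emptyset$ and infinite return-time sets are unbounded) and that ``syndetic'' still means ``finitely many translates cover $\G$'' — both of which hold for $\G=\OK$. One should also read the sets $M'$ in (b) and $M''$ in (d) as nonempty, which is the intended convention.
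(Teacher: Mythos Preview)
Your proof is correct and rests on the same two ideas as the paper's: $\omega$-limit sets to link (a), (b), (c), and the syndeticity/thickness dichotomy to handle (d) and (e). The paper runs a cyclic chain (a)$\Rightarrow$(b)$\Rightarrow$(c)$\Rightarrow$(d)$\Rightarrow$(e)$\Rightarrow$(a); you instead use (a) as a hub, proving (a)$\Leftrightarrow$(c) first via your ``key reformulation'' and then (a)$\Rightarrow$(d) directly (the paper does (c)$\Rightarrow$(d), but the contradiction argument is the same). The one place your route genuinely differs is (e)$\Rightarrow$(a): the paper assumes two disjoint minimal sets $M_1,M_2$, picks $U_n$ of small diameter (hence disjoint from, say, $M_2$), and uses syndeticity of returns to $U_n$ to force a point of $M_2$ into $U_n$; you instead extract a limit $x_M$ of points of the $U_n$'s and show $x_M$ lies in \emph{every} minimal set. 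Your version has the mild advantage that it simultaneously manufactures the point $x_M$ needed for the ``furthermore'' clause, and your argument that any $M''$ satisfying (d) equals $M$ is more explicit than the paper's rather compressed line ``the reduction of $M'$ has no impact on (d), the equalities hold''.
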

\begin{proof}
\eqref{HA}$\Rightarrow$\eqref{HB} Let $M$ be a unique minimal subset of $(X,(T_g)_{g\in\G})$. Let $y\in X$. Since $X$ is a compact space, so the set \[\omega(y):=\{z\in X; \ \exists g_n\to\infty\text{ such that }T_{g_n}y\to z\}\] is closed, non-empty and~$(T_g)_{g\in\G}$-invariant. We will show that $\bigcap_{y\in X}\omega(y)$ is minimal. Let $x\in\bigcap_{y\in X}\omega(y)$. Then since $\bigcap_{y\in X}\omega(y)$ is closed and $(T_g)_{g\in\G}$-invariant, so $X_x\subseteq\bigcap_{y\in X}\omega(y)$. Obviously, we have $\omega(x)\subseteq X_x$ and $\bigcap_{y\in X}\omega(y)\subseteq\omega(x)$. So $\omega(x)=X_x=\bigcap_{y\in X}\omega(y)$. Hence $\bigcap_{y\in X}\omega(y)$ is minimal. By~\eqref{HA}, we get $M=\bigcap_{y\in X}\omega(y)$. Notice that any set $M'$ satisfying~\eqref{HB} is contained in the set $\bigcap_{y\in X}\omega(y)$, so it also satisfies the equality $M'=M$.

\eqref{HB}$\Rightarrow$\eqref{HC} Take any $x_M\in M'$.

\eqref{HC}$\Rightarrow$\eqref{HD} Let $x_M$ satisfy~\eqref{HC} and put $M'':=X_{x_M}$. Let $U$ be an open set such that $U\cap M''\neq\emptyset$. Let $y\in U\cap M''$. Then there exists $g\in\G$ such that $T_gx_M\in U$. Thus $x_M\in V:=T_{-g}(U)$. From that \[g+\{h\in\G;\ T_hx_0\in V\}= \{h\in\G;\ T_hx_0 \in U\}.\]
If $\{h\in\G ;\ T_hx_0\in V\}$ is not syndetic, then for any $n\in\N$ we have $\{h\in\G ;\ T_hx_0\in V\}+F_n\neq\G$. So for any $n\in\N$ there exists $k_n\in\G$ such that $T_{k_n-f}x_0\in X\setminus V$ for any $f\in F_n$. Let $T_{k_{n_{\ell}}}x_0\to z$ and $g\in\G$. Then $g\in F_{{n_\ell}}$ for sufficiently large $\ell\geq1$. Since \[T_{k_{n_\ell}-g}x_0\in X\setminus V\text{ and }T_{k_{n_\ell}-g}x_0\to T_{-g}z,\] so $T_{-g}z\in X\setminus V$. So $X_z\subseteq X\setminus V$, but then $x_M\not\in X_z$, which contradicts~\eqref{HC}.

\eqref{HD}$\Rightarrow$\eqref{HE} Let $x\in M''$. Take $U_n:=\{y\in X;\ D(x,y)<\frac{1}{n}\}$.

\eqref{HE}$\Rightarrow$\eqref{HA} Assume that~\eqref{HE} holds and $M_1,M_2\subseteq X$ are disjoint minimal sets, so \[\min\{D(x_1,x_2);\ x_i\in M_i, i=1,2\}=:\vep>0.\] From $(U_n)_{n\geq1}$ we take $n$ such that ${\rm diam}\,U_n<\vep/2$. We may assume that $U_n\cap M_2=\emptyset$. Let $V\supseteq M_2$ be open such that $V\cap U_n=\emptyset$. By syndeticity, we get there exists $N\geq1$ such that \begin{equation}\label{synd}
\{g\in\G;\ T_gx_0\in U_n\}+F_N=\G.
\end{equation} Since $M_2$ is $(T_g)_{g\in\G}$-invariant, there is an open set $W$ with $M_2\subseteq W\subseteq V$ such that $T_gW\subseteq V$ for any $g\in F_N$. But the orbit of $x_0$ is dense in $X$; therefore there exists $g'\in\G$ with $T_{g'}x_0\in W$. By \eqref{synd}, there exist $g\in\G$ and $f\in F_N$ such that $T_gx_0\in U_n$ and $g'=g+f$. So \[
U_n\ni T_gx_0=T_{g'-f}x_0=T_{-f}(T_{g'}x_0)\in V,\]
contradicting $U_n\cap V=\emptyset$.

By uniqueness of $M''$ and by above we have $M\subseteq M'\subseteq M''$ and the reduction of $M'$ has no impact on \eqref{HD}, the equalities hold. To finish the proof note that $x_M$ is minimal, so $x_M\in M=M'=M''$.
\end{proof}
\begin{Cor}[{\cite[Corollary 2.17]{MR3803141} for $\G=\Z$}]\label{dlapod}
Let $(X,(S_g)_{g\in\G})$ be a subshift with a transitive point $x_0$. Then $(X,(S_{g})_{g\in\G})$ is essentially minimal if and only if there exists an inifite family of pairwise distinct blocks $\{B_i\}_{i\geq1}$ such that $\{g\in\G;\ {x_0|}_{B_i}={x_0|}_{B_i+g}\}$ is syndetic for any $i\geq1$.
\end{Cor}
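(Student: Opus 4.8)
The plan is to deduce Corollary~\ref{dlapod} directly from Proposition~\ref{LE51new}, through the equivalence \eqref{HA}$\iff$\eqref{HE}: essential minimality of $(X,(S_g)_{g\in\G})$ is the same as the existence of a sequence of open sets $U_n\subseteq X$ with $\text{diam}(U_n)\to 0$ whose visiting-time sets $\{g\in\G:\ S_gx_0\in U_n\}$ are syndetic. Everything then reduces to translating between that condition and the block condition in the statement, the dictionary being: the cylinder $[w]_D:=\{y\in X:\ y|_D=w\}$ of a pattern $w$ on a finite window $D$ is clopen; its visiting-time set equals $\{g:\ x_0|_{D+g}=w\}$; and its diameter is small as soon as a translate of $D$ contains a large F\o lner set, since by~\eqref{metryka} agreement on $F_n$ forces metric distance $\le 2^{-n}$.

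For the implication ``such blocks exist $\Rightarrow$ essentially minimal'', I would argue as follows. Write $D_i$ for the window of $B_i$ and $w_i=x_0|_{D_i}$, and set $V_i:=[w_i]_{D_i}$, so that $\{g:\ S_gx_0\in V_i\}=\{g:\ x_0|_{B_i}=x_0|_{B_i+g}\}$ is syndetic by hypothesis. Over a finite alphabet there are only finitely many blocks on windows of a given size, so pairwise distinctness of the $B_i$ forces their sizes to be unbounded; after passing to a subsequence and re-centring each block (i.e.\ replacing $V_i$ by $U_i:=S_{g_i'}V_i$ for a suitable $g_i'\in\G$) we may assume the resulting windows contain $F_{n_i}$ with $n_i\to\infty$. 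Then $\text{diam}(U_i)\le 2^{-n_i}\to 0$, while $\{g:\ S_gx_0\in U_i\}$ is a translate of $\{g:\ S_gx_0\in V_i\}$, hence still syndetic. Condition~\eqref{HE} holds, and Proposition~\ref{LE51new} yields a unique minimal subset.

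For the converse I would start from \eqref{HA}$\Rightarrow$\eqref{HE}: we get open $U_n$ with $\text{diam}(U_n)\to 0$ and syndetic $R_n:=\{g:\ S_gx_0\in U_n\}$. For each $m\ge 1$ choose $n(m)$ with $\text{diam}(U_{n(m)})<2^{-m}$ and some $g_m\in R_{n(m)}$ (a syndetic set is non-empty). Any two points of $U_{n(m)}$ agree on $F_m$; applying this to $S_{g_m}x_0$ and $S_gx_0$ for an arbitrary $g\in R_{n(m)}$ gives $x_0|_{F_m+g_m}=x_0|_{F_m+g}$, so the block $B_m:=x_0|_{F_m+g_m}$ satisfies $\{h:\ x_0|_{B_m}=x_0|_{B_m+h}\}\supseteq R_{n(m)}-g_m$, which is syndetic. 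Since the F\o lner sequence is nested and exhausts $\G$, $|B_m|=|F_m|$ is unbounded, and after passing to a subsequence with strictly increasing $|F_m|$ the blocks $B_m$ are pairwise distinct, as required.

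The routine facts used above — that a cylinder is clopen, that a translate of a syndetic set is syndetic, and that agreement on a large window forces small metric distance — are all immediate from the definitions in Section~\ref{se2}. The one point that needs a little care is the interface between the \emph{location-blind} notion of diameter (anchored at the fixed F\o lner sequence in~\eqref{metryka}) and the \emph{location-dependent} blocks $B_i$: this is what the re-centring translations $g_i'$ (resp.\ the base points $g_m$) take care of, and one must check — it is immediate — that they preserve syndeticity of the relevant sets. The remaining bookkeeping, namely choosing subsequences so that the output blocks are genuinely pairwise distinct, is the only other thing to watch.
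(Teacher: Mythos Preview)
Your proposal is correct and follows exactly the route the paper takes: the paper's proof is the single sentence ``It follows immediately from the equivalence \eqref{HA} and \eqref{HE} from Proposition~\ref{LE51new},'' and what you have written is a careful unpacking of that sentence. The translation you set up---cylinders $\leftrightarrow$ blocks, re-centring to match the fixed F{\o}lner windows in the metric~\eqref{metryka}, and the pigeonhole observation that pairwise distinct blocks over a finite alphabet must have unbounded support---is precisely the bookkeeping the paper leaves implicit.
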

\begin{proof}
It follows immediately from the equivalence \eqref{HA} and \eqref{HE} from Proposition~\ref{LE51new}.
\end{proof}

\subsubsection{Main part}
\begin{proof}[Proof of Theorem~\ref{dwyz}]
We will first show (i) $\iff$ (ii). Clearly, if $\mathfrak{D}=\emptyset$ then $\mathfrak{B}=\mathfrak{B}^*$. For the other direction, notice that
\[
\mathfrak{B}=\mathfrak{B}^*=(\mathfrak{B}\setminus \mathcal{M}_\mathfrak{D})\cup \mathfrak{D}^{prim} \text{ implies }\mathfrak{D}^{prim}\subset \mathfrak{B}.
\]
However, by the definition of $\mathfrak{D}$, for any $\mathfrak{d}\in\mathfrak{D}$, some (non-trivial) multiple of $\mathfrak{d}$ is a member of $\mathfrak{B}$, as $\mathfrak{d}\mathcal{C}\subset \mathfrak{B}$ where $\mathcal{C}$ is inifinite (and pairwise coprime). If $\mathfrak{D}\neq\emptyset$ this contradicts the primitivity of $\mathfrak{B}$.

Let us show that $\mathfrak{D}^*=\emptyset$ (i.e.\ $\eta^*$ is Toeplitz, so (i) $\implies$ (iii)). Suppose that there is some $\mathfrak{d}$ and some infinite pairwise coprime set $\cC$ such that
\[
\mathfrak{d}\mathcal{C}\subseteq \mathfrak{B}^*=(\mathfrak{B}\setminus \mathcal{M}_\mathfrak{D})\cup \mathfrak{D}^{prim}.
\]
Then, without loss of generality (taking a smaller but still infinite and pairwise coprime set $\cC$), one of the following holds:
\begin{enumerate}[(A)]
\item
$\mathfrak{d}\mathcal{C}\subseteq \mathfrak{B}\setminus \mathcal{M}_\mathfrak{D}$,
\item
$\mathfrak{d}\mathcal{C}\subseteq \mathfrak{D}^{prim}$.
\end{enumerate}
If (A) holds then $\mathfrak{d}\mathcal{C}\subset \mathfrak{B}$, so $\mathfrak{d}\in \mathfrak{D}$ and we obtain
\[
\mathfrak{d}\mathcal{C}\subset \mathscr{B}\setminus \mathcal{M}_\mathfrak{D}\subset \mathfrak{B}\setminus \mathfrak{d},
\]
which yields a contradiction. Suppose now that (B) holds. Let $\mathcal{C}=\{\mathfrak{c}_1,\mathfrak{c}_2,\dots\}$ and let $\mathcal{A}_i$ for $i\geq 1$ be infinite, pairwise coprime and such that $\mathfrak{d}\mathfrak{c}_i\mathcal{A}_i\subset \mathfrak{B}$. We can choose $\mathfrak{a}_i\in\mathcal{A}_i$, $i\geq 1$ so that $\{\mathfrak{c}_i\mathfrak{a}_i:i\geq 1\}$ is infinite and pairwise coprime (we just use that each $\mathcal{A}_i$ is infinite and pairwise coprime). This gives again $\mathfrak{d}\in \mathfrak{D}$. This contradicts (B) (by the primitivity of $\mathfrak{D}^{prim}$). We conclude that indeed $\mathfrak{D}^*=\emptyset$.

Finally, we will show (iii) $\implies$ (i). If $\eta$ is Toeplitz then $\mathcal{F}_\mathfrak{B}\cap \mathcal{M}_\mathfrak{D}=\emptyset$ since (by Proposition~\ref{propA}) it is the set of non-periodic positions on $\eta$. Therefore, $\mathcal{M}_\mathfrak{D}\subseteq \mathcal{M}_\mathfrak{B}$ and it follows immediately (cf.\ Remark~\ref{B*}) that $\mathcal{M}_{\mathfrak{B}^*}=\mathcal{M}_{\mathfrak{B}}$. This yields $\mathfrak{B}=\mathfrak{B}^*$ by the primitivity of $\mathfrak{B}$ and $\mathfrak{B}^*$.
\end{proof}

\begin{proof}[Proof of Theorem~\ref{minimal2}]
Since $\mathfrak{B}^*=(\mathfrak{B}\cup\mathfrak{D})^{prim}$, we have  $\eta^*\leq\eta$. Moreover, $\eta^*$ is Toeplitz, by Theorem~\ref{dwyz}.

Now, we will prove that $\eta^*\in X_\eta$. Let $(F_k)_{k\geq1}$ be a nested F\o lner sequence. We will show that for any $N\geq 1$,  $\eta^*|_{F_N}$ appears on $\eta$. Fix $N\geq 1$ and let $i\geq1$ be sufficiently large to ensure that
\begin{equation}\label{righti}
 \mathcal{M}_\mathfrak{D}\cap\mathcal{F}_\mathfrak{B}\cap F_N=(\OK\setminus\Per(\eta,\mathfrak{s}_i))\cap F_N.
\end{equation}
Recall that by Proposition~\ref{propA} we have $\mathcal{M}_\mathfrak{D}=\OK\setminus \bigcup_{i\geq 1}\Per(\eta,\mathfrak{s}_i)$), and $\Per(\eta,\mathfrak{s}_i)$ grows, as $i$ grows. Moreover, by Proposition~\ref{propA}, $\mathcal{M}_\mathfrak{D}\cap\mathcal{F}_\mathfrak{B}\cap F_N$ are all non-periodic positions on $\eta$ restricted to $F_N$. Let $t_N$ be the cardinality of $\mathcal{M}_\mathfrak{D}\cap \mathcal{F}_\mathfrak{B}\cap F_N$ and denote the elements of this set by $I_1,\dots, I_{t_N}$. It follows by the definition of $D$ that for any $1\leq j\leq t_N$, there exists a non-zero ideal $\mathfrak{d}_j$ and an infinite pairwise coprime set $\mathcal{C}_j$ such that
\[
I_j\in \mathfrak{d}_j \text{ and }\mathfrak{d}_j\mathcal{C}_j\subseteq \mathfrak{B}.
\]
Let
\[
\mathfrak{L}:=\lcm(\mathfrak{s}_i,\mathfrak{d}_1,\dots,\mathfrak{d}_{t_N}).
\]
Let $\mathfrak{c}_1\in\mathcal{C}_1$ be coprime to $\mathfrak{L}$ (such an ideal exists since $\mathfrak{L}$ has finitely many factors and $\mathcal{C}_1$ is infinite pairwise coprime). Then $I_1\in\mathfrak{d}_1=\mathfrak{L}+\mathfrak{d}_1\mathfrak{c}_1=\gcd(\mathfrak{L},\mathfrak{d}_1\mathfrak{c}_1)$ and therefore we can find $k_1\in\mathfrak{L}$ such that
\[
k_1+I_1\in\mathfrak{d}_1\mathfrak{c}_1.
\]
Let $\mathfrak{c}_2\in\mathcal{C}_2$ be coprime to $\mathfrak{L}$ and $\mathfrak{c}_1$ (again, such an ideal exists because $\mathfrak{L}$ and $\mathfrak{c}_1$ have finitely many factors and $\mathcal{C}_2$ is infinite pairwise coprime). Since  $\mathfrak{L}\subseteq\mathfrak{d}_2$ and $\mathfrak{L},\mathfrak{c}_1$ and~$\mathfrak{c}_2$ are coprime, we have
\[
k_1+I_2\in \mathfrak{L}+\mathfrak{d}_2=\mathfrak{d}_2
\]
Moreover,
\[\mathfrak{d}_2=
\gcd(\mathfrak{L}\mathfrak{c}_1,\mathfrak{d}_2\mathfrak{c}_2)=\gcd(\mathfrak{L}\cap\mathfrak{c}_1,\mathfrak{d}_2\mathfrak{c}_2)=\mathfrak{L}\cap\mathfrak{c}_1+\mathfrak{d}_2\mathfrak{c}_2
\]
Therefore, $k_1+I_2\in \mathfrak{L}\cap\mathfrak{c}_1+\mathfrak{d}_2\mathfrak{c}_2$ and thus, we can find $k_2\in \mathfrak{L}\cap \mathfrak{c}_1$ such that
\[
k_2+k_1+I_2\in \mathfrak{d}_2\mathfrak{c}_2.
\]
Notice that since $k_2\in\mathfrak{L}\cap\mathfrak{c}_1\subseteq\mathfrak{d}_1\mathfrak{c}_1$, we have
\[
k_2+k_1+I_1\in\mathfrak{d}_1\mathfrak{c}_1.
\]
Let $\mathfrak{c}_3\in\mathcal{C}_3$ be coprime to $\mathfrak{L}$, $\mathfrak{c}_1$ and~$\mathfrak{c}_2$. Since
$\mathfrak{L}\subseteq \mathfrak{d}_3$ and $\mathfrak{L},\mathfrak{c}_1,\mathfrak{c}_2$ and $\mathfrak{c}_3$ are pairwise coprime, we have
\[
k_2+k_1+I_3\in \mathfrak{L}\cap \mathfrak{c}_1 + \mathfrak{L}+\mathfrak{d}_3=\mathfrak{d}_3
\]
Moreover,
\[
\mathfrak{d}_3=\mathfrak{L}\cap\mathfrak{c}_1\cap\mathfrak{c}_2+\mathfrak{d}_3\mathfrak{c}_3.
\]
Therefore, $k_2+k_1+I_3\in \mathfrak{L}\cap\mathfrak{c}_1\cap\mathfrak{c}_2+\mathfrak{d}_3\mathfrak{c}_3$ and thus, we can find $k_3\in\mathfrak{L}\cap\mathfrak{c}_1\cap\mathfrak{c}_2$ such that
\[
k_3+k_2+k_1+I_3\in\mathfrak{d}_3\mathfrak{c}_3.
\]
Notice that since $k_3\in\mathfrak{d}_\ell\mathfrak{c}_\ell$ for $\ell=1,2$, we have
\[
k_3+k_2+k_1+I_\ell\in\mathfrak{d}_\ell\mathfrak{c}_\ell\text{ for }\ell=1,2.
\]
By repeating the above arguments, we obtain $k_j\in\mathfrak{L}\cap\bigcap_{i=1}^{j-1}\mathfrak{c}_i$ and $\mathfrak{c}_j\in\mathcal{C}_j$, $j=1,2,\ldots,t_N$, such that
\begin{equation}\label{31}
k_{t_N}+\ldots+k_2+k_1+I_k\in\mathfrak{d}_j\mathfrak{c}_j
\end{equation}
for $j=1,2,\ldots t_N$. Put
\[
M_N:=k_{t_N}+\ldots+k_2+k_1\in\mathfrak{L}.
\]

We will show that
\begin{equation}\label{32}
\eta_{n+M_N}=\eta_n=\eta^*_n \text{ for }n\in\Per(\eta,\mathfrak{s}_i)\cap F_N.
\end{equation}
Take $n\in\Per(\eta,\mathfrak{s}_i)\cap F_N$. Since $M_N\in \mathfrak{L} \subseteq \mathfrak{s}_i$, it follows that
\begin{equation}\label{33}
\eta_{n+M_N}=\eta_n \text{ for }n\in\Per(\eta,\mathfrak{s}_i)\cap F_N
\end{equation}
and $n+M_N\in\Per(\eta,\mathfrak{s}_i)$. It follows by Proposition~\ref{propA} that $n,n+M_N\not\in\mathcal{M}_\mathfrak{D}\cap\mathcal{F}_\mathfrak{B}$ (the latter set is the set of all non-periodic positions on $\eta$). In other words,
\[
n,n+M_N\in\mathcal{F}_\mathfrak{D}\cup\mathcal{M}_\mathfrak{B}=(\mathcal{F}_\mathfrak{D}\cap\mathcal{F}_\mathfrak{B})\cup\mathcal{M}_\mathfrak{B}.
\]
Recall that we have $\mathfrak{B}^*=(\mathfrak{B}\cup\mathfrak{D})^{prim}$, so $\mathcal{M}_{\mathfrak{B}^*}=\mathcal{M}_\mathfrak{B}\cup\mathcal{M}_\mathfrak{D}$. Equivalently, $\mathcal{F}_{\mathfrak{B}^*}=\mathcal{F}_\mathfrak{B}\cap\mathcal{F}_\mathfrak{D}$. Therefore,
\begin{itemize}
\item if $n\in\mathcal{F}_\mathfrak{D}\cap\mathcal{F}_\mathfrak{B}$ then $n\in\mathcal{F}_{\mathfrak{B}^*}$,
\item if $n\in\mathcal{M}_\mathfrak{B}$ then $n\in\mathcal{M}_{\mathfrak{B}^*}$.
\end{itemize}
Hence,~\eqref{32} indeed holds.

Now, we will show that
\begin{equation}\label{34}
\eta_{n+M_N}=\eta^*_n \text{ for }n\in (\OK\setminus \Per(\eta,\mathfrak{s}_i))\cap F_N.
\end{equation}
Since $\mathfrak{d}_j\mathfrak{c}_j\in\mathfrak{B}$ and $I_j+M_N \in \mathfrak{d}_j\mathfrak{c}_k$, it follows by~\eqref{31} that
\begin{equation}\label{35}
\eta_{I_j+M_N}=0 \text{ for all } 1\leq j\leq t_N.
\end{equation}
Moreover, it follows by $I_j\in \mathcal{M}_{\mathfrak{D}}\subseteq \mathcal{M}_{\mathfrak{B}^*}$ that we also have
\[
\eta^*_{I_j}=0 \text{ for all }1\leq j\leq t_N.
\]
Therefore,~\eqref{34} indeed holds. Combining~\eqref{32} and~\eqref{34}, we conclude that
\begin{equation}\label{36}
\eta|_{F_N+M_N}=\eta^*|_{F_N}.
\end{equation}

Notice that in the above arguments we used only that
\[
M_N\in \mathfrak{L} \text{ and }M_N+I_j\in\mathfrak{d}_j\mathfrak{c}_j
\]
(to obtain~\eqref{33} and~\eqref{35}, respectively). Thus, by~\eqref{31},
\[
\eta|_{F_N+M_N}=\eta|_{F_N+M_N+s}\text{ for any } s\in\mathfrak{L}\cap\bigcap_{j=1}^{s_N}\mathfrak{c}_j.
\]
By Lemma~\ref{intersect}, $\mathfrak{L}\cap\bigcap_{j=1}^{s_N}\mathfrak{c}_j$ has finite index, so $\{s\in\OK;\ \eta|_{F_N+M_N}=\eta|_{F_N+M_N+s}\}$ is syndetic. Moreover, since $(F_k)_{k\geq1}$ is increasing, so blocks $(\eta|_{F_N+M_N})_{N\geq1}$ have different lengths, so they are pairwise different.  By Corollary~\ref{dlapod}, $X_\eta$ has a unique minimal subset.
\end{proof}
\begin{Cor}\label{proxety}
The pair $(\eta,\eta^*)$ is proximal.
\end{Cor}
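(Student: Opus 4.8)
The goal is to prove Corollary~\ref{proxety}: the pair $(\eta,\eta^*)$ is proximal. Recall that proximality of $(\eta,\eta^*)$ means $\liminf_{g\to\infty}d(S_g\eta,S_g\eta^*)=0$, i.e.\ there is a sequence $g_n\to\infty$ in $\OK$ such that $S_{g_n}\eta$ and $S_{g_n}\eta^*$ agree on larger and larger Følner sets.

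The plan is to extract exactly this from the proof of Theorem~\ref{minimal2}. In that proof, for each $N\ge 1$ we produced an element $M_N\in\OK$ with
\[
\eta|_{F_N+M_N}=\eta^*|_{F_N},
\]
which is precisely \eqref{36}. Equivalently, $(S_{M_N}\eta)|_{F_N}=\eta^*|_{F_N}$, so $S_{M_N}\eta$ and $\eta^*$ agree on $F_N$. Since $(F_N)_{N\ge 1}$ is a nested Følner sequence exhausting $\OK$, this already says $S_{M_N}\eta\to\eta^*$ in the product topology; equivalently $d(S_{M_N}\eta,\,\eta^*)\to 0$. Applying the (uniformly continuous) shift $S_{-M_N}$ — or rather noting that $d$ is shift-compatible up to the usual reindexing — one gets $d(S_{M_N}\eta,\,S_{M_N}\eta^*)\to 0$ provided we also know $(S_{M_N}\eta^*)|_{F_N}$ stabilises, but it is cleaner to argue directly: we want a single sequence $g_n$ with $S_{g_n}\eta$ and $S_{g_n}\eta^*$ close. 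Take $g_n:=-M_n$. Then from \eqref{36}, $\eta|_{F_n+M_n}=\eta^*|_{F_n}$ gives $(S_{-M_n}\eta^*)|_{F_n+M_n}=(S_{-M_n}\eta)|_{F_n+M_n+M_n}$... this reindexing is getting awkward. The genuinely clean route is: proximality of a pair in a subshift is equivalent to the existence of $g_n\to\infty$ with $S_{g_n}\eta$ and $S_{g_n}\eta^*$ agreeing on an increasing exhausting family of windows; and \eqref{36} says $S_{M_N}\eta$ agrees with $\eta^*$ on $F_N$, while trivially $S_{M_N}\eta^*$ agrees with $\eta^*$ translated — so instead compare $S_{M_N}\eta$ with $\eta^*$ only after we also shift $\eta^*$.

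Here is the argument I would actually write. Apply $S_{-M_N}$ to \eqref{36}: since $\eta|_{F_N+M_N}=\eta^*|_{F_N}$, for every $h\in F_N$ we have $\eta_{h+M_N}=\eta^*_h$. Now we need $g_n$ such that $(S_{g_n}\eta)_h=(S_{g_n}\eta^*)_h$ for all $h$ in a large set. Choose $g_N:=M_N$; then we must check $\eta_{h+M_N}=\eta^*_{h+M_N}$ for $h$ in a large set, but \eqref{36} gives $\eta_{h+M_N}=\eta^*_h$, not $\eta^*_{h+M_N}$. However, recall that in the proof of Theorem~\ref{minimal2} we established more: \eqref{32} shows $\eta_{n+M_N}=\eta^*_n$ AND $\eta^*_n=\eta_n$ on the periodic positions $\Per(\eta,\mathfrak{s}_i)\cap F_N$; moreover $M_N\in\mathfrak{L}\subseteq\mathfrak{s}_i$, and on $\Per(\eta,\mathfrak{s}_i)$ one also has $\eta^*_{n+M_N}=\eta^*_n$ (because $\mathfrak{s}_i$ is a group of periods of $\eta^*$ too, as $\eta^*\le\eta$ forces $\Per(\eta,\mathfrak{s}_i)\subseteq\Per(\eta^*,\mathfrak{s}_i)$ — this uses $X_{\eta^*}$ minimal and $\eta\in X_{\eta^*}$, or just Proposition~\ref{propA} applied to $\mathfrak{B}^*$ together with Theorem~\ref{dwyz}). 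So on the periodic positions in $F_N$, $\eta_{n+M_N}=\eta^*_{n+M_N}$. On the finitely many non-periodic positions $I_1,\dots,I_{t_N}$, \eqref{35} gives $\eta_{I_j+M_N}=0$, and also $\eta^*_{I_j+M_N}$: since $I_j\in\mathcal{M}_\mathfrak{D}$ and $M_N\in\mathfrak{L}\subseteq\mathfrak{d}_j$, we get $I_j+M_N\in\mathfrak{d}_j\subseteq\mathcal{M}_\mathfrak{D}\subseteq\mathcal{M}_{\mathfrak{B}^*}$, hence $\eta^*_{I_j+M_N}=0$ as well. Therefore $\eta_{h+M_N}=\eta^*_{h+M_N}$ for all $h\in F_N$, i.e.\ $(S_{M_N}\eta)|_{F_N}=(S_{M_N}\eta^*)|_{F_N}$, so $d(S_{M_N}\eta,S_{M_N}\eta^*)\le 2^{-N}$ (once $N$ is large enough that the $F_1$-coordinate is included).

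The only remaining point is that $M_N\to\infty$ in $\OK$, which I would address by noting that $M_N\in\mathfrak{L}$ and $\mathfrak{L}$ depends on $N$, so a priori $M_N$ could stay bounded; but we are free to adjust $M_N$ by any element of the finite-index ideal $\mathfrak{L}\cap\bigcap_{j=1}^{t_N}\mathfrak{c}_j$ (this was exactly the syndeticity observation at the end of the proof of Theorem~\ref{minimal2}), and such an ideal is infinite, so we may pick $M_N$ avoiding any prescribed finite set; choosing them inductively ensures $M_N\to\infty$. Hence $\liminf_{g\to\infty}d(S_g\eta,S_g\eta^*)=0$, which is proximality. The main obstacle is bookkeeping: one must carefully check that the equality $(S_{M_N}\eta)|_{F_N}=(S_{M_N}\eta^*)|_{F_N}$ holds and not merely $(S_{M_N}\eta)|_{F_N}=\eta^*|_{F_N}$ — the fix, as above, is that $\eta^*$ inherits $\mathfrak{s}_i$-periodicity on $\Per(\eta,\mathfrak{s}_i)$ and that $\mathcal{M}_\mathfrak{D}$ is translation-invariant under $\mathfrak{L}$, so the translated non-periodic positions land in $\mathcal{M}_{\mathfrak{B}^*}$.

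\begin{proof}[Proof of Corollary~\ref{proxety}]
We use the notation and the construction from the proof of Theorem~\ref{minimal2}. Fix $N\ge 1$ and let $M_N\in\mathfrak{L}$ be as constructed there, where $\mathfrak{L}=\lcm(\mathfrak{s}_i,\mathfrak{d}_1,\dots,\mathfrak{d}_{t_N})$ and $i$ is chosen so that \eqref{righti} holds. We claim that
\begin{equation}\label{proxeq}
\eta_{h+M_N}=\eta^*_{h+M_N}\quad\text{for all }h\in F_N.
\end{equation}

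First, $\eta^*\le\eta$ (Theorem~\ref{minimal2}) gives $\Per(\eta,\mathfrak{s}_i)\subseteq\Per(\eta^*,\mathfrak{s}_i)$; indeed, if $\eta|_{h+\mathfrak{s}_i}\equiv 1$ and $\eta^*\le\eta$ then $\eta^*|_{h+\mathfrak{s}_i}$ is constant (being $\le 1$ and, by Proposition~\ref{propA} applied to $\mathfrak{B}^*$ together with Theorem~\ref{dwyz}, $h$ is a periodic position of $\eta^*$), so $h\in\Per(\eta^*,\mathfrak{s}_i)$. Hence for $h\in\Per(\eta,\mathfrak{s}_i)\cap F_N$, using $M_N\in\mathfrak{L}\subseteq\mathfrak{s}_i$, we get $\eta_{h+M_N}=\eta_h$ and $\eta^*_{h+M_N}=\eta^*_h$; by \eqref{32}, $\eta_h=\eta^*_h$, so $\eta_{h+M_N}=\eta^*_{h+M_N}$.

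Now let $h\in F_N\setminus\Per(\eta,\mathfrak{s}_i)$. By \eqref{righti}, $h\in\{I_1,\dots,I_{t_N}\}$, say $h=I_j$. By \eqref{35}, $\eta_{I_j+M_N}=0$. Moreover $I_j\in\mathcal{M}_\mathfrak{D}$ means $I_j\in\mathfrak{d}$ for some $\mathfrak{d}\in\mathfrak{D}$; taking $\mathfrak{d}=\mathfrak{d}_j$ and using $M_N\in\mathfrak{L}\subseteq\mathfrak{d}_j$, we obtain $I_j+M_N\in\mathfrak{d}_j$, so $I_j+M_N\in\mathcal{M}_\mathfrak{D}\subseteq\mathcal{M}_{\mathfrak{B}^*}$, whence $\eta^*_{I_j+M_N}=0$. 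Thus \eqref{proxeq} holds in all cases, i.e.\ $(S_{M_N}\eta)|_{F_N}=(S_{M_N}\eta^*)|_{F_N}$.

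Finally, as noted at the end of the proof of Theorem~\ref{minimal2}, $M_N$ may be replaced by $M_N+s$ for any $s$ in the finite-index ideal $\mathfrak{L}\cap\bigcap_{j=1}^{t_N}\mathfrak{c}_j$ without affecting \eqref{proxeq} (the proof used only $M_N\in\mathfrak{L}$ and $M_N+I_j\in\mathfrak{d}_j\mathfrak{c}_j$, and both survive such a shift). Since this ideal is infinite, we may choose the $M_N$ inductively so that $M_N\to\infty$ in $\OK$. Then, for $N$ large enough that $1\in F_N$, \eqref{proxeq} gives $(S_{M_N}\eta)|_{F_N}=(S_{M_N}\eta^*)|_{F_N}$, hence $d(S_{M_N}\eta,S_{M_N}\eta^*)\le 2^{-N}\to 0$. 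Therefore $\liminf_{g\to\infty}d(S_g\eta,S_g\eta^*)=0$, i.e.\ $(\eta,\eta^*)$ is proximal.
\end{proof}
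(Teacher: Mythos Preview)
Your overall strategy is exactly the paper's: show $\eta|_{F_N+M_N}=\eta^*|_{F_N+M_N}$ by splitting $F_N$ into $\Per(\eta,\mathfrak{s}_i)\cap F_N$ and the non-periodic positions $I_1,\dots,I_{t_N}$, and your treatment of the $I_j$'s (using $M_N\in\mathfrak{L}\subseteq\mathfrak{d}_j$ so that $I_j+M_N\in\mathfrak{d}_j\subseteq\mathcal{M}_{\mathfrak{D}}\subseteq\mathcal{M}_{\mathfrak{B}^*}$) is identical to the paper's. You are also more careful than the paper in arranging $M_N\to\infty$ via the syndeticity freedom.

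There is, however, a genuine gap in your handling of the periodic positions. You claim $\Per(\eta,\mathfrak{s}_i)\subseteq\Per(\eta^*,\mathfrak{s}_i)$, and for $h\in\Per(\eta,\mathfrak{s}_i,1)$ you justify this by saying ``by Proposition~\ref{propA} applied to $\mathfrak{B}^*$ together with Theorem~\ref{dwyz}, $h$ is a periodic position of $\eta^*$''. But Proposition~\ref{propA} for $\mathfrak{B}^*$ combined with $\mathfrak{D}^*=\emptyset$ only tells you that \emph{every} position is periodic for $\eta^*$ with \emph{some} period; it says nothing about the specific ideal $\mathfrak{s}_i$, so ``$\eta^*|_{h+\mathfrak{s}_i}$ is constant'' does not follow. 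The fix is to use Proposition~\ref{propA} for $\mathfrak{B}$ (not $\mathfrak{B}^*$): if $h\in\Per(\eta,\mathfrak{s}_i,1)$ then $h+\mathfrak{s}_i\subseteq\Per(\eta,\mathfrak{s}_i)\cap\mathcal{F}_\mathfrak{B}$, and since $\Per(\eta,\mathfrak{s}_i)$ is disjoint from $\mathcal{M}_\mathfrak{D}\cap\mathcal{F}_\mathfrak{B}$, we get $h+\mathfrak{s}_i\subseteq\mathcal{F}_\mathfrak{D}\cap\mathcal{F}_\mathfrak{B}=\mathcal{F}_{\mathfrak{B}^*}$, hence $\eta^*|_{h+\mathfrak{s}_i}\equiv 1$. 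Alternatively, and this is what the paper does, skip the inclusion entirely and argue directly: since $n+M_N\in\Per(\eta,\mathfrak{s}_i)$, Proposition~\ref{propA} gives $n+M_N\notin\mathcal{M}_\mathfrak{D}\cap\mathcal{F}_\mathfrak{B}$, i.e.\ $n+M_N\in\mathcal{F}_{\mathfrak{B}^*}\cup\mathcal{M}_\mathfrak{B}$, and in either case $\eta_{n+M_N}=\eta^*_{n+M_N}$.
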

\begin{proof}
Let $N\geq1$ and let $F_N$, $M_N$, $\mathfrak{s}_i$, $I_j$, $\mathfrak{d}_j$, $\mathcal{L}$, $s_N$ be the same as in the proof of Theorem~\ref{minimal2}.
Then
\begin{equation}\label{etyprox}
\eta|_{F_N+M_N}=\eta^*|_{F_N+M_N}.
\end{equation}
Indeed, since $n+M_N\not\in \mathcal{M}_\mathfrak{D}\cap\mathcal{F}_\mathfrak{B}$, we can show that $\eta_{n+M_N}=\eta^*_{n+M_N}$ for $n\in\Per(\eta,\mathfrak{s}_i)\cap F_N$, using the same arguments as were used in the proof of Theorem~\ref{minimal2} to show $\eta_n=\eta^*_n$ for $n\in\Per(\eta,\mathfrak{s}_i)\cap F_N$

so the same lines as we showed in the proof of Theorem \ref{minimal2} $\eta_n=\eta^*_n$ for $n\in\Per(\eta,\mathfrak{s}_i)\cap F_N$, one can show $\eta_{n+M_N}=\eta^*_{n+M_N}$ for $n\in\Per(\eta,\mathfrak{s}_i)\cap F_N$. Since $I_j\in\mathfrak{d}_j\subseteq\mathcal{M}_\mathfrak{D}$ and $I_j+M_N\in\mathcal{L}\subseteq\mathfrak{d}_j\subseteq\mathcal{M}_\mathfrak{D}$, so $\eta^*_{I_j}=\eta^*_{I_j+M_N}$ for all $j\in\{1,\ldots,s_N\}$. By \eqref{etyprox}, $S_{M_N}\eta|_{F_N}=S_{M_N}\eta^*|_{F_N}$, so $d(S_{M_N}\eta,S_{M_N}\eta^*)\leq\frac{1}{2^N}$. Hence $(\eta,\eta^*)$ is proximal.
\end{proof}
To prove Theorem B, we will also need the following lemma.
\begin{Lemma}\label{Toeplitz_SK}
If $\eta$ is an $\OK$-Toeplitz array, then $\eta\in Y$.
\end{Lemma}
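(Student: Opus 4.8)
The plan is to reduce the lemma to Theorem~\ref{Thm_SK} via Theorem~\ref{dwyz}, exactly mirroring how Corollary~\ref{Toeplitz_SK_one} is deduced from Theorem~\ref{druggi2Toeplitz} in dimension one (the only extra link being that the hypothesis $\mathfrak{D}=\emptyset$ of Theorem~\ref{Thm_SK} is not built into its statement, so it has to be produced). Fix $\mathfrak{b}\in\mathfrak{B}$. Since $\operatorname{supp}\eta=\mathcal{F}_\mathfrak{B}$, the image of $\operatorname{supp}\eta$ in $\OK/\mathfrak{b}$ is precisely the set of cosets $r+\mathfrak{b}$ with $r+\mathfrak{b}\not\subseteq\mathcal{M}_\mathfrak{B}$, so it suffices to show this set has exactly $N(\mathfrak{b})-1$ elements, and then to let $\mathfrak{b}$ range over $\mathfrak{B}$.

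For the upper bound I would note that $0+\mathfrak{b}=\mathfrak{b}\subseteq\mathcal{M}_\mathfrak{B}$, because every $g\in\mathfrak{b}$ satisfies $(g)\subseteq\mathfrak{b}$ and hence is a multiple of $\mathfrak{b}\in\mathfrak{B}$; thus the zero coset does not meet $\mathcal{F}_\mathfrak{B}$ and $|\operatorname{supp}\eta\bmod\mathfrak{b}|\le N(\mathfrak{b})-1$ always. For the matching lower bound, take any $r\in\OK$ with $r\notin\mathfrak{b}$ and suppose, towards a contradiction, that $r+\mathfrak{b}\subseteq\mathcal{M}_\mathfrak{B}$. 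The degenerate case $\eta=\mathbf{0}$, equivalently $\mathcal{F}_\mathfrak{B}=\emptyset$, forces (by primitivity) $\mathfrak{B}=\{\OK\}$ and $Y=\{\mathbf{0}\}$, so the lemma is trivially true there; hence we may assume $\eta\neq\mathbf{0}$. Then $\eta$ is an $\OK$-Toeplitz array different from $\mathbf{0}$, so Theorem~\ref{dwyz} gives $\mathfrak{D}=\emptyset$, and Theorem~\ref{Thm_SK} applies with $\mathfrak{a}:=\mathfrak{b}$: from $r+\mathfrak{b}\subseteq\mathcal{M}_\mathfrak{B}$ we obtain $(r)+\mathfrak{b}\subseteq\mathfrak{b}'$ for some $\mathfrak{b}'\in\mathfrak{B}$. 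In particular $\mathfrak{b}\subseteq(r)+\mathfrak{b}\subseteq\mathfrak{b}'$, and primitivity of $\mathfrak{B}$ forces $\mathfrak{b}'=\mathfrak{b}$, whence $(r)\subseteq\mathfrak{b}$, i.e.\ $r\in\mathfrak{b}$, a contradiction. Therefore every nonzero coset modulo $\mathfrak{b}$ is represented in $\mathcal{F}_\mathfrak{B}$, so $|\operatorname{supp}\eta\bmod\mathfrak{b}|=N(\mathfrak{b})-1$; as $\mathfrak{b}\in\mathfrak{B}$ was arbitrary, $\eta\in Y$.

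I do not anticipate any genuine obstacle: the entire arithmetic content has already been isolated in Theorem~\ref{Thm_SK}, and the proof is a two-line deduction once $\mathfrak{D}=\emptyset$ is available from Theorem~\ref{dwyz}. The only points requiring a little care are the bookkeeping that the zero coset (and no other) is absent, and explicitly dispatching the trivial case $\eta=\mathbf{0}$ so that the appeal to Theorem~\ref{dwyz}, whose relevant equivalence is stated for Toeplitz arrays \emph{different from} $\mathbf{0}$, is legitimate.
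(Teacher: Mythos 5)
Your proof is correct and follows essentially the same route as the paper's: assume a nonzero coset $r+\mathfrak{b}\subseteq\cM_\mathfrak{B}$ with $r\notin\mathfrak{b}$, apply Theorem~\ref{Thm_SK} with $\mathfrak{a}=\mathfrak{b}$, and use primitivity to force $\mathfrak{b}'=\mathfrak{b}$ and hence the contradiction $r\in\mathfrak{b}$. The only difference is that you make explicit two points the paper leaves implicit, namely deriving the hypothesis $\mathfrak{D}=\emptyset$ from Theorem~\ref{dwyz} and dispatching the degenerate case $\eta=\mathbf{0}$; this is a harmless (indeed welcome) addition, not a different argument.
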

\begin{proof}[Proof of Theorem \ref{equivalent}]
Obviously, \eqref{top}$\Rightarrow$\eqref{min}.

\eqref{min}$\Rightarrow$\eqref{top}. It follows from~\eqref{min} and from Theorem A that $X_{\eta^*}=X_\eta$. Suppose that $\eta^*<\eta$, i.e. there is $n\in\OK$ such that $0=\eta^*_n<\eta_n=1$. Let $\mathfrak{p}$ be such that $\eta^*|_{n+\mathfrak{p}}=0$. Notice that both, $\Per(\eta,\mathfrak{p},0)$ and $\Per(\eta^*,\mathfrak{p},0)$ are invariant under translations by the elements of $\mathfrak{p}$, so it is natural to look at them as subsets of the finite quotient group $\OK/\mathfrak{p}$. It follows from~\eqref{downar_mul} that
\begin{equation}\label{jo1}
|\Per(\eta^*,\mathfrak{p},0)\bmod \mathfrak{p}|=|\Per(\eta,\mathfrak{p},0)\bmod \mathfrak{p}|
\end{equation}
as $\eta\in X_{\eta^*}$. On the other hand, since $\eta^*\leq \eta$, it follows immediately that
\[
\Per(\eta^*,\mathfrak{p},0)\supseteq \Per(\eta,\mathfrak{p},0).
\]
It remains to notice that $n\in\Per(\eta^*,\mathfrak{p},0)\setminus \Per(\eta,\mathfrak{p},0)$, which contradicts~\eqref{jo1}. This yields $\eta=\eta^*$, i.e.~\eqref{top} holds.

\eqref{top} $\Rightarrow$ \eqref{Y} Suppose $\eta$ is an $\OK$-Toeplitz array. For any $\mathfrak{b}\in\mathfrak{B}$ and~$0\leq s_\mathfrak{b}\leq N(\mathfrak{b})$ let \[Y^\mathfrak{b}_{\geq s_{\mathfrak{b}}}=\{x\in \{0,1\}^{\OK};\ |\text{supp }x\bmod \mathfrak{b}|\leq N(\mathfrak{b})-s_\mathfrak{b}\}\] and \[Y^\mathfrak{b}_{s_\mathfrak{b}}=\{x\in \{0,1\}^{\OK};\ |\text{supp }x\bmod \mathfrak{b}|=N(\mathfrak{b})-s_\mathfrak{b}\}.\] Then $Y^\mathfrak{b}_{\geq s_{\mathfrak{b}}}$ is closed and ~$(S_g)_{g\in\OK}$-invariant. By the minimality of $(X_\eta,(S_g)_{g\in\OK})$, it follows that for any $0\leq s_\mathfrak{b}\leq N(\mathfrak{b})$, $\mathfrak{b}\in\mathfrak{B}$, we have
\[
X_\eta \cap Y^\mathfrak{b}_{\geq s_{\mathfrak{b}}}=X_\eta\text{ or }X_\eta \cap Y^\mathfrak{b}_{\geq s_{\mathfrak{b}}}=\emptyset.
\]
By Lemma~\ref{Toeplitz_SK}, we have $\eta\in Y$, so for $s_\mathfrak{b}\geq 2$ we have \[X_\eta \cap Y^\mathfrak{b}_{\geq s_\mathfrak{b}}=\emptyset.\] Because for any $\mathfrak{b}\in\mathfrak{B}$ \[X_\eta=X_\eta \cap \left(\bigcup_{1\leq s_\mathfrak{b}\leq N(\mathfrak{b})}Y^\mathfrak{b}_{s_\mathfrak{b}}\right),\] it follows immediately that $X_\eta=X_\eta\cap Y$.

\eqref{Y} $\Rightarrow$ \eqref{top} Suppose $X_\eta\subseteq Y$. By Theorem~\ref{minimal2}, we have $X_{\eta^*}\subseteq X_\eta	$, so, in particular, $\eta^*\in X_\eta$ is an element of $Y$. Since $\eta^*$ is an $\OK$-Toeplitz array, it follows by  Lemma~\ref{Toeplitz_SK} that
\[
\eta^*\in Y^*,
\]
where
\[
Y^*=\{x\in\{0,1\}^{\OK};\ |\supp \ x\bmod \mathfrak{b}^*|=N(\mathfrak{b}^*)-1 \text{ for any }\mathfrak{b}^*\in\mathfrak{B}^*\}.
\]
Suppose there exists $\mathfrak{b}\in\mathfrak{B}\setminus\mathfrak{B}^*$. Let $\mathfrak{b}^*\in\mathfrak{B}$ be such that $\mathfrak{b}\subseteq\mathfrak{b}^*$. Then \[|\supp \ \eta^* \bmod \mathfrak{b}^*|=N(\mathfrak{b}^*)-1
\]
and we conclude that
\[
\left|\supp \ \eta^* \bmod \mathfrak{b}\right|=\frac{N(\mathfrak{b})}{N(\mathfrak{b}^*)}\left|\supp \ \eta^* \bmod \mathfrak{b}^*\right|=\frac{N(\mathfrak{b})}{N(\mathfrak{b}^*)}\left(N(\mathfrak{b}^*)-1\right)<N(\mathfrak{b})-1.
\]
Thus, $\eta^*\not\in Y$, which we know that is not true. It follows that $\mathfrak{B}=\mathfrak{B}^*$ and therefore~$\eta=\eta^*$ is an $\OK$-Toeplitz array.
\end{proof}

\begin{proof}[Proof of Lemma~\ref{Toeplitz_SK}]
Assume that $\eta$ is an $\OK$-Toeplitz array. Suppose $\eta\not\in Y$. Then for some $\mathfrak{b}\in\mathfrak{B}$ and some $r\not\in\mathfrak{b}$
\[r+\mathfrak{b}\subseteq\cm_\mathfrak{B}.\]
By Theorem \ref{Thm_SK}, there exists $\mathfrak{b}'\in\mathfrak{B}$ such that
\[(r)+\mathfrak{b}\subseteq\mathfrak{b}'.\]
Then by the primitivity of $\mathfrak{B}$, $\mathfrak{b}=(r)+\mathfrak{b}=\mathfrak{b}'$, which contradicts $r\not\in\mathfrak{b}$.
\end{proof}

\section{Counterexample to Theorem~\ref{equivalent} for lattices}\label{se5}
We mentioned in the introduction that as the counterpart of the set of multiples $\cM_{\sB}$ one can choose countable unions of~sublattices of~$\Z^m$. In this case the analogous result to Theorem \ref{equivalent} does not hold. Another example in \cite{MR4251829} shows that the arithmetic characterization of proxmiality valid in the setting of ideals in integer rings also fails to hold in the lattice setting. These examples can be treated as a reason to work with integer rings in number fields and ideals rather than with lattices.

Let us state now two necessary results.
\begin{Th}[{\cite[Proposition 3.17]{MR4251829}}]\label{prime}
Let $m\geq2$ and $\{\Lambda_i\}_{i\geq1}$ be an inifite and pairwise coprime collection of lattices in $\Z^m$.
Then $\left\{[\Z^m:\Lambda_i]\right\}_{i\geq1}$ contains an infinite pairwise coprime subset.
\end{Th}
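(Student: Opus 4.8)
The plan is to reduce everything modulo primes. Recall that the $\Lambda_i$ are finite-index sublattices of $\Z^m$, so that $n_i:=[\Z^m:\Lambda_i]$ is a well-defined positive integer, and that ``pairwise coprime'' means $\Lambda_i+\Lambda_j=\Z^m$ for all $i\neq j$. The core observation will be that, for a fixed prime $p$, coprimality forces $p\mid n_i$ for only finitely many $i$; once this is established, a routine greedy argument extracts the desired infinite pairwise coprime subsequence of $(n_i)_{i\geq1}$.

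First I would prove the key claim: for every prime $p$ the set $S_p:=\{i\geq1:\ p\mid n_i\}$ is finite. Fix $p$, let $\pi\colon\Z^m\to(\Z/p\Z)^m=\Z^m/p\Z^m$ be the reduction map, and put $V_i:=\pi(\Lambda_i)$, a subspace of the $\Z/p\Z$-vector space $(\Z/p\Z)^m$. Since $\Z^m/(\Lambda_i+p\Z^m)\cong(\Z^m/\Lambda_i)/p(\Z^m/\Lambda_i)$ is nonzero precisely when $p\mid n_i$, we have $p\mid n_i$ if and only if $V_i\subsetneq(\Z/p\Z)^m$. Applying $\pi$ to the equality $\Lambda_i+\Lambda_j=\Z^m$ yields $V_i+V_j=(\Z/p\Z)^m$ for all $i\neq j$. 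Hence if $i,j\in S_p$ with $i\neq j$ had $V_i=V_j$, then $V_i+V_j=V_i$ would be a proper subspace, contradicting $V_i+V_j=(\Z/p\Z)^m$. So $i\mapsto V_i$ is injective on $S_p$, and since $(\Z/p\Z)^m$ is a finite set it has only finitely many subspaces; thus $S_p$ is finite.

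Granting the claim, I would finish by a recursive construction of indices $i_1<i_2<\cdots$ with $n_{i_1},n_{i_2},\dots$ pairwise coprime. Start with $i_1=1$. Having chosen $i_1<\cdots<i_{t-1}$, let $Q$ be the finite set of prime divisors of $n_{i_1}\cdots n_{i_{t-1}}$; by the claim $\bigcup_{p\in Q}S_p$ is finite, so---using that $\{\Lambda_i\}_{i\geq1}$ is infinite---we may pick $i_t>i_{t-1}$ with $i_t\notin\bigcup_{p\in Q}S_p$. If some prime divided both $n_{i_t}$ and $n_{i_\ell}$ for some $\ell<t$, it would lie in $Q$ and place $i_t$ in the excluded set, a contradiction; hence $\gcd(n_{i_t},n_{i_\ell})=1$ for all $\ell<t$, and $\{n_{i_t}:\ t\geq1\}$ is an infinite pairwise coprime subset of $\{n_i:\ i\geq1\}$. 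The only step that is more than bookkeeping is the injectivity of $i\mapsto V_i$ on $S_p$---the fact that two coprime sublattices of $\Z^m$ cannot have the same proper reduction modulo $p$---which is exactly what makes the reduction-mod-$p$ idea do its job.
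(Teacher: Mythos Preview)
The paper does not give its own proof of this statement; it is quoted verbatim from \cite[Proposition~3.17]{MR4251829} and used as a black box in Example~\ref{ex_lattices}. So there is no in-paper argument to compare against.

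Your proof is correct. The reduction modulo $p$ does exactly what is needed: the isomorphism $\Z^m/(\Lambda_i+p\Z^m)\cong(\Z^m/\Lambda_i)/p(\Z^m/\Lambda_i)$ shows that $p\mid n_i$ iff $V_i=\pi(\Lambda_i)$ is a proper subspace of $(\Z/p\Z)^m$; coprimality $\Lambda_i+\Lambda_j=\Z^m$ forces $V_i+V_j=(\Z/p\Z)^m$, so two indices in $S_p$ cannot share the same proper $V_i$; and finiteness of the set of subspaces of $(\Z/p\Z)^m$ bounds $|S_p|$. The greedy extraction is then routine. One cosmetic point: to conclude that $\{n_{i_t}:t\geq1\}$ is genuinely an infinite \emph{set} (rather than a sequence with repetitions), note that any two equal values $n_{i_s}=n_{i_t}>1$ would fail to be coprime, while $n_i=1$ forces $\Lambda_i=\Z^m$ and so can occur for at most one index in an infinite collection of distinct lattices; hence the constructed values are pairwise distinct.
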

\begin{Lemma}\label{prostokatna}
Let $\Lambda=(a,b)\Z+(0,d)\Z$ be a lattice in~$\Z^2$. Then the maximal (with respect to the inclusion relation) lattice contained in~$\Lambda$ equals \[\frac{ad}{\gcd(b,d)}\Z\times d\Z.\]
\end{Lemma}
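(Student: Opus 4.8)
The plan is to reduce the membership of a rectangular lattice in $\Lambda$ to two independent one‑variable divisibility conditions, one per axis, and then simply read off the largest solution.

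First I would record the explicit parametrisation $\Lambda=\{(ak,\,bk+dl):k,l\in\Z\}$, and note that (up to signs, which do not affect any of the lattices involved) we may assume $a,d>0$, since $\Lambda$ has full rank. A rectangular lattice $\Gamma=p\Z\times q\Z$ is exactly $\Z\cdot(p,0)+\Z\cdot(0,q)$, and since $\Lambda$ is a subgroup, $\Gamma\subseteq\Lambda$ if and only if both $(p,0)\in\Lambda$ and $(0,q)\in\Lambda$. So the whole problem splits into describing, for each axis, which integers are admissible.

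Next I would treat the two conditions separately. For $(0,q)$: the equation $0=ak$ forces $k=0$ because $a\neq0$, and then $q=dl$, so $(0,q)\in\Lambda\iff d\mid q$. For $(p,0)$: we need $p=ak$ and $bk+dl=0$ for some $k,l\in\Z$; the first equation says $a\mid p$, and writing $p=ak$ the second becomes $d\mid bk$. Now $d\mid bk$ is equivalent to $\tfrac{d}{\gcd(b,d)}\mid k$, because $\gcd\!\big(\tfrac{b}{\gcd(b,d)},\tfrac{d}{\gcd(b,d)}\big)=1$. Hence the set of admissible first coordinates is $a\cdot\tfrac{d}{\gcd(b,d)}\Z=\tfrac{ad}{\gcd(b,d)}\Z$.

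Combining the two, $p\Z\times q\Z\subseteq\Lambda$ if and only if $\tfrac{ad}{\gcd(b,d)}\mid p$ and $d\mid q$, i.e.\ if and only if $p\Z\times q\Z\subseteq \tfrac{ad}{\gcd(b,d)}\Z\times d\Z$; and choosing $p=\tfrac{ad}{\gcd(b,d)}$, $q=d$ shows that this particular rectangular lattice is itself contained in $\Lambda$. Therefore it is the (unique) maximal rectangular sublattice of $\Lambda$, as claimed. I do not anticipate a genuine obstacle: the only points that merit a sentence of care are the reduction to the two axis generators (legitimate because $\Lambda$ is a group), the harmless normalisation $a,d>0$, and the elementary equivalence $d\mid bk\iff \tfrac{d}{\gcd(b,d)}\mid k$.
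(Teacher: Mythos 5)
Your argument is correct, and since the paper omits the proof of this lemma as ``straightforward,'' your elementary computation (splitting containment of $p\Z\times q\Z\subseteq\Lambda$ into the axis conditions $(p,0),(0,q)\in\Lambda$ and reducing to $d\mid q$ and $\tfrac{ad}{\gcd(b,d)}\mid p$) is exactly the kind of argument intended. You also correctly read ``maximal lattice'' as the maximal \emph{rectangular} lattice $p\Z\times q\Z$, which is how the lemma is applied in Example~\ref{ex_lattices}.
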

The proof of the above lemma is straightforward, so we skip it.
\begin{Example}\label{ex_lattices}
Let $a_i,c_i\in\N\setminus\{1\}$, $i\geq1$ be such that $\{a_ic_i\}_{i\geq1}$ is an infinite and pairwise coprime set of odd numbers, $\{a_i\}_{i\geq1}$ and~$\{c_i\}_{i\geq1}$ are infinite.
Let \[\Lambda_i=(a_i,2^i)\Z+(0,2^ic_i)\Z, i\geq1\] and \[\mathfrak{B}=\{\Lambda_i\}_{i\geq1}.\] We claim that:
\begin{enumerate}[1)]
    \item $\eta=\raz_{\Z^2\setminus\bigcup_{i\geq1}\Lambda_i}$ is not a $\Z^2$-Toeplitz array,
    \item $\mathfrak{B}$ does not contain a scaled copy of infinite pairwise coprime collection of lattices $\{\Lambda_i'\}_{i\geq1}$,
    i.e.\ $\{(a,b)\Lambda_i'\}_{i\geq1}\nsubseteq\mathfrak{B}$ for any $(a,b)\in\Z^2$, such that $a,b\neq0$, where we consider the coordinatewise multiplication in $\Z^2$.
\end{enumerate}

Indeed, by Lemma~\ref{prostokatna} (for $\Lambda=\Lambda_i$, $i\geq1$), for $i\geq1$ \[a_ic_i\Z\times 2^ic_i\Z\subseteq\Lambda_i.\] Since $\{a_ic_i\}_{i\geq1}$ is infinite and pairwise coprime, there are arbitrarly long blocks of consecutive zeros on $\eta_1:=\left(\eta_{(n,0)}\right)_{n\in\Z}$. Since $a_i\geq2$, $i\geq1$, $\eta_{(1,0)}=1$. Suppose that there is $\Lambda=(c,d)\Z+(0,f)\Z$ such that \[(1,0)+\Lambda\subseteq\cf_\mathfrak{B}.\] By Lemma~\ref{prostokatna},
\[
\frac{cf}{\gcd(d,f)}\Z\times d\Z\subseteq \Lambda.
\]
Hence, for any $k\in\Z$,
\[
\eta_1\left(1+\frac{cf}{\gcd(d,f)}k\right)=1,
\]
but this contradicts that there are arbitrarly long blocks of consecutive zeros on $\eta_1$. Hence $\eta$ is not a $\Z^2$-Toeplitz array.

Suppose that for some infinite pairwise coprime collection of lattices $\{\Lambda_i'\}_{i\geq1}$ and some $(a,b)\in\Z^2$ we have
\[\{(a,b)\Lambda_i'\}_{i\geq1}\subseteq\mathfrak{B}.\] Since $(a,b)\Lambda_i'\in\mathfrak{B}$, there exists $j_i\geq1$ such that \begin{equation}\label{L_ji}
(a,b)\Lambda_i'=\Lambda_{j_i}.
\end{equation} Let $\Lambda_i'=(d_i',e_i')\Z+(0,f_i')\Z$, $i\geq1$. Since $\{\Lambda_i'\}_{i\geq1}$  is a collection of pairwise coprime lattices, by Theorem~\ref{prime} we can assume $\{d_i'f_i'\}_{i\geq1}$ is infinite and pairwise coprime. Let $r_i=d_i'f_i'$, $i\geq1$. Since $r_i=[\Z^2:\Lambda_i']$, we have $r_i\Z^2\subseteq\Lambda_i'$ for any $i\geq1$, so \[ar_i\Z\times br_i\Z\subseteq\Lambda_{j_i}.\] By Lemma~\ref{prostokatna}, the maximal lattice contained in~$\Lambda_{j_i}$ equals \[a_{j_i}c_{j_i}\Z\times 2^{j_i}c_{j_i}\Z.\] Hence
\[
ar_i\Z\times br_i\Z\subseteq a_{j_i}c_{j_i}\Z\times 2^{j_i}c_{j_i}\Z,\ i\geq1.
\]
Thus, \[2^{j_i}c_{j_i}\mid br_i, i\geq1.\] Since $r_i$, $i\geq1$, are pairwise coprime, we can assume that all of them are odd. Hence \[2^{j_i}\mid b,\] but $\{\Lambda_i'\}_{i\geq1}$ is infinite, so $2^{j_i}\to\infty$ as $i\to\infty$. It follows that $b=0$, but then $(a,b)\Lambda_i'$ is of infinite index in~$\Z^2$, which contradicts \eqref{L_ji}, as elements of $\mathfrak{B}$ have finite indices.
\end{Example}
\bibliographystyle{acm}
\bibliography{basic}
        \bigskip
        \footnotesize

		\noindent
		Aurelia Dymek\\
 \textsc{Faculty of Mathematics and Computer Science, Nicolaus Copernicus University,\\ Chopina 12/18, 87-100 Toru\'{n}, Poland}\par\nopagebreak
        \noindent
        \href{mailto:aurbart@mat.umk.pl}
        {\texttt{aurbart@mat.umk.pl}}

\medskip

		\noindent
		Stanis\l aw Kasjan\\
        \textsc{Faculty of Mathematics and Computer Science, Nicolaus Copernicus University,\\ Chopina 12/18, 87-100 Toru\'{n}, Poland}\par\nopagebreak
        \noindent
        \href{mailto:skasjan@mat.umk.pl}
        {\texttt{skasjan@mat.umk.pl}}

\medskip        
        
        \noindent       
        Joanna Ku\l aga-Przymus\\
        \textsc{Faculty of Mathematics and Computer Science, Nicolaus Copernicus University,\\ Chopina 12/18, 87-100 Toru\'{n}, Poland}\par\nopagebreak
        \noindent
        \href{mailto:joasiak@mat.umk.pl}
        {\texttt{joasiak@mat.umk.pl}}

\end{document}